\documentclass[11pt,reqno]{amsart}


\usepackage{
  mathabx,
  amsmath,
  amsfonts,
  amssymb,
  amsthm,
  amscd,
  graphicx,
  mathtools,    
  enumitem,
  mathdots,     
  txfonts,
  stackrel
}
\usepackage[all]{xy}
\usepackage[usenames,dvipsnames]{xcolor}
\usepackage{todonotes}
\usepackage{tikz}
\usetikzlibrary{decorations.markings}
\usetikzlibrary{cd}
\usetikzlibrary{patterns}
\usetikzlibrary{shapes}
\usepackage{todonotes}

\makeatletter
\@namedef{subjclassname@2020}{%
  \textup{2020} Mathematics Subject Classification}
\makeatother




\usepackage{bbm}


\DeclareFontFamily{OT1}{pzc}{}
\DeclareFontShape{OT1}{pzc}{m}{it}{<-> s * [1.10] pzcmi7t}{}
\DeclareMathAlphabet{\mathpzc}{OT1}{pzc}{m}{it}

\usepackage[colorlinks=true, linkcolor=blue, citecolor=purple, urlcolor=blue, breaklinks=true]{hyperref}


\textheight=8.66in
\topmargin=0pt
\leftmargin=0in
\headsep=0.25in
\oddsidemargin=0in
\evensidemargin=0in
\parindent=0.5cm
\textwidth=6.25in


\usepackage{cleveref}

\crefname{definition}{Definition}{Definitions}
\crefname{example}{Example}{Examples}
\crefname{lemma}{Lemma}{Lemmas}
\crefname{corollary}{Corollary}{Corollaries}
\crefname{theorem}{Theorem}{Theorems}
\crefname{remark}{Remark}{Remarks}
\crefname{equation}{}{}
\crefname{enumi}{}{}
\crefname{section}{Section}{Section}


\tikzset{anchorbase/.style={outer sep=auto,baseline={([yshift=-0.5ex]current bounding box.center)}}}
\renewcommand{\ngeq}{{\mathchoice
{\displaystyle\geq\hspace{-3.5mm}/\hspace{1mm}}
{\textstyle\geq\hspace{-3.5mm}/\hspace{1mm}}
{\scriptstyle\geq\hspace{-1.8mm}/\hspace{.6mm}}
{\scriptscriptstyle\geq\hspace{-1.5mm}/\hspace{.6mm}}}}

\tikzset{anchorbase/.style={outer sep=auto,baseline={([yshift=-0.5ex]current bounding box.center)}}}
\renewcommand{\ng}{{\mathchoice
{\displaystyle>\hspace{-3.5mm}/\hspace{1mm}}
{\textstyle>\hspace{-3.5mm}/\hspace{1mm}}
{\scriptstyle>\hspace{-1.8mm}/\hspace{.6mm}}
{\scriptscriptstyle>\hspace{-1.5mm}/\hspace{.6mm}}}}
\newcommand{\nl}{{\mathchoice
{\displaystyle< \hspace{-3.2mm}/\hspace{.9mm}}
{\textstyle< \hspace{-3.2mm}/\hspace{.9mm}}
{\scriptstyle< \hspace{-1.6mm}/\hspace{.6mm}}
{\scriptscriptstyle< \hspace{-1.4mm}/\hspace{.6mm}}}}

\def\supp{\operatorname{supp}}

\newcommand\Z{\mathbb{Z}}
\newcommand\Q{\mathbb{Q}}

\newcommand\N{\mathbb{N}}
\newcommand\kk{\Bbbk}

\def\op{{\operatorname{op}}}

\def\eps{\varepsilon}

\def\ob{{\operatorname{ob\,}}}

\def\S{\mathbf{S}}
\def\B{\mathbf{B}}
\def\I{\mathbf{I}}
\def\X{\mathrm{X}}
\def\Y{\mathrm{Y}}
\def\H{\mathrm{H}}

\newcommand\GVec{\mathpzc{GVec}}

\def\lround{(\!(}
\def\rround{)\!)}

\DeclareMathOperator{\End}{End}
\DeclareMathOperator{\Ext}{Ext}
\DeclareMathOperator{\Tor}{Tor}
\DeclareMathOperator{\Hom}{Hom}
\def\soc{\operatorname{soc}}
\def\head{\operatorname{cosoc}}
\def\rad{\operatorname{rad}}
\DeclareMathOperator{\id}{id}

\DeclareMathOperator{\im}{im}
\DeclareMathOperator{\coker}{coker}

\def\taudual{{\scriptscriptstyle\bigcirc\hspace{-1.35mm}\tau\hspace{.1mm}}}

\def\gmodpuredelta{\operatorname{\!-gmod}_{\pureDelta}}
\def\gmodpurenabla{\operatorname{\!-gmod}_{\purenabla}}
\def\gmodpuredeltabar{\operatorname{\!-gmod}_{\bar\pureDelta}}
\def\gmodpurenablabar{\operatorname{\!-gmod}_{\bar\purenabla}}
\def\gmoddelta{\operatorname{\!-gmod}_\Delta}
\def\gmodnabla{\operatorname{\!-gmod}_\nabla}
\def\gmoddeltabar{\operatorname{\!-gmod}_{\bar\Delta}}
\def\gmodnablabar{\operatorname{\!-gmod}_{\bar\nabla}}
\def\gmod{\operatorname{\!-gmod}}
\def\domg{\operatorname{gmod-\!}}
\def\gmodlfd{\operatorname{\!-lfdmod}}
\def\dfldomg{\operatorname{lfdmod-\!}}
\def\pgmod{\operatorname{\!-pgmod}}
\def\domgp{\operatorname{pgmod-\!}}
\def\igmod{\operatorname{\!-igmod}}
\def\domgi{\operatorname{igmod-\!}}


\newcommand{\pureDelta}
{\mathchoice   {\displaystyle\Delta\hspace{-2.48mm}\raisebox{.24pt}{$\blacktriangle$}\hspace{.2mm}}     {\textstyle\Delta\hspace{-2.48mm}\raisebox{.24pt}{$\blacktriangle$}\hspace{.2mm}}{\scriptstyle\Delta\hspace{-2.05mm}\raisebox{.1pt}{$\blacktriangle$}\hspace{-.2mm}}{\scriptscriptstyle\blacktriangle}}

\newcommand{\purenabla}
{\mathchoice   {\displaystyle\nabla\hspace{-2.44mm}\raisebox{1pt}{$\blacktriangledown$}}     {\textstyle\nabla\hspace{-2.44mm}\raisebox{1pt}{$\blacktriangledown$}}{\scriptstyle\nabla\hspace{-2.15mm}\raisebox{.1pt}{$\blacktriangledown$}\hspace{-.25mm}}{\scriptscriptstyle\blacktriangledown}}



\newtheorem{theorem}{Theorem}[section]
\newtheorem{lemma}[theorem]{Lemma}
\newtheorem*{lemma*}{Lemma}
\newtheorem{corollary}[theorem]{Corollary}

\theoremstyle{definition}
\newtheorem{definition}[theorem]{Definition}
\newtheorem{remark}[theorem]{Remark}

\numberwithin{equation}{section}
\allowdisplaybreaks

\begin{document}

\title[Graded triangular bases]{Graded triangular bases}
\author{Jonathan Brundan}
\address{
  Department of Mathematics \\
  University of Oregon \\
  Eugene, OR, USA
}
\urladdr{\href{https://pages.uoregon.edu/brundan}{https://pages.uoregon.edu/brundan}, \textrm{\textit{ORCiD}:} \href{https://orcid.org/0009-0009-2793-216X}{0009-0009-2793-216X}}
\email{brundan@uoregon.edu}

\thanks{Research supported in part by NSF grant
  DMS-2101783.}

\subjclass[2020]{Primary 17B10}
\keywords{Quasi-hereditary algebra, cellular basis, triangular basis}

\begin{abstract}
This article develops a practical technique for studying representations of $\kk$-linear categories arising in the categorification of quantum groups.
We work in terms of locally unital algebras which are $\Z$-graded with graded pieces that are 
finite-dimensional and bounded below,
developing a theory of {\em graded triangular bases} for such algebras.
The definition is a graded extension of the notion of triangular basis as formulated in \cite{BS}. However, in the general graded setting, finitely generated projective 
modules often fail to be Noetherian,
so that existing results from the study of highest weight categories are not directly applicable. Nevertheless, we show that there is still a good theory of {\em standard modules}. In motivating examples arising from Kac-Moody 2-categories, these modules categorify the PBW bases for
the modified forms of 
quantum groups constructed by Wang.
\end{abstract}

\maketitle

\vspace{-8mm}

\section{Introduction}\label{intro}

Recently, Wang \cite{PBW} has introduced PBW
bases for 
the modified forms of quantum groups. Similar bases exist also for
iquantum groups.
This article arose from attempts to understand the categorification of these bases. 
Quantum groups are categorified by
the {\em Kac-Moody 2-categories}
of Khovanov and Lauda \cite{KL3} and Rouquier \cite{Rou}.
From this perspective, Wang's PBW bases come from certain {\em standard modules} for the morphism categories of these 2-categories.
More precisely, 
standard modules categorify Wang's fused canonical basis and a variation,
called {\em pure} standard modules, categorify his PBW basis in all finite types.
This will be explained in forthcoming work.
Another example in a similar spirit is
developed in \cite{BWW2}, where we show
that the split Grothendieck ring of the monoidal category of finitely generated graded projective modules for the {\em nil-Brauer category} from \cite{BWWbasis} is isomorphic to the split {iquantum group} 
of rank one.
Again, this iquantum group has a PBW basis
which is categorified by standard modules.

The motivating examples just mentioned
are small graded $\kk$-linear categories over a field $\kk$. The goal
of this article is to develop the algebraic tools 
needed to construct the standard modules for these categories
in the first place.
We are inclined to replace the $\kk$-linear category
in question with its
{\em path algebra} $A$. This is a 
locally unital graded associative algebra
$$
A = \bigoplus_{i,j \in \I} 1_i A 1_j
$$
equipped with a distinguished family of mutually orthogonal homogeneous idempotents
$1_i\:(i \in \I)$ arising from the identity endomorphisms of the objects of the underlying $\kk$-linear category.
The spaces $1_i A 1_j$ are usually infinite-dimensional graded vector spaces, but they are {\em locally finite-dimensional}, i.e.,
the degree $d$ component $1_i A_d 1_j$
is finite-dimensional
for all $d \in\Z$.
Moreover, the grading is {\em bounded below}
in the sense that for each $i,j \in \I$
there exists $N_{i,j} \in \Z$
such that $1_i A_d 1_j = 0$
for all $d < N_{i,j}$.

\begin{definition}\label{raspberries}
Let $A = \bigoplus_{i,j \in \I} 1_i A 1_j$ 
be a locally unital graded algebra that is
locally finite-dimensional and bounded below.
A {\em graded triangular basis} for $A$
is following additional data:                
\begin{itemize}
\item A subset $\S \subseteq \I$ indexing {\em special idempotents}
  $\{1_s\:|\:s\in \S\}$.
\item A {\em lower finite} poset $(\Lambda,\leq)$,
meaning that $\{\mu \in \Lambda\:|\:\mu \leq \lambda\}$ is finite for each $\lambda \in \Lambda$.
\item
A function 
$\partial:\S\rightarrow \Lambda, s \mapsto \dot s$ with finite (possibly empty) fibers $\S_\lambda := \partial^{-1}(\lambda)$ for each $\lambda \in \Lambda$.
\item
Homogeneous sets $\X(i,s) \subset 1_i A 1_s$, 
$\H(s,t) \subset 1_s A 1_t$, $\Y(t,j) \subset 1_t A 1_j$ 
for $i,j\in \I$ and $s,t \in \S$.
\end{itemize}
For $s,t \in \S$, let 
$\X(s) := \bigcup_{i \in \I} \X(i,s)$ and
$\Y(t) := \bigcup_{j \in \I} \Y(t,j)$.
The axioms are as follows:
\begin{enumerate}
\item[(A1)] The products $x h y$ for  $(x,h,y) \in \bigcup_{s,t \in \S} \X(s) \times \H(s,t) \times \Y(t)$
  give a basis for $A$.
  \item[(A2)] For each $s \in \S$,
$\X(s,s)=\Y(s,s) = \{1_s\}$.
\item[(A3)]  For $s,t \in \S$ with $s\neq t$,
$\X(s,t)\neq\varnothing\Rightarrow
\dot s > \dot t$, 
$\H(s,t)\neq\varnothing\Rightarrow\dot s = \dot t$, and
$\Y(s,t)\neq\varnothing\Rightarrow
\dot s < \dot t$.
\item[(A4)] For each $i \in \I-\S$, the
sets $\{s \in \S\:|\:\X(i,s)\neq\varnothing\}$
and 
$\{s \in \S\:|\:\Y(s,i)\neq\varnothing\}$
are both finite\footnote{The final axiom is seldom needed; it is applied in \cref{wherefinalaxiomisneeded}.}.
    \end{enumerate}
  \end{definition}

The setup in 
\cref{raspberries}, incorporating three index sets $\I, \S$ and $\Lambda$, is designed to be sufficiently flexible to be applicable directly to the various examples ``in nature''. From a theoretical perspective, one can always reduce to the special case that $\I = \S = \Lambda$, which simplifies the definition; this is discussed further at the start of \cref{sfirst}. See also \cref{nice}, which introduces two particularly well-behaved
special cases in which the set $\S$
also parametrizes the isomorphism classes of irreducible graded left $A$-modules.
One of these special cases, in which $\S = \Lambda$ and the function $\partial$ is the identity,
gives a general definition of a {\em based affine quasi-hereditary algebra}.

The history behind \cref{raspberries} will be 
discussed later in the introduction. 
We just note for now that it is almost exactly the same
as the definition of triangular basis given in \cite[Def.~5.26]{BS},
and that is equivalent to the
definition of weakly triangular decomposition in \cite{GRS}.
The main difference is that we are now in a graded setting, so that the assumption made in \cite{BS,GRS} that each $1_i A 1_j$ is finite-dimensional can be weakened.
We have also reversed the partial order compared to \cite{BS} since it seems more sensible to work in terms of lowest weight rather than highest weight modules in the sort of diagrammatical examples that we are interested in; this is the same convention as in \cite{EL} and \cite{SS}.

When $A$ has a graded triangular basis, 
the category 
$A\gmod$ of (locally unital) 
graded left $A$-modules has 
properties which are reminiscent of various
Abelian categories appearing in Lie theory. 
Here is a brief summary of the 
results developed in the main body of the text:
\begin{itemize}
\item
For each $\lambda \in \Lambda$,
let $e_\lambda := \sum_{s \in \S_\lambda} 1_s$. The {\em $\lambda$-weight space} of a graded left $A$-module $V$ is 
the subspace $e_\lambda V$.
Let $A_{\geq \lambda}$
be the quotient of $A$ by the two-sided ideal generated by all $e_\mu\:(\mu\ngeq\lambda)$.
Then let $A_\lambda := \bar e_\lambda A_{\geq \lambda} \bar e_\lambda$, where $\bar e_\lambda$ is the canonical image of $e_\lambda$ in $A_{\geq \lambda}$.
These are {\em unital} graded algebras which are locally finite-dimensional and bounded below; in the motivating examples coming from Kac-Moody 2-categories they are some quiver Hecke algebras.
\item
The algebras $A_\lambda\:(\lambda \in \Lambda)$ play the role of ``Cartan subalgebra" in a sort of 
lowest weight theory:
if $V$ is any graded left $A$-module and $\lambda$ is a minimal weight of $V$,
there is a naturally induced action of $A_\lambda$ on the $\lambda$-weight space 
$e_\lambda V$.
There are also exact
functors
$j^\lambda_!:A_\lambda\gmod \rightarrow
A\gmod$ 
and 
$j^\lambda_*:A_\lambda\gmod \rightarrow
A\gmod$, which are left and right adjoints
of the idempotent truncation functor
$j^\lambda:A_{\geq\lambda}\gmod \rightarrow A_\lambda\gmod, V \mapsto \bar e_\lambda V$;
see \cref{xmas}.
We call these the {\em standardization}
and {\em costandardization functors},
respectively, following the terminology of \cite{LW}.
\item
Fix also a set $\B = \coprod_{\lambda \in \Lambda} \B_\lambda$ such that $\B_\lambda$ indexes a complete set of irreducible graded left $A_\lambda$-modules $L_\lambda(b)\:(b \in \B_\lambda)$ up to isomorphism and degree shift; these modules are (globally) finite-dimensional since $A_\lambda$ 
is unital. Also let $P_\lambda(b)$
and $I_\lambda(b)$ be a projective cover and an injective hull of $L_\lambda(b)$ in $A_\lambda\gmod$, respectively.
For $b \in \B_\lambda$ we define
{\em standard modules}
$\Delta(b) := j^\lambda_! P_\lambda(b)$,
{\em proper standard modules}
$\bar\Delta(b) := j^\lambda_! L_\lambda(b)$,
{\em costandard modules}
$\nabla(b) := j^\lambda_* I_\lambda(b)$
and
{\em proper costandard modules}
$\bar\nabla(b) := j^\lambda_* L_\lambda(b)$.
We show that $L(b) := \head \bar\Delta(b)
= \soc\bar\nabla(b)$ is irreducible, and
 the modules $L(b)\:(b \in \B)$ give
 a complete set of
irreducible graded left $A$-modules up to isomorphism and degree shift; see \cref{irrclass}.
To keep track of all of these modules, it is helpful to note that there are canonical homomorphisms
$$
P(b)\twoheadrightarrow 
\Delta(b)\twoheadrightarrow  \bar\Delta(b)\twoheadrightarrow L(b) \hookrightarrow \bar\nabla(b)
\hookrightarrow \nabla(b)
\hookrightarrow I(b).
$$
\item
Let $P(b)$ be a projective cover and $I(b)$
be an injective hull of $L(b)$ in $A\gmod$.
We show that $P(b)$ has a {\em $\Delta$-flag}
and $I(b)$ has a {\em $\nabla$-flag}
with multiplicities
satisfying an analog of the BGG reciprocity formula; see \cref{bgginj,bggproj}.
We also introduce
$\bar\Delta$-flags and $\bar\nabla$-flags,
and establish the familiar homological criteria for all of these types of ``good filtrations"; see \cref{citizens,citizenss} (with finiteness assumptions on the flags) and \cref{villains,villainss} (with the finiteness assumptions removed).
\end{itemize}
For experts, there are probably no surprises in the above statements, but it is remarkable that it is possible to develop this theory so fully
given that we have imposed very 
mild finiteness assumptions on $A$.
In fact, in the 
motivating examples, the algebra $A$ fails
to be locally Noetherian---finitely generated projectives often have submodules
that are not themselves finitely generated.
To deal with this, our notion of $\Delta$-flag in this setting allows sections of such filtrations to be 
infinite direct sums of standard modules; see \cref{defup,upity}.
Accordingly, the Grothendieck group
of the exact category of modules with $\Delta$-flags
is a free $\Z\lround q \rround$-module 
(rather than merely a $\Z[q,q^{-1}]$-module)
with basis given by the isomorphism classes
of the standard modules. This is consistent with the completions that
are needed in order to work with the bases from \cite{PBW} integrally
rather than over $\Q(q)$.

There are two more results we would like to summarize here, both of which require some additional hypothesis.
\begin{itemize}
\item
Assuming that $A$ is unital
rather than merely locally unital,
graded Noetherian (both left and right), and that each of the algebras
$A_\lambda$ has finite graded global dimension, 
the algebra $A$ has finite graded global dimension;
see \cref{birdie}.
\end{itemize}
The strong finiteness assumptions in the statement just made are satisfied in many more classical examples.
When they hold,
the category of finitely generated graded left $A$-modules is an example of an
affine properly stratified category in the sense of
\cite[Def.~5.1]{AHW}, and this result
about global dimension can also be deduced from \cite[Cor.~5.25]{AHW}.
Our final observation is as follows:
\begin{itemize}
\item
If each of the algebras $A_\lambda$ has additional structure making them into based affine quasi-hereditary algebras, then there are also {\em pure standard} and {\em pure proper standard}
modules $\pureDelta(b), \bar\pureDelta(b)\:(b \in \B)$ obtained by applying the standardization functors to the standard and proper standard module of each $A_\lambda$,
and {\em pure costandard} and {\em pure proper costandard modules} $\purenabla(b), \bar \purenabla(b)\:(b \in \B)$ obtained by applying the costandardization functor to the costandard and proper costandard modules of each $A_\lambda$.
These
satisfy analogous homological properties to the standard, proper standard, costandard and proper costandard modules in an affine highest weight category; see \cref{breathe,pureexts}.
In this refined setting, there are canonical homomorphisms
$$
P(b)\twoheadrightarrow 
\Delta(b)\twoheadrightarrow \pureDelta(b)
\twoheadrightarrow \bar\pureDelta(b) \twoheadrightarrow \bar\Delta(b)\twoheadrightarrow L(b) \hookrightarrow
\bar\nabla(b) \hookrightarrow\bar\purenabla(b)
\hookrightarrow \purenabla(b) \hookrightarrow \nabla(b)
\hookrightarrow I(b).
$$\end{itemize}
To explain the significance of this last point, 
we say a little more about the application of graded triangular bases to the categorification of PBW bases of the modified form $\dot U$ of 
a quantized enveloping algebra.
The algebra $\dot U$ is obtained by glueing together  
$U^+$ and $U^-$, both of which are isomorphic to Lusztig's algebra $\mathbf{f}$ which is categorified by certain quiver Hecke algebras according to \cite{KL3}.
In \cite{PBW}, {\em two} new families of bases
for $\dot U$ are discussed, one called 
the {\em fused canonical basis}, which exists in general, and the other, called the {\em PBW basis},
which exists in all finite types.
The fused canonical basis
is categorified by 
standard modules arising from a graded triangular basis whose Cartan algebras are tensor products of two quiver Hecke algebras, one arising from the categorification of $U^+$ and the other
from the categorification of $U^-$.
In finite type, these quiver Hecke algebras are based affine quasi-hereditary algebras 
thanks to \cite{abcdefg,AHW}, and it is the pure standard modules resulting from this extra structure 
which categorify Wang's PBW bases.

To conclude the introduction, we make further historical remarks, with apologies to
many contributions in the same spirit which we have surely missed.
\begin{itemize}
\item
The antecedant for this genre is the notion of {\em cellular algebra} formulated by
Graham and Lehrer \cite{GL}.  
There are many other variations in the literature, including cellular categories \cite{W},
graded cellular algebras \cite{HM}, 
affine cellular algebras \cite{KX}, skew cellular algebras \cite{MHR}, and
sandwich cellular algebras \cite{Tubbenhauer}.
However, algebras with triangular bases have more in common with the quasi-hereditary algebras of \cite{CPS} than cellular algebras---our standard modules 
always have a unique irreducible quotient unlike the situation for cellular algebras where there can be strictly more 
cell modules than isomorphism classes of irreducible modules.
\item
Another influential contribution 
is the definition \cite[Def.~2.17]{EL} of 
{\em fibered object-adapted cellular basis}.
Our primary motivating examples, the morphism categories of Kac-Moody 2-categories, were also one of the motivations behind \cite{EL}.
In \cref{raspberries}, we have weakened some of the hypotheses compared to \cite{EL} but strengthened some others. Most significant, in \cite{EL} the algebras $A_\lambda$ are required to be (commutative) subalgebras of $e_\lambda A e_\lambda$, whereas for us they are subquotients.
However, the novelty of the present article 
compared to \cite{EL}
lies in the subsequent theory that we are able to develop, rather than in the definition itself.
\item
Also providing motivation for us was
the definition of {\em based quasi-hereditary algebra} from \cite{KM}, and the older
notion of
{\em standardly based algebra} from \cite{DR}.
However, \cite{KM} and \cite{DR} only consider
finite-dimensional algebras, in particular, the poset $\Lambda$ is finite rather than merely being lower finite.
In \cite[Def.~5.1]{BS},
we simplified the
definition of based quasi-hereditary algebra
and upgraded it
from unital to locally unital algebras.
The result is equivalent
to the notion of
{\em strictly object-adapted cellular basis}
from \cite[Def.~2.4]{EL}, a definition which was designed to capture the properties of 
Libedinsky's double leaf basis for the diagrammatic Hecke category as studied in \cite{EW}.
In \cite[Ch.~5]{BS}, 
we used semi-infinite Ringel duality together with some arguments involving tilting modules adapted from \cite{AST}
to show that all upper finite 
highest weight categories can be realized in terms of based quasi-hereditary algebras. 
Thus, there are already many important examples in the ungraded setting.
\item
In \cite[Def.~5.20]{BS},
the definition of based quasi-hereditary algebra was weakened
to the notion of a
{\em based stratified algebra}. This is almost the same as an algebra with a triangular basis but with one extra axiom requiring that the idempotents $\bar 1_s (s\in S_\lambda)$ are primitive in $A_\lambda$; see also \cref{nice} below. Upper finite 
fully stratified categories whose tilting modules satisfy some additional axioms
can be realized in terms of 
based stratified algebras;
see \cite[Th.~5.24]{BS}.
\item Finally we would like to mention that there is a stronger notion of
{\em triangular decomposition} formalized in \cite[Def.~5.31]{BS}, which is closely related to the notion of {\em triangular category} introduced in \cite{SS}. The latter is particularly
useful in when there is also some  monoidal structure, i.e., one has what Sam and Snowden call a {\em triangular monoidal category}.
Examples include various sorts of Brauer category (both oriented and unoriented) arising from Schur-Weyl dualities, but the notion is too restrictive to capture 
examples like the ones coming from Kac-Moody 2-categories.
\end{itemize}

One unusual feature of the remainder of the text is that we have not included any {\em examples}. 
The historical discussion above points to many classical examples, but really the present setup was developed specifically to treat the examples arising from Kac-Moody 2-categories, and the nil-Brauer category studied in \cite{BWWbasis, BWW2}. The latter is a particularly good example since closed formulae exist for the 
graded composition multiplicities of proper standard modules, making their infinite nature clear; see especially \cite[Sec.~5]{BWW2} which discusses the graded triangular basis explicitly for this example.
We encourage the reader to have this example in plain view when working through the subsequent definitions and proofs in the present paper.
Some familiarity with the general theory of highest weight categories (e.g., see \cite{BS}) is also assumed.

\section{Locally unital graded algebras and their modules}

Throughout the article, we will work 
over an algebraically closed field $\kk$. All algebras, categories, functors, etc. will be assumed to be $\kk$-linear.
We write $\GVec$ for the closed symmetric monoidal category of $\Z$-graded vector spaces with morphisms that are degree-preserving linear maps. 
The upward
degree shift functor\footnote{In the official published version of this text
  the opposite convention is used---there, $q$ is the downward degree
  shift.} 
is denoted by $q$, i.e.,
for a graded vector space 
$V=\bigoplus_{d \in \Z} V_d$
its degree shift $qV$ is the same underlying vector space
with grading defined via
$(q V)_d := V_{d-1}$ for each $d \in \Z$.
For any sort of formal series
$f = \sum_{d \in \Z} r_d q^d$ with each $r_d \in \N$, we write 
$V^{\oplus f}$
for $\bigoplus_{d \in \Z} q^d V^{\oplus r_d}$.
The conjugate series $\overline{f}$ is $\sum_{d \in \Z} r_d q^{-d}$.
For a graded vector space $V = \bigoplus_{d \in \Z} V_d$ with finite-dimensional graded pieces, we write
$$
\dim_q V := \sum_{d \in \Z} (\dim V_d) q^{d}.
$$
Usually for this,  $V$ will be 
finite-dimensional so that $\dim_q V \in \N[q,q^{-1}]$, or
{\em bounded below} in the sense that $V_d = 0$ for $d \ll 0$ so that
$\dim_q V \in \N\lround q\rround$, or
{\em bounded above} in the sense that $V_d = 0$ for $d \gg 0$ so that
$\dim_q V \in \N\lround q^{-1}\rround$.

By a {\em locally unital graded algebra}
we mean a graded associative (but not necessarily unital) algebra
$A$ equipped with a distinguished system
$1_i\:(i \in \I)$ of mutually orthogonal
homogeneous idempotents such that
\begin{equation}\label{thedec}
A = \bigoplus_{i,j \in \I} 1_i A 1_j.
\end{equation}
By a {\em graded left $A$-module}, we mean
a {\em locally unital} graded left $A$-module $V$, i.e.,
$V = \bigoplus_{i \in \I} 1_i V$. 

For graded left $A$-modules $V$ and $W$
and $d \in \Z$,
we write $\Hom_{A}(V,W)_d$
for the vector space of all ordinary $A$-module
homomorphisms $f:V \rightarrow W$
such that $f(V_n) \subseteq W_{n+d}$ for each $n \in \Z$.
Then
$$
\Hom_{A}(V,W) := \bigoplus_{d \in \Z} \Hom_{A}(V,W)_d
$$
is a morphism space in 
the $\GVec$-enriched 
category of graded left $A$-modules. 
We 
denote the underlying category 
consisting of the same objects with
morphism spaces $\Hom_{A}(V,W)_0$
by $A\gmod$.
This is the usual Abelian category of graded modules and degree-preserving module homomorphisms.
It has enough injectives and projectives, indeed, it is a Grothendieck category,
so that homological algebra makes sense in
$A\gmod$. We define
$\Ext^n_{A}(V,W)$ so that it is
 naturally graded just like $\Hom_{A}(V,W)$:
$$
\Ext^n_{A}(V,W) = \bigoplus_{d \in \Z}
\Ext^n_{A}(V,W)_d\qquad\text{ with }\qquad
\Ext^n_{A}(V,W)_d = 
\Ext^n_{A}(q^{d} V,W)_0=\Ext^n_{A}(V,q^{-d} W)_0.
$$
We use $V \cong W$ for isomorphism in $A\gmod$ and $V \simeq W$
if $V \cong q^d W$ for some $d \in \Z$.

We write $A\pgmod$ (resp., $A\igmod$) for the full subcategory
of $A\gmod$ consisting of finitely generated
projective (resp., finitely cogenerated injective) graded modules.
These are additive Karoubian categories equipped with the downward degree shift functor $q$.
We say that a graded left $A$-module $V$ is
{\em locally finite-dimensional}
if $\dim 1_i V_d < \infty$ for all $i \in \I$ and $d \in \Z$. Also it is {\em bounded below}
(resp., {\em bounded above})
if for each $i \in \I$ there exists $N_i\in \Z$ such that
$1_i V_d = 0$ for $d < N_i$ 
(resp., $d > N_i$).
We denote the Abelian category of locally finite-dimensional graded left 
$A$-modules by $A\gmodlfd$.
There are also graded right $A$-modules,
which are of course the same thing as
graded left $A^\op$-modules.
The various categories of graded right $A$-modules are $\domg A$, $\domgp A$, $\domgi A$ and $\dfldomg A$.

For any locally finite-dimensional graded $A$-module $V$
and an irreducible graded $A$-module $L$,
the {\em graded multiplicity}
of $L$ in $V$
is the following formal series with coefficients in $\N$:
\begin{equation}\label{tech}
[V:L]_q := \sum_{d \in \Z} \max 
\bigg(
\left|\{r=1,\dots,n\:|\:V_r / V_{r-1} \cong q^d L \}\right|
\:\bigg|\:
\begin{array}{ll}
\text{for all finite graded filtrations}\\
0 = V_0 \subseteq \cdots \subseteq V_n = V
\end{array}
\bigg)
 q^d.
\end{equation}
If $V$ is bounded below (resp., above) then this is a formal Laurent series
in $\N\lround q \rround$ (resp., $\N\lround q^{-1}\rround$).
For example, taking $A$ to be $\kk$ itself and writing also $\kk$ for the ground field viewed as a one-dimensional graded vector space concentrated in degree zero,
we have that $[V:\kk]_q = \dim_q V$.

There are  exact contravariant functors
\begin{align}\label{politicians}
?^\circledast:&\domg A \rightarrow A \gmod,
&
?^\circledast:&A\gmod \rightarrow \domg A.
\end{align}
The first of these takes a graded left
module $V$ to
$V^\circledast :=
\bigoplus_{i \in \I}\bigoplus_{d \in \Z} (1_i V_{-d})^*$ viewed as a graded right module
with the natural action.
The second functor is defined similarly. 
If $V$ is locally finite-dimensional
 then $(V^{\circledast})^\circledast \cong V$
naturally.
So $?^\circledast$ restricts to quasi-inverse contravariant equivalences
\begin{align}\label{circledast1}
?^\circledast:\dfldomg A&\rightarrow A\gmodlfd,&
?^\circledast:A\gmodlfd&\rightarrow \dfldomg A.
\end{align}
There is a natural isomorphism
\begin{equation}\label{theduals}
\Hom_A(V, W^\circledast)
\cong \Hom_A(W,V^\circledast)
\end{equation}
for any graded left (resp., right) $A$-module $V$ (resp., $W$). 
This implies that
 $?^\circledast:
\domg A \rightarrow (A\gmod)^\op$
is left adjoint to the exact functor
$?^\circledast:(A\gmod)^\op \rightarrow
\domg A$.
Hence, by properties of adjunctions, 
$?^\circledast$ takes projectives 
in $\domg A$ to projectives in $(A\gmod)^\op$,
i.e., injectives in $A\gmod$.
It then follows that
\begin{equation}\label{hands}
\Ext^n_A(V, W^\circledast) \cong \Ext_A^n(W, V^\circledast)
\end{equation}
for a graded left (resp., right) $A$-module $V$ (resp., $W$) and $n \geq 0$. Indeed, we can compute $\Ext^n_A(V,W^\circledast)$
from a projective resolution of $V$. Applying $?^\circledast$ gives an injective resolution of $V^\circledast$, which can be used to compute
$\Ext^n_A(W, V^\circledast)$.
Then \cref{hands} follows using \cref{theduals}.

It will always be the case for us that $A$ itself is locally finite-dimensional and bounded below, by which we mean that 
all of the right $A$-modules $1_i A \:(i \in \I)$ and all of the left $A$-modules $A 1_j\:(j \in \I)$ are locally finite-dimensional and bounded below in the earlier sense.
Assuming this, finitely generated (resp., finitely cogenerated) graded $A$-modules
are locally finite-dimensional and bounded below (resp., above).
In particular, if $L$ is an irreducible graded left $A$-module, it is both finitely generated
and finitely cogenerated, so it is locally finite-dimensional and it is bounded both below
and above. This proves that
\begin{equation}\label{finitude}
\dim 1_i L < \infty
\end{equation}
for any $i \in \I$.
Using also the assumption that $\kk$ is algebraically closed, one
deduces that 
\begin{equation}\label{schurslemma}
\End_A(L) = \kk.
\end{equation}
The functor $?^\circledast$ restricts
to contravariant functors
\begin{align}\label{circledast2}
?^\circledast:\domgp A &\rightarrow A\igmod,&
?^\circledast:A\pgmod &\rightarrow \domgi A.
\end{align}
For this assertion, we have used that the dual of a finitely generated projective is a finitely cogenerated injective, as follows from the discussion in the previous paragraph.
It is also true that the dual of a finitely 
cogenerated injective is a finitely generated projective, so that restrictions
of $?^\circledast$ also give functors
\begin{align}\label{circledast3}
?^\circledast:A\igmod&\rightarrow \domgp A,&
?^\circledast:\domgi A&\rightarrow A\pgmod,
\end{align}
which are quasi-inverses
of the ones in \cref{circledast2}, i.e., these are all contravariant equivalences.
The proof of this needs some further
 argument which will be explained in the proof of the first lemma.

\begin{lemma}\label{tech2}
Suppose that $A$ is a locally unital graded algebra which is locally finite-dimensional and bounded below.
Let $V$ be any graded left $A$-module.
\begin{enumerate}
\item The module $V$ is finitely cogenerated if and only if $\soc V$, the sum of its irreducible graded submodules, is an essential submodule of finite length.
It always has an injective hull $I_V$
in $A\gmod$. When $V$ is finitely cogenerated, $I_V$ is also finitely cogenerated and coincides with the injective hull of $\soc V$ in $A\gmod$.
\item The module $V$ is finitely generated if and only if $\rad V$, the intersection of its maximal graded submodules,
is a superfluous submodule and $\head V := V / \rad V$ is of finite length.
In that case, it has a projective cover 
$P_V$ 
in $A\gmod$, which is itself finitely generated and coincides with the projective
cover of $\head V$ in $A\gmod$.
\item 
The module $V$ is locally finite-dimensional if and only if
$\Hom_A(P_L, V) \cong 
\Hom_A(V, I_L)^\circledast$
is locally finite-dimensional for all irreducible graded left $A$-modules $L$.
When this holds, the graded dimension of this 
morphism space is equal to the graded multiplicity
$[V:L]_q$ defined by \cref{tech}.
\end{enumerate}
\end{lemma}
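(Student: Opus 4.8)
The plan is to establish the three parts essentially in order, using standard arguments from the theory of Grothendieck categories but taking care to exploit the hypotheses on $A$. For part (1), I would first observe that since $A\gmod$ is a Grothendieck category it has enough injectives and every module has an injective hull, which gives the existence of $I_V$ with no hypotheses. For the characterization of finite cogeneration: if $\soc V$ is essential and of finite length, then $V$ embeds in $I_{\soc V}$, which is a finite direct sum of injective hulls of irreducibles $I_L$; each such $I_L$ is finitely cogenerated because, using \cref{finitude}, $\soc I_L = L$ has $\dim 1_i L < \infty$, and one checks that $I_L$ has simple (hence essential finite-length) socle, so $V$ is a submodule of a finitely cogenerated injective and is therefore finitely cogenerated. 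Conversely, if $V$ is finitely cogenerated, i.e. embeds in a finite direct sum of indecomposable injectives $I_{L_1}\oplus\cdots\oplus I_{L_n}$, then $\soc V \hookrightarrow \soc(I_{L_1}\oplus\cdots) = L_1\oplus\cdots\oplus L_n$ is of finite length, and essentiality of the socle in $V$ follows since $V$ lives inside an injective with essential socle (each $I_{L_k}$ has essential simple socle). The final sentence of (1) is then immediate: $I_V$ restricts from the injective hull of $\soc V$, and that is finitely cogenerated by the first part.

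Part (2) is dual to part (1) via the functor $?^\circledast$ of \cref{circledast1}, but there is a subtlety: $?^\circledast$ is only an equivalence between $A\gmodlfd$ and $\dfldomg A$, and a general finitely generated module need not a priori be known to be locally finite-dimensional until (2) itself is in hand, and even then $?^\circledast$ sends it to a module over $A^\op$. The cleaner route is to argue (2) directly, mirroring (1): a finitely generated module is a quotient of a finite direct sum $A1_{j_1}\oplus\cdots\oplus A1_{j_n}$; each $A1_j$ is locally finite-dimensional and bounded below by hypothesis on $A$, so $\rad(A1_j)$ is superfluous and $\head(A1_j)$ is finite-dimensional, hence finite length, by a standard lifting-of-idempotents / graded Nakayama argument (the grading being bounded below makes $\rad$ behave like the Jacobson radical in the complete-local sense degree by degree). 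This gives $\head V$ finite length and $\rad V$ superfluous, and conversely if $\head V$ is finite length with $\rad V$ superfluous one builds a projective cover by lifting a projective cover of $\head V$. The existence and finite generation of $P_V$, and that it is the projective cover of $\head V$, then follow as in (1). The main obstacle here is making the graded Nakayama-type statement precise for the $A1_j$: one needs that $\rad(A1_j)$ is exactly the span of positive-degree-plus-radical-of-degree-zero pieces and that it is superfluous, which uses boundedness below crucially.

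For part (3), I would use the adjunctions: $\Hom_A(P_L,V)$ computes the graded multiplicity of $L$ as a composition-factor-type invariant, and the isomorphism $\Hom_A(P_L,V)\cong\Hom_A(V,I_L)^\circledast$ is the special case of \cref{theduals} applied after noting $I_L \cong (P_L{}^{\text{op}})^\circledast$ is the dual of the projective cover of $L^\circledast$ — more directly, it follows from \cref{hands} in degree $n=0$ once one identifies $I_L$ with the $?^\circledast$-dual of the projective cover of the right-module irreducible $L^\circledast$, which is part (2) applied to $A^\op$. For the equality with $[V:L]_q$: since $P_L$ is projective, $\Hom_A(P_L,-)$ is exact, so for any finite graded filtration $0=V_0\subseteq\cdots\subseteq V_n=V$ one gets $\dim_q\Hom_A(P_L,V) = \sum_r \dim_q\Hom_A(P_L, V_r/V_{r-1})$; using $\dim_q\Hom_A(P_L, q^dL) = q^{-d}$ by \cref{schurslemma} (and that $\Hom_A(P_L, L') = 0$ for a non-isomorphic irreducible $L'$), this forces $\dim_q\Hom_A(P_L,V) = \sum_d c_d q^{-d}$ where $c_d$ is the number of sections isomorphic to $q^d L$ in that filtration, and the max over all filtrations appearing in \cref{tech} is achieved (and equals this, independent of the filtration) precisely because exactness of $\Hom_A(P_L,-)$ makes the count filtration-independent. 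Finally, $V$ is locally finite-dimensional iff each $1_iV_d$ is finite-dimensional iff each graded piece $\Hom_A(P_L,V)_d$ is finite-dimensional for all $L$, because $P_L$ is finitely generated so $\Hom_A(P_L,V)_d$ sees only finitely many $1_iV_{d'}$ and conversely $1_iV_d \cong \bigoplus_L \Hom_A(P_L, -)$-type data recovers the composition series in that degree — here the boundedness-below assumption guarantees the relevant sums are finite. The trickiest point to get right is the direction "locally fin.-dim. morphism spaces $\Rightarrow$ $V$ locally fin.-dim.": one should argue that $V$ is locally finite-dimensional iff its head (in each degree) is, iterating, which again leans on boundedness below so that the induction on degrees terminates.
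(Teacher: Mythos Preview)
Your proposal is broadly correct but diverges from the paper's proof in parts (2) and (3), in ways worth noting.

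For part (2), your worry that duality is unavailable because a finitely generated $V$ ``need not a priori be known to be locally finite-dimensional'' is unfounded: the paper has already observed, in the paragraph immediately preceding the lemma, that under the standing hypotheses any finitely generated graded module is locally finite-dimensional and bounded below. Given this, the paper simply dualizes, applies part (1) for $A^{\op}$, and dualizes back to get that $\rad V$ is superfluous and $\head V$ has finite length. The paper then reduces the existence of projective covers to the irreducible case and handles that by another duality trick: for $L$ irreducible with $1_i L\neq 0$, dualize $q^d A 1_i$ to a finitely cogenerated injective right module, decompose it by part (1), and dualize back to exhibit $P_L$ as a summand of $q^d A 1_i$. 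Your direct approach via a graded Nakayama argument for $A 1_j$ can be made to work, but you would have to supply that argument in the locally unital setting (the paper's \cref{nakayama} is only stated for unital $A$); the duality route bypasses this entirely.

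For the converse direction in part (3), your ``iterating on degrees'' sketch is vague and more delicate than necessary. The paper's argument is one line: since $A 1_i$ is finitely generated projective, part (2) gives a decomposition $A 1_i \cong \bigoplus_{r=1}^n P_{L_r}^{\oplus f_r}$ into finitely many indecomposable projectives, whence $1_i V \cong \Hom_A(A 1_i, V) \cong \bigoplus_r \Hom_A(P_{L_r},V)^{\oplus \overline{f_r}}$ is locally finite-dimensional by hypothesis. This is both simpler and avoids any induction.

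Your treatment of part (1) is more explicit than the paper's (which just invokes Grothendieck-category generalities), and your argument for the multiplicity formula in (3) via exactness of $\Hom_A(P_L,-)$ is essentially what the paper means by ``Schur's Lemma and the definition''.
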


\begin{proof}
(1) This follows from general principles since $A\gmod$ is a Grothendieck category.

\vspace{1mm}
\noindent
(2)
We have already noted that finitely generated (resp., finitely cogenerated) modules are locally finite-dimensional and bounded below (resp., above).
Consequently, if $V$ is finitely generated
we can apply $?^\circledast$ then
the first part of (1)
 (with $A$ replaced by $A^\op$) then $?^\circledast$ again to deduce that 
 $\rad V$ is superfluous and $\head V$ is of finite length.
 Conversely, if $\rad V$ is superfluous and $\head V$ is of finite length then
 it is clear that $V$ is finitely generated since it is generated by pre-images of generators of $\head V$.
 
 To complete the proof, it suffices to show that any irreducible graded left $A$-module $L$ has a projective cover $P_L$.
 To see this, we pick $i \in \I$ such that $1_i L \neq 0$,
so that $L$ is a quotient of $q^d A 1_i$
for some $d \in \Z$.
Since $q^d A 1_i$ is a finitely generated projective graded left $A$-module,
its dual $(q^d A 1_i)^\circledast$
is finitely cogenerated and injective.
So by (1), $(q^d A 1_i)^\circledast
= I_1 \oplus \cdots \oplus I_n$ 
with each $I_r$ being the injective hull
of an irreducible graded right $A$-module.
We deduce that $q^d A 1_i \cong
P_1 \oplus\cdots \oplus P_n$
for $P_r := I_r^\circledast$.
Since $q^d A 1_i$ is projective, so is each summand $P_r$,
and duality then gives that $P_r$ is the projective cover of its head
which is an irreducible graded left $A$-module.
One of these summands is a projective cover
of the irreducible $L$,
completing the proof.
This argument shows moreover that the duals of finitely cogenerated injective graded right $A$-modules are projective, something which was promised just before the statement of the lemma.

\vspace{1mm}
\noindent
(3) We just prove the assertions involving $P_L$; the ones involving $I_L$
follow by the dual argument.
If $V$ is locally finite-dimensional then
$\Hom_A(P_L, V)$
is locally finite-dimensional since $P_L$
is finitely generated. Also its graded dimension is equal to $[V:L]_q$
by Schur's Lemma \cref{schurslemma} and the definition \cref{tech}.
Conversely, suppose that 
$\Hom_A(P_L, V)$ is locally finite-dimensional for all $L$.
We need to show that $1_i V$ is locally finite-dimensional for $i \in \I$.
Since $A 1_i$ is finitely generated, (2) implies that 
there are irreducible graded left $A$-modules
$L_1,\dots,L_n$ with $L_r \not\simeq L_s$
for $r \neq s$ 
and $f_1,\dots,f_n \in \N[q,q^{-1}]$ 
such that
$$
A 1_i \cong P_1^{\oplus f_1} \oplus \cdots \oplus P_n^{\oplus f_n},
$$
where $P_r$ is a projective cover of $L_r$.
We deduce that $1_i V \cong
\Hom_A(A 1_i, V)$ is locally
finite-dimensional since
each $\Hom_A(P_r, V)$ is locally finite-dimensional by assumption.
\end{proof}

The locally unital algebra $A$ is {\em unital} if and only if $|\{i \in \I\:|\:1_i \neq 0\}| < \infty$.
Then $1_A = \sum_{i \in \I} 1_i$.
More can be said when this holds.
To start with, \cref{finitude} implies that
all irreducible graded $A$-modules 
are actually finite-dimensional.
Moreover,
there are only finitely many of them up to isomorphism and degree shift;
see \cite[Lemma 2.2(i)]{Klecturenotes}
for the proof.
The following is a graded version of the Nakayama Lemma.

\begin{lemma}\label{nakayama}
Suppose that $A$ is a {\em unital} graded algebra which is locally finite-dimensional and
bounded below.
Let $V$ be a graded left $A$-module
which is bounded below. 
If $\Hom_A(V,L) = 0$ for all
irreducible graded left $A$-modules $L$
then $V = 0$.
\end{lemma}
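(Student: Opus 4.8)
The plan is to run a graded version of the usual Nakayama argument. First, $\Hom_A(V,L)=0$ for every irreducible graded $L$ says exactly that $V$ has no irreducible quotient, i.e. $\rad V = V$ (the intersection of the maximal graded submodules is everything); so I must show that a nonzero bounded-below $V$ has a maximal graded submodule. Let $J:=\rad A$ be the graded Jacobson radical, the intersection of the annihilators of the irreducible graded $A$-modules. Since $A$ is unital there are only finitely many such modules up to shift, all finite-dimensional with endomorphism algebra $\kk$ by \cref{schurslemma}; by Jacobson density each $A/\operatorname{ann}(L)$ is a full matrix algebra over $\kk$, the annihilators are pairwise comaximal, and the Chinese Remainder Theorem shows $A/J$ is finite-dimensional semisimple. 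Hence $V/JV$ has zero (graded) radical for every $V$, so $\rad V = JV$, and it suffices to prove: if $V\ne 0$ is bounded below then $JV\ne V$.

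Suppose $JV=V$, so $V=J^kV$ for all $k\ge 1$. The crux is that $J$ is \emph{locally nilpotent}: for each $n$ there is $k$ with $(J^k)_d=0$ for all $d\le n$. Granting this, pick $n$ with $V_n\ne 0$; since $A$ and $V$ are bounded below and $\I$ is finite, there is $N$ with $A_d=V_d=0$ for $d<N$, so $(J^kV)_n=\sum_{d\le n-N}(J^k)_d\,V_{n-d}$, which vanishes for $k$ large, forcing $V_n=0$ — a contradiction.

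For local nilpotence, take a nonzero homogeneous product $w=a_1\cdots a_k$ with $a_i\in J$ of degree $e_i$ and $\sum e_i=d\le n$, and set $s_j:=e_1+\cdots+e_j$, so $a_1\cdots a_j\in A_{s_j}$ and $a_{j+1}\cdots a_k\in A_{d-s_j}$. Because $A$ is bounded below (say $A_d=0$ for $d<-m$) and every prefix and suffix of $w$ is nonzero, all $s_j$ lie in the fixed finite interval $[-m,\,n+m]$. If $k$ is large (larger than $(n+2m+1)t$, say), where $t$ is the nilpotency index of $\rad(A_0)$, then some value in this interval is attained by at least $t+1$ of the $s_j$; the $t$ intermediate sub-products of the $a_i$ between successive occurrences of that value then all lie in $A_0$, and, being products of elements of the two-sided ideal $J$, they lie in $J\cap A_0$; since $J\cap A_0$ is nilpotent of index $\le t$ (next paragraph), their product is $0$, so $w=0$. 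This is the step where the bounded-below hypothesis is genuinely used, and it is the main obstacle.

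Finally, $J\cap A_0$ is nilpotent. Each nonzero graded piece $L_d$ of an irreducible graded $A$-module $L$ is a simple $A_0$-module (a proper $A_0$-submodule of $L_d$ generates a proper nonzero graded $A$-submodule of $L$), so $\rad(A_0)$ annihilates every such $L$, whence $\rad(A_0)\subseteq J$. Now $J\cap A_0$ is a two-sided ideal of the finite-dimensional algebra $A_0$; if it were not nilpotent it would not lie in $\rad(A_0)$ and would therefore contain a nonzero idempotent $f$ (lift a central idempotent of the semisimple $A_0/\rad(A_0)$). But then $Af$ is a nonzero cyclic, hence finitely generated, graded module, so by \cref{tech2}(2) it has an irreducible quotient $L$; the image of $f$ generates $L$, yet $f\in\rad A$ annihilates $L$, forcing $L=0$ — absurd.
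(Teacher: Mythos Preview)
Your proof is correct and follows the same overall strategy as the paper: let $J$ be the graded Jacobson radical, observe that $A/J$ is graded semisimple so that $V/JV$ is completely reducible, and then show $JV\neq V$ by exploiting that powers of $J$ eventually sit in strictly positive degrees. The paper dispatches this last step in one line by citing \cite[Lem.~2.7]{AHW} (which gives $r$ with $N^r\subseteq\bigoplus_{d\geq 1}A_d$), while you supply a self-contained proof via your pigeonhole argument on the partial degree sums $s_j$ together with the nilpotence of $J\cap A_0$. Your argument is thus a direct proof of the cited lemma, and the pigeonhole idea is a nice way to see it. One small remark: when you lift the central idempotent to obtain $f\in J\cap A_0$, you should note that the lifted idempotent can be taken as a polynomial in $x\in J\cap A_0$ with zero constant term, so it lands back in the ideal $J\cap A_0$; this is implicit in the standard lifting but worth saying since you then use $f\in J$.
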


\begin{proof}
Let $N = N(A)$ be the graded Jacobson radical of $A$. The quotient algebra $A / N$ is a 
finite direct product of graded matrix algebras over $\kk$. In particular, it is semisimple. Suppose that $V$ is a non-zero
graded module that is bounded below. 
Let $m \in \Z$ be minimal such that
$V_m \neq 0$. By \cite[Lem.~2.7]{AHW},
there exists $r \geq 1$ such that
$N^r \subseteq \bigoplus_{d \geq 1} A_d$.
We have that
$N^r V \subseteq \bigoplus_{d \geq 1} A_d V
\subseteq \bigoplus_{d \geq 1} V_{m+d}$.
Hence, $N^r V \neq V$, so $N V \neq V$.
As $A/ N$ is graded semisimple, $V / NV$ is a completely reducible graded module, so there exists an irreducible graded left $A$-module $L$
with $\Hom_A(V / NV, L) \neq 0$.
This implies that $\Hom_A(V,L) \neq 0$ as required.
\end{proof}

\begin{lemma}\label{tough}
Suppose that $A$ is a {\em unital} graded algebra 
that is locally finite-dimensional and bounded below.
Any finitely generated (resp., finitely cogenerated) graded left $A$-module $V$
 has a graded filtration
$V = V_0 \supseteq V_1 \supseteq V_2 \supseteq \cdots$
(resp., $0 = V_0 \subseteq V_1 \subseteq \cdots$)
which is exhaustive in the sense that
$\bigcap_{r \geq 0} V_r = 0$
(resp., $\bigcup_{r \geq 0} V_r = V$)
and has sections are irreducible or zero.
\end{lemma}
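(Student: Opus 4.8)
The plan is to reduce immediately to the finitely generated case: if $V$ is finitely cogenerated then $V^\circledast$ is a finitely generated graded left $A^\op$-module, and $A^\op$ is again unital, locally finite-dimensional and bounded below, so applying the exact contravariant equivalence $?^\circledast$ of \cref{circledast1} carries a filtration of $V^\circledast$ of the desired kind to one of $(V^\circledast)^\circledast\cong V$ of the desired kind; here one uses that $?^\circledast$ exchanges irreducibles with irreducibles, reverses inclusions, and (checking degreewise, which is legitimate since $V$ is locally finite-dimensional) turns an exhaustive descending filtration into an exhaustive ascending one. So assume $V$ is finitely generated. As recalled just before the lemma, $V$ is then locally finite-dimensional and bounded below, and since $\I$ is finite each graded piece $V_n$ is in fact \emph{finite-dimensional}, with $V_n=0$ for $n<m'$ for some $m'\in\Z$. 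Since $A$ is unital and bounded below there is also some $e\geq 0$ with $A_d=0$ for $d<-e$.

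For $\ell\geq 1$ I would let $W_\ell\subseteq V$ be the $A$-submodule generated by $\bigoplus_{n\geq m'+\ell}V_n$, and set $W_0:=V$. The key elementary properties of the descending chain $V=W_0\supseteq W_1\supseteq W_2\supseteq\cdots$ are: (i) $W_\ell\subseteq\bigoplus_{n\geq m'+\ell-e}V_n$, because $A\,V_n\subseteq\bigoplus_{n'\geq n-e}V_{n'}$; (ii) $(W_\ell)_n=V_n$ for all $n\geq m'+\ell$, since $W_\ell$ contains $\bigoplus_{n\geq m'+\ell}V_n$; and (iii) $\bigcap_{\ell\geq 0}W_\ell=0$, because by (i) a homogeneous element of degree $n$ lies in no $W_\ell$ with $\ell>n-m'+e$. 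Properties (i) and (ii) force $V/W_\ell$ to be concentrated in the finite range of degrees $m'\leq n<m'+\ell$, hence finite-dimensional, and therefore each $W_\ell/W_{\ell+1}$, being a subquotient of $V/W_{\ell+1}$, is finite-dimensional too. A finite-dimensional graded module has a finite composition series with irreducible graded sections, so for each $\ell\geq 0$ I would pull back such a series for $V/W_{\ell+1}$ along $W_\ell\twoheadrightarrow W_\ell/W_{\ell+1}\hookrightarrow V/W_{\ell+1}$ to get a finite descending chain from $W_\ell$ to $W_{\ell+1}$ with irreducible successive quotients. Concatenating these finite chains over $\ell=0,1,2,\dots$ yields a single $\Z_{\geq 0}$-indexed descending filtration $V=V_0\supseteq V_1\supseteq\cdots$ with every section irreducible (pad with zeros if $V$ already has finite length), and since the $W_\ell$ occur cofinally among the $V_r$, property (iii) gives $\bigcap_r V_r=0$.

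The one point that genuinely needs care is keeping the filtration indexed by $\N$ rather than by a larger ordinal. The radical series $V\supseteq\rad V\supseteq\rad^2V\supseteq\cdots$ does have intersection $0$ here (using $N(A)^{r_0}\subseteq\bigoplus_{d\geq 1}A_d$ together with boundedness below of $V$), but in this non-Noetherian setting its sections $\rad^rV/\rad^{r+1}V$ are semisimple of typically infinite length, so a naive refinement would overshoot $\omega$. Interposing the submodules $W_\ell$, whose cokernels $V/W_\ell$ are \emph{finite}-dimensional precisely because $V$ is locally finite-dimensional and bounded below, is exactly what makes each refinement step finite; this is where I expect the real content to lie. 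The presence of finitely many negative-degree components in $A$ (so that $\bigoplus_{n\geq c}V_n$ need not itself be a submodule) is only a cosmetic nuisance, costing the harmless shift by $e$ in (i) and not affecting $\bigcap_\ell W_\ell=0$.
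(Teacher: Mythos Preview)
Your proof is correct and follows essentially the same strategy as the paper: reduce to the finitely generated case by duality, then build a descending filtration with finite-dimensional sections (exploiting boundedness below and local finite-dimensionality) and refine each section to a composition series. The only cosmetic difference is the choice of filtration---the paper uses $A A_{\geq r} X$ for a finite homogeneous generating set $X$ and $A_{\geq r} := \bigoplus_{s\geq r} A_s$, whereas you use the submodules generated by $\bigoplus_{n\geq m'+\ell} V_n$---but these are interchangeable.
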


\begin{proof}
We just prove the result in the finitely generated case,
the other case following by duality.
Let $A_{\geq r} := \bigoplus_{s \geq r} A_s$.
Let $X$ be a finite set of homogeneous generators for $V$.
Since $A A_{\geq r} / A A_{\geq (r+1)}$
is spanned by the image of $\sum_{s \leq r} A_s$, which is finite-dimensional, the sections 
of the exhaustive filtration
$$
V= A X \supseteq A A_{\geq 1} X 
\supseteq A A_{\geq 2} X \supseteq \cdots
$$
are all finite-dimensional. Then each section can be refined to a composition series to obtain a filtration of the desired form.
\end{proof}

The following lemma is stronger than Lemma~\ref{tech2}(1)--(2) since there is no assumption on finite generation or cogeneration here.

\begin{lemma}\label{tech1}
Suppose that $A$ is a {\em unital} graded algebra that is locally finite-dimensional and bounded below.
Let $\{L(b)\:|\:b \in \B\}$
be a full set of irreducible graded left $A$-modules up to isomorphism and degree shift. Let $P(b)$ and $I(b)$ 
be a projective cover and an injective hull of $L(b)$ in $A\gmod$, respectively.
\begin{enumerate}
\item Any graded left $A$-module $V$
that is locally finite-dimensional and bounded below has a projective cover $P_V$
in $A\gmod$, which is itself locally finite-dimensional and bounded below.
Moreover, we have that
\begin{equation}\label{pone}
P_V \cong \bigoplus_{b \in \B}
P(b)^{\oplus\overline{\dim_q \Hom_A(V, L(b))}}
\end{equation}
as a graded left $A$-module.
\item Any graded left $A$-module $V$
that is locally finite-dimensional and bounded above  has an injective hull $I_V$
in $A\gmod$, which is itself both locally finite-dimensional and bounded above.
Moreover, we have that
\begin{equation}\label{ione}
I_V \cong \bigoplus_{b \in \B}
I(b)^{\oplus\dim_q \Hom_A(L(b), V)}
\end{equation}
as a graded left $A$-module.
\end{enumerate}
\end{lemma}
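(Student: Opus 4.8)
The plan is to prove part (1) and deduce part (2) by applying (1) to $A^{\op}$ together with the duality $?^\circledast$ from \cref{circledast1}, exactly as in the proof of \cref{tech2}(3). So I will focus on (1). The key point is to bootstrap from the finitely generated case (\cref{tech2}) using the boundedness hypothesis. First I would fix a graded left $A$-module $V$ that is locally finite-dimensional and bounded below. Since $A$ is unital, there are only finitely many irreducibles up to isomorphism and degree shift, say $L(b)$ for $b$ in a finite set $\B$. For each $b$ the space $\Hom_A(V, L(b))$ is a graded vector space; I claim it is bounded above and has finite-dimensional graded pieces, so that $\dim_q \Hom_A(V,L(b)) \in \N\lround q\rround$ and the formula \cref{pone} makes sense (the conjugate series lies in $\N\lround q^{-1}\rround$, matching the fact that $P_V$ should be bounded below). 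The finiteness of each graded piece of $\Hom_A(V,L(b))$ follows because $L(b)$ is finite-dimensional (by \cref{finitude} and unitality) and $V$ is locally finite-dimensional: a degree-$d$ homomorphism $V \to L(b)$ is determined by its restriction to the finitely many homogeneous components $1_i V_n$ that can map nontrivially into the finitely many nonzero components of $L(b)$. Boundedness above of $\Hom_A(V,L(b))$ uses that $V$ is bounded below: a nonzero degree-$d$ map $V \to L(b)$ forces the minimal nonzero degree of $V$ to be at least the minimal degree of $L(b)$ minus $d$, bounding $d$ below... wait, this bounds $d$ from below for surjectivity onto the socle degree; more carefully, the lowest degree in which $V$ is nonzero is some $m$, and any nonzero map $V\to L(b)$ of degree $d$ sends $V_m$ into $L(b)_{m+d}$, and since $V_m$ generates nothing forcing surjectivity I instead argue: the image is a nonzero submodule of $L(b)$, hence all of $L(b)$, so $L(b)_{m+d}$ could still vanish — instead I bound using that $V/\rad V$ maps onto $L(b)$ and $\rad$ raises degree, so the top degree of $V$ in which $\head V$ is supported... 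Let me restructure: it is cleanest to show $P_V$ exists directly and then read off its structure.

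The cleanest route is: let $P := \bigoplus_{b \in \B} P(b)^{\oplus \overline{\dim_q \Hom_A(V, L(b))}}$, which a priori requires knowing the exponent series lies in $\N\lround q^{-1}\rround$; grant this for the moment. Each $P(b)$ is finitely generated (being a summand of some $q^e A 1_i$, cf.\ the proof of \cref{tech2}(2)), hence locally finite-dimensional and bounded below, and the degree shifts occurring in the multiplicity series are bounded above (the conjugate of something in $\N\lround q\rround$), so $P$ is again locally finite-dimensional and bounded below. Now construct a homomorphism $\pi\colon P \to V$: for each $b$ and each basis vector of $\Hom_A(V,L(b))_{-d}$ (i.e.\ a map $V \to q^{-d}L(b)$, equivalently a map $q^d V \to L(b)$), lift the corresponding surjection along the projective cover $P(b) \to L(b)$ using projectivity of $q^d P(b)$ to get a map $q^d P(b) \to V$; assemble these into $\pi$. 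By construction $\pi$ induces an isomorphism $\head P \xrightarrow{\sim} \head V$ after noting $\Hom_A(P(b), V)$ is computed correctly — here one uses that $\Hom_A(P(b), V) \cong$ the $b$-isotypic part of $\head V$ by \cref{tech2}-style reasoning, together with the fact (from \cref{nakayama} applied as the Nakayama lemma) that a degree-$0$ map inducing a surjection on heads of bounded-below modules is itself surjective. Hence $\pi$ is surjective. Then $\ker \pi$ is bounded below, and $\Hom_A(\ker\pi, L) \hookrightarrow$ the obstruction to $\pi$ being a projective cover, which vanishes since we built $\pi$ to be a minimal cover on heads; applying \cref{nakayama} to $\ker\pi \subseteq \rad P$ (a degree-shifted submodule of the radical, hence bounded below with no irreducible quotients) gives $\ker\pi \subseteq \rad P$, so $\pi$ is a projective cover. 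Thus $P_V \cong P$, proving \cref{pone}, and $P_V$ inherits local finite-dimensionality and boundedness below from $P$.

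The main obstacle, and the step I would spend the most care on, is the finiteness statement: that $\dim_q \Hom_A(V, L(b)) \in \N\lround q\rround$, i.e.\ that this graded space has finite-dimensional homogeneous components and is bounded above. Both facts rest on pairing the local finite-dimensionality of $V$ against the (global) finite-dimensionality of $L(b)$, which holds precisely because $A$ is unital. For the grading bound: $\Hom_A(V, L(b))_d \neq 0$ forces a nonzero map $q^{d}V \to L(b)$; since $L(b)$ is a quotient of $V$ twisted, and $L(b)$ is generated in the single range of degrees where it is nonzero (finitely many), while $V$ is bounded below by some $N$, any such map must be nonzero on $V_n$ for some $n$ with $n + d$ in the support of $L(b)$, and since the map is onto $L(b)$ it is in particular onto the top graded layer $L(b)_{\max}$ of $L(b)$; the preimage lies in degree $\max - d$, which must be $\geq N$, giving $d \leq \max - N$. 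This is the delicate point because one must be sure the argument does not secretly require $V$ to be finitely generated — it does not, only bounded below. Once this is nailed down, everything else is the standard "projective cover = direct sum of $P(b)$ with head multiplicities" argument, now legitimate in the graded locally-unital-but-unital setting thanks to \cref{nakayama} and \cref{tough}, which supply the graded Nakayama lemma and the existence of exhaustive irreducible filtrations needed to make the head computations rigorous.
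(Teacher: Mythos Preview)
Your approach is correct but takes a different route from the paper. The paper first establishes existence of $P_V$ abstractly via duality: it surjects the projective $A \otimes_\kk V$ onto $V$, dualizes to embed $V^\circledast$ into the injective $(A \otimes_\kk V)^\circledast$, takes the injective hull $I$ of $V^\circledast$ inside that, and dualizes back to obtain $P_V = I^\circledast$ as a summand of $A \otimes_\kk V$. Only afterwards does it construct the map $f\colon P \to V$ with $P$ as in \cref{pone} and show $P \cong P_V$ by a dimension count plus \cref{nakayama}. You instead show in one stroke that $P$ is itself a projective cover, bypassing the duality argument. What the paper's detour buys is that the boundedness of $\Hom_A(V,L(b))$ becomes automatic (once $P_V$ is known to be bounded below, so is $\head V$), whereas you must verify it directly; but as you correctly note, that verification is straightforward from the finite-dimensionality of $L(b)$ and the lower bound on $V$.

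One place where your write-up needs tightening: the step showing $\ker\pi$ is superfluous is phrased circularly (``applying \cref{nakayama} to $\ker\pi\subseteq\rad P$ \ldots\ gives $\ker\pi\subseteq\rad P$''). The clean version is: once $\pi$ is surjective, $\pi^*\colon\Hom_A(V,L(b))\to\Hom_A(P,L(b))$ is injective between graded spaces of the same (finite in each degree) graded dimension, hence an isomorphism; thus every map $P\to L(b)$ factors through $V$, giving $\ker\pi\subseteq\rad P$. Then \cref{nakayama} does the rest: if $K\subseteq\rad P$ and $K+M=P$, then $P/M$ is a bounded-below quotient of $K$ on which every simple quotient map of $P$ vanishes, so $P/M=0$. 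Your surjectivity step via ``isomorphism on heads'' has the same flavor and is fine, but be aware it requires choosing the lifts $\hat\theta$ compatibly with a decomposition of $\head V$ so that the resulting map on heads is genuinely the identity rather than merely a square matrix with $1$'s on the diagonal.
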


\begin{proof}
(1) 
Let $V$ be a graded left $A$-module
which is locally finite-dimensional and bounded below. The multiplication map
$A \otimes_\kk V \twoheadrightarrow V,
a\otimes v \mapsto av$
is a surjective graded left $A$-module
homomorphism. Also $A \otimes_\kk V$
is a 
projective graded left $A$-module for the action coming from left multiplication on the first tensor factor.
It is locally finite-dimensional and bounded below
since both $A$ and $V$ are.
Thus, 
we have constructed 
$f:P \twoheadrightarrow V$ for $P \in \ob A\gmodlfd$ 
that is bounded below and projective in $A\gmod$. 
Next we apply the functor \cref{circledast2}
to obtain $f^\circledast:V^\circledast \hookrightarrow P^\circledast$ with
$V^\circledast$ and $P^\circledast$ being locally finite-dimensional and bounded above,
and $P^\circledast$ being injective in $\domg A$.

Let $i:V^\circledast
\hookrightarrow I$ be an injective hull of 
$V^\circledast$ in $\domg A$, which exists by general principles
because $\domg A$ is a Grothendieck category.
Using that $P^\circledast$ is injective,
we extend $f^\circledast:V^\circledast \hookrightarrow P^\circledast$ to $g:I \rightarrow P^\circledast$ so that 
the following diagram commutes:
$$
\begin{tikzcd}
\arrow[dr,hookrightarrow,"i" below left,]
V^\circledast\arrow[rr,hookrightarrow,"f^\circledast"]&&P^\circledast\\
&I\arrow[ur,hookrightarrow,"g" below right]&
\end{tikzcd}
$$
Thus $I$ embeds into $P^\circledast$.
It follows that $I$ is locally finite-dimensional and bounded above.
Also $I$ is injective in $\domg A$
so it is certainly injective in the
Abelian subcategory $\dfldomg A$,
and $V^\circledast$ is an essential submodule of $I$.
Finally we dualize again, making some natural identifications
to get a commuting diagram
$$
\begin{tikzcd}
V&&\arrow[ll,twoheadrightarrow,"f"]P\arrow[dl,twoheadrightarrow,"g^\circledast" right]\\
&\arrow[ul,twoheadrightarrow,"i^\circledast" below,]
I^\circledast&
\end{tikzcd}
$$
By duality, $I^\circledast$ is projective in $A\gmodlfd$,
but we do not immediately know that it is injective in $A\gmod$. This follows because the surjection $g^\circledast$ splits to reveal that $I^\circledast$ is a graded summand of $P$, so it is projective in $A\gmod$ as $P$ is so.
Also $\ker i^\circledast$ is a superfluous submodule of
$I^\circledast$ since
$\im i$ was an essential submodule of $I$.
So $I^\circledast$ is a projective cover of $V$ in
$A\gmod$, and it is locally finite-dimensional and bounded below as required.

It remains to prove \cref{pone}.
Take $b \in \B$ and pick a homogeneous basis
$\Theta$
for $\Hom_A(V, L(b))$.
For each $\theta \in \Theta$, we 
use projectivity to construct homogeneous maps $\hat\theta$ making the following diagram commute:
$$
\begin{tikzcd}
& V\arrow[dr,"\theta" above right,twoheadrightarrow]\\
P(b)\arrow[ur,"\hat\theta" above left]\arrow[rr,twoheadrightarrow]&& L(b)
\end{tikzcd}
$$
Note $\deg(\hat \theta) = - \deg(\theta)$.
Let $\theta^\vee\:(\theta \in \Theta)$
be the basis for $\Hom_A(V, L(b))^\circledast$ that is dual to $\Theta$.
We obtain a graded left $A$-module homomorphism
$f_b:P(b)\otimes\Hom_A(V,L(b))^\circledast
\rightarrow V,p \otimes \theta^\vee
\mapsto \hat\theta(p)$.
These homomorphisms for all $b$ combine
to define a graded $A$-module homomorphism
$$
f:\bigoplus_{b \in \B} 
P(b) \otimes \Hom_A(V, L(b))^\circledast
\rightarrow V.
$$
This is surjective by construction.
Moreover, the module $P$ appearing on the left hand side is locally finite-dimensional, bounded below and projective in $A\gmod$.
It follows that there is a surjection
$P \twoheadrightarrow P_V$ from $P$ to the projective cover,
i.e., we have a short exact sequence
$0 \rightarrow K \rightarrow P \rightarrow P_V\rightarrow 0$ for some graded submodule $K$ of $P$.
To complete the proof, we show that $K = 0$.
Applying $\Hom_A(-,L(b))$ to the short exact sequence gives
$0 \rightarrow \Hom_A(P_V,L(b))
\rightarrow \Hom_A(P, L(b))
\rightarrow \Hom_A(K, L(b)) \rightarrow 0$.
As we have that
$\dim_q \Hom_A(P, L(b)) = \dim_q \Hom_A(V, L(b))
=\dim_q \Hom_A(P_V, L(b))$
by the construction, we deduce that 
$\Hom_A(K, L(b)) = 0$
 for all $b \in \B$.
This implies that $K=0$
by \cref{nakayama}.

\vspace{1mm}
\noindent
(2) This follows from (1) (with $A$ replaced by $A^\op$) by applying $?^\circledast$.
\end{proof}

\begin{corollary}\label{noname}
Suppose once again that $A$ is {\em unital}, locally finite-dimensional and
bounded below. Let
 $V$ be a graded left $A$-module
which is locally finite-dimensional and bounded below.
If $\Ext^1_A(V,L) = 0$ for all
irreducible graded left $A$-modules $L$
then $V$ is projective in $A\gmod$.
\end{corollary}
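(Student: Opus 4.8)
The plan is to realize $V$ as a quotient of its projective cover and then force the kernel to vanish using the graded Nakayama lemma (\cref{nakayama}), essentially repeating the argument at the end of the proof of \cref{tech1}. By \cref{tech1}(1), the module $V$ has a projective cover $\pi\colon P_V\twoheadrightarrow V$ in $A\gmod$ with $P_V$ locally finite-dimensional and bounded below; set $K:=\ker\pi$, giving a short exact sequence $0\to K\to P_V\xrightarrow{\pi}V\to 0$. Since $\pi$ is a projective cover, $K$ is a superfluous submodule of $P_V$, hence $K\subseteq\rad P_V$; and $K$ is bounded below, being a submodule of the bounded below module $P_V$.

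Next I would apply $\Hom_A(-,L)$, for an arbitrary irreducible graded left $A$-module $L$, to this short exact sequence, obtaining the exact sequence
$$0\to\Hom_A(V,L)\to\Hom_A(P_V,L)\to\Hom_A(K,L)\to\Ext^1_A(V,L).$$
The rightmost term vanishes by hypothesis, so the restriction map $\Hom_A(P_V,L)\to\Hom_A(K,L)$ is surjective. But any graded homomorphism $P_V\to L$ kills $\rad P_V$ because $L$ is irreducible, hence kills $K\subseteq\rad P_V$; so the restriction map is zero. Therefore $\Hom_A(K,L)=0$ for every irreducible graded left $A$-module $L$, and \cref{nakayama}, which applies since $K$ is bounded below, gives $K=0$. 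Hence $\pi$ is an isomorphism and $V\cong P_V$ is projective in $A\gmod$.

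There is no real obstacle here once \cref{tech1,nakayama} are in hand; the only points needing a moment's care are the inclusion $K\subseteq\rad P_V$, which follows from superfluousness of the kernel of a projective cover, and the fact that $K$ inherits boundedness below from $P_V$, which is precisely what makes \cref{nakayama} applicable. One should also bear in mind that ``for all irreducible $L$'' incorporates all degree shifts, so the conclusion $\Hom_A(K,L)=0$ is a statement about the full graded Hom space, as needed to invoke \cref{nakayama}.
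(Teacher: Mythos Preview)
Your proof is correct and follows essentially the same route as the paper's: take the projective cover from \cref{tech1}(1), apply $\Hom_A(-,L)$ to the resulting short exact sequence, deduce $\Hom_A(K,L)=0$, and invoke \cref{nakayama}. The only cosmetic difference is in how $\Hom_A(K,L)=0$ is obtained: the paper uses the identification $\Hom_A(P_V,L)\cong\Hom_A(V,L)$ coming from \cref{pone} to conclude that the first map in the exact sequence is an isomorphism, whereas you argue directly from superfluousness that $K\subseteq\rad P_V$ so the restriction map is zero; these are equivalent observations.
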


\begin{proof}
By \cref{tech1}(1), $V$ has a projective cover
$P_V$ in $A\gmod$ which is locally finite-dimensional and bounded below.
Moreover, 
$\Hom_A(P_V, L) \cong \Hom_A(V, L)$
for all irreducible graded modules $L$.
We apply $\Hom_A(-,L)$ to 
the short exact sequence
$0 \rightarrow K \rightarrow P_V \rightarrow V \rightarrow 0$
using the assumption that
$\Ext^1_A(V,L) = 0$ to get a short exact sequence
$0 \rightarrow \Hom_A(V,L)
\rightarrow \Hom_A(P_V, L) 
\rightarrow \Hom_A(K,L) \rightarrow 0$.
We have already observed that the first map is an isomorphism. It follows that
$\Hom_A(K, L) = 0$.
By \cref{nakayama}, this implies that $K = 0$,
so $V \cong P_V$ as required.
\end{proof}

\section{First properties of graded triangular bases}\label{sfirst}

Throughout the section, we assume that $A$ has a 
graded triangular basis in the sense of \cref{raspberries}.
We will use obvious notations like
$\S_{\leq \lambda}$ for
$\{s \in \S\:|\:\dot s \leq \lambda\}$,  $\S_{\ngeq \lambda}$ for $\{s \in \S\:|\:\dot s \ngeq \lambda\}$, etc.
Before we do anything interesting with the axioms, we make some general remarks.
\begin{itemize}
\item
The axiom (A1)
implies that $A = \sum_{s \in \S} A 1_s A$.
It follows that $A$ is graded Morita equivalent to the idempotent
truncation $\bigoplus_{s,t \in \S} 1_s A 1_t$.
This algebra also has a graded triangular basis that is the obvious subset of the one for $A$.
In this way, one can always reduce to the case that 
$\I=\S$, at the price of replacing $A$ by a Morita equivalent algebra.
\item
Without changing the algebra $A$, merely contracting its distinguished idempotents, one can always reduce to 
a situation in which $\S = \Lambda$.
To do this
starting from the general setup of
\cref{raspberries}, we 
first replace $\Lambda$ by the image of the function
$\S \rightarrow \Lambda, s \mapsto \dot s$. Assuming also
that the sets $\Lambda$ and $\I - \S$ are disjoint,
we 
define $\tilde \X(i,\lambda) := \bigcup_{s \in \S_\lambda} \X(i,s)$
and $\tilde \Y(\lambda,j) := \bigcup_{t \in \S_\lambda} \Y(t,j)$
for $\lambda \in \Lambda$, $i,j \in \I-\S$.
Also for $\lambda,\mu \in \Lambda$, we let
$\tilde \H(\lambda,\mu) := \bigcup_{s \in \S_\lambda, t \in \S_\mu} \H(s,t)$,
and we set 
$\tilde \X(\lambda,\mu) := \bigcup_{s \in \S_\lambda, t \in \S_\mu} \X(s,t)$ and
$\tilde \Y(\lambda,\mu) := \bigcup_{s \in \S_\lambda, t \in \S_\mu} \Y(s,t)$
assuming that $\lambda \neq \mu$.
Finally, let $\tilde \I := (\I-\S) \cup\Lambda$
and define $\tilde 1_i$ to be $1_i$ for $i \in \I - \S$
or $\sum_{s \in \S_\lambda} 1_s$ for $i=\lambda \in \Lambda$,
then set
$\tilde \X(\lambda,\lambda) = \tilde \Y(\lambda,\lambda) := \{\tilde 1_\lambda\}$.
This data gives a new graded triangular basis for $A = \bigoplus_{i,j \in \tilde\I} \tilde 1_i A \tilde 1_j$ 
with special idempotents indexed by
the weight poset $\Lambda \subset \tilde\I$,
which is what we wanted.
\end{itemize}
Taken together, these reductions 
reduce to the case that $\S = \I = \Lambda$.
Although harmless, we have not assumed this since it
is not so convenient in the motivating examples discussed in the introduction.

Returning to the general setup, we proceed to develop some basic consequences
of \cref{raspberries}.
For $\lambda \in \Lambda$,
let $e_\lambda := \sum_{s \in \S_\lambda} 1_s$.
Note it is perfectly possible that $e_\lambda=0$,
indeed, the idempotents $1_s$ can already be zero,
and also it could be that $\S_\lambda = \varnothing$ since we did not assume that the function
$\S \rightarrow \Lambda$ is surjective.
The {\em $\lambda$-weight space} of 
a graded left $A$-module $V$ is the subspace $e_\lambda V$. Then the {\em set of
 weights} of $V$ is 
\begin{equation}
\Lambda(V):=\{\lambda \in \Lambda\:|\:e_\lambda V \neq 0\}.
\end{equation}
Let $A_{\geq \lambda}$
be the quotient of $A$ by the two-sided ideal generated by the idempotents
$\{e_\mu\:|\:\mu\ngeq\lambda\}$.
We often use the notation
$\bar a$ to denote the image of $a \in A$
in $A_{\geq \lambda}$.
The algebra $A_{\geq \lambda}$ is another 
locally unital graded algebra with distinguished idempotents $\bar 1_i\:(i \in \I)$, and it is locally finite-dimensional and bounded below since $A$ is so by assumption.
Let $A_\lambda := \bar e_\lambda A_{\geq \lambda} \bar e_\lambda$. This is a {\em unital} graded algebra which is locally finite-dimensional and bounded below; its
identity element is $\bar e_\lambda$.

\begin{lemma}\label{parasites}
Any
element $f$ of the
two-sided ideal $A e_\lambda A$ 
can be written as a linear combination of
elements of the form $x hy$ for $
(x,h,y) \in \bigcup_{s,t \in \S_{\leq \lambda}}
 \X(s)\times \H(s,t)\times \Y(t)$.
\end{lemma}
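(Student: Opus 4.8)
The plan is to realize the span of the basis vectors named in the statement as a two-sided ideal of $A$ containing $e_\lambda$. For a subset $\Gamma\subseteq\Lambda$ let $A[\Gamma]\subseteq A$ be the span of those basis vectors $xhy$, $(x,h,y)\in\X(s)\times\H(s,t)\times\Y(t)$, whose \emph{middle weight} $\dot s=\dot t$ lies in $\Gamma$. As every basis vector has a well-defined middle weight, $A=\bigoplus_{\mu\in\Lambda}A[\{\mu\}]$, and the lemma is precisely the inclusion $Ae_\lambda A\subseteq A[\Lambda_{\leq\lambda}]$; here $\Lambda_{\leq\lambda}$ is a \emph{finite} lower set of $\Lambda$, since $(\Lambda,\leq)$ is lower finite.

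Two elementary consequences of the axioms of \cref{raspberries} do all the work. (i) For a basis vector $xhy$ with $x\in\X(i,p)$, $h\in\H(p,q)$, $y\in\Y(q,j)$, the conditions on $\X(p)$ and $\Y(q)$ force $\dot p=\dot q\leq\dot i$ if $i\in\S$ and $\dot p=\dot q\leq\dot j$ if $j\in\S$; hence $1_iA1_j\subseteq A[\Lambda_{\leq\dot i}]$ whenever $i\in\S$, and likewise for $j$. In particular $1_s\in A[\Lambda_{\leq\dot s}]$ for every $s\in\S$, so $e_\lambda=\sum_{s\in\S_\lambda}1_s\in A[\Lambda_{\leq\lambda}]$. (ii) In the situation of (i), if moreover $i\in\S$ with $\dot i$ equal to the middle weight, then $x=1_i$ (because $\X(i,p)\neq\varnothing$ forces $i=p$); symmetrically $y=1_j$ if $j\in\S$ with $\dot j$ equal to the middle weight. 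So, when $j\in\S$, the basis vectors of $1_iA1_j$ of middle weight $\dot j$ are exactly the $xh$ with $x\in\X(i,p)$, $h\in\H(p,j)$; dually on the left.

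The main step is that $A[\Gamma]$ is a two-sided ideal of $A$ for every finite lower set $\Gamma\subseteq\Lambda$, which I would prove by induction on $|\Gamma|$, the case $\Gamma=\varnothing$ being trivial. For the inductive step choose $\lambda$ maximal in $\Gamma$ and put $\Gamma'=\Gamma\setminus\{\lambda\}$: this is a lower set with $\Lambda_{<\lambda}\subseteq\Gamma'$, and $A[\Gamma']$ is a two-sided ideal by induction. By bilinearity it suffices to show $ep,pe\in A[\Gamma]$ for basis vectors $e,p$ with $p$ of middle weight in $\Gamma$; if that middle weight lies in $\Gamma'$ this holds by induction, so assume $p=x_ph_py_p$ has middle weight $\lambda$, with $h_p\in\H(s_p,t_p)$, $x_p\in\X(\ell,s_p)$, $y_p\in\Y(t_p,m)$ and $\dot s_p=\dot t_p=\lambda$. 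I treat $ep$ (the case of $pe$ is the mirror image, or apply the present argument to $A^{\op}$, under which \cref{raspberries} is self-dual with $\X\leftrightarrow\Y$). We may assume $ep\neq0$, so $e\in 1_{i'}A1_\ell$ for some $i'$, and then $ep=(e\,x_ph_p)\,y_p$ with $e\,x_ph_p\in 1_{i'}A1_{t_p}\subseteq A[\Lambda_{\leq\lambda}]$ by (i). Discarding its middle-weight-$<\lambda$ part (which lies in $A[\Lambda_{<\lambda}]\subseteq A[\Gamma']$), remark (ii) shows what is left is a linear combination of basis vectors $x_\alpha h_\alpha$ with $x_\alpha\in\X(i',p_\alpha)$, $h_\alpha\in\H(p_\alpha,t_p)$ and $\dot p_\alpha=\lambda$. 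Multiplying by $y_p$ on the right and using that $A[\Gamma']$ is a right ideal, $ep$ becomes, modulo $A[\Gamma']$, a linear combination of the vectors $x_\alpha h_\alpha y_p$; and each such is a basis vector lying in $A[\{\lambda\}]$, since $(x_\alpha,h_\alpha,y_p)\in\X(p_\alpha)\times\H(p_\alpha,t_p)\times\Y(t_p)$ with $p_\alpha,t_p\in\S$ of middle weight $\lambda$. Hence $ep\in A[\{\lambda\}]+A[\Gamma']\subseteq A[\Gamma]$, which completes the induction.

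Granting this, the lemma is immediate: $e_\lambda\in A[\Lambda_{\leq\lambda}]$ by (i) and $\Lambda_{\leq\lambda}$ is a finite lower set, so $Ae_\lambda A\subseteq A\cdot A[\Lambda_{\leq\lambda}]\cdot A\subseteq A[\Lambda_{\leq\lambda}]$. The one point requiring care is the bookkeeping in the inductive step: one must verify that every ``error term'' arising when a product is re-expanded in the given basis has middle weight \emph{strictly} below $\lambda$—so that the lower-set hypothesis forces it into $A[\Gamma']$—and that the surviving middle-weight-$\lambda$ terms carry exactly the trivial $\X$- or $\Y$-component needed to be reassembled into genuine basis vectors of $A$. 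Remarks (i) and (ii) are precisely what guarantee both, and this is the only place where I would spell the argument out in detail.
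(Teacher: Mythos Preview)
Your argument is correct and uses the same inductive mechanism as the paper's proof: both rely on the observations you label (i) and (ii), and both handle products by re-expanding in the basis, separating the middle-weight-$\lambda$ terms (where the $\X$- or $\Y$-component trivializes) from the middle-weight-$<\lambda$ terms (handled by induction).

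The packaging is mildly different. The paper proves the lemma directly by induction on $\lambda$ in the poset, working with a typical product $x_1h_1y_1\cdot x_2h_2y_2$ passing through $e_\lambda$ and re-expanding $h_1h_2$. You instead abstract the statement to ``$A[\Gamma]$ is a two-sided ideal for every finite lower set $\Gamma$'' and induct on $|\Gamma|$. This buys you a slightly stronger conclusion for free---it immediately yields the basis description of $A/I$ for \emph{any} lower set $\check\Lambda$ (what the paper records separately as the corollary following this lemma)---at the cost of one extra layer of notation. Neither approach is more elementary than the other; yours is a clean repackaging of the same computation.
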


\begin{proof}
We argue by induction up the poset.
By (A1) and (A3), we may assume that
$f = x_1 h_1 y_1 x_2 h_2 y_2$
for $x_1 \in \X(s_1), h_1\in \H(s_1,t_1),
y_1 \in \Y(t_1,u), 
x_2 \in \X(u,t_2), h_2 \in \H(t_2,s_2),
y_2 \in \Y(s_2)$,
$s_1,t_1,t_2,s_2,u \in S$
with 
$\dot u = \lambda \in \Lambda$
and $\dot s_1 = \dot t_1 \leq 
\lambda \geq \dot t_2 = \dot s_2$.
If $\dot t_1 < \lambda$ or $\lambda > \dot t_2$
we get done by induction, so we may assume
that $\dot t_1 = \lambda = \dot t_2$.
But then by (A2) we must have that 
$t_1 = u = t_2$ and $y_1 = 1_u = x_2$.
So $f = x_1h_1 h_2 y_2$.
Then we expand $h_1 h_2$
in terms of the basis to get a linear combination of terms
$x_3 h_3 y_3$
for $x_3 \in \X(s_1,s_3),
h_3\in \H(s_3,t_3), y_3 \in \Y(t_3,s_2)$ 
for $s_3,t_3 \in S_{\mu}$ and $\mu \leq \lambda$. 
It remains to show that the resulting 
$x_1 x_3 h_3 y_3 y_2$ can be written in the desired form.
If $\mu < \lambda$ this follows by induction, so assume that $\mu=\lambda$.
Then we must have $s_1 = s_3$
and $x_3 = 1_{s_1}$, and $t_3 = s_2$
and $y_3 = 1_{s_2}$.
The term simplifies to $x_1 h_3 y_2$,
which is of the desired form.
\end{proof}

\begin{corollary}\label{climbing}
Suppose we are given a partition $\Lambda = \hat\Lambda\sqcup\check\Lambda$ 
with $\hat\Lambda$ being an upper set, equivalently, $\check\Lambda$ being a lower set.
The quotient $\hat A$ of $A$ by the 
two-sided ideal $I$ generated
by $\{e_{\lambda}\:|\:\lambda \in \check\Lambda\}$ has
basis given by the images of of all
$x h y$ for $(x,h,y) \in \bigcup_{s,t \in \hat\S}
\X(s)\times \H(s,t)\times \Y(t)$
where $\hat \S := \{s \in \S\:|\:\dot s \in \hat \Lambda\}$.
\end{corollary}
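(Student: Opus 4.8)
The plan is to identify the two-sided ideal $I$ with the span of those triangular basis elements $xhy$ whose idempotents $1_s,1_t$ belong to $\check\S := \S\setminus\hat\S = \{s\in\S\:|\:\dot s\in\check\Lambda\}$, and then read off the claimed basis of $\hat A$ by projecting the triangular basis of $A$ through the quotient map.

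I would begin by noting that $I = \sum_{s\in\check\S}A1_sA$. Indeed, for $\lambda\in\check\Lambda$ the generator $e_\lambda = \sum_{s\in\S_\lambda}1_s$ is a finite sum of idempotents $1_s$ with $s\in\check\S$, and conversely for $s\in\check\S$ one has $1_s = 1_se_{\dot s}1_s\in I$. Now put
$$
W:=\mathrm{span}_{\kk}\bigl\{\,xhy\;\big|\;(x,h,y)\in\textstyle\bigcup_{s,t\in\check\S}\X(s)\times\H(s,t)\times\Y(t)\,\bigr\}.
$$
For $W\subseteq I$: if $(x,h,y)$ is such a triple with $x\in\X(i,s)$ and $y\in\Y(t,j)$, then $x\in 1_iA1_s$ and $hy\in 1_sA1_j$, hence $xhy\in 1_i(A1_sA)1_j\subseteq I$ because $s\in\check\S$. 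For $I\subseteq W$: by the previous paragraph $I = \sum_{\lambda\in\check\Lambda}Ae_\lambda A$, and \cref{parasites} expresses every element of $Ae_\lambda A$ as a linear combination of products $xhy$ with $s,t\in\S_{\leq\lambda}$; since $\check\Lambda$ is a lower set, $\dot s\leq\lambda\in\check\Lambda$ forces $\dot s\in\check\Lambda$, i.e.\ $s\in\check\S$, and likewise for $t$, so such a linear combination lies in $W$. Hence $I = W$.

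Finally I would invoke the defining axiom that the products $xhy$ over $(x,h,y)\in\bigcup_{s,t\in\S}\X(s)\times\H(s,t)\times\Y(t)$ form a basis of $A$, together with the observation that every such triple has $\dot s = \dot t$ (trivially when $s = t$, and by the axiom $\H(s,t)\neq\varnothing\Rightarrow\dot s=\dot t$ when $s\neq t$). Consequently this basis is the disjoint union of the part indexed by $s,t\in\hat\S$ and the part indexed by $s,t\in\check\S$, and the latter part is a basis of $I = W$. Projecting to $\hat A = A/I$, the images of the former part therefore constitute a basis of $\hat A$, which is exactly the assertion.

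I do not anticipate a real difficulty; the only delicate point is the bookkeeping in the use of \cref{parasites}. It is precisely the lower-set hypothesis on $\check\Lambda$ that turns the index set ``$\S_{\leq\lambda}$'' produced by that lemma into the index set ``$\check\S$'' of the statement, and one also needs the remark that a triangular basis element never couples a weight from $\hat\Lambda$ with a weight from $\check\Lambda$, so that the basis of $A$ genuinely splits along the partition $\Lambda = \hat\Lambda\sqcup\check\Lambda$.
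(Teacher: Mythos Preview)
Your proof is correct and follows essentially the same approach as the paper's own proof, which is very terse: the paper simply observes that, in view of the triangular basis, it suffices to show $I$ is spanned by the $xhy$ with $s,t\in\check\S$, and then invokes \cref{parasites}. You have spelled out the same argument in more detail, including the trivial containment $W\subseteq I$ and the observation that the basis of $A$ splits along $\hat\S$ versus $\check\S$ because $\dot s=\dot t$ for every nonvanishing triple.
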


\begin{proof}
In view of (A1), it suffices to show that $I$
is spanned by all
$x h y$ for $(x,h,y) \in \bigcup_{s,t \in \check\S}
\X(s)\times \H(s,t)\times \Y(t)$
where
$\check \S := \{s \in \S\:|\:\dot s \in \check\Lambda\} = \S - \hat\S$.
This follows from \cref{parasites}.
\end{proof}

\begin{corollary}\label{thimbles}
For $\lambda \in \Lambda$, 
$A_{\geq\lambda}$ has a basis
given by all $\bar x \bar h \bar y$
for
$(x,h,y) \in \bigcup_{s,t \in \S_{\geq \lambda}}\X(s)\times \H(s,t)\times \Y(t)$.
Hence,
$A_\lambda = \bar e_{{\lambda}} A_{\geq\lambda} \bar e_{{\lambda}}$ has
basis consisting of all $\bar h$ for $h \in \bigcup_{s,t \in \S_\lambda} \H(s,t)$.
\end{corollary}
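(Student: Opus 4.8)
The plan is to obtain the first statement as a special case of \cref{climbing} and then read off the description of $A_\lambda$ by a short computation with the triangular basis. For the first assertion, I would observe that $\hat\Lambda := \Lambda_{\geq\lambda}$ is an upper set, that its complement $\check\Lambda := \Lambda_{\ngeq\lambda}$ is therefore a lower set, and that the two-sided ideal generated by $\{e_\mu \mid \mu \in \check\Lambda\}$ is exactly the ideal whose quotient defines $A_{\geq\lambda}$. So \cref{climbing} applies with this partition: since $\hat\S = \{s \in \S \mid \dot s \in \hat\Lambda\} = \S_{\geq\lambda}$, it gives that the images $\bar x\bar h\bar y$ of the basis elements $xhy$ with $(x,h,y) \in \bigcup_{s,t \in \S_{\geq\lambda}} \X(s)\times\H(s,t)\times\Y(t)$ form a basis of $A_{\geq\lambda}$.

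For the second assertion, a spanning set for $A_\lambda = \bar e_\lambda A_{\geq\lambda}\bar e_\lambda$ is obtained by applying $\bar e_\lambda(-)\bar e_\lambda$ to this basis of $A_{\geq\lambda}$. Fix a basis element $\bar x\bar h\bar y$ with $x \in \X(i,s)$, $h \in \H(s,t)$, $y \in \Y(t,j)$ and $s,t \in \S_{\geq\lambda}$. Mutual orthogonality of the idempotents $1_k$ gives $e_\lambda 1_i = 1_i$ or $0$ according as $i \in \S_\lambda$ or not, and likewise $1_j e_\lambda = 1_j$ or $0$, so $\bar e_\lambda\bar x\bar h\bar y\bar e_\lambda = 0$ unless $i,j \in \S_\lambda$. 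When $i \in \S_\lambda$ we have $\dot i = \lambda \leq \dot s$, so the triangularity axiom for $\X$ rules out $i \neq s$; hence $i = s \in \S_\lambda$ and $x \in \X(s,s) = \{1_s\}$. The triangularity axiom for $\Y$ forces $j = t \in \S_\lambda$ and $y = 1_t$ in the same way. So every nonzero image has the form $\bar 1_s\bar h\bar 1_t = \bar h$ with $h \in \bigcup_{s,t \in \S_\lambda} \H(s,t)$, and these span $A_\lambda$.

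It remains to check linear independence, which I would do by noting that each such $\bar h$, for $h \in \H(s,t)$ with $s,t \in \S_\lambda$, equals $\bar x\bar h\bar y$ for the triple $(1_s,h,1_t)$, which lies in the index set $\bigcup_{s,t \in \S_{\geq\lambda}} \X(s)\times\H(s,t)\times\Y(t)$ because $\S_\lambda \subseteq \S_{\geq\lambda}$; hence these $\bar h$ are pairwise distinct members of the basis of $A_{\geq\lambda}$ from the first part, so they are linearly independent and therefore form a basis of $A_\lambda$. I do not expect a genuine obstacle here: the only step that needs care is the middle one, where the triangularity of the $\X$- and $\Y$-parts of the basis must be invoked to collapse $\bar x$ and $\bar y$ onto the idempotents $\bar 1_s$ and $\bar 1_t$; everything else is bookkeeping on top of \cref{climbing}.
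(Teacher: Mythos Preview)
Your proposal is correct and follows the approach the paper intends: the corollary is stated immediately after \cref{climbing} with no explicit proof, so the first assertion is meant as the special case $\hat\Lambda = \Lambda_{\geq\lambda}$, and the second assertion is the straightforward idempotent-truncation computation you carry out. Your use of the triangularity axioms for $\X$ and $\Y$ to collapse $x$ and $y$ to identity idempotents is exactly the right detail to check, and your linear-independence argument is sound since the surviving $\bar h$ are a subset of the basis already obtained for $A_{\geq\lambda}$.
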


In the setup of \cref{climbing},
we will always {\em identify}
$\hat A\gmod$ with the full subcategory of $A\gmod$
consisting of the $A$-modules annihilated by all
$e_\lambda\:(\lambda \in \check\Lambda)$.
There is an adjoint triple of functors
$(i^*, i, i^!)$
with 
\begin{equation}
i:\hat A\gmod \rightarrow A \gmod
\end{equation}
being the (often omitted) natural inclusion functor
and
\begin{align}\label{func1}
i^*:= \hat A \otimes_A -:&A\gmod \rightarrow \hat A \gmod,\\
i^! := \bigoplus_{i \in \I}
\Hom_A(\hat A 1_i, -):&A\gmod\rightarrow \hat A\gmod.\label{func2}
\end{align}
We clearly have that $i^* \circ i =i^! \circ i= \id_{\hat A\gmod}$.
In the special case that $\hat A = A_{\geq \lambda}$,
we denote the adjoint triple
$(i^*, i, i^!)$ instead by
$(i^*_{\geq \lambda}, i_{\geq \lambda}, i^!_{\geq \lambda})$:
\begin{equation}\label{recollement1}
\begin{tikzpicture}[anchorbase]
\node(a) at (0,0) {$A_{\geq \lambda}\gmod$};
\node(b) at (4,0) {$A\gmod$};
\draw[-latex] (a) -- (b);
\draw[bend right=40,-latex] (b.north west) to (a.north east);
\draw[bend left=40,-latex] (b.south west) to (a.south east);
\node at (2,0.2) {$\scriptstyle i_{\geq \lambda}$};
\node at (2,1) {$\scriptstyle i^!_{\geq \lambda}$};
\node at (2,-1) {$\scriptstyle i^*_{\geq \lambda}$};
\end{tikzpicture}
\end{equation}
More explicitly,
for a graded left $A$-module $V$,
$i_{\geq \lambda}^* V$ is the largest graded quotient 
and $i_{\geq \lambda}^! V$ is the largest
graded submodule of $V$ all of whose weights
are $\geq \lambda$.

\begin{lemma}\label{green}
Let $V$ be a graded left $A$-module
and $\lambda$ be minimal in $\Lambda(V)$.
\begin{enumerate}
\item
We have that $e_\mu A e_\lambda V = 0$
unless $\mu \geq \lambda$. Hence, the natural inclusion
$\bar e_\lambda\big(i^!_{\geq \lambda} V\big) \hookrightarrow e_\lambda V$
is an isomorphism of graded vector spaces.
\item
We have that $e_\lambda A e_\mu V = 0$
unless $\mu \geq \lambda$. 
Hence, the natural quotient map $e_\lambda V \twoheadrightarrow
\bar e_\lambda\big(i^*_{\geq \lambda} V\big)$
is an isomorphism of graded vector spaces.
\end{enumerate}
\end{lemma}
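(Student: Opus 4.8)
The plan is to deduce each of the two ``Hence'' clauses formally from the weight-space vanishing statement that precedes it, and to prove those vanishing statements by expanding $e_\mu A e_\lambda$ and $e_\lambda A e_\mu$ in the graded triangular basis of \cref{raspberries} and invoking the minimality of $\lambda$ in $\Lambda(V)$. Throughout one uses that a graded left $A$-module killed by every $e_\mu\ (\mu\ngeq\lambda)$ is naturally a module over $A_{\geq\lambda}$ on which $\bar e_\lambda$ acts as $e_\lambda$, and that for any subspace $W\subseteq V$ the submodule of $V$ it generates is $AW$, whose $\mu$-weight space is $e_\mu A W$ (here $e_\lambda^2=e_\lambda$ gives $e_\lambda V\subseteq Ae_\lambda V$, etc.).

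For the formal reductions I would use the explicit descriptions recorded just before the lemma: $i^!_{\geq\lambda}V$ is the largest graded submodule of $V$ all of whose weights lie $\geq\lambda$, and $i^*_{\geq\lambda}V = V/V'$, where $V' = \sum_{\mu\ngeq\lambda}Ae_\mu V$ is the smallest graded submodule containing every $e_\mu V$ with $\mu\ngeq\lambda$. Granting the first assertion of (1), the submodule $Ae_\lambda V$ has $\mu$-weight space $e_\mu A e_\lambda V = 0$ for all $\mu\ngeq\lambda$, so $Ae_\lambda V\subseteq i^!_{\geq\lambda}V$ and hence $e_\lambda V\subseteq\bar e_\lambda\big(i^!_{\geq\lambda}V\big)$; the reverse inclusion $\bar e_\lambda\big(i^!_{\geq\lambda}V\big) = e_\lambda\big(i^!_{\geq\lambda}V\big)\subseteq e_\lambda V$ is immediate, so the natural inclusion is an equality. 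Granting the first assertion of (2), the natural map $e_\lambda V\to\bar e_\lambda\big(i^*_{\geq\lambda}V\big) = (e_\lambda V + V')/V'$ is obviously surjective, and its kernel is $e_\lambda V\cap V' = e_\lambda V' = \sum_{\mu\ngeq\lambda}e_\lambda A e_\mu V = 0$; hence it is an isomorphism.

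It remains to prove that $e_\mu A e_\lambda V = 0$ (resp.\ $e_\lambda A e_\mu V = 0$) unless $\mu\geq\lambda$. A basis element $xhy$ with $x\in\X(i,s)$, $h\in\H(s,t)$, $y\in\Y(t,j)$ lies in $e_\mu A e_\lambda$ precisely when $i\in\S_\mu$ and $j\in\S_\lambda$; since then $i,s,t,j\in\S$, the axioms of \cref{raspberries} force $\dot s\leq\dot i=\mu$, $\dot s=\dot t$ and $\dot t\leq\dot j=\lambda$ (equalities allowed, corresponding to $x=1_s$ or $y=1_t$). If $e_\mu A e_\lambda V\neq 0$ then some such $xhy$ satisfies $xhyV\neq 0$, so $hyV\neq 0$; as $hyV\subseteq 1_s V$ this gives $1_s V\neq 0$, that is $\dot s\in\Lambda(V)$, and since $\dot s=\dot t\leq\lambda$ the minimality of $\lambda$ forces $\dot s=\lambda$, whence $\mu=\dot i\geq\dot s=\lambda$. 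The statement in (2) is the mirror image: for $xhy\in e_\lambda A e_\mu$ one has $i\in\S_\lambda$, $j\in\S_\mu$, so $\lambda=\dot i\geq\dot s=\dot t$ and $\dot t\leq\dot j=\mu$; a nonzero action again yields $1_s V\neq 0$ with $\dot s\leq\lambda$, hence $\dot s=\lambda$ by minimality, and then $\mu\geq\dot t=\dot s=\lambda$.

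The argument is short, and the only genuine content sits in the last paragraph: keeping straight the three different monotonicity conventions built into the $\X$-, $\H$- and $\Y$-axioms and using minimality of $\lambda$ exactly once (to upgrade ``$\dot s$ is a weight with $\dot s\leq\lambda$'' to ``$\dot s=\lambda$''). The reductions in the second paragraph are routine idempotent bookkeeping in a locally unital algebra; the only point needing a word of care there is the identification of the $\bar e_\lambda$-action on $i^!_{\geq\lambda}V$ and $i^*_{\geq\lambda}V$ with the $e_\lambda$-action on $V$.
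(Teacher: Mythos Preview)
Your argument is correct and follows essentially the same route as the paper: expand $e_\mu A e_\lambda$ (resp.\ $e_\lambda A e_\mu$) in the triangular basis, observe that the middle index $\dot s=\dot t$ lies both $\leq\lambda$ and in $\Lambda(V)$ whenever the action is nonzero, and use minimality of $\lambda$ to force equality, then read off $\mu\geq\lambda$. Your formal reductions of the ``Hence'' clauses via $Ae_\lambda V\subseteq i^!_{\geq\lambda}V$ and $e_\lambda V'=\sum_{\mu\ngeq\lambda}e_\lambda Ae_\mu V$ are a touch more explicit than the paper's, but the content is identical.
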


\begin{proof}
(1) The subspace 
$e_\mu A e_\lambda V$ is spanned
by vectors $xhy v$ for 
$x \in \X(s_1,s_2), h \in \H(s_2,t_2), y \in \Y(t_2,t_1)$
and
$v \in 1_{t_1} V$
with $\dot s_1 = \mu,
\dot s_2 = \nu = \dot t_2, \dot t_1 = \lambda$ and $\mu \geq \nu \leq \lambda$. The minimality of $\lambda$ implies that
$xhyv = 0$ unless $\nu = \lambda$, in which case $\mu \geq \lambda$.
It follows that the submodule $A e_\lambda V$ is contained in $i^!_{\geq \lambda} V$, so their $\lambda$-weight spaces coincide.

\vspace{1mm}
\noindent
(2) The proof that
$e_\lambda A e_\mu V = 0$
unless $\mu \geq \lambda$ is similar to the proof in (1). 
To deduce that
$e_\lambda V\cong \bar e_\lambda\big( i^*_{\geq \lambda} V\big)$, note that
$i^*_{\geq \lambda} V = V / \sum_{\mu\ngeq \lambda} A e_\mu V$.
We have shown that the $\lambda$-weight
space of each $A e_\mu V$ appearing here is zero, so the quotient map 
restricts to an isomorphism between
the $\lambda$-weight spaces of $V$
and $i^*_{\geq \lambda} V$.
\end{proof}

\section{Standard modules and the classification of irreducible modules}\label{saturday}

Suppose to start with that $A$ is any locally unital graded algebra as in \cref{thedec}.
Let $e$ be an idempotent in $A$
that is a finite sum of the distinguished idempotents
$1_i\:(i \in \I)$. Then $eAe$ is a unital graded algebra.
Truncating a module with the idempotent $e$
defines an exact 
functor 
\begin{equation}
j:A\gmod \rightarrow eAe\gmod, V \mapsto eV.
\end{equation}
It is well known that $j$ takes irreducible graded $A$-modules to irreducible graded $eAe$-modules or to zero, 
and all irreducible graded left $eAe$-modules arise in this way. 
Moreover, $j$ satisfies the universal property of 
quotient functor: any exact functor
from $A\gmod$ to an Abelian category which takes
all of the irreducibles annihilated by $j$ to zero
factors uniquely through $j$.
The functor $j$ has a left adjoint $j_!$ and a right adjoint $j_*$ defined by
\begin{align}\label{pinkeye}
j_! := A e \otimes_{eAe} ?&:
eAe\gmod \rightarrow A_{\geq\lambda}\gmod,\\
j_* := \bigoplus_{i \in \I} \Hom_{eAe}(e A1_i,?)&:
              eAe\gmod\rightarrow
              A\gmod.\label{blueye}
\end{align}
Neither $j_!$ nor $j_*$ is exact in general.
We obviously have that $j \circ j_! 
\cong j\circ j_* \cong \id_{eAe\gmod}$.
If $P\twoheadrightarrow L$
(resp., $L \hookrightarrow I$) is a projective
cover (resp., an injective hull) of an irreducible graded left $A$-module $L$ such that $j L \neq 0$
then $jP$ (resp., $jI$) is a projective cover
(resp., an injective hull) of $jL$ in $eAe\gmod$.
Using properties of adjunctions, it 
follows that $j_! j P \cong P$ and $j_* j I \cong I$.

Now return to the setup of the previous section, so that $A$ has a graded triangular basis.
Take any $\lambda \in \Lambda$.
Applying the constructions just explained
to the idempotent $\bar e_\lambda$ in the algebra $A_{\geq \lambda}$ produces an adjoint triple
of functors which we denote by $(j^\lambda_!, j^\lambda,
j^\lambda_*)$:
\begin{equation}\label{recollement2}
\begin{tikzpicture}[anchorbase]
\node(a) at (0,0) {$A_{\geq \lambda}\gmod$};
\node(b) at (4,0) {$A_{\lambda}\gmod$};
\draw[-latex] (a) -- (b);
\draw[bend right=40,-latex] (b.north west) to (a.north east);
\draw[bend left=40,-latex] (b.south west) to (a.south east);
\node at (2,0.2) {$\scriptstyle j^\lambda$};
\node at (2,1) {$\scriptstyle j^\lambda_*$};
\node at (2,-1) {$\scriptstyle j^\lambda_!$};
\end{tikzpicture}
\end{equation}
We call $j^\lambda_!$ and $j^\lambda_*$ 
the {\em standardization}
and {\em costandardization} functors, respectively.
We are in a special situation so that these functors
have additional favorable properties:

\begin{lemma}\label{xmas}
For $\lambda \in \Lambda$,
the functor $j^\lambda_!$ (resp., $j^\lambda_*$)
is exact and it takes modules that are locally finite-dimensional and bounded below (resp., bounded above) to modules that are locally finite-dimensional and bounded below (resp., bounded above).
\end{lemma}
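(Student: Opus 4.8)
The strategy is to use the explicit formula for $j^\lambda_!$ as a tensor product and the explicit description of $A_{\geq\lambda}$ provided by the graded triangular basis, then invoke the duality $?^\circledast$ to transfer the statement about $j^\lambda_!$ to the analogous statement about $j^\lambda_*$. So the real work is entirely on the standardization side.

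First I would recall from \cref{pinkeye} that $j^\lambda_! = A_{\geq\lambda}\bar e_\lambda \otimes_{A_\lambda} ?$. By \cref{thimbles}, $A_{\geq\lambda}$ has a basis given by the $\bar x\bar h\bar y$ with $(x,h,y)\in\bigcup_{s,t\in\S_{\geq\lambda}}\X(s)\times\H(s,t)\times\Y(t)$, and $A_\lambda$ has a basis consisting of the $\bar h$ with $h\in\bigcup_{s,t\in\S_\lambda}\H(s,t)$. Truncating by $\bar e_\lambda$ on the right kills all basis elements whose $\Y$-component ends outside $\S_\lambda$; since $\Y(t)\cap 1_t A_{\geq\lambda}\bar e_\lambda$ forces $\dot t \le$ (the weight of the right endpoint) $= \lambda$ while $t\in\S_{\geq\lambda}$ gives $\dot t\ge\lambda$, only $y=1_t$ with $t\in\S_\lambda$ survives. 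Hence $A_{\geq\lambda}\bar e_\lambda$ has basis $\{\bar x\bar h : (x,h)\in\bigcup_{s\in\S_{\geq\lambda},\,t\in\S_\lambda}\X(s)\times\H(s,t)\}$. This exhibits $A_{\geq\lambda}\bar e_\lambda$ as a \emph{free} right $A_\lambda$-module, with homogeneous basis $\{\bar x : x\in\bigcup_{s\in\S_{\geq\lambda}}\bigcup_{t\in\S_\lambda}\X(s,t)\}$ — one checks freeness directly since the basis elements $\bar x\bar h$ of $A_{\geq\lambda}\bar e_\lambda$ are exactly the $\bar x$ times the basis $\{\bar h\}$ of $A_\lambda$. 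Freeness of $A_{\geq\lambda}\bar e_\lambda$ as a right $A_\lambda$-module immediately gives that $j^\lambda_! = A_{\geq\lambda}\bar e_\lambda\otimes_{A_\lambda}?$ is exact.

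Next, the finiteness and boundedness. For each $i\in\I$, the summand $\bar 1_i A_{\geq\lambda}\bar e_\lambda$ is a right $A_\lambda$-module which is free with basis $\{\bar x : x\in\bigcup_{t\in\S_\lambda}\X(i,t)\}$. I claim this basis is finite in each degree and bounded below in degree: the degree-$d$ part of $\bar 1_i A_{\geq\lambda}\bar e_\lambda$ is finite-dimensional and vanishes for $d\ll 0$ because $A_{\geq\lambda}$ is locally finite-dimensional and bounded below (as noted in the excerpt, these properties pass to $A_{\geq\lambda}$). Write the basis as $\{x_1,x_2,\dots\}$ with $\deg x_k = d_k$; then $\bar 1_i(j^\lambda_! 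M) = \bigoplus_k q^{-d_k}(\bar 1_i x_k \cdot \bar e_\lambda M)$ where $\bar e_\lambda M$ carries the $A_\lambda$-structure — more precisely, for $M$ a graded $A_\lambda$-module, $1_i(A_{\geq\lambda}\bar e_\lambda\otimes_{A_\lambda}M) \cong \bigoplus_k q^{-d_k} M$ as graded vector spaces (only finitely many $k$ contributing in each fixed degree since the $d_k$ are bounded below and the $M_n$ vanish for $n\ll 0$ when $M$ is bounded below, while $\dim M_n<\infty$ when $M$ is locally finite-dimensional). Since $A_\lambda$ is unital, local finite-dimensionality of $M$ just means $\dim M_n<\infty$ for all $n$, and boundedness below means $M_n=0$ for $n\ll 0$; both are manifestly inherited by $\bigoplus_k q^{-d_k}M$, finitely many terms contributing per degree. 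This establishes the claim for $j^\lambda_!$.

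Finally, I would deduce the statement for $j^\lambda_*$ by duality: one checks that $j^\lambda_*(N) \cong \bigl(j^\lambda_!(N^\circledast)\bigr)^\circledast$ for $N\in A_\lambda\gmod$ (both sides represent the right adjoint of $j^\lambda$, using that $?^\circledast$ is an exact contravariant equivalence on locally-finite-dimensional modules and that $A_\lambda$ is unital so $N$ is automatically locally finite-dimensional; the isomorphism $(j^\lambda)^\circledast \cong j^\lambda(?^\circledast)$ of functors and adjunction uniqueness do the rest). Then exactness of $j^\lambda_*$ follows from exactness of $j^\lambda_!$ and of $?^\circledast$, and the boundedness statement flips "bounded below" to "bounded above" exactly because $?^\circledast$ negates degrees. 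The main obstacle is getting the freeness of $A_{\geq\lambda}\bar e_\lambda$ over $A_\lambda$ cleanly from \cref{thimbles} — once that is in hand, exactness is formal and the finiteness bookkeeping is routine.
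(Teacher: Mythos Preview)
Your argument for $j^\lambda_!$ is essentially the paper's: you use \cref{thimbles} to see that $\bar 1_i A_{\geq\lambda}\bar e_\lambda$ is free as a right $A_\lambda$-module on the set $\{\bar x : x\in\bigcup_{t\in\S_\lambda}\X(i,t)\}$, giving exactness, and then the degree bookkeeping is the same as the paper's \cref{thisisabasis}. (A small slip: in your description of the free basis of the whole bimodule $A_{\geq\lambda}\bar e_\lambda$ you index the left endpoint by $s\in\S_{\geq\lambda}$ rather than $i\in\I$; you fix this in the next paragraph, so it does not affect the argument.)

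The gap is in your treatment of $j^\lambda_*$. You argue via the duality $?^\circledast$, asserting that ``$A_\lambda$ is unital so $N$ is automatically locally finite-dimensional.'' This is false: an arbitrary graded $A_\lambda$-module (for instance an infinite direct sum of copies of $A_\lambda$ concentrated in a single degree) need not have finite-dimensional graded pieces. Consequently the natural map $N\to (N^\circledast)^\circledast$ is not an isomorphism in general, and the identity $j^\lambda_*(N)\cong\bigl(j^\lambda_!(N^\circledast)\bigr)^\circledast$ fails for such $N$. So your duality argument only establishes exactness of $j^\lambda_*$ on locally finite-dimensional $A_\lambda$-modules, not on all of $A_\lambda\gmod$ as the lemma claims. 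The paper avoids this by arguing symmetrically: it shows directly (using the $\Y$-part of the triangular basis) that $\bar e_\lambda A_{\geq\lambda}\bar 1_i$ is free as a \emph{left} $A_\lambda$-module, which makes $\Hom_{A_\lambda}(\bar e_\lambda A_{\geq\lambda}\bar 1_i,-)$ exact on the nose. Your duality argument \emph{does} suffice for the boundedness/local-finiteness assertion about $j^\lambda_*$, since there one is already restricted to locally finite-dimensional modules; it is only the exactness claim that needs the direct argument.
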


\begin{proof}
The functor
$j^\lambda_!$ is exact because
$\bar 1_i A_{\geq \lambda} \bar e_\lambda$ is a projective graded right
$A_\lambda$-module for each $i \in \I$.
Indeed, by Corollary~\ref{thimbles},
$\bar 1_i A_{\geq \lambda} \bar e_\lambda$ has
basis $\bar x\bar h$ for $(x,h) \in \bigcup_{s,t \in \S_\lambda} \X(i,s) \times \H(s,t)$.
Hence we have that
\begin{equation}\label{thisgivesabasis}
\bar 1_i A_{\geq \lambda} \bar e_\lambda = 
\bigoplus_{s \in \S_\lambda} \bigoplus_{x \in \X(i,s)}
\bar x A_{\lambda}
\end{equation}
with the summand $\bar x A_{\lambda}$ here being isomorphic
to
$q^{\deg(x)}\bar 1_s A_{\lambda}$
as a graded right $A_\lambda$-module, which is projective.
Similarly,
\begin{equation}\label{anotherbasis}
\bar e_\lambda A_{\geq \lambda} \bar 1_i
= \bigoplus_{s \in \S_\lambda} \bigoplus_{y \in \Y(s,i)} A_\lambda \bar y
\end{equation}
with the summand $A_\lambda \bar y$
being isomorphic
to
$q^{\deg(y)}A_{\lambda}\bar 1_s$
as a graded left $A_\lambda$-module.
So
$\bar e_\lambda A_{\geq\lambda}\bar 1_i$ is a projective graded left
$A_\lambda$-module, hence, $j^\lambda_*$ is exact.

Now let $V$ be a graded left $A_{\lambda}$-module 
and let $V(s)$ be a homogeneous basis for $1_s V$
for $s \in \S_\lambda$.
The decomposition \cref{thisgivesabasis} implies that 
$1_i \left(j^\lambda_! V\right) = \bar 1_i A_{\geq \lambda} \bar e_\lambda \otimes_{A_\lambda} V$
has homogeneous basis given by the vectors
\begin{equation}\label{thisisabasis}
\bar x \otimes v
\qquad\text{for }
(x,v) \in \bigcup_{s \in \S_\lambda} \X(i,s) \times V(s).
\end{equation}
The vector $\bar x\otimes v$ is of degree $\deg(x)+\deg(v)$.
Since $A$ is locally finite-dimensional and bounded below
and $\S_\lambda$ is finite,
there are only finitely many $x \in \bigcup_{s \in \S_\lambda}
\X(i,s)$ of any given degree, and these degrees are bounded below.
This implies that
$j^\lambda_! V$ is locally finite-dimensional and bounded below assuming $V$ has these properties.
Similarly, from \cref{anotherbasis},
we deduce that $1_i \left(j^\lambda_* V\right) = \Hom_{A_\lambda}(\bar e_\lambda 
A_{\geq \lambda}\bar 1_i, V)$
has basis 
\begin{equation}\label{theotherbasis}
\delta_{y,v}\qquad
\text{for }
(y,v) \in \bigcup_{s \in \S_\lambda} \Y(s,i) \times V(s),
\end{equation}
where $\delta_{y,v}$ is the unique left $A_\lambda$-module
homomorphism that takes $\bar y \in \Y(s,i)$ 
to $v$ and all other elements of 
$\bigcup_{t \in \S_\lambda} \Y(t,i)$
to zero. Since $\deg(\delta_{y,v}) = \deg(v)-\deg(y)$,
it is easy to deduce that $j^\lambda_* V$ is locally finite-dimensional and bounded above assuming that $V$ has these properties.
\end{proof}

Next, we fix a set $\B = \coprod_{\lambda \in \Lambda} \B_\lambda$ 
 such that $\B_\lambda$ parametrizes 
a full set
$L_\lambda(b)\:(b \in \B_\lambda)$
of irreducible
graded left $A_\lambda$-modules up to isomorphism and degree shift.
Given $b \in \B$, we use the notation
$\dot b$ to denote the unique $\lambda \in \Lambda$ such that $b \in \B_\lambda$.
For this notation to be unambiguous, one should assume that the sets are chosen so that
$\B_\lambda \cap \S = \B \cap \S_\lambda$.
As we did with $\S$, we also use notations 
like $\B_{\leq \lambda}, \B_{\geq \lambda}$, etc.
Since $A_\lambda$ is a unital graded algebra which is locally finite-dimensional and bounded below, the set $\B_\lambda$ is finite and 
each $L_\lambda(b)$ is finite-dimensional.
Also let $P_\lambda(b)$ (resp., $I_\lambda(b)$) be a projective cover 
(resp., injective hull) 
of $L_\lambda(b)$ in $A_\lambda\gmod$; these modules may 
be infinite-dimensional. 
For any $b \in \B$, 
we let
\begin{align}\label{standards}
\Delta(b)& :=  j^\lambda_! P_\lambda(b),&
\bar\Delta(b) &:= j^\lambda_! L_\lambda(b),&
\bar\nabla(b)& := j^\lambda_* L_\lambda(b),&
\nabla(b) &:= j^\lambda_* I_\lambda(b),
\end{align}
where $\lambda := \dot b$.
We view all of these as graded left 
$A$-modules via the natural inclusion $i_{\geq \lambda}$.
We call them the {\em standard,
proper standard, proper costandard} and {\em costandard} modules, respectively. 
If one knows bases for $P_\lambda(b),
L_\lambda(b)$ and $I_\lambda(b)$,
one obtains bases for $\Delta(b)$ and $\bar\Delta(b)$ from \cref{thisisabasis},
and bases for $\nabla(b)$ and $\bar\nabla(b)$
from \cref{theotherbasis}.

In general, there is no reason for any of the modules \cref{standards} to have finite length.
However, by \cref{tough}, each
$P_\lambda(b)\:(b \in \B_\lambda)$
admits an exhaustive descending filtration 
with irreducible sections. By exactness of $j^\lambda_!$, it follows
that $\Delta(b)$ has an exhaustive descending filtration
with top section $\bar\Delta(b)$ and other sections that
are degree shifts of $\bar\Delta(c)$ for $c \in \B_\lambda$.
Similarly, $\nabla(b)$ has an exhaustive ascending filtration with bottom section $\bar\nabla(b)$ and other sections that are degree shifts of $\bar\nabla(c)$ for $c \in \B_\lambda$.

\begin{remark}\label{nice}
It is especially convenient when the sets $\B$ and $\S$ are naturally identified.
We record here two special cases of \cref{raspberries} where this can be achieved.
\begin{itemize}
\item
We call $A$ a {\em based affine quasi-hereditary algebra} if
$\S =\Lambda$ with the map $\S\rightarrow\Lambda, s\mapsto \dot s$ being the identity,
and each $A_\lambda\:(\lambda \in \Lambda)$ is graded local, i.e., 
the quotient of $A_\lambda$ by its graded Jacobson radical 
$N(A_\lambda)$ is $\kk$.
In this situation, 
$\B_\lambda$ is a singleton. Then
one can choose notation so that $\S = \Lambda=\B$ and $P_\lambda(\lambda) = A_\lambda$
for each $\lambda \in \Lambda$.
When the grading is concentrated in degree zero,
this setup recovers the based quasi-hereditary algebras of
\cite{KM} if $\Lambda$ is finite, or their semi-infinite analog from \cite[Def.~5.1]{BS} when $\Lambda$ is infinite.
\item
We call $A$ a {\em based affine stratified algebra} if 
$A_\lambda / N(A_\lambda) \cong \prod_{s \in \S_\lambda} \kk$
for each $\lambda \in \Lambda$.
In this situation, we can 
choose notation so that $\S_\lambda = \B_\lambda$ 
for each $\lambda \in \Lambda$ and
$P_\lambda(b) = A_\lambda \bar 1_b$ for each $b \in \B_\lambda$.
When the grading is concentrated in degree zero,
this setup recovers
the based stratified algebras of \cite[Def.~5.20]{BS}.
\end{itemize}
\end{remark}

If $V$ is any graded left $A$-module and $\lambda$ is minimal in $\Lambda(V)$,
the weight space $e_\lambda V$ is naturally an $A_\lambda$-module,
with the basis vector $\bar h$ of $A_\lambda$ 
acting simply by multiplication by $h \in \bigcup_{s,t \in \S_\lambda} \H(s,t)$.
This follows from \cref{green}(1).
Clearly, both of the isomorphisms
$e_\lambda V \cong j^\lambda i_{\geq \lambda}^! V$ 
and 
$e_\lambda V \cong j^\lambda i_{\geq \lambda}^* V$ from \cref{green}
are $A_\lambda$-module homomorphisms.
If we take $V$ here to be one of the modules $\Delta(b), \bar\Delta(b),\bar\nabla(b)$ or $\nabla(b)$ for $b \in \B_\lambda$
then $\lambda$ is the {\em lowest weight}
of $V$, i.e., it is the unique minimal weight in $\Lambda(V)$.
Moreover, 
in view of the bases \cref{thisisabasis,theotherbasis},
the lowest weight space $e_\lambda V$ 
simply recovers the $A_\lambda$-module
from which $V$ was constructed in the first place in \cref{standards}. This is a familiar situation since it is entirely analogous to the 
construction of Verma and dual Verma modules for semisimple Lie algebras.
In view of this, the following theorem (and its proof) should come as no surprise.

\begin{theorem}[Classification of irreducible modules]\label{irrclass}
For $b \in \B$, the module $\Delta(b)$ has a unique irreducible quotient denoted $L(b)$.
This is also the unique irreducible submodule of $\nabla(b)$. Moreover, the modules
$L(b)\:(b \in \B)$
give a full set of irreducible graded left $A$-modules up to isomorphism and degree shift.
\end{theorem}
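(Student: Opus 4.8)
The plan is to establish that $\Delta(b)$ has a unique maximal submodule by a lowest-weight-space argument, to deduce the dual statement about $\soc\nabla(b)$ from the fact that $A^{\op}$ again carries a graded triangular basis, and finally to prove completeness by producing, for an arbitrary irreducible $L$, a nonzero homomorphism from $L$ into a suitable proper costandard module via adjunction and Schur's Lemma \cref{schurslemma}. The one point needing real care is the non-Noetherian nature of $A$: a priori the sum of all proper submodules of $\Delta(b)$ could be all of $\Delta(b)$. This is exactly what the weight-space observation rules out -- truncation at the lowest weight $\lambda=\dot b$ detects properness -- and everything else is bookkeeping with the functors $j^\lambda, j^\lambda_!, j^\lambda_*$ and the duality $?^\circledast$.

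\emph{Head of $\Delta(b)$.} Write $\lambda:=\dot b$. Using the basis in \cref{thisisabasis}, $\Delta(b)$ lies in $A_{\geq\lambda}\gmod$, has $\lambda$ as its unique minimal weight, and is generated by its lowest weight space $e_\lambda\Delta(b)$, which is the module $P_\lambda(b)$ it was built from. Given a submodule $M\subseteq\Delta(b)$, \cref{green}(1) shows $e_\lambda M$ is an $A_\lambda$-submodule of $P_\lambda(b)$; if $M$ is proper then $e_\lambda M\neq P_\lambda(b)$ (else $M\supseteq A\, e_\lambda M=\Delta(b)$), so $e_\lambda M\subseteq\rad P_\lambda(b)$, the unique maximal submodule of the projective cover $P_\lambda(b)$. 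Hence the sum $R$ of all proper submodules of $\Delta(b)$ still has $e_\lambda R\subseteq\rad P_\lambda(b)\subsetneq e_\lambda\Delta(b)$, so $R\neq\Delta(b)$: thus $R=\rad\Delta(b)$ is the unique maximal submodule and $L(b):=\Delta(b)/\rad\Delta(b)$ is irreducible. A short check (the kernel of the surjection $\Delta(b)\twoheadrightarrow\bar\Delta(b)$ provided by \cref{xmas}, and the submodule of $\Delta(b)$ generated by $\rad P_\lambda(b)$, are both proper, hence contained in $\rad\Delta(b)$, and both meet $e_\lambda\Delta(b)$ in $\rad P_\lambda(b)$) gives $e_\lambda\rad\Delta(b)=\rad P_\lambda(b)$, so $j^\lambda L(b)\cong L_\lambda(b)$ and $L(b)=\head\bar\Delta(b)$. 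In particular $L(b)$ is, up to degree shift, the unique irreducible object of $A_{\geq\lambda}\gmod$ with $j^\lambda$-image $L_\lambda(b)$, since $j^\lambda$ is a quotient functor.

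\emph{Socle of $\nabla(b)$.} One checks directly that $A^{\op}$ carries a graded triangular basis with the same poset $\Lambda$, the same $\S$ and the same function $s\mapsto\dot s$, obtained by setting $\X^{\op}(i,s):=\Y(s,i)$, $\Y^{\op}(s,i):=\X(i,s)$ and $\H^{\op}(s,t):=\H(t,s)$; consequently $(A^{\op})_\lambda\cong(A_\lambda)^{\op}$ for each $\lambda$, and the irreducibles of $(A^{\op})_\lambda$ are the duals $L_\lambda(b)^\circledast$. The explicit bases \cref{thisisabasis,theotherbasis} then show that the exact duality $?^\circledast$ carries $j^\lambda_*$ to the standardization functor for $A^{\op}$ (and $j^\lambda_!$ to costandardization for $A^{\op}$); concretely $\nabla(b)^\circledast\cong\Delta^{\op}(b)$ and $\bar\nabla(b)^\circledast\cong\bar\Delta^{\op}(b)$ in the obvious notation. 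Applying the previous paragraph over $A^{\op}$ and dualizing, $\soc\nabla(b)$ and $\soc\bar\nabla(b)$ are irreducible, $\soc\bar\nabla(b)=\soc\nabla(b)$ (since $\bar\nabla(b)\hookrightarrow\nabla(b)$ and the socle is the unique irreducible submodule), and $j^\lambda$ of this socle is $L_\lambda(b)$; comparing with the previous paragraph, $\soc\nabla(b)\cong L(b)$.

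\emph{Completeness.} Let $L$ be an irreducible graded left $A$-module. Since $A=\sum_{s\in\S}A\,1_s A$, we must have $e_\lambda L\neq 0$ for some $\lambda$, and as $\Lambda$ is lower finite we may choose $\lambda$ minimal in the nonempty set of weights of $L$. By \cref{green}(1) and the fact that the $e_\lambda A e_\lambda$-action on $e_\lambda L$ factors through $A_\lambda$, together with the quotient-functor property, $e_\lambda L$ is an irreducible $A_\lambda$-module, so $e_\lambda L\cong q^n L_\lambda(b)$ for some $b\in\B_\lambda$ and $n\in\Z$. Adjunction ($i^*_{\geq\lambda}\dashv i_{\geq\lambda}$ then $j^\lambda\dashv j^\lambda_*$) gives
\[
\Hom_A(q^{-n}L,\bar\nabla(b))\cong\Hom_{A_\lambda}(j^\lambda i^*_{\geq\lambda}(q^{-n}L),\,L_\lambda(b)),
\]
and by \cref{green}(2) one has $j^\lambda i^*_{\geq\lambda}(q^{-n}L)\cong e_\lambda(q^{-n}L)\cong L_\lambda(b)$, so this space is nonzero by \cref{schurslemma}. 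A nonzero map $q^{-n}L\to\bar\nabla(b)$ is injective, so $q^{-n}L$ is an irreducible submodule of $\bar\nabla(b)$ and hence $q^{-n}L\cong\soc\bar\nabla(b)=L(b)$, i.e.\ $L\simeq L(b)$. Finally the $L(b)$ are pairwise non-isomorphic up to degree shift: $L(b)$ has lowest weight $\dot b$ with $e_{\dot b}L(b)=L_{\dot b}(b)$, and the $L_\lambda(c)\ (c\in\B_\lambda)$ were chosen pairwise non-isomorphic up to shift.
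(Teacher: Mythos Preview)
Your argument is correct. Part~1 (head of $\Delta(b)$) is essentially the paper's proof, phrased in terms of the sum of all proper submodules rather than exhibiting the maximal submodule explicitly.

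Parts~2 and~3, however, take a route genuinely dual to the paper's. For the socle of $\nabla(b)$, the paper computes $\Hom_A(L(a),\nabla(b))$ directly via the adjunction $(j^\lambda,j^\lambda_*)$, obtaining $\delta_{a,b}$; this is quick and self-contained. You instead pass to $A^{\op}$, apply Part~1 there, and dualize via $?^\circledast$---this is exactly the machinery of the paper's Section~5 (Duality), which you are anticipating. For completeness, the paper produces a surjection $\Delta(b)\twoheadrightarrow L$ using the \emph{left} adjoint $j^\lambda_!$ and a map $P_\lambda(b)\to e_\lambda L$; you produce an injection $L\hookrightarrow\bar\nabla(b)$ using the \emph{right} adjoint $j^\lambda_*$, which requires first knowing $\soc\bar\nabla(b)=L(b)$ and hence reverses the logical order to $1\to 3\to 2$ rather than the paper's $1\to 2\to 3$. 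A small point: your irreducibility of $e_\lambda L$ implicitly uses that $L$ is an $A_{\geq\lambda}$-module (so that the quotient-functor property of $j^\lambda$ applies); this follows since $i^!_{\geq\lambda}L\neq 0$ by \cref{green}(1) and $L$ is irreducible, but you might make it explicit. Both approaches are clean; the paper's avoids invoking the $A^{\op}$ duality and is marginally shorter, while yours foreshadows the symmetry developed immediately afterward.
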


\begin{proof}
Take $b \in \B$ and let $\lambda := \dot b$.
The indecomposable projective $A_\lambda$-module
$P_\lambda(b)$ has a unique maximal graded submodule
$\rad P_\lambda(b)$.
Since $e_\lambda \Delta(b) = \bar e_\lambda \otimes P_\lambda(b) \cong P_\lambda(b)$
as an $A_\lambda$-module,
and $\Delta(b)$ is generated as an $A$-module
by its lowest weight space $e_\lambda \Delta(b)$,
it follows that $\Delta(b)$ has a unique maximal graded submodule, namely,
$\left(\bar e_\lambda \otimes \rad P_\lambda(b)\right)
\oplus \bigoplus_{\mu > \lambda} e_\mu \Delta(b)$.
Hence, $\Delta(b)$ has a unique irreducible quotient $L(b)$. Moreover, $\lambda$ is lowest weight of $L(b)$, and $e_\lambda L(b) \cong L_\lambda(b)$ as a graded $A_\lambda$-module. This implies that
$L(a) \not\simeq L(b)$ for $a \neq b$.

Now we show that any irreducible graded $A$-module $L$ is isomorphic to $L(b)$ for some $b \in B$.
Pick $\lambda$ minimal in $\Lambda(L)$,
so that $e_\lambda L$ is naturally 
an $A_\lambda$-module.
There is a non-zero homogeneous $A_\lambda$-module homomorphism
$f:P_\lambda(b) \rightarrow e_\lambda L
$ for some $b \in \B_\lambda$.
Since $e_\lambda L \cong j^\lambda i_{\geq \lambda}^! L$ as an $A_\lambda$-module
and $\Delta(b) = i_{\geq \lambda} j^\lambda_! P_\lambda(b)$,
the adjunctions produce a non-zero homogeneous
$A$-module homomorphism
$\Delta(b) \rightarrow L$, which is necessarily surjective.
We deduce that $L \simeq L(b)$.
The classification of irreducible modules is now proved.

It remains to show that $\nabla(b)$ has irreducible socle 
$L(b)$. For this, we take $a \in \B$ and compute:
$$
\Hom_A(L(a), \nabla(b))
=\Hom_A(L(a), i_{\geq \lambda} j^\lambda_* I_\lambda(b))
\cong \Hom_{A_\lambda}(j^\lambda i_{\geq \lambda}^* L(a), I_\lambda(b)).
$$
Since $j^\lambda i_{\geq \lambda}^* L(a) = 0$ unless $a \in \B_\lambda$,
in which case it is $L_\lambda(a)$,
we deduce that 
$\Hom_A(L(a), \nabla(b))$
is zero unless $a=b$, when it is $\kk$.
This proves that $\soc \nabla(b) = L(b)$.
\end{proof}

For $b \in \B$, we let $P(b)$ be a projective cover and
$I(b)$ be an injective hull of the irreducible module $L(b)$ in $A\gmod$.
For $b \in \B_\lambda$, $\Delta(b)$ (resp.,
$\nabla(b)$) can also be described as the projective cover (resp., injective hull) of $L(b)$
in $A_{\geq \lambda}\gmod$, and we
have that $P(b) \twoheadrightarrow \Delta(b)$
and $\nabla(b) \hookrightarrow I(b)$.
The following lemma gives characterizations of
$\bar\Delta(b)$ and $\bar\nabla(b)$ in a similar vein.

\begin{lemma}\label{eat}
Suppose that $b \in \B$ and let 
$\lambda := \dot b$.
\begin{enumerate}
\item
The proper standard module $\bar \Delta(b)$
is the largest graded quotient of 
$\Delta(b)$ with the properties $[\bar \Delta(b):L(b)]_q = 1$
and $[\bar \Delta(b):L(c)]_q = 0$ for $b \neq c \in \B_{\ng \lambda}$.
\item
The proper costandard module $\bar \nabla(b)$
is the largest graded submodule of 
$\nabla(b)$ with the properties $[\bar \nabla(b):L(b)]_q = 1$
and $[\bar \nabla(b):L(c)]_q = 0$ for $b \neq c \in \B_{\ng \lambda}$.
\end{enumerate}
\end{lemma}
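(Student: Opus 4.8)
The plan is to prove (1) in full and to obtain (2) from it by duality. For the duality: $A^{\op}$ again carries a graded triangular basis — interchange the homogeneous sets $\X(i,s)\leftrightarrow\Y(s,i)$ and $\H(s,t)\leftrightarrow\H(t,s)$, keeping the same poset $\Lambda$ — and the contravariant equivalence $?^\circledast$ of \cref{circledast1} interchanges the standardization functor for $A^{\op}$ with the costandardization functor for $A$; it therefore carries $\bar\Delta^{\op}(b)$ to $\bar\nabla(b)$ and $\Delta^{\op}(b)$ to $\nabla(b)$, turns quotients into submodules, and preserves the conditions $[\,\cdot\,:L(c)]_q=1$ and $[\,\cdot\,:L(c)]_q=0$ (the substitution $q\mapsto q^{-1}$ that $?^\circledast$ introduces on graded multiplicities fixes $0$ and $1$). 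Thus (2) is just (1) applied to $A^{\op}$, and from now on I treat (1); write $\lambda:=\dot b$.

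First I would verify that $\bar\Delta(b)$ has the asserted properties. Since $\bar\Delta(b)=i_{\geq\lambda}j^\lambda_!L_\lambda(b)$, all of its weights lie in $\Lambda_{\geq\lambda}$, so every composition factor $L(c)$ has $\dot c\geq\lambda$; if moreover $\dot c\ng\lambda$ then $\dot c=\lambda$, i.e. $c\in\B_\lambda$. For such $c$, applying the exact functor $e_\lambda(-)=j^\lambda(-)$ to any finite graded filtration of $\bar\Delta(b)$ and using that $e_\lambda L(c)\cong L_\lambda(c)$ and $e_\lambda\bar\Delta(b)\cong L_\lambda(b)$ gives $[\bar\Delta(b):L(c)]_q\leq[L_\lambda(b):L_\lambda(c)]_q=\delta_{b,c}$. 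Since also $\bar\Delta(b)$ is a nonzero quotient of $\Delta(b)$, its head is $L(b)$ by \cref{irrclass}, so $[\bar\Delta(b):L(b)]_q=1$; together these give both properties.

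The real work is maximality. Let $Q$ be any graded quotient of $\Delta(b)$ satisfying the two conditions; I must show $\Delta(b)\twoheadrightarrow Q$ factors through $\Delta(b)\twoheadrightarrow\bar\Delta(b)$. As $\Delta(b)$ is generated by its lowest weight space $e_\lambda\Delta(b)\cong P_\lambda(b)$, the module $Q$ is generated by $e_\lambda Q$, a nonzero quotient of the indecomposable projective $P_\lambda(b)$ and hence of head $L_\lambda(b)$. Put $M:=\rad_{A_\lambda}(e_\lambda Q)$ and let $N:=AM$ be the $A$-submodule of $Q$ it generates. Using \cref{green}(1) (equivalently the discussion preceding the lemma), $e_\lambda N=M$, hence $e_\lambda(Q/N)\cong L_\lambda(b)$, and the counit $j^\lambda_!j^\lambda(Q/N)\twoheadrightarrow Q/N$ exhibits $Q/N$ as a quotient of $j^\lambda_!L_\lambda(b)=\bar\Delta(b)$. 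It remains to show $N=0$. Suppose not; then $N$ is generated by $e_\lambda N=M\neq0$, which over the unital algebra $A_\lambda$ has a simple quotient $L_\lambda(c)$ with $c\in\B_\lambda$. By the adjunction $j^\lambda\dashv j^\lambda_*$,
\[\Hom_A(N,\bar\nabla(c))\cong\Hom_{A_{\geq\lambda}}(N,j^\lambda_*L_\lambda(c))\cong\Hom_{A_\lambda}(e_\lambda N,L_\lambda(c))\neq0,\]
and the image of a nonzero such map is a nonzero submodule of $\bar\nabla(c)\hookrightarrow\nabla(c)\hookrightarrow I(c)$; since $\soc I(c)=L(c)$ is essential, this image contains $L(c)$, so $L(c)$ is a composition factor of $N$. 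Because $\dot c=\lambda\ng\lambda$, the hypotheses on $Q$ force $c=b$. But $Q/N$ is a nonzero quotient of $\Delta(b)$, hence also has $L(b)$ as a composition factor, and additivity of graded multiplicities along $0\to N\to Q\to Q/N\to0$ then gives $[Q:L(b)]_q\geq[N:L(b)]_q+[Q/N:L(b)]_q\geq2$, contradicting $[Q:L(b)]_q=1$. So $N=0$; then $e_\lambda Q\cong L_\lambda(b)$, so $\Delta(b)\twoheadrightarrow Q$ restricts on $\lambda$-weight spaces to the canonical epimorphism $P_\lambda(b)\twoheadrightarrow L_\lambda(b)$, whence its kernel contains the $A$-submodule of $\Delta(b)$ generated by $\rad P_\lambda(b)$, which is exactly $\ker(\Delta(b)\twoheadrightarrow\bar\Delta(b))$. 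Therefore $\Delta(b)\twoheadrightarrow Q$ factors through $\bar\Delta(b)$, as required.

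The main obstacle is the step $N=0$. In this generality $Q$ need not have finite length, so one cannot simply count composition factors; instead the bound $[Q:L(b)]_q\leq1$ must be exploited through the submodule $N$, and the decisive input is that $\bar\nabla(c)$ embeds in the injective hull $I(c)$, whose simple socle $L(c)$ is essential — this is what converts a nonzero homomorphism $N\to\bar\nabla(c)$, produced by adjunction from a simple quotient of $e_\lambda N$, into a genuine composition factor $L(c)$ of $N$ and hence into a contradiction with the multiplicity hypotheses.
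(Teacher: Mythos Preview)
Your proof is correct, and the underlying idea---finding an extra composition factor $L(c)$ with $c\in\B_\lambda$ in the ``excess'' of $Q$ over $\bar\Delta(b)$---is the same as the paper's. The paper reaches this by invoking the exhaustive descending filtration of $\Delta(b)$ (obtained by applying $j^\lambda_!$ to a filtration of $P_\lambda(b)$ from \cref{tough}) to argue that any quotient strictly larger than $\bar\Delta(b)$ picks up such a factor. You instead work intrinsically inside an arbitrary $Q$ satisfying the conditions: defining $N=A\cdot\rad_{A_\lambda}(e_\lambda Q)$, you use \cref{nakayama}, the adjunction $j^\lambda\dashv j^\lambda_*$, and the essentiality of $\soc\bar\nabla(c)$ to locate the forbidden factor when $N\neq 0$, then conclude that $\Delta(b)\twoheadrightarrow Q$ factors through $\bar\Delta(b)$. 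Your $N$ is exactly the image of $\ker(\Delta(b)\to\bar\Delta(b))$ in $Q$, so the two arguments are really the same computation organized from opposite ends; your version is a bit more self-contained (it avoids the filtration) and directly yields the sharper statement that every $Q$ with the stated properties is a quotient of $\bar\Delta(b)$, rather than only that no strictly larger quotient works.
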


\begin{proof}
(1)
Let $\lambda := \dot b$.
As noted earlier, $\Delta(b)$ has an exhaustive descending
filtration $V = V_0 \supset V_1 \supseteq \cdots$ with top section $V_0 / V_1 = \bar\Delta(b)$ and other
sections $\simeq \bar\Delta(c)$ for $c \in \B_\lambda$.
It follows that any strictly larger quotient $Q$
of $\Delta(b)$ than $\bar\Delta(b)$
has an irreducible quotient of the form $q^d L(c)$ 
for some $c \in \B_\lambda$. 
Hence, either $[Q:L(b)]_q \neq 1$ or
$[Q:L(c)]_q \neq 0$ for $b \neq c \in \B_{\ng \lambda}$,
violating the properties we wanted.
It remains to see that the quotient $\bar\Delta(b)$
does have these properties. 
We certainly have that $[\bar\Delta(b):L(c)]_q = 0$
if $\dot c \ngeq \lambda$ since $\lambda$ is the lowest weight of $\bar\Delta(b)$.
If $\dot c = \lambda$ then $L(c)$ can be viewed an irreducible
$A_{\geq \lambda}$-module with $j^\lambda 
L(c) \cong L_\lambda(c)$, 
and we have by exactness of $j^\lambda$ that
$$
[\bar\Delta(b):L(c)]_q=
[j^\lambda_! L_\lambda(b):L(c)]_q=
[j^\lambda j^\lambda_! L_\lambda(b):j^\lambda L(c)]_q
= 
[L_\lambda(b): L_\lambda(c)]_q = \delta_{b,c}.
$$

\vspace{1mm}
\noindent
(2) Similar.
\end{proof}

\begin{corollary}\label{up}
For $b,c \in \B$,
we have that
$\dim_q \Hom_A(\Delta(b), \bar\nabla(c))=
\dim_q \Hom_A(\bar\Delta(b), \nabla(c))= \delta_{b,c}.$
\end{corollary}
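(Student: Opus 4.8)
The plan is to move both computations into the unital algebras $A_\lambda$ via the adjunctions in \cref{recollement1,recollement2}, and then to distinguish cases according to how $\dot b$ and $\dot c$ compare in $\Lambda$. Write $\lambda := \dot b$ and $\mu := \dot c$; since the adjunctions involved are $\GVec$-enriched, all isomorphisms below preserve gradings, so it is harmless to take $\dim_q$ at the end. For the first equality, use $\Delta(b) = i_{\geq\lambda}\bigl(j^\lambda_! P_\lambda(b)\bigr)$ together with $i_{\geq\lambda}\dashv i^!_{\geq\lambda}$ and $j^\lambda_!\dashv j^\lambda$ to obtain
$$\Hom_A\bigl(\Delta(b),\bar\nabla(c)\bigr)\cong\Hom_{A_{\geq\lambda}}\bigl(j^\lambda_! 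P_\lambda(b),\,i^!_{\geq\lambda}\bar\nabla(c)\bigr)\cong\Hom_{A_\lambda}\bigl(P_\lambda(b),\,j^\lambda i^!_{\geq\lambda}\bar\nabla(c)\bigr).$$
Thus it suffices to identify the $A_\lambda$-module $M := j^\lambda i^!_{\geq\lambda}\bar\nabla(c)$, and I claim $M\cong L_\lambda(c)$ if $\mu=\lambda$ and $M=0$ otherwise. Granting this, the right-hand side is $\Hom_{A_\lambda}(P_\lambda(b),L_\lambda(c))\cong\Hom_{A_\lambda}(L_\lambda(b),L_\lambda(c))$ when $\mu=\lambda$ and is $0$ otherwise; since $P_\lambda(b)$ is a projective cover of $L_\lambda(b)$, the $L_\lambda(a)\:(a\in\B_\lambda)$ are pairwise non-isomorphic even up to degree shift, and $\End_{A_\lambda}(L_\lambda(b))=\kk$ by \cref{schurslemma}, this has $\dim_q$ equal to $\delta_{b,c}$ (recall $\delta_{b,c}=0$ unless $\lambda=\mu$).

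To prove the claim, recall that $i^!_{\geq\lambda}\bar\nabla(c)$ is the largest graded submodule of $\bar\nabla(c)$ all of whose weights are $\geq\lambda$, and that $\bar\nabla(c)$ has lowest weight $\mu$. If $\lambda\ngeq\mu$, then $\lambda$ is not a weight of $\bar\nabla(c)$, so $j^\lambda\bar\nabla(c)=0$ and a fortiori $M=0$. If $\lambda>\mu$, note first that $\bar\nabla(c)=j^\mu_* L_\mu(c)$ embeds into $\nabla(c)=j^\mu_* I_\mu(c)$ by exactness of $j^\mu_*$ (\cref{xmas}), hence into the injective hull $I(c)$ via the embedding $\nabla(c)\hookrightarrow I(c)$ recorded after \cref{eat}; consequently $\soc\bar\nabla(c)=L(c)$ and $L(c)$ is essential in $\bar\nabla(c)$. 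Since $L(c)$ has nonzero weight-$\mu$ space and $\mu\ngeq\lambda$, no nonzero submodule of $\bar\nabla(c)$ can have all weights $\geq\lambda$; thus $i^!_{\geq\lambda}\bar\nabla(c)=0$ and $M=0$. Finally, if $\lambda=\mu$, then every weight of the $A_{\geq\lambda}$-module $\bar\nabla(c)$ is automatically $\geq\lambda$, so $i^!_{\geq\lambda}\bar\nabla(c)=\bar\nabla(c)=j^\lambda_* L_\lambda(c)$ and $M=j^\lambda j^\lambda_* L_\lambda(c)\cong L_\lambda(c)$. This proves the claim, and with it the first equality.

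The second equality is proved the same way: from $\bar\Delta(b)=i_{\geq\lambda}\bigl(j^\lambda_! L_\lambda(b)\bigr)$ the same two adjunctions give $\Hom_A(\bar\Delta(b),\nabla(c))\cong\Hom_{A_\lambda}\bigl(L_\lambda(b),\,j^\lambda i^!_{\geq\lambda}\nabla(c)\bigr)$, and the identical three-case analysis --- using now $\soc\nabla(c)=L(c)$ from \cref{irrclass} and $\nabla(c)\hookrightarrow I(c)$ --- shows $j^\lambda i^!_{\geq\lambda}\nabla(c)\cong I_\lambda(c)$ if $\mu=\lambda$ and $0$ otherwise; then $\Hom_{A_\lambda}(L_\lambda(b),I_\lambda(c))=\Hom_{A_\lambda}(L_\lambda(b),\soc I_\lambda(c))=\Hom_{A_\lambda}(L_\lambda(b),L_\lambda(c))$ has $\dim_q$ equal to $\delta_{b,c}$ when $\mu=\lambda$. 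The one step that is not bookkeeping is the case $\lambda>\mu$: here $L(b)$ may genuinely occur as a composition factor of $\bar\nabla(c)$ or $\nabla(c)$, so merely counting composition factors does not suffice --- instead one exploits that the essential socle $L(c)$ sits in the wrong weight (namely $\mu\ngeq\lambda$) to force the truncation $i^!_{\geq\lambda}$ to annihilate the module. The other thing to watch is that every adjunction isomorphism used is grading-preserving, so the equalities hold at the level of $\dim_q$ and not merely of ordinary dimensions.
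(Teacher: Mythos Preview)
Your proof is correct but follows a different route from the paper's. The paper argues directly at the level of $A$-modules using \cref{eat}: given a nonzero homogeneous $f:\Delta(b)\to\bar\nabla(c)$, the head $L(b)$ of $\Delta(b)$ must appear as a composition factor of $\bar\nabla(c)$, forcing $\dot b>\dot c$ or $b=c$ by \cref{eat}(2); dually the socle $L(c)$ of $\bar\nabla(c)$ appears in $\Delta(b)$, forcing $\dot c\geq\dot b$. These two constraints together leave only $b=c$, and Schur's Lemma pins down the one-dimensional space in degree~$0$.

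Your approach instead pushes everything through the adjunctions $(j^\lambda_!,j^\lambda,j^\lambda_*)$ and $(i^*_{\geq\lambda},i_{\geq\lambda},i^!_{\geq\lambda})$ to reduce to a computation in $A_\lambda\gmod$, where the answer is immediate from the projective cover/injective hull property. The key step that is genuinely different is your treatment of the case $\lambda>\mu$: rather than invoking the composition-factor description from \cref{eat}, you use essentiality of the socle $L(c)$ inside $\bar\nabla(c)$ (respectively $\nabla(c)$) to force $i^!_{\geq\lambda}$ to vanish. This is a clean argument and has the virtue of bypassing \cref{eat} entirely, at the cost of needing the socle computation for $\bar\nabla(c)$ (which you correctly extract from $\bar\nabla(c)\hookrightarrow\nabla(c)\hookrightarrow I(c)$). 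The paper's proof is shorter because it has already paid for \cref{eat}; yours is more self-contained and perhaps better illustrates how the adjunction machinery does the work.
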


\begin{proof}
We just explain for $\Hom_A(\Delta(b), \bar\nabla(c))$.
If $b = c$ there is by Schur's Lemma a unique (up to scalars) non-zero 
homomorphism taking the irreducible head of $\Delta(b)$ to the irreducible socle of $\bar\nabla(c)$. Any
non-zero homogeneous homomorphism
$\Delta(b) \rightarrow \bar\nabla(c)$ that is not of this form
 takes the head $L(b)$ of $\Delta(b)$ to an irreducible subquotient of $\bar\nabla(c)$ different from $L(c)$, so we get
that $\dot b > \dot c$ thanks to the lemma.
Also we have that $\dot c \geq \dot b$ since 
there must be an irreducible subquotient of $\Delta(b)$
isomorphic to the socle $L(c)$ of $\bar\nabla(c)$.
This contradiction shows that there are no such homomorphisms.
\end{proof}

\begin{corollary}\label{upup}
For $b, c \in \B$, $f \in \N\lround q \rround$ and $g \in \N\lround q^{-1}\rround$,
we have that
$$
\dim_q \Hom_A\big(\Delta(b)^{\oplus f}, \bar\nabla(c)^{\oplus g}\big)=
\dim_q \Hom_A\big(\bar\Delta(b)^{\oplus f}, \nabla(c)^{\oplus g}\big)= \delta_{b,c}\,\overline{f}\,g \in \N \lround q^{-1} \rround.
$$
\end{corollary}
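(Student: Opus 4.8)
The plan is to reduce everything to Corollary~\ref{up} by a bookkeeping argument with (possibly infinite) direct sums; the two substantive points are that $\Hom$ out of the source module commutes with the infinite direct sum on the target side — which holds because the source modules are finitely generated — and that the resulting direct product stays locally finite-dimensional, which is where the growth hypotheses on $f$ and $g$ enter.

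First I would verify that $\Delta(b)$ and $\bar\Delta(b)$ are finitely generated as $A$-modules. Set $\lambda := \dot b$. The module $P_\lambda(b)$ is finitely generated over the unital algebra $A_\lambda$ by Lemma~\ref{tech2}(2), being the projective cover of the finite-dimensional module $L_\lambda(b)$. Since $j^\lambda_! = A_{\geq\lambda}\bar e_\lambda\otimes_{A_\lambda}(-)$ carries finitely generated $A_\lambda$-modules to finitely generated $A_{\geq\lambda}$-modules, and $A\twoheadrightarrow A_{\geq\lambda}$, it follows that $\Delta(b) = j^\lambda_!P_\lambda(b)$ is finitely generated over $A$; and $\bar\Delta(b) = j^\lambda_!L_\lambda(b)$ is then finitely generated too, being a quotient of $\Delta(b)$.

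Next I would carry out the computation for $\Hom_A(\Delta(b)^{\oplus f},\bar\nabla(c)^{\oplus g})$; the other case is word-for-word the same with $(\Delta(b),\bar\nabla(c))$ replaced by $(\bar\Delta(b),\nabla(c))$ and the other half of Corollary~\ref{up}. Write $f = \sum_d r_d q^d$ and $g = \sum_e s_e q^e$; the hypotheses $f\in\N\lround q^{-1}\rround$, $g\in\N\lround q\rround$ provide $D_f, D_g$ with $r_d = 0$ for $d > D_f$ and $s_e = 0$ for $e < D_g$. By definition $\Delta(b)^{\oplus f} = \bigoplus_d q^d\Delta(b)^{\oplus r_d}$ and $\bar\nabla(c)^{\oplus g} = \bigoplus_e q^e\bar\nabla(c)^{\oplus s_e}$. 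Using that $\Hom_A(\bigoplus_i V_i,-) = \prod_i\Hom_A(V_i,-)$ always, that $\Hom_A(M,\bigoplus_j W_j) = \bigoplus_j\Hom_A(M,W_j)$ when $M$ is finitely generated, and that $\Hom_A(q^aV,q^bW)\cong q^{b-a}\Hom_A(V,W)$ as graded vector spaces, I would obtain an isomorphism of graded vector spaces
\[
\Hom_A\big(\Delta(b)^{\oplus f},\bar\nabla(c)^{\oplus g}\big)\ \cong\ \prod_{d}\ \bigoplus_{e}\ q^{\,e-d}\,\Hom_A\big(\Delta(b),\bar\nabla(c)\big)^{\oplus r_d s_e}.
\]
By Corollary~\ref{up}, $\Hom_A(\Delta(b),\bar\nabla(c))$ vanishes when $b\neq c$, in which case both sides of the claimed identity are $0$; and it is one-dimensional, concentrated in degree $0$, when $b = c$. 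In that case the degree-$k$ component of the right-hand side has dimension $\sum_d r_d s_{d-k}$, which is a finite sum since the $d$-th term vanishes unless $D_g + k\le d\le D_f$. Hence the space is locally finite-dimensional and
\[
\dim_q\Hom_A\big(\Delta(b)^{\oplus f},\bar\nabla(c)^{\oplus g}\big) = \sum_k\Big(\sum_d r_d s_{d-k}\Big)q^{-k} = \Big(\sum_d r_d q^{-d}\Big)\Big(\sum_e s_e q^e\Big) = \overline f\,g = \delta_{b,c}\,\overline f\,g,
\]
which moreover lies in $\N\lround q\rround$ since $\overline f$ and $g$ do. The second equality follows identically.

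I expect the crux to be the step where the direct product over $d$ is seen to be locally finite-dimensional and to coincide degreewise with the corresponding direct sum: this is exactly where the growth conditions $f\in\N\lround q^{-1}\rround$ and $g\in\N\lround q\rround$ are needed (for, say, $f = g = \sum_{d\ge 0}q^d$, which lies in neither class, the product would be genuinely infinite-dimensional in some degrees), and it is the reason the Grothendieck group of $\Delta$-flag modules considered later in the paper has to be completed. Everything else is routine manipulation of degree shifts that I would carry out carefully but do not expect to cause trouble.
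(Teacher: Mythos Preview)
Your proof is correct and follows essentially the same route as the paper: split the direct sum on the source into a product, use finite generation of $\Delta(b)$ (resp.\ $\bar\Delta(b)$) to pull the direct sum on the target outside, apply Corollary~\ref{up}, and count degrees. The paper works one degree at a time whereas you package the computation as an isomorphism of graded spaces, and you spell out why $\Delta(b)$ is finitely generated, but these are cosmetic differences.
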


\begin{proof}
Again we just treat
$\Hom_A\big(\Delta(b)^{\oplus f}, \bar\nabla(c)^{\oplus g}\big)$.
Say $f = \sum_{m \in \Z} r_m q^{m}$
and $g = \sum_{n \in \Z} s_n q^{-n}$.
We need to show that
$$
\dim \Hom_A\Big(\bigoplus_{m \in \Z} q^{m} \Delta(b)^{\oplus r_m},
\bigoplus_{n\in\Z} q^{-n} \bar\nabla(c)^{\oplus s_n}\Big)_{-d} = \delta_{b,c}\sum_{m+n=d} r_m s_n,
$$
which makes sense because $r_m=s_n=0$ for $m,n \ll 0$.
Using that $\Delta(b)$ is finitely generated, we have that
\begin{align*}
\Hom_A\Big(\bigoplus_{m \in \Z} q^{m} 
\Delta(b)^{\oplus r_m},
\bigoplus_{n \in \Z} q^{-n} \bar\nabla(c)^{\oplus s_n}\Big)_{-d}
&\cong \prod_{m \in \Z} \Hom_A\Big(\Delta(b),
\bigoplus_{n \in \Z}q^{-n}\bar\nabla(c)^{\oplus s_n}\Big)^{\oplus r_m}_{m-d}\\
&\cong \prod_{m \in \Z} 
\bigoplus_{n \in \Z} \Hom_A(\Delta(b),
\bar\nabla(c)\big)^{\oplus r_m s_n}_{m+n-d}.
\end{align*}
By \cref{up}, the $\Hom$ space here is zero unless $b=c$ and $m+n=d$, when it is 1-dimensional. So the dimension is $\delta_{b,c} \sum_{m+n=d} r_m s_n$ as required.
\end{proof}

We record also a useful consequence of the Nakayama Lemma
for the algebras $A_\lambda$.

\begin{lemma}\label{nakcor}
Let $V$ be a graded left $A$-module
that is bounded below.
If $\Hom_A(V, \bar\nabla(b)) = 0$
for all $b \in \B$ 
then $V = 0$.
\end{lemma}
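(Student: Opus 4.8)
The plan is to reduce the statement to the Nakayama Lemma for the ``Cartan'' algebras $A_\lambda$ (the unital version proved in \cref{nakayama}) applied at a minimal weight. First I would observe that it suffices to prove the contrapositive: if $V \neq 0$, I must produce some $b \in \B$ with $\Hom_A(V,\bar\nabla(b)) \neq 0$. Since $V$ is bounded below, its set of weights $\Lambda(V)$ is nonempty; because $\Lambda$ is lower finite, I may choose $\lambda$ minimal in $\Lambda(V)$, so that $e_\lambda V \neq 0$. By \cref{green}(1) the minimality of $\lambda$ gives that $e_\lambda V$ is naturally a graded left $A_\lambda$-module, and moreover $e_\lambda V \cong j^\lambda i^*_{\geq\lambda} V$ as an $A_\lambda$-module via \cref{green}(2). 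Also $e_\lambda V = 1_{e_\lambda} V$ is bounded below because $V$ is, and $A_\lambda$ is a unital graded algebra that is locally finite-dimensional and bounded below.

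Next I would apply \cref{nakayama} to the nonzero bounded-below $A_\lambda$-module $e_\lambda V$: there exists an irreducible graded left $A_\lambda$-module $L_\lambda(b)$ with $\Hom_{A_\lambda}(e_\lambda V, L_\lambda(b)) \neq 0$ for some $b \in \B_\lambda$ (after a degree shift we may take it on the nose from the chosen full set of irreducibles). The key step is then to convert this into a nonzero $A$-module map into $\bar\nabla(b)$. Using the isomorphism $e_\lambda V \cong j^\lambda i^*_{\geq\lambda} V$ from \cref{green}(2), the adjunction $(j^\lambda, j^\lambda_*)$, and the definition $\bar\nabla(b) = i_{\geq\lambda} j^\lambda_* L_\lambda(b)$, I would compute
$$
\Hom_A(V, \bar\nabla(b)) = \Hom_A\!\big(V, i_{\geq\lambda} j^\lambda_* L_\lambda(b)\big) \cong \Hom_{A_{\geq\lambda}}\!\big(i^*_{\geq\lambda} V, j^\lambda_* L_\lambda(b)\big) \cong \Hom_{A_\lambda}\!\big(j^\lambda i^*_{\geq\lambda} V, L_\lambda(b)\big) \cong \Hom_{A_\lambda}(e_\lambda V, L_\lambda(b)),
$$
where the first isomorphism is the $(i^*_{\geq\lambda}, i_{\geq\lambda})$ adjunction and the second is the $(j^\lambda, j^\lambda_*)$ adjunction. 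The right-hand side is nonzero by the previous paragraph, hence $\Hom_A(V,\bar\nabla(b)) \neq 0$, contradicting the hypothesis. Therefore $V = 0$.

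The main obstacle is making sure every adjunction is applied with the correct module category and that the composite really lands in the form $\Hom_{A_\lambda}(e_\lambda V, L_\lambda(b))$; in particular one must check that $i^*_{\geq\lambda} V$ is the right object feeding into $j^\lambda$, which is exactly what \cref{green}(2) provides. A secondary point to verify is that $e_\lambda V$ is bounded below as a graded vector space so that \cref{nakayama} genuinely applies — but this is immediate since $e_\lambda$ is a finite sum of the distinguished idempotents and $V$ is bounded below. Everything else is a formal chain of adjunctions, so the argument is short once the minimal-weight reduction is in place.
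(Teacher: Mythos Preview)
Your proof is correct and follows essentially the same approach as the paper: pick $\lambda$ minimal in $\Lambda(V)$, apply \cref{nakayama} to the bounded-below $A_\lambda$-module $e_\lambda V$ to get a nonzero map to some $L_\lambda(b)$, then use the identification $e_\lambda V \cong j^\lambda i^*_{\geq\lambda} V$ from \cref{green}(2) together with the adjunctions $(i^*_{\geq\lambda}, i_{\geq\lambda})$ and $(j^\lambda, j^\lambda_*)$ to produce a nonzero element of $\Hom_A(V,\bar\nabla(b))$. One tiny expository slip: the nonemptiness of $\Lambda(V)$ comes from $V\neq 0$ together with $A=\sum_{s\in\S} A1_sA$, not from $V$ being bounded below; but this does not affect the argument.
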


\begin{proof}
Suppose that $V \neq 0$.
Let $\lambda$ be minimal in $\Lambda(V)$.
By \cref{nakayama},
there is a non-zero $A_\lambda$-module homomorphism
$e_\lambda V \rightarrow L_\lambda(b)$
for some $b \in \B_\lambda$.
Since $e_\lambda V \cong j^\lambda i_{\geq \lambda}^* V$
and $\bar\nabla(b) = i_{\geq \lambda} j^\lambda_* L_\lambda(b)$,
 we get induced a non-zero
homomorphism 
$V \rightarrow \bar\nabla(b)$.
So $\Hom_A(V, \bar\nabla(b)) \neq 0$.
\end{proof}

\section{Duality}

The definition of graded triangular basis is
symmetric in the sense that if we are given a graded
triangular basis of $A$, then it also gives one for
$A^{\op}$. One just has to swap the sets $\X(s)$ and 
$\Y(s)$. Clearly the algebras $(A^\op)_\lambda$ arising from this new triangular basis for $A^\op$ are the opposites 
$(A_\lambda)^\op$
of the algebras $A_\lambda$ from before.
Letting $L_\lambda^\op(b) := L_\lambda(b)^\circledast$ 
for each $b \in \B_\lambda$, we obtain a full set of irreducible graded right $A_\lambda$-modules up to isomorphism and degree shift.
Then one can apply the general theory to this basis of $A^{\op}$ to obtain graded right $A$-modules
$P^\op(b)$, $\Delta^\op(b)$, $\bar\Delta^\op(b)$, $L^\op(b)$, $\bar\nabla^\op(b)$, $\nabla^\op(b)$ 
and $I^\op(b)$ indexed by $b \in \B$. 
By properties of adjunctions, we have that
\begin{align}
j_!^\lambda \circ ?^\circledast &\cong
?^\circledast \circ j_*^\lambda,&
j_*^\lambda \circ ?^\circledast &\cong
?^\circledast \circ j_!^\lambda.
\end{align}
Also duality obviously commutes with
the inclusion functor $i_{\geq \lambda}$.
It follows that $?^\circledast$ 
takes $\Delta^\op(b)$, $\bar\Delta^\op(b)$,  $\bar\nabla^\op(b)$ and $\nabla^\op(b)$ 
to
$\nabla(b)$, $\bar\nabla(b)$, $\bar\Delta(b)$ and $\Delta(b)$, respectively.
By \cref{irrclass}, we deduce that 
$L^\op(b)^\circledast \cong L(b)$, so
$I^\op(b)^\circledast \cong P(b)$ and
$P^\op(b)^\circledast \cong I(b)$.

In examples, it is often the case that 
$A$ admits a graded algebra 
anti-automorphism $\tau:A \rightarrow A$
fixing each $1_i\:(i \in \I)$.
We say that the graded triangular basis
admits a {\em duality} $\tau$ when this 
holds. If in addition $\tau$
can be chosen so that it takes each $\X(i,s)$ to $\Y(s,i)$ and each $\H(s,t)$ to $\H(t,s)$, we say that the graded triangular basis is {\em symmetric}.
Assuming that $A$ admits a duality $\tau$,
we can compose the functor
$?^\circledast:A\gmod \rightarrow \domg A$ 
from \cref{politicians} with 
restriction along $\tau$ to obtain a 
contravariant functor
\begin{equation}\label{taudual}
?^\taudual: A\gmod \rightarrow A \gmod.
\end{equation}
This restricts to a contravariant graded auto-equivalence on $A\gmodlfd$.
It is easy to see that $\tau$ descends to anti-automorphisms $\tau:A_{\geq \lambda}
\rightarrow A_{\geq \lambda}$
and
$\tau:A_\lambda\rightarrow A_\lambda$
for each $\lambda \in \Lambda$. 
Using this, we define
dualities $?^\taudual$ on $A_{\geq \lambda}\gmod$ and $A_\lambda\gmod$ too. 
If it happens that 
$L_\lambda(b)^\taudual \cong L_\lambda(b)$
for each $b \in \B_\lambda$, we get that
\begin{align}\label{selfduality}
\Delta(b)^\taudual &\cong \nabla(b),
&\bar\Delta(b)^\taudual&\cong \bar\nabla(b),&
L(b)^\taudual &\cong L(b),&
P(b)^\taudual &\cong I(b),&
I(b)^\taudual &\cong P(b).
\end{align}
It follows that
\begin{equation}
[\bar\Delta(a):L(b)]_q
= \overline{[\bar\nabla(a):L(b)]_q}
\end{equation}
for any $a,b \in \B$.

\section{Good filtrations}

Continue with $A$ being an algebra with a graded triangular basis.
The next important theorem is similar to \cite[Th.~5.28]{BS},
which treats the ungraded setting. A key difference in the graded case is that
the direct sums in the sections of the filtration  may be infinite.

\begin{theorem}\label{ohno}
Take any $b \in \B$ and let $\lambda := \dot b$.
Let $\lambda = \lambda_1,\dots,\lambda_n$
be $\{\mu \in \Lambda\:|\:\mu \leq \lambda\}$
ordered so that $\lambda_p < \lambda_q \Rightarrow p > q$.
\begin{enumerate}
\item
There exists a (non-unique) module $Q(b)\in \ob A\pgmod$ with a graded filtration
$$
Q(b) =  Q_0(b) \supset Q_1(b)\supseteq\cdots\supseteq Q_n(b) = 0$$ 
such that each
$Q_{r-1}(b) / Q_{r}(b)$ is
a (possibly infinite) direct sum of 
degree-shifted copies of standard modules 
$\Delta(a)$ for $a \in \B_{\lambda_r}$.
Moreover, the top section
$Q_0(b) / Q_1(b)$ is actually a finite direct sum
of these standard modules, with one of them being $\Delta(b)$.
\item
There exists a (non-unique) module
$J(b) \in \ob A\igmod$ with a graded filtration
$$0 = J_{0}(b) \subset J_1(b)\subseteq
\cdots\subseteq J_n(b)=J(b)$$ 
such that each
$J_{r}(b) / J_{r-1}(b)$ 
is a (possibly infinite) direct sum of 
degree-shifted copies of costandard modules
$\nabla(a)$ for $a \in \B_{\lambda_r}$.
Moreover, the bottom section
$J_1(b) / J_0(b)$ is actually a finite direct sum
of these costandard modules, with one of them being $\nabla(b)$.
\end{enumerate}
\end{theorem}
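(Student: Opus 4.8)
\emph{Proof plan.} The plan is to write $Q(b)$ and $J(b)$ down explicitly in terms of the graded triangular basis and to read their good filtrations straight off the basis. Since $\bar e_\lambda=\sum_{t\in\S_\lambda}\bar 1_t$ is the identity element of the unital algebra $A_\lambda$ and $L_\lambda(b)\neq 0$, one can fix $s\in\S_\lambda$ and $d\in\Z$ so that $q^{d}P_\lambda(b)$ is a direct summand of the projective graded $A_\lambda$-module $A_\lambda\bar1_s$. I would then take $Q(b):=q^{-d}A1_s\in\ob A\pgmod$, set $Q_0(b):=Q(b)$, and let $Q_1(b)$ be the kernel of the canonical surjection $q^{-d}A1_s\twoheadrightarrow q^{-d}A_{\geq\lambda}1_s$. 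Through the inclusion $i_{\geq\lambda}$ one has $A_{\geq\lambda}1_s\cong A_{\geq\lambda}\bar e_\lambda\otimes_{A_\lambda}A_\lambda\bar1_s=j^\lambda_!(A_\lambda\bar1_s)$; since $A_\lambda\bar1_s$ is a finite direct sum of degree shifts of the modules $P_\lambda(c)$ $(c\in\B_\lambda)$ and $j^\lambda_!$ is additive and exact by \cref{xmas}, the top section $Q_0(b)/Q_1(b)$ is a finite direct sum of degree-shifted standard modules $\Delta(c)$ with $c\in\B_\lambda$, one of which is $\Delta(b)$ by the choice of $s$ and $d$. Finally, \cref{climbing} applied with the lower set $\{\mu:\mu\ngeq\lambda\}$ shows that $Q_1(b)=q^{-d}(I_{\geq\lambda}1_s)$ has basis the images of the triangular basis elements $xhy$ with $x\in\X(s_1)$, $h\in\H(s_1,t_1)$, $y\in\Y(t_1,s)$ whose \emph{middle weight} $\dot s_1=\dot t_1$ satisfies $\dot s_1<\lambda$.

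The next step is to filter $Q_1(b)$ by middle weight. Arguing exactly as in the proof of \cref{parasites}, left multiplication sends any $xhy$ to a linear combination of triangular basis elements of middle weight $\leq\dot s_1$; consequently, for any lower set $D\subseteq\Lambda$, the span of the $xhy$ of middle weight lying in $D$ is a graded submodule. With $\lambda=\lambda_1,\dots,\lambda_n$ ordered as in the statement, each tail $\{\lambda_{r+1},\dots,\lambda_n\}$ is a lower set of $\Lambda$, so letting $Q_r(b)$, for $r\geq1$, be $q^{-d}$ times the span of those basis elements $xhy\in I_{\geq\lambda}1_s$ of middle weight in $\{\lambda_{r+1},\dots,\lambda_n\}$, one gets a chain of graded submodules with $Q_1(b)=q^{-d}(I_{\geq\lambda}1_s)$ and $Q_n(b)=0$, and $Q_{r-1}(b)/Q_r(b)$ has basis the images of the $xhy$ of middle weight exactly $\mu:=\lambda_r$. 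Now fix such a $\mu<\lambda$ and a basis element $y\in\Y(t_1,s)$ (which determines $t_1\in\S_\mu$), and let $M[y]$ be the span in $Q_{r-1}(b)/Q_r(b)$ of the classes $\overline{xhy}$ for $x\in\X(s_1)$, $h\in\H(s_1,t_1)$, $s_1\in\S_\mu$. The crux is to prove that $M[y]$ is a graded submodule and that $\overline{xhy}\mapsto\bar x\bar h$ is an isomorphism $M[y]\cong q^{-\deg y}(A_{\geq\mu}1_{t_1})$ of graded left $A$-modules, where $A_{\geq\mu}1_{t_1}$ is given the basis $\{\bar x\bar h\}$ from \cref{thimbles}. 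Granting this, $A_{\geq\mu}1_{t_1}\cong j^\mu_!(A_\mu\bar1_{t_1})$ is, just as before, a finite direct sum of degree-shifted $\Delta(c)$ with $c\in\B_\mu=\B_{\lambda_r}$, and since the $\overline{xhy}$ for distinct $y$ are part of a basis, $Q_{r-1}(b)/Q_r(b)=\bigoplus_{t_1\in\S_\mu}\bigoplus_{y\in\Y(t_1,s)}M[y]$ is a (possibly infinite) direct sum of degree-shifted $\Delta(c)$ with $c\in\B_{\lambda_r}$; this is part (1).

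The isomorphism for $M[y]$ is the only genuine obstacle, but it is a bookkeeping exercise with the triangular relations. When $a\cdot(xhy)$ is expanded in the triangular basis, its part of middle weight exactly $\mu$ is obtained by expanding $ax$ in the basis of $A1_{s_1}$, keeping only those basis elements whose $\Y$-component equals the idempotent $1_{s_1}$, then multiplying the resulting weight-$\mu$ factors with $h$ and reducing inside $A_\mu$ using the description of $A_\mu$ in \cref{thimbles}; all contributions of strictly smaller middle weight vanish modulo $Q_r(b)$. In particular every surviving term still has $y$ as its final $\Y$-factor, so $M[y]$ is a submodule, and the reduction inside $A_\mu$ is precisely the transport of the $A$-action on $A_{\geq\mu}1_{t_1}$ along $\overline{xhy}\mapsto\bar x\bar h$, the only difference being the degree shift by $\deg y$.

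Part (2) then follows by duality. Applying part (1) to $A^{\op}$, which carries a graded triangular basis obtained by interchanging the roles of $\X(s)$ and $\Y(s)$, yields a module $Q^{\op}(b)\in\ob\domgp A$ with a filtration whose sections are direct sums of degree-shifted copies of $\Delta^{\op}(c)$; applying the exact contravariant functor $?^\circledast:\domgp A\to A\igmod$, and using that $?^\circledast$ carries $\Delta^{\op}(c)$ to $\nabla(c)$ (established in the preceding section on duality), produces the required $J(b)\in\ob A\igmod$ with a $\nabla$-filtration. The one point needing care is that $?^\circledast$ commutes with the infinite direct sums appearing in these sections; this holds because those sums are indexed by the elements $y\in\Y(t_1,s)$, whose degrees are bounded below with only finitely many of each degree, so each graded piece of a section involves only finitely many of the summands.
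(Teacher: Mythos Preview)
Your argument is correct and follows essentially the same route as the paper: take $Q(b)=q^dA1_u$ for a suitable $u\in\S_\lambda$, filter by the ``middle weight'' of triangular basis elements using \cref{parasites}, and identify each section with a direct sum of degree-shifted modules $j^{\lambda_r}_!(A_{\lambda_r}\bar 1_t)$; part (2) is then obtained by applying $?^\circledast$ to the analogous construction for $A^{\op}$. The only organizational difference is that the paper identifies the whole section $Q_{r-1}(b)/Q_r(b)$ at once with $q^d\,j^{\lambda_r}_!\big(\bar e_{\lambda_r}A_{\geq\lambda_r}\bar 1_u\big)$ and then decomposes the $A_{\lambda_r}$-module $\bar e_{\lambda_r}A_{\geq\lambda_r}\bar 1_u$ via \cref{anotherbasis}, whereas you decompose the section directly into the pieces $M[y]$ and verify by hand that each is a submodule isomorphic to a shifted $A_{\geq\mu}\bar 1_{t_1}$; the paper's packaging avoids that direct verification, but your ``bookkeeping'' paragraph does the job correctly.
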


\begin{proof}
We just explain the proof of (1). Then (2) follows 
by applying $?^\circledast$
to the conclusion of
 (1) for $A^\op$ .
Pick $u \in \S_\lambda$ 
and $d \in \Z$ such that $\bar 1_u L_\lambda(b)_{d} \neq 0$. Equivalently, $P_\lambda(b)$ is a summand of 
$q^{d} A_\lambda \bar 1_u$.
We define $Q(b)$ to be the finitely generated projective graded left $A$-module
$q^{d} A 1_u$. It has basis
$xhy$ for $(x,h,y) \in \bigcup_{s,t \in \S_{\leq \lambda}} \X(s) \times \H(s,t) \times \Y(t,u)$.
Let $Q_r(b)$ be the subspace of $Q(b)$
spanned by all $xhy$ for $(x,h,y) \in \bigcup_{r< f\leq n} \bigcup_{s,t \in \S_{\lambda_f}}
\X(s) \times \H(s,t) \times \Y(t,u)$.

We show in this paragraph that $Q_r(b)$ is an $A$-submodule of $Q(b)$.
It suffices to see 
that $a xhy \in Q_r(b)$ for any $i \in \I$,
$r < f \leq n$,
$s,t \in \S_{\lambda_f},
a \in A 1_i$,
 and 
 $(x,h,y) \in 
\X(i,s) \times \H(s,t) \times \Y(t,u)$.
This follows by applying Lemma~\ref{parasites}
to get that
$a xhy$ is a linear combination of elements of the form
$x'h'y'$ for $x' \in \X(s') \times \H(s',t') \times \Y(t',u)$
and $s',t' \in \S_{\lambda_g}\:(g \geq f)$. Hence, we have constructed a filtration of $Q(b)$.

Consider some $1 \leq r \leq n$.
The module $Q_{r-1}(b) / Q_{r}(b)$
has basis given by the canonical images
of the vectors $xhy$ for
$(x,h,y) \in \bigcup_{s,t \in \S_{\lambda_r}}
\X(s) \times \H(s,t) \times \Y(t,u)$.
By
\cref{thisisabasis},
the vectors
$\bar x \otimes\bar h \bar y$
for
$(x,h,y) \in \bigcup_{s,t \in \S_{\lambda_r}}
\X(s) \times \H(s,t) \times \Y(t,u)$
give a basis for
$A_{\geq \lambda_j}\bar e_{\lambda_r} \otimes_{A_{\lambda_r}}
\bar e_{\lambda_r} 
A_{\geq \lambda_r} \bar 1_u$.
It follows that there is a degree-preserving isomorphism of graded vector spaces
$$
f:q^d A_{\geq \lambda_r}\bar e_{\lambda_r} \otimes_{A_{\lambda_r}}
\bar e_{\lambda_r} 
A_{\geq \lambda_r} \bar 1_u
\stackrel{\sim}{\rightarrow} Q_{r-1}(b) / Q_{r}(b),\qquad
\bar x\otimes \bar h \bar y\mapsto xhy + Q_{r}(b).
$$
This is actually an isomorphism of $A$-modules.
To see this, we take $s,t \in \S_{\lambda_r}$,
$a \in A 1_i$ and 
$(x,h,y) \in \X(i,s) \times \H(s,t) \times \Y(t)$ then apply \cref{parasites} again to
write $a xhy$
as a linear combination of basis elements
$x'h'y'$ for $(x',h',y') \in \bigcup_{
s',t' \in \S_{\geq \lambda_r}}
\X(i,s') \times \H(s',t') \times \Y(t',s)$.
It just remains to observe that when $s',t' \in \S_{> \lambda_r}$
both $\bar x' \otimes \bar h' \bar y'$
and $xhy + Q_r(b)$ are zero.

We have now proved that 
$Q_{r-1}(b) / Q_{r}(b) \cong
q^d A_{\geq \lambda_r}\bar e_{\lambda_r} \otimes_{A_{\lambda_r}}
\bar e_{\lambda_r} 
A_{\geq \lambda_r} \bar 1_u$
as graded left $A$-modules.
The basis implies that
$\bar e_{\lambda_r} A_{\geq \lambda_r}
\bar 1_u =
\bigoplus_{s \in \S_{\lambda_r}}
\bigoplus_{y \in \Y(s,u)}
A_{\lambda_r} \bar y$ 
as a graded left $A_{\lambda_r}$-module,
and $A_{\lambda_r} \bar y
\cong q^{\deg(y)} A_{\lambda_r} \bar 1_s$
for $s \in \S_{\lambda_r}$
and $y \in \Y(s,u)$.
We deduce that
$$
q^d A_{\geq \lambda_r}\bar e_{\lambda_r} \otimes_{A_{\lambda_r}}
\bar e_{\lambda_r} 
A_{\geq \lambda_r} \bar 1_u
\cong 
\bigoplus_{s \in \S_{\lambda_r}}
\bigoplus_{y \in \Y(s,u)}
q^{d+\deg(y)} j_!^{\lambda_r} A_{\lambda_r} \bar 1_s
$$
as graded left $A$-modules.
Since $A_{\lambda_r} \bar 1_s$ is a finitely generated projective, it is a finite direct sum of
degree-shifted copies of $P_{\lambda_r}(a)$ for $a \in \B_{\lambda_r}$.
Now our decomposition implies that $Q_{r-1}(b) / Q_{r}(b)$ is a (possibly infinite) direct sum of degree-shifted copies of $\Delta_{\lambda_r}(a)$
for $a \in \B_{\lambda_r}$.
In the case $r=1$, the argument shows further that
$Q_0(b) / Q_1(b)
\cong q^d j_!^\lambda A_\lambda \bar 1_u$,
which is a {\em finite} direct sum of 
degree-shifted standard modules since $A_\lambda \bar 1_u$ is a finitely generated projective, and it
contains $j_!^\lambda
P_\lambda(b) \cong \Delta(b)$
as a summand by the choice of $u$.
\end{proof}

\begin{corollary}\label{gym}
Suppose that $\lambda$ is minimal in $\Lambda$.
Then $\Delta(b) = P(b)$ and $\nabla(b) = I(b)$
for any $b \in \B_\lambda$.
\end{corollary}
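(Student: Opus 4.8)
The plan is to read off both statements from \cref{ohno} specialized to the degenerate case $n=1$. Since $\lambda$ is minimal in $\Lambda$ we have $\{\mu \in \Lambda\:|\:\mu \leq \lambda\} = \{\lambda\}$, so in the notation of \cref{ohno} we have $n = 1$ and $\lambda_1 = \lambda$. Part (1) then produces a module $Q(b) \in \ob A\pgmod$ with filtration $Q(b) = Q_0(b) \supset Q_1(b) = 0$; thus $Q(b) \cong Q_0(b)/Q_1(b)$ is a \emph{finite} direct sum of degree-shifted copies of standard modules $\Delta(a)$ for $a \in \B_\lambda$, one of the summands being $\Delta(b)$. Dually, part (2) produces $J(b) \in \ob A\igmod$ which is a finite direct sum of degree-shifted copies of costandard modules $\nabla(a)$ for $a \in \B_\lambda$, one of the summands being $\nabla(b)$.

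First I would note that $\Delta(b)$, being a direct summand of the finitely generated projective module $Q(b)$, is itself finitely generated and projective in $A\gmod$. By \cref{irrclass} it has irreducible head $L(b)$, and by \cref{tech2}(2) the submodule $\rad \Delta(b)$ is superfluous. Hence the canonical surjection $\Delta(b) \twoheadrightarrow L(b)$ exhibits the projective module $\Delta(b)$ as a projective cover of $L(b)$, giving $\Delta(b) \cong P(b)$.

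The costandard statement is entirely analogous: $\nabla(b)$ is a direct summand of the finitely cogenerated injective module $J(b)$, hence is finitely cogenerated and injective; by \cref{irrclass} it has irreducible socle $L(b)$, and by \cref{tech2}(1) this socle is essential, so the inclusion $L(b) \hookrightarrow \nabla(b)$ realizes $\nabla(b)$ as an injective hull of $L(b)$, giving $\nabla(b) \cong I(b)$. One could equally well deduce this from the standard case by applying the duality $?^\circledast$ to $A^\op$, whose weight poset is again $\Lambda$ so that $\lambda$ remains minimal, together with the identifications $\Delta^\op(b)^\circledast \cong \nabla(b)$ and $P^\op(b)^\circledast \cong I(b)$ recorded in the section on duality. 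Since all of the real work has already been carried out in \cref{ohno}, there is no substantive obstacle here; the only point to verify is the standard characterization of projective covers and injective hulls of simples via superfluous radicals and essential socles, which is immediate from \cref{tech2}.
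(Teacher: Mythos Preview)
Your proof is correct and follows essentially the same approach as the paper: both use \cref{ohno} with $n=1$ to realize $\Delta(b)$ as a direct summand of a finitely generated projective, hence projective, and then identify it with $P(b)$ via its irreducible head. You simply spell out a few more details (invoking \cref{tech2} explicitly and treating the $\nabla$-case separately), whereas the paper leaves these routine points implicit.
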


\begin{proof}
We just prove the first statement.
 It suffices to show that $\Delta(b)$ is projective. This follows from \cref{ohno}(1): the filtration of 
$Q(b)$ constructed there has just one layer by the minimality of $\lambda$
so it shows that $\Delta(b)$ is a summand of the projective module $Q(b)$.
\end{proof}

 \cref{ohno} reveals that we are in a situation which 
 is similar in some respects to the semi-infinite fully stratified categories of \cite{BS}, and in other respects to the affine highest weight categories of \cite{AHW}. However, in
 \cite{BS}, there is no grading and the algebras $A_\lambda$ are assumed to be finite-dimensional, while in \cite{AHW} the graded algebra $A$ is assumed to be both unital 
 and Noetherian. In the examples of interest to us, the sections of the filtration constructed  in Theorem~\ref{ohno}
 usually involve infinite direct sums, so that 
 our indecomposable projectives 
 $P(b)\:(b \in \B)$ are seldom
 Noetherian. So we need to develop some new theory to proceed.
 
\begin{definition}\label{defup}
By a {\em $\Delta$-layer} (resp., a {\em $\bar\Delta$-layer}) of type $\lambda$,
we mean a graded $A$-module that is isomorphic to
$j^\lambda_! \bar V$
for 
a projective (resp., an arbitrary)
graded left $A_\lambda$-module $\bar V$
that is locally finite-dimensional and bounded below.
We say that $V \in \ob A\gmod$ has a 
{\em $\Delta$-flag} (resp., a {\em $\bar\Delta$-flag}) if for some $n \geq 0$
there is a graded
filtration 
$$
0 = V_0 \subset V_1 \subset\cdots \subset V_n = V
$$
and distinct weights $\lambda_1,\dots,\lambda_n \in \Lambda$
such that $V_{r} / V_{r-1}$ is a $\Delta$-layer (resp., a $\bar\Delta$-layer) of type
$\lambda_r$ for each $r = 1,\dots,n$.
\end{definition}

\begin{definition}\label{defdown}
By a {\em $\nabla$-layer} (resp., a {\em $\bar\nabla$-layer}) of type $\lambda$,
we mean a graded $A$-module
that is isomorphic to $j^\lambda_* \bar V$
for 
an injective (resp., an arbitrary)
graded left $A_\lambda$-module $\bar V$ that is locally finite-dimensional and bounded above.
We say that $V \in \ob A\gmod$ has a
{\em $\nabla$-flag} (resp., a {\em $\bar\nabla$-flag}) if for some $n \geq 0$
there is a graded filtration
$$
V = V_0 \supset V_1 \supset \cdots\supset V_n = 0
$$
and distinct weights $\lambda_1,\dots,\lambda_n \in \Lambda$
such that $V_{r-1} / V_{r}$ is a $\nabla$-layer (resp., a $\bar\nabla$-layer) of type
$\lambda_r$ for each $r= 1,\dots,n$.
\end{definition}

\begin{remark}\label{jam}
Our $\Delta$-layers of type $\lambda$
can be defined equivalently as
modules of the form $\bigoplus_{b \in \B_\lambda}
\Delta(b)^{\oplus f_b}$ for $f_b \in \N\lround q\rround$.
Similarly, 
$\nabla$-layers of type $\lambda$ are
modules of the form $\bigoplus_{b \in \B_\lambda}
\nabla(b)^{\oplus f_b}$ for $f_b \in \N\lround q^{-1}\rround$.
Using this, it follows that the module $Q(b)$ in \cref{ohno}(1) has a $\Delta$-flag, and the module $J(b)$ in \cref{ohno}(2)
has a $\nabla$-flag.
\end{remark}

The full subcategory of $A\gmod$
consisting of modules with $\Delta$-flags
(resp., $\bar\Delta$-flags, $\nabla$-flags, $\bar\nabla$-flags) will be denoted
$A\gmoddelta$ (resp., $A\gmoddeltabar$,
$A\gmodnabla$, $A\gmodnablabar$).
Evidently, 
\cref{defup,defdown}
are dual to each other. Note also by \cref{xmas}
that modules in $A\gmoddelta$ or 
$A\gmoddeltabar$ are locally finite-dimensional and bounded below, and modules in $A\gmodnabla$ 
or $A\gmodnablabar$ are locally finite-dimensional and bounded above.
Consequently, all subsequent results about $\Delta$-
or $\bar\Delta$-flags have dual 
formulations involving $\nabla$- or $\bar \nabla$-flags.

Noting that $\Delta$-layers are $\bar\Delta$-layers,
  $A\gmoddelta$ is a subcategory of
$A\gmoddeltabar$.
The next result allows sections
in $\bar\Delta$-flags, hence, in $\Delta$-flags, to be reordered so that the biggest weights are at the top.

\begin{lemma}\label{archers}
If $V$ is a $\bar\Delta$-layer
of type $\lambda$ and $W$ is a $\bar\Delta$-layer of type $\mu$
for $\lambda \ngeq \mu$ then $\Ext^1_A(V,W) = 0$.
\end{lemma}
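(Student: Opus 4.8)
The plan is to establish the stronger assertion that \emph{every} short exact sequence $0\to W\to E\to V\to 0$ in $A\gmod$ splits; this gives $\Ext^1_A(V,W)=0$ since an extension class vanishes exactly when the sequence splits. The idea I would exploit is that, writing $V=i_{\geq\lambda}j^\lambda_!\bar V$ and $W=i_{\geq\mu}j^\mu_!\bar W$ for graded $A_\lambda$- and $A_\mu$-modules $\bar V,\bar W$ as in \cref{defup} (we may assume $\bar V\neq 0$, else $V=0$), the module $V$ is ``induced from its lowest weight space'': one has $e_\lambda V=\bar e_\lambda i^!_{\geq\lambda}V=j^\lambda j^\lambda_!\bar V\cong\bar V$ as $A_\lambda$-modules, $V$ is generated over $A$ by $e_\lambda V$, and $\Hom_A(V,-)$ is controlled by weight $\lambda$. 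The hypothesis $\lambda\ngeq\mu$ will be used to guarantee that $W$ (and hence $E$) contributes nothing new in weight $\lambda$, so that a splitting can be read off at the $\lambda$-weight level.

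Concretely I would proceed in three steps. \emph{Step 1.} Since $j^\lambda_!\bar V\in A_{\geq\lambda}\gmod$ and $j^\mu_!\bar W\in A_{\geq\mu}\gmod$, we have $\Lambda(V)\subseteq\{\nu\in\Lambda:\nu\geq\lambda\}$ and $\Lambda(W)\subseteq\{\nu\in\Lambda:\nu\geq\mu\}$, while $\Lambda(E)\subseteq\Lambda(V)\cup\Lambda(W)$ because truncating by an idempotent is exact. From $\lambda\ngeq\mu$ I then conclude (i) $e_\lambda W=0$, since $\lambda\notin\{\nu:\nu\geq\mu\}$, and (ii) $\lambda$ is minimal in $\Lambda(E)$, since a weight $\nu<\lambda$ of $E$ would have to lie in $\Lambda(V)$ (impossible, as then $\nu\geq\lambda$) or in $\Lambda(W)$ (impossible, as then $\mu\leq\nu<\lambda$ forces $\lambda\geq\mu$). \emph{Step 2.} By (i) the map $e_\lambda\pi\colon e_\lambda E\to e_\lambda V$ is bijective, and since $\lambda$ is minimal in both $\Lambda(E)$ and $\Lambda(V)$ it is a map of $A_\lambda$-modules, hence an isomorphism. \emph{Step 3.} Composing the adjunctions $i_{\geq\lambda}\dashv i^!_{\geq\lambda}$ and $j^\lambda_!\dashv j^\lambda$ yields, naturally in $M\in A\gmod$,
$$
\Hom_A(V,M)\;\cong\;\Hom_{A_{\geq\lambda}}\bigl(j^\lambda_!\bar V,\,i^!_{\geq\lambda}M\bigr)\;\cong\;\Hom_{A_\lambda}\bigl(\bar V,\,\bar e_\lambda\, i^!_{\geq\lambda}M\bigr),
$$
and by \cref{green}(1) the right-hand $A_\lambda$-module equals $e_\lambda M$ whenever $\lambda$ is minimal in $\Lambda(M)$. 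Applying this to $M=E$ and $M=V$ and using naturality, $\pi_*\colon\Hom_A(V,E)\to\Hom_A(V,V)$ is identified with $(e_\lambda\pi)_*\colon\Hom_{A_\lambda}(\bar V,e_\lambda E)\to\Hom_{A_\lambda}(\bar V,e_\lambda V)$, which is bijective by Step 2, while $\id_V$ corresponds to $\id_{\bar V}$. Hence $\id_V$ lifts along $\pi$, so the sequence splits.

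The parts needing care, rather than being the genuine difficulty, are the bookkeeping in Step 3: that the displayed identification is natural in $M$ (it is a composite of adjunction isomorphisms), that under it $\bar e_\lambda i^!_{\geq\lambda}\pi$ becomes $e_\lambda\pi$ once \cref{green}(1) is invoked for both $E$ and $V$, and that $\id_V\mapsto\id_{\bar V}$ (which uses $i^!_{\geq\lambda}i_{\geq\lambda}=\id$, $j^\lambda j^\lambda_!=\id$, with the relevant unit maps being identities). The one content-bearing point is Step 1, namely the verification that $e_\lambda W=0$ and that $\lambda$ remains minimal in $\Lambda(E)$; this is exactly where $\lambda\ngeq\mu$ is used. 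Finally, I note that the argument never uses projectivity of $\bar V$ over $A_\lambda$, so the identical proof also shows $\Ext^1_A(V,W)=0$ when $V,W$ are $\Delta$-layers with $\lambda\ngeq\mu$.
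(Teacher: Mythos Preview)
Your proof is correct and takes a genuinely different route from the paper's argument. The paper proceeds by constructing the beginning of a projective resolution of $V$: it first takes a two-step projective resolution $\bar P_1\to\bar P_0\to\bar V\to 0$ in $A_\lambda\gmod$, applies $j^\lambda_!$ to get a sequence whose terms are direct sums of standard modules $\Delta(b)\,(b\in\B_\lambda)$, and then uses \cref{ohno}(1) to further resolve these by the explicit finitely generated projectives $Q(b)$. Taking the total complex yields $P_{1,0}\oplus P_{0,1}\to P_{0,0}\to V\to 0$ with all irreducible quotients of $P_{1,0}\oplus P_{0,1}$ of the form $L(a)$ with $\dot a\leq\lambda$. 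Since $W$ has lowest weight $\mu$ and $\lambda\ngeq\mu$, one gets $\Hom_A(P_{1,0}\oplus P_{0,1},W)=0$, hence $\Ext^1_A(V,W)=0$.

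Your argument instead works directly with an arbitrary extension $0\to W\to E\to V\to 0$ and splits it by exploiting the adjunction $\Hom_A(j^\lambda_!\bar V,-)\cong\Hom_{A_\lambda}(\bar V,j^\lambda i^!_{\geq\lambda}(-))$ together with \cref{green}(1). This is more elementary in that it avoids \cref{ohno} entirely, which is a substantial structural result about projectives. On the other hand, the paper's resolution-based approach assembles information that is reused later (for instance, the same style of argument underlies \cref{previousl} and \cref{mercedes}), so having the resolution viewpoint available pays dividends downstream. Your approach gives exactly $\Ext^1$-vanishing and no more, but that is all the lemma asks for.
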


\begin{proof}
We have that $V = j^\lambda_! \bar V$
for some $\bar V \in \ob A_\lambda\gmod$ that is locally finite-dimensional and bounded below.
Take the start of a projective
resolution of $\bar V \in \ob A_\lambda\gmod$:
$\bar P_1 \rightarrow \bar P_0 \rightarrow \bar V \rightarrow 0$.
Then apply the exact functor $j^\lambda_!$
to deduce that there is an exact sequence
$P_1 \stackrel{f}{\rightarrow} P_0 \rightarrow V \rightarrow 0
$
in $A\gmod$ such that $P_0, P_1$ both
(possibly infinite) are direct sums of degree-shifted modules 
of the form $\Delta(b)$ for $b \in \B_\lambda$.
Next, we apply \cref{ohno}(1) to construct 
projective resolutions $\cdots \rightarrow P_{0,1}\rightarrow P_{0,0} \rightarrow P_0 \rightarrow 0$ and
$\cdots \rightarrow P_{1,0} \rightarrow P_1 \rightarrow 0$ 
such that $P_{0,0}$ and $P_{1,0}$ are (possibly infinite)
direct sum of degree-shifted copies of
$Q(b)$ for $b \in \B_\lambda$ and 
$P_{0,1}$ is a 
(possibly infinite)
direct sum of degree-shifted copies of
$Q(a)$ for 
$a \in \B_{\leq \lambda}$.
By the nature of the filtration
from \cref{ohno}(1),
it follows that all irreducible quotients of $P_{0,0}, P_{0,1}$ and $P_{1,0}$ are degree-shifted copies of $L(a)$
for $a \in \B_{\leq \lambda}$.
We lift $f:P_1 \rightarrow P_0$ to these resolutions then take the total complex to
obtain the beginning of a projective resolution
$$
P_{1,0} \oplus P_{0,1} \rightarrow P_{0,0} \rightarrow V \rightarrow 0
$$
of $V$. Then apply $\Hom_{A}(-,W)$
and take homology to deduce that $\Ext^1_A (V, W)$ is 
a subquotient of 
$\Hom_A(P_{1,0} \oplus P_{0,1}, W)$.
But the module $W$ has lowest weight $\mu$,
while all non-zero quotients of $P_{1,0}\oplus P_{0,1}$ 
have a weight that is $\leq \lambda$.
Since $\lambda\ngeq \mu$ this means that
$\Hom_A(P_{1,0} \oplus P_{0,1}, W) = 0$, so 
$\Ext^1_A (V, W) = 0$ too.
\end{proof}

\begin{corollary}\label{archersc}
If $V$ is a $\Delta$-layer
of type $\lambda$ and $W$ is a $\Delta$-layer of type $\mu$
for $\lambda \ngeq \mu$ then $\Ext^1_A(V,W) = 0$.
\end{corollary}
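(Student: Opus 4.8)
The plan is to observe that \cref{archersc} is an immediate special case of \cref{archers}, so almost no new argument is needed. First I would recall from \cref{defup} that a $\Delta$-layer of type $\lambda$ is a module of the form $j^\lambda_!\bar V$ with $\bar V$ a \emph{projective} graded left $A_\lambda$-module that is locally finite-dimensional and bounded below; forgetting the projectivity of $\bar V$, such a $\bar V$ is in particular an \emph{arbitrary} graded left $A_\lambda$-module that is locally finite-dimensional and bounded below, so $j^\lambda_!\bar V$ is also a $\bar\Delta$-layer of type $\lambda$. The same remark applies to $W$, so $V$ and $W$ are $\bar\Delta$-layers of types $\lambda$ and $\mu$ respectively. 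Then I would simply apply \cref{archers} with the given hypothesis $\lambda \ngeq \mu$ to conclude $\Ext^1_A(V,W) = 0$. Equivalently, one can just invoke the observation recorded just before \cref{archers} that $A\gmoddelta$ is a subcategory of $A\gmoddeltabar$.

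There is essentially no obstacle here: the finiteness conditions imposed on $\Delta$-layers and on $\bar\Delta$-layers in \cref{defup} are literally identical (locally finite-dimensional and bounded below), so passing from the projective case to the general case costs nothing. If one prefers an argument phrased in terms of explicit modules rather than in terms of the definitions, one can instead use \cref{jam}: a $\Delta$-layer of type $\lambda$ has the form $\bigoplus_{b \in \B_\lambda}\Delta(b)^{\oplus f_b}$ with $f_b \in \N\lround q^{-1}\rround$, each $\Delta(b) = j^\lambda_! P_\lambda(b)$ is a $\bar\Delta$-layer because $P_\lambda(b)$ is locally finite-dimensional and bounded below (being finitely generated), and $j^\lambda_!$ commutes with arbitrary direct sums; then \cref{archers} applies verbatim to give the vanishing.
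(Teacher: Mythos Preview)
Your proposal is correct and matches the paper's own proof exactly: the paper simply says ``This follows immediately from the lemma since $\Delta$-layers are $\bar\Delta$-layers.'' Your additional elaboration via \cref{jam} is unnecessary but harmless.
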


\begin{proof}
This follows immediately from the lemma 
since $\Delta$-layers are $\bar\Delta$-layers.
\end{proof}

\begin{remark}\label{archersr}
In fact, the following 
slightly stronger statement than
\cref{archersc} is true:
if $V$ is a $\Delta$-layer
of type $\lambda$ and $W$ is a $\Delta$-layer of type $\mu$
for $\lambda \ng \mu$ then $\Ext^1_A(V,W) = 0$.
We are not in a position to be able to prove this yet, but it follows from \cref{bggproj,eat}
since they imply that $P(b)$ has a $\Delta$-flag with
top section 
$\Delta(b)$ and other sections
that are $\Delta$-layers of type $\mu$
for $\mu< \dot b$.
In view of this, if $V$ is a $\Delta$-layer
of type $\lambda$, we can construct a projective resolution $\cdots \rightarrow 
P_1 \rightarrow P_0 \rightarrow V$
such that $P_0$ is a direct sum of degree-shifted
copies of $P(b)\:(b \in \B_\lambda)$
and $P_1$ is a direct sum of degree-shifted copies of $P(c)\:(c \in \B_{<\lambda}$).
We deduce that $\Ext^1_A(V,W) = 0$
for a $\Delta$-layer $W$ of type $\mu \nl\lambda$
since we have that $\Hom_A(P_1, W) = 0$ like at the end of the proof of \cref{archers}.
\end{remark}

Later on, the next lemma (which is really two lemmas since there are two cases in the statement) will be used at a crucial point in an inductive argument; see the proof of \cref{citizens}.

\begin{lemma}\label{inductionbase}
Suppose that 
$\lambda \in \Lambda$ is minimal
and $V \in A\gmod$ has the following properties:
\begin{enumerate}
\item
$V$ is locally finite-dimensional and bounded below;
\item
$V = A e_\lambda V$;
\item
$\Ext^1_A(V, \bar\nabla(b)) = 0$ 
(resp.,
$\Ext^1_A(V, \nabla(b)) = 0$) for all
$b \in \B$.
\end{enumerate}
Then $V$ is a $\Delta$-layer (resp., a $\bar\Delta$-layer) of type $\lambda$.
\end{lemma}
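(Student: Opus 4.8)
The plan is to prove that $V\cong j^\lambda_!(e_\lambda V)$ in both cases, and then, in the $\Delta$-layer case, to show in addition that $e_\lambda V$ is projective over $A_\lambda$. If $V=0$ then $V=j^\lambda_!0$ is trivially of both kinds, so assume $V\neq 0$; by hypothesis~(2) we have $e_\lambda V\neq 0$, so $\lambda$ is a weight of $V$, and since $\lambda$ is minimal in $\Lambda$ it is minimal in $\Lambda(V)$. Applying \cref{green}(1) to $V=Ae_\lambda V$ then shows that all weights of $V$ are $\geq\lambda$, i.e.\ $V$ lies in the full subcategory $A_{\geq\lambda}\gmod$ of $A\gmod$, which is closed under subquotients and extensions. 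Set $\bar V:=e_\lambda V$, regarded as a graded $A_\lambda$-module (as $V\in A_{\geq\lambda}\gmod$); it is locally finite-dimensional and bounded below by~(1), and $\bar V=j^\lambda V$. The counit $\epsilon\colon j^\lambda_!\bar V\to V$ of the adjunction $j^\lambda_!\dashv j^\lambda$ has image $A_{\geq\lambda}e_\lambda V=Ae_\lambda V=V$, so it is surjective; write $K:=\ker\epsilon$, giving a short exact sequence $0\to K\to j^\lambda_!\bar V\to V\to 0$ in $A_{\geq\lambda}\gmod$.

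The first step is to show $K=0$. Applying the exact functor $j^\lambda$ to $\epsilon$ and using a triangle identity together with $j^\lambda\circ j^\lambda_!\cong\id$ shows that $j^\lambda(\epsilon)$ is an isomorphism, hence $e_\lambda K=j^\lambda K=0$; since $K\subseteq j^\lambda_!\bar V\in A_{\geq\lambda}\gmod$ and $\lambda$ is minimal, every weight of $K$ is then $>\lambda$. Suppose $K\neq 0$. By \cref{xmas} the module $j^\lambda_!\bar V$ is locally finite-dimensional and bounded below, hence so is its submodule $K$, and \cref{nakcor} produces $b\in\B$ with $\Hom_A(K,\bar\nabla(b))\neq 0$. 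The image of any nonzero such map is a nonzero submodule of $\bar\nabla(b)$, hence contains $L(b)=\soc\bar\nabla(b)$ (which is essential, $\bar\nabla(b)$ being a submodule of the injective hull $I(b)$ of $L(b)$), so $\dot b\in\Lambda(L(b))\subseteq\Lambda(K)$ and therefore $\dot b>\lambda$. Now apply $\Hom_A(-,\bar\nabla(b))$ to the short exact sequence: hypothesis~(3) gives $\Ext^1_A(V,\bar\nabla(b))=0$, so $\Hom_A(j^\lambda_!\bar V,\bar\nabla(b))$ surjects onto $\Hom_A(K,\bar\nabla(b))\neq 0$ and is thus nonzero — yet by adjunction $\Hom_A(j^\lambda_!\bar V,\bar\nabla(b))\cong\Hom_{A_\lambda}(\bar V,e_\lambda\bar\nabla(b))=0$, since all weights of $\bar\nabla(b)$ are $\geq\dot b>\lambda$. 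This contradiction gives $K=0$, so $V\cong j^\lambda_!\bar V$. For the parenthetical statement the same argument works with $\nabla(b)$ in place of $\bar\nabla(b)$, using $\Hom_A(K,\nabla(b))\neq 0$ (as $\bar\nabla(b)\hookrightarrow\nabla(b)$) and $\Ext^1_A(V,\nabla(b))=0$; and in that ($\bar\Delta$-layer) case this already completes the proof, since $\bar V$ is an arbitrary locally finite-dimensional, bounded below graded $A_\lambda$-module.

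It remains, in the $\Delta$-layer case, to show $\bar V$ is projective in $A_\lambda\gmod$, and this is where the vanishing of $\Ext^1_A(V,\bar\nabla(b))$ for $b\in\B_\lambda$ is used essentially (for such $b$, $\Ext^1_A(V,\nabla(b))$ is automatically zero, so the resp.\ hypothesis contributes nothing here). Since $j^\lambda_!$ is exact (\cref{xmas}) and, as a left adjoint of the exact functor $j^\lambda$, carries projectives to projectives, applying it to a projective resolution of $\bar V$ in $A_\lambda\gmod$ yields a projective resolution of $j^\lambda_!\bar V=V$ in $A_{\geq\lambda}\gmod$; combined with the adjunction and the isomorphism $j^\lambda\circ j^\lambda_*\cong\id$, this identifies $\Ext^1_{A_{\geq\lambda}}(V,\bar\nabla(b))$ with $\Ext^1_{A_\lambda}(\bar V,j^\lambda\bar\nabla(b))=\Ext^1_{A_\lambda}(\bar V,L_\lambda(b))$ for each $b\in\B_\lambda$. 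Because $A_{\geq\lambda}\gmod$ is closed under extensions in $A\gmod$, the left-hand side equals $\Ext^1_A(V,\bar\nabla(b))$, which vanishes by~(3). As $\{L_\lambda(b):b\in\B_\lambda\}$ exhausts the irreducible graded $A_\lambda$-modules up to isomorphism and degree shift, \cref{noname} applied to the unital algebra $A_\lambda$ shows $\bar V$ is projective, so $V\cong j^\lambda_!\bar V$ is a $\Delta$-layer of type $\lambda$, completing the proof.

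The step I expect to require the most care is not any single computation but the orchestration of the three module categories $A\gmod$, $A_{\geq\lambda}\gmod$ and $A_\lambda\gmod$: one must check at each stage that $V$ and every module compared with it genuinely lies in the Serre subcategory $A_{\geq\lambda}\gmod$, since this is what legitimizes both $\Ext$-comparisons. It is also this point that accounts for the asymmetry between the two halves of the statement — for $b\in\B_\lambda$ the module $\nabla(b)=j^\lambda_*I_\lambda(b)$ is injective in $A_{\geq\lambda}\gmod$ (so $\Ext^1_A(V,\nabla(b))$ carries no information), whereas $\bar\nabla(b)=j^\lambda_*L_\lambda(b)$ is not, and vanishing of $\Ext^1_A(V,\bar\nabla(b))$ is precisely the extra condition that forces $\bar V$ to be projective over $A_\lambda$.
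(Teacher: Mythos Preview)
Your proof is correct and follows essentially the same approach as the paper's: show that the counit $j^\lambda_! j^\lambda V \to V$ is an isomorphism, then in the $\Delta$-layer case verify that $\bar V$ is projective via \cref{noname}. The only organizational differences are that you dispatch $K=0$ by contradiction (using $e_\lambda K=0$ and the socle of $\bar\nabla(b)$ to force $\dot b>\lambda$ immediately), whereas the paper does a three-way case split on $\dot b$; and you spell out the $\Ext$-comparison by pushing a projective resolution through $j^\lambda_!$, which is exactly the degenerate spectral sequence the paper invokes as \cref{ssarg}.
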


\begin{proof}
The assumption (2) plus
\cref{green}(1) implies that all weights of $V$ are $\geq \lambda$, hence, $V$ is an $A_{\geq \lambda}$-module.
The counit of adjunction
gives a homomorphism $\eps^\lambda_V:j^\lambda_! j^\lambda V
\rightarrow V$. This becomes an isomorphism when we
apply $j^\lambda$, so the $\lambda$-weight space of $\coker f$ is zero. But by (2) we know that every quotient of $V$ is generated by its $\lambda$-weight space,
so this implies that $\coker \eps^\lambda_V = 0$.
Thus, we have proved that $\eps^\lambda_V$ is surjective.

Let $K := \ker \eps^\lambda_V$ so that there is a 
short exact sequence
$0 \rightarrow K \rightarrow j^\lambda_! j^\lambda V \rightarrow V \rightarrow 0$.
Let $Y := \bar\nabla(b)$ (resp., $\nabla(b)$) 
for some $b \in \B$.
We claim that $\Hom_A(K, Y) = 0$.
To see this, we apply $\Hom_A(-,Y)$ to the short exact sequence and use (3) to get another short exact sequence
$$
0 \longrightarrow \Hom_A(V,Y)
\longrightarrow 
\Hom_A(j^\lambda_! j^\lambda V, Y)
\longrightarrow \Hom_A(K,Y) \rightarrow 0
$$
If $\dot b \ngeq\lambda$ then $\Hom_A(K, Y) = 0$
because $\dot b$ is a weight of $\soc Y$ but it is not a weight of $K$.
If $\dot b > \lambda$ then $\Hom_A(j^\lambda_! j^\lambda V, Y) \cong \Hom_{A_\lambda}(j^\lambda V, j^\lambda Y)$,
which is zero
since $j^\lambda Y = 0$. Hence, 
$\Hom_A(K,Y) = 0$ in this case.
If $\dot b = \lambda$, both 
$\Hom_A(j^\lambda_! j^\lambda V, Y)$
and
$\Hom_A(V,Y)$
are isomorphic to $\Hom_{A_\lambda}(j^\lambda V, j^\lambda Y)$; in the second case this follows because 
$Y = j^\lambda_* j^\lambda Y$. Hence, they have  the same graded dimensions. It follows that the first map in the displayed short exact sequence is an isomorphism. Again, this gives that $\Hom_A(K, Y) = 0$, and the claim is proved.

By the claim, we deduce in either case that 
$\Hom_A(K, \bar\nabla(b)) = 0$ for all $b\in \B$.
So $K=0$ thanks to \cref{nakcor}.
Now we have proved that
$V \cong j^\lambda_! j^\lambda V$.
This already shows that $V$ is a $\bar\Delta$-layer.
To complete the proof, we need to show
that $j^\lambda V$ is projective in $A_\lambda\gmod$
in the case that
$\Ext^1_A(V, \bar\nabla(b)) = 0$ for all $b \in \B$.
The functor $j^\lambda_*$ is right adjoint to an exact functor, so it takes injective graded $A_\lambda$-modules to injective graded $A_{\geq \lambda}$-modules.
It is also exact by \cref{xmas}.
Since $\Hom_{A_\lambda}(j^\lambda V, -)\cong 
\Hom_{A_{\geq \lambda}}(V, -) \circ j^\lambda_*$,
a standard degenerate Grothendieck spectral sequence argument gives that
\begin{equation}\label{ssarg}
\Ext^n_{A_\lambda}(j^\lambda V, -)
\cong \Ext^n_{A_{\geq \lambda}}(V, j^\lambda_* -)
\end{equation} 
for any $V \in A_{\geq \lambda} \gmod$ and $n \geq 0$.
Using this, we deduce that
$$
\Ext^1_{A_\lambda}(j^\lambda V, L_\lambda(b))
\cong
\Ext^1_{A_{\geq \lambda}}(V, j^\lambda_* L_\lambda(b))=
\Ext^1_A(V, \bar\nabla(b))=0
$$
for all $b \in \B_\lambda$.
This implies that $j^\lambda V$ is projective
according to \cref{noname}.
\end{proof}

\section{Truncation to upper sets}\label{tuper}

In this section, 
we assume that $\hat\Lambda$ is an upper set in $\Lambda$. Let $\hat\S := \{s \in \S\:|\:\dot s \in \hat\Lambda\}$,
$\hat\B := \{b \in \B\:|\:\dot b \in \hat \Lambda\}$
and 
$\check\Lambda := \Lambda - \hat\Lambda$, $\check\S := \S - \hat\S$,
$\check\B := \B-\hat\B$.
Let $\hat A$ be the quotient of $A$ by the
two-sided ideal generated by the idempotents
$e_\lambda\:(\lambda \in \check\Lambda)$.
Let $$
i:\hat A\gmod \rightarrow A\gmod
$$ 
be the canonical
inclusion functor, which is part of the adjoint triple
$(i^*,i,i^!)$ discussed in \cref{func1,func2}.
So $i^* = \hat A \otimes_A-$
and $i^! = \bigoplus_{i \in \I} \Hom_{A}(\hat A \bar 1_i,-)$.

By \cref{climbing}, $\hat A = \bigoplus_{i,j \in \I} 1_i \hat A 1_j$ has a graded triangular basis
with special idempotents indexed by 
$\hat \S \subseteq \I$, weight poset
$\hat\Lambda$, and bases arising from
the sets $\hat \X(i,s), \hat \H(s,t),
\hat \Y(s,j)$ that are the canonical images
of $\X(i,s), \H(s,t), \Y(t,j)$
for $i,j \in \I, s,t \in \hat\S$.
Using the decoration ``$\wedge$" in other notation related to $\hat A$ in the obvious way,
the algebras $\hat A_{\geq \lambda}\:(\lambda \in \hat\Lambda)$ 
are naturally identified with the algebras $A_{\geq \lambda}$. So we also have that $\hat A_\lambda = A_\lambda$,
and the adjoint triple $(\hat \jmath^\lambda_!, \hat \jmath, \hat \jmath^\lambda_*)$ defined for $\hat A$
is just the same triple of functors  $(j^\lambda_!,j^\lambda, j^\lambda_*)$ as for $A$,
still assuming that $\lambda \in \hat\Lambda$.

The various modules
for $\hat A$ arising from the triangular basis are 
\begin{align}\label{standardsagain}
\hat\Delta(b)& := 
j^\lambda_! P_\lambda(b),&
\widehat{\bar\Delta}(b) &:= 
j^\lambda_! L_\lambda(b),&
\widehat{\bar\nabla}(b)& := 
j^\lambda_* L_\lambda(b),&
\hat\nabla(b) &:= 
j^\lambda_* I_\lambda(b)
\end{align}
for $b \in \hat \B$ and $\lambda := \dot b$.
Then the modules
$\hat L(b) := \head \hat\Delta(b) = \soc \hat\nabla(b)$ for $b \in \hat \B$ 
give a complete set of irreducible graded
left $\hat A$-modules up to isomorphism and degree shift. We denote a projective cover
and an injective hull of $\hat L(b)$
by $\hat P(b)$ and $\hat I(b)$, respectively.

\begin{lemma}\label{workout}
For $b \in \hat\B$, we have that 
$\hat \Delta(b) = \Delta(b)$,
$\widehat{\bar\Delta}(b) = \bar\Delta(b)$,
$\hat L(b) = L(b)$,
$\widehat{\bar \nabla}(b) = \bar\nabla(b)$ and
$\hat \nabla(b) = \nabla(b)$.
Also
$i^* P(b) \cong \hat P(b)$, 
$i^! I(b) \cong \hat I(b)$
if $b \in \hat\B$, and
$i^* P(b) = i^* \Delta(b) = i^* \bar\Delta(b)
=i^* L(b) = i^! L(b)= i^! \bar\nabla(b) = i^! \nabla(b) = i^! I(b)= 0$ if $b \in \check \B$.
\end{lemma}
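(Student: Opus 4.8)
The plan is to exploit what is already recorded just before the lemma: for $\lambda\in\hat\Lambda$ one has $\hat A_{\geq\lambda}=A_{\geq\lambda}$, hence $\hat A_\lambda=A_\lambda$, and the triple $(\hat\jmath^\lambda_!,\hat\jmath^\lambda,\hat\jmath^\lambda_*)$ for $\hat A$ is the very same triple $(j^\lambda_!,j^\lambda,j^\lambda_*)$ as for $A$. Recall also that $i$ identifies $\hat A\gmod$ with the Serre subcategory of $A\gmod$ of modules $V$ with $e_\mu V=0$ for all $\mu\in\check\Lambda$, and that under this identification the inclusion $A_{\geq\lambda}\gmod\hookrightarrow\hat A\gmod$ followed by $i$ is exactly $i_{\geq\lambda}$. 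So for $b\in\hat\B$ with $\lambda:=\dot b$, comparing the defining formulas in \cref{standards} and \cref{standardsagain} gives at once $\hat\Delta(b)=\Delta(b)$, $\widehat{\bar\Delta}(b)=\bar\Delta(b)$, $\widehat{\bar\nabla}(b)=\bar\nabla(b)$ and $\hat\nabla(b)=\nabla(b)$ as $A$-modules. Since an $A$-submodule of a module in $\hat A\gmod$ is the same thing as an $\hat A$-submodule, heads and radicals are computed identically in the two categories, so $\hat L(b)=\head\hat\Delta(b)=\head\Delta(b)=L(b)$ as well. This settles the first sentence of the lemma.

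Before the rest I would record two support facts valid for any $b\in\B$ with $\lambda:=\dot b$: the module $\Delta(b)$, hence its quotients $\bar\Delta(b)$ and $L(b)$, has all weights $\geq\lambda$ and is generated by its lowest weight space $e_\lambda\Delta(b)$; dually $\nabla(b)$, hence its submodules $\bar\nabla(b)$ and $L(b)$, as well as $I(b)$, has all weights $\geq\lambda$ and has essential simple socle $L(b)$. In particular, if $b\in\hat\B$ then $\Delta(b),\bar\Delta(b),L(b),\bar\nabla(b),\nabla(b)$ all lie in $\hat A\gmod$ (their weights are $\geq\dot b\in\hat\Lambda$ and $\hat\Lambda$ is an upper set), whereas if $b\in\check\B$ then $e_\lambda L(b)\neq0$ with $\lambda=\dot b\in\check\Lambda$, so $L(b)\notin\hat A\gmod$.

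Now take $b\in\check\B$. Being left adjoint to the exact functor $i$, the functor $i^*$ preserves projectives, and since $i^*(q^dA1_i)\cong q^d\hat A\bar1_i$ is finitely generated, $i^*$ preserves finite generation; so $i^*P(b)$ is a finitely generated projective graded $\hat A$-module (and $\hat A$ is again locally finite-dimensional and bounded below, so \cref{tech2} applies to it). By adjunction $\Hom_{\hat A}(i^*P(b),\hat L(c))\cong\Hom_A(P(b),L(c))=0$ for every $c\in\hat\B$ (as $b\neq c$ and $\head P(b)=L(b)$), so $i^*P(b)$ has zero head and hence $i^*P(b)=0$ by \cref{tech2}(2). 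As $\Delta(b),\bar\Delta(b),L(b)$ are quotients of $P(b)$ and $i^*$ is right exact, also $i^*\Delta(b)=i^*\bar\Delta(b)=i^*L(b)=0$. For $i^!$: the module $I(b)$ has essential simple socle $L(b)$, and $L(b)\notin\hat A\gmod$, so no nonzero submodule of $I(b)$ lies in $\hat A\gmod$; thus $i^!I(b)=0$, and since $L(b),\bar\nabla(b),\nabla(b)$ embed in $I(b)$ and $i^!$ is left exact, $i^!L(b)=i^!\bar\nabla(b)=i^!\nabla(b)=0$ too.

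Finally take $b\in\hat\B$. As above $i^*P(b)$ is a finitely generated projective graded $\hat A$-module, hence by \cref{tech2}(2) a finite direct sum of degree-shifted copies of the indecomposable projectives $\hat P(c)\ (c\in\hat\B)$; comparing $\Hom_{\hat A}(i^*P(b),\hat L(c))\cong\Hom_A(P(b),L(c))=\delta_{b,c}\kk$ (concentrated in degree $0$) with $\Hom_{\hat A}(\hat P(c),\hat L(c'))=\delta_{c,c'}\kk$ forces $i^*P(b)\cong\hat P(b)$. Dually, $i^!I(b)$ is injective in $\hat A\gmod$ (right adjoint of the exact $i$), embeds in $I(b)$, and contains $i^!L(b)=L(b)\neq0$, so its socle is $i^!I(b)\cap\soc I(b)=L(b)=\hat L(b)$, which is essential; hence $i^!I(b)$ is finitely cogenerated and by \cref{tech2}(1) equals the injective hull of $\hat L(b)$, i.e.\ $i^!I(b)\cong\hat I(b)$. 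The only genuinely delicate point is the bookkeeping that makes the first-sentence assertions literal identities and the correct feeding of the mild finiteness of $P(b)$ and $I(b)$ into \cref{tech2}, since in this graded, non-Noetherian setting one has no composition series to fall back on; the argument itself is essentially definition-chasing.
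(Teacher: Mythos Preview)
Your proof is correct and follows essentially the same approach as the paper's: identify the standardization/costandardization functors for $\hat A$ with those for $A$ to get the first sentence, then use adjunction to compute $\Hom_{\hat A}(i^*P(b),\hat L(c))\cong\Hom_A(P(b),L(c))$ (and dually for $i^!I(b)$), concluding via right/left exactness for the remaining modules. Your treatment of $i^!I(b)$ via the essential socle of $I(b)$ is a slight variant of the paper's dual adjunction argument, but equally valid.

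One small inaccuracy in your ``support facts'' paragraph: the claim that $I(b)$ has all weights $\geq\lambda$ is false in general. The injective hull of $L(b)$ in $A\gmod$ can have weights not $\geq\dot b$; indeed, by \cref{bgginj} the $\nabla$-flag of $I(b)$ involves $\nabla(a)$ for $\dot a\leq\dot b$, and such $\nabla(a)$ has lowest weight $\dot a$, which need not be $\geq\dot b$. Only $\nabla(b)$ (and its submodules) is guaranteed to live in $A_{\geq\lambda}\gmod$. Fortunately you never use this claim about $I(b)$: for $b\in\check\B$ you argue via the essential socle $L(b)\notin\hat A\gmod$, and for $b\in\hat\B$ you only need $L(b)\in\hat A\gmod$. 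So the error is cosmetic and the proof stands; just delete ``as well as $I(b)$'' from that sentence.
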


\begin{proof}
We have that $i_{\geq \lambda} = i \circ \hat 
\imath_{\geq \lambda}$, which implies the
assertions about
$\Delta(b)$,
$\bar\Delta(b)$,
$\bar\nabla(b)$ and
$\nabla(b)$ for $b \in \hat \B$. Clearly we also have that $\hat L(b) = L(b)$ since it is the irreducible head of $ \hat \Delta(b) = \Delta(b)$.
To see that $i^* P(b) \cong \hat P(b)$,
note that $i^*$ is left adjoint to an exact functor,
so $i^* P(b)$ is a finitely generated
projective for any $b \in \B$. It remains to
observe for $c \in \hat\B$ that $\Hom_{\hat A}(i^* P(b), L(c)) \cong
\Hom_A(P(b), i L(c))$, which is zero unless $c=b$.
This gives that $i^* P(b) \cong \hat P(b)$ for $b \in \hat \B$ and it is zero otherwise.
A similar argument proves the assertion about $i^! I(b)$.
Everything else follows by right exactness of $i^*$
and left exactness of $i^!$.
\end{proof}

\begin{lemma}\label{previousl}
For 
$V \in \ob A\gmoddelta$ and $i \in \I$,
we have that
$\Tor^A_m(1_i \hat A, V) = 0$
for all $m \geq 1$.
\end{lemma}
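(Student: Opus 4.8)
The plan is to peel off the $\Delta$-flag of $V$ and reduce to $V=\Delta(b)$, then to prove $\Tor^A_m(1_i\hat A,\Delta(b))=0$ for all $m\geq 1$ by induction on $\dot b$ in the lower finite poset $\Lambda$, using the projective module $Q(b)$ produced by \cref{ohno}(1). Throughout I will write $I:=\ker(A\twoheadrightarrow\hat A)$, the two-sided ideal generated by the $e_\mu$ with $\mu\in\check\Lambda$; by \cref{climbing} (see its proof) this $I$ has basis $\{xhy:(x,h,y)\in\bigcup_{s,t\in\check\S}\X(s)\times\H(s,t)\times\Y(t)\}$. I will also use that $1_i\hat A\otimes_A-=1_i\cdot(\hat A\otimes_A-)$ with $1_i\cdot(-)$ exact, and that $\hat A\otimes_A M=M/IM$ for any graded left $A$-module $M$. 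Since $\Tor^A_\bullet(1_i\hat A,-)$ yields long exact sequences and $\Tor^A_m(1_i\hat A,-)$ commutes with arbitrary direct sums, and a module in $A\gmoddelta$ has a finite filtration whose sections are $\Delta$-layers, i.e.\ by \cref{jam} direct sums of degree-shifted standard modules, it is enough to prove $\Tor^A_m(1_i\hat A,\Delta(b))=0$ for all $m\geq 1$, all $b\in\B$ and all $i\in\I$.

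For the induction I would fix $b\in\B$, put $\lambda:=\dot b$, and assume $\Tor^A_m(1_i\hat A,\Delta(a))=0$ for all $m\geq 1$, all $i\in\I$ and all $a$ with $\dot a<\lambda$. Taking $Q(b)=Q_0\supset Q_1\supset\cdots\supset Q_n=0$ as in \cref{ohno}(1), with $\lambda=\lambda_1$ and $\lambda_2,\dots,\lambda_n$ the weights $<\lambda$, each section $Q_{r-1}/Q_r$ with $r\geq 2$ is a direct sum of degree-shifted copies of $\Delta(a)$ with $\dot a=\lambda_r<\lambda$; so the inductive hypothesis and the two properties of $\Tor$ just recalled give $\Tor^A_m(1_i\hat A,Q_1)=0$ for all $m\geq 1$, by working down the filtration $Q_1\supset\cdots\supset Q_n=0$ (this is automatic when $n=1$, e.g.\ for $\lambda$ minimal). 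Since $Q_0$ is projective, the long exact sequence for $0\to Q_1\to Q_0\to Q_0/Q_1\to 0$ then forces $\Tor^A_m(1_i\hat A,Q_0/Q_1)=0$ for $m\geq 2$ and
\[\Tor^A_1(1_i\hat A,Q_0/Q_1)=1_i\cdot\ker\bigl(\hat A\otimes_A Q_1\to\hat A\otimes_A Q_0\bigr)=1_i\cdot(Q_1\cap IQ_0)/IQ_1 .\]

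The hard part will be to show this last group vanishes, i.e.\ that $\hat A\otimes_A Q_1\to\hat A\otimes_A Q_0$ is injective; concretely I would prove $IQ_0=IQ_1$, from which $IQ_0\subseteq Q_1$ and hence $Q_1\cap IQ_0=IQ_1$ follow at once. Only $IQ_0\subseteq IQ_1$ needs work. Writing $Q_0=q^dA1_u$ with $\dot u=\lambda$ as in \cref{ohno}, $IQ_0=I1_u$ is spanned by elements $f1_u$ with $f\in I$, and expanding $f$ in the basis of $I$ it suffices to treat $f=xhy$ with $x\in\X(i',s)$, $h\in\H(s,t)$, $y\in\Y(t,u)$ and $\dot s=\dot t\in\check\Lambda$. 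Then $hy=1_s\cdot h\cdot y$ is again one of the triangular basis elements of $Q_0$, and it lies in $Q_1$ because $\dot s\in\check\Lambda$ together with $\dot s=\dot t\leq\dot u=\lambda$ forces $\dot s<\lambda$; this is the one place where the hypothesis that $\hat\Lambda$ is an upper set (so $\check\Lambda$ is a lower set not containing $\lambda$) is used. Since moreover $x=x1_s\in I$ (as $1_s\in I$), we get $xhy=x\cdot(hy)\in I\cdot Q_1=IQ_1$, which proves $IQ_0\subseteq IQ_1$.

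Finally, having shown $\Tor^A_m(1_i\hat A,Q_0/Q_1)=0$ for all $m\geq 1$, I would conclude by recalling from \cref{ohno}(1) that $Q_0/Q_1$ is a finite direct sum of degree-shifted standard modules $\Delta(a)$ with $a\in\B_\lambda$, one summand being $\Delta(b)$; as a direct summand, $\Delta(b)$ inherits the $\Tor$-vanishing, which closes the induction and finishes the proof. Everything here is formal dévissage except the triangular-basis computation of the third paragraph, and that is where I expect the only real difficulty to lie.
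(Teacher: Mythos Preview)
Your overall strategy---reduce to standard modules and induct on $\dot b$ using the projective $Q(b)$---is sound and genuinely different from the paper's route, but the triangular-basis computation at the heart of your argument has a real gap. You assert that ``$\check\Lambda$ is a lower set not containing $\lambda$,'' but nothing forces $\lambda=\dot b\in\hat\Lambda$; the lemma requires $\Tor^A_m(1_i\hat A,\Delta(b))=0$ for \emph{all} $b\in\B$, including those with $\dot b\in\check\Lambda$. When $\lambda\in\check\Lambda$, your key claim $IQ_0=IQ_1$ is simply false: since $u\in\check\S$ we have $1_u\in I$, hence $A1_u=(A1_u)1_u\subseteq I1_u$, so $IQ_0=Q_0$; but $IQ_1\subseteq Q_1\subsetneq Q_0$.

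The repair is a short case split. When $\lambda\in\check\Lambda$, every $\lambda_r\leq\lambda$ lies in the lower set $\check\Lambda$, so \cref{workout} gives $i^*\Delta(a)=0$ for every $\Delta(a)$ appearing in the filtration of $Q_1$; together with your inductive $\Tor$-vanishing this forces $\hat A\otimes_A Q_1=0$, while $\hat A\otimes_A Q_0=\hat A\bar 1_u=0$ directly. Thus $\Tor_1^A(1_i\hat A,Q_0/Q_1)=\ker(0\to 0)=0$ trivially in this case. Your argument is correct as written once $\lambda\in\hat\Lambda$.

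For comparison, the paper avoids the poset induction entirely. It first proves $\Tor_1^A(1_i\hat A,\Delta(b))=0$ for \emph{all} $b$ simultaneously: taking $0\to K\to Q\to\Delta(b)\to 0$, it uses \cref{archersc} to reorder the $\Delta$-flags of $K$ and $Q$ so that the $\check\Lambda$-typed layers sit at the bottom and the $\hat\Lambda$-typed layers at the top, then identifies $\hat A\otimes_A K$ and $\hat A\otimes_A Q$ with the $\hat\Lambda$-parts and reads off the $\Tor_1$-vanishing by comparing with the obvious short exact sequence on those parts. Higher $\Tor$ then follows by dimension-shifting via $\Tor_{m+1}^A(1_i\hat A,\Delta(b))\cong\Tor_m^A(1_i\hat A,K)$. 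Your approach is more hands-on with the triangular basis; the paper's is more categorical, trading the explicit basis manipulation for the reordering lemma.
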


\begin{proof}
In the next paragraph, we show that $\Tor_m^A(1_i \hat A, \Delta(b)) = 0$ for 
$b \in \B$ and $m \geq 1$. 
To deduce the lemma from this, 
$\Delta$-layers are (possibly infinite) direct sums
of standard modules as noted in \cref{jam},
so we get that $\Tor_m^A(1_i \hat A, V) = 0$
for all $\Delta$-layers $V$ and $m \geq 1$.
Then one deduces the result for all $V$ with a $\Delta$-flag by induction on the length of the filtration.

Take $b \in \B$ and let $Q$ be the module $Q(b)$ from \cref{ohno}(1).
There is a short exact sequence
$0 \rightarrow K \rightarrow Q \rightarrow \Delta(b)
\rightarrow 0$ with $K$ and $Q$ having a $\Delta$-flags. Applying $1_i \hat A \otimes_A-$ gives
the long exact sequence
$$
0 \rightarrow \Tor_1^A(1_i \hat A, \Delta(b))  \longrightarrow 1_i \hat A \otimes_A K
 \longrightarrow 1_i \hat A \otimes_A Q
 \longrightarrow 1_i \hat A \otimes_A \Delta(b)
  \rightarrow 0
  $$
  and isomorphisms
  $\Tor_{m+1}^A(1_i \hat A, \Delta(b))\cong
  \Tor_m^A(1_i \hat A, K)$ for $m \geq 1$.
  Now we use \cref{archersc} to see that the $\Delta$-flags of $K$ and $Q$ can be ordered to obtain short exact sequences
  $0 \rightarrow K^- \rightarrow K \rightarrow K^+\rightarrow 0$ and $0 \rightarrow Q^- \rightarrow Q \rightarrow Q^+\rightarrow 0$ so that $K^-$ and $Q^-$ (resp., $K^+$ and $Q^+$) have a $\Delta$-flags
with all sections being $\Delta$-layers of types in $\check\Lambda$ (resp., $\hat\Lambda$).
It is then clear that $1_i \hat A \otimes_A K = 1_i K^+$
and $1_i \hat A \otimes_A Q = 1_i Q^+$,
since $K^+$ and $Q^+$ are the largest quotients of $K$ and $Q$ with all weights in $\hat\Lambda$.
If $b \notin \hat\B$ then $K^+ = 0$, so we have that
$\Tor_1^A(1_i \hat A, \Delta(b)) = 0$ at once.
If $b \in \hat\B$ then there is a short exact sequence
$0 \rightarrow 1_i K^+ \rightarrow 1_i Q^+ \rightarrow 1_i \Delta(b)
\rightarrow 0$. This is just the same
as the rightmost terms  $1_i \hat A \otimes_A K \rightarrow
1_i \hat A \otimes_A Q \rightarrow 1_i \hat A \otimes_A \Delta(b) \rightarrow 0$ of the long exact sequence displayed above. So again we deduce that $\Tor_1^A(1_i \hat A, \Delta(b)) = 0$.
So now we have shown that $\Tor_1^A(1_i \hat A, \Delta(b)) = 0$ for all $b \in \B$.
For $K$ as before, it follows that $\Tor_1^A(1_i \hat A, K) = 0$,
hence, we get that $\Tor_2^A(1_i \hat A, \Delta(b)) = 0$
for all $b \in \B$.
Further degree shifting like this 
gives the conclusion in general.
\end{proof}

\begin{corollary}\label{previousc}
The functor $i^* = \hat A \otimes_A -$
takes short exact sequences of modules with $\Delta$-flags
 to short exact sequences of modules with $\Delta$-flags.
Similarly, the functor $i^! = \bigoplus_{i \in \I} \Hom_A(\hat A 1_i,-)$
takes short exact sequences of modules with $\nabla$-flags
 to short exact sequences of modules with $\nabla$-flags.
\end{corollary}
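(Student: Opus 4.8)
The plan is to derive the whole statement from \cref{previousl}. First I would note that $\Tor$ commutes with arbitrary direct sums and $\hat A=\bigoplus_{i\in\I}1_i\hat A$, so that $\Tor^A_m(\hat A,W)=\bigoplus_{i\in\I}\Tor^A_m(1_i\hat A,W)=0$ for every $W\in\ob A\gmoddelta$ and $m\geq 1$. Hence, given a short exact sequence $0\to U\to V\to W\to 0$ with $U,V,W\in\ob A\gmoddelta$, applying the right exact functor $i^*=\hat A\otimes_A-$ and using $\Tor^A_1(\hat A,W)=0$ produces a short exact sequence $0\to i^*U\to i^*V\to i^*W\to 0$. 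So the only remaining point is to show that $i^*$ carries modules with $\Delta$-flags to modules with $\Delta$-flags.

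For that I would argue as follows. The functor $i^*$ is a left adjoint (of the inclusion $i$), hence commutes with arbitrary direct sums. By \cref{jam}, a $\Delta$-layer of type $\lambda$ has the form $\bigoplus_{b\in\B_\lambda}\Delta(b)^{\oplus f_b}$. If $\lambda\in\check\Lambda$ then $\B_\lambda\subseteq\check\B$ and $i^*\Delta(b)=0$ by \cref{workout}, so $i^*$ annihilates the layer. If $\lambda\in\hat\Lambda$ then, since $\hat\Lambda$ is an upper set and $\Delta(b)$ has all its weights $\geq\lambda$, each $\Delta(b)$ with $b\in\B_\lambda$ is already an $\hat A$-module, so $i^*\Delta(b)=\hat\Delta(b)=\Delta(b)$ and $i^*$ carries the layer to itself, now viewed as a $\Delta$-layer of type $\lambda$ for $\hat A$. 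Next, taking $V\in\ob A\gmoddelta$ with a $\Delta$-flag $0=V_0\subset\cdots\subset V_n=V$ of distinct types $\lambda_1,\dots,\lambda_n$, I would apply $i^*$ to each $0\to V_{r-1}\to V_r\to V_r/V_{r-1}\to 0$ (exact by the first paragraph, since all three terms have $\Delta$-flags) to obtain a filtration $0=i^*V_0\subseteq\cdots\subseteq i^*V_n=i^*V$ whose sections $i^*(V_r/V_{r-1})$ are each either zero or a $\Delta$-layer of type $\lambda_r\in\hat\Lambda$ for $\hat A$; deleting the zero steps, and noting that the surviving types are still distinct, exhibits a $\Delta$-flag of $i^*V$. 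Doing this for $U$, $V$ and $W$ finishes the first statement.

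For the statement about $i^!$ I plan to dualize. The algebra $A^\op$ carries a graded triangular basis with the same weight poset $\Lambda$ (swap $\X$ and $\Y$), so $\hat\Lambda$ remains an upper set, the corresponding truncation of $A^\op$ is $\hat A^\op$, and its version of $i^*$ is $i^*_{A^\op}=\hat A^\op\otimes_{A^\op}-$. For locally finite-dimensional $V$ I would identify $(i^!V)^\circledast$ with $i^*_{A^\op}(V^\circledast)$: the former is the largest graded submodule of $V$ whose weights all lie in $\hat\Lambda$, the latter the largest graded quotient of $V^\circledast$ with that property, and $?^\circledast$ interchanges submodules with quotient modules while preserving weight spaces. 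Combined with $j^\lambda_*\circ?^\circledast\cong?^\circledast\circ j^\lambda_!$ and the fact that $?^\circledast$ interchanges injective, locally finite-dimensional, bounded-above $A_\lambda$-modules with projective, locally finite-dimensional, bounded-below $(A^\op)_\lambda$-modules, this shows $?^\circledast$ carries $A\gmodnabla$ into $A^\op\gmoddelta$, and conversely. So I would take a short exact sequence in $A\gmodnabla$, apply $?^\circledast$ to land in $A^\op\gmoddelta$, invoke the already-proved first half of the corollary for $A^\op$, and apply $?^\circledast$ once more; since all the modules in sight are locally finite-dimensional by \cref{xmas}, so that $?^{\circledast\circledast}\cong\id$, this recovers the required short exact sequence of $\hat A$-modules with $\nabla$-flags.

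The substantive input is entirely \cref{previousl}; within the corollary the only points needing care are the observation that $i^*$ commutes with the possibly infinite direct sums occurring in $\Delta$-layers, and the bookkeeping in the identification $(i^!V)^\circledast\cong i^*_{A^\op}(V^\circledast)$ used to transfer the result to $i^!$. I do not expect either to be a genuine obstacle.
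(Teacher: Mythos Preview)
Your proposal is correct and follows essentially the same approach as the paper's proof: use \cref{previousl} to get exactness of $i^*$ on short exact sequences in $A\gmoddelta$, reduce the question of where $\Delta$-flags go to the case of a single $\Delta$-layer via \cref{workout} and the fact that $i^*$ commutes with direct sums, and then obtain the $\nabla$-statement by dualizing through $A^\op$. Your write-up is somewhat more explicit (you spell out the inductive filtration argument and the description of $(i^!V)^\circledast\cong i^*_{A^\op}(V^\circledast)$), but there is no substantive difference in strategy.
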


\begin{proof}
The lemma shows that $i^*$ takes short exact sequences
of modules with $\Delta$-flags to short exact sequences.
Hence, to prove that $i^*$ takes
modules with $\Delta$-flags to $\Delta$-flags, 
it suffices to check that $i^*$ takes $\Delta$-layers
to $\Delta$-layers. This follows from \cref{workout}
since $i^*$ commutes with direct sums.
This proves the first statement.
Then the second statement follows by duality, i.e.,
we apply $?^\circledast$ then the analog of the first statement for the opposite algebras, then apply $?^\circledast$ again.
\end{proof}

\begin{lemma}\label{mercedes}
For $V \in \ob A\gmoddelta$ and $W \in \ob\hat A\gmod$,
we have that
$\Ext^n_{A}(V, i W)
\cong \Ext^n_{\hat A}(i^* V, W)$
for all $n \geq 0$.
Similarly, for $V \in \ob \hat A\gmod$ and $W \in \ob A\gmodnabla$,
we have that
 $\Ext^n_{A}(i V, W)
\cong \Ext^n_{\hat A}(V, i^! W)$
for all $n \geq 0$.
\end{lemma}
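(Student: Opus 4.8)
The plan is to obtain both isomorphisms as the derived-functor shadow of the adjunctions $(i^*, i)$ and $(i, i^!)$, the one nontrivial input being the $\Tor$-vanishing recorded in \cref{previousl}. For the first isomorphism, fix $V \in \ob A\gmoddelta$ and choose a projective resolution $\cdots \to P_1 \to P_0 \to V \to 0$ in $A\gmod$; this exists because $A\gmod$ is a Grothendieck category with enough projectives. Since the inclusion $i\colon \hat A\gmod \to A\gmod$ is exact and $i^* = \hat A\otimes_A -$ is its left adjoint, $i^*$ carries projectives to projectives, so $i^* P_\bullet$ is a complex of projective graded left $\hat A$-modules. Its homology in degree $0$ is $i^* V$ by right-exactness of $i^*$, while in degree $m \geq 1$ it computes $\bigoplus_{i\in\I}\Tor^A_m(1_i\hat A, V)$, which vanishes by \cref{previousl} as $V$ has a $\Delta$-flag. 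Hence $i^* P_\bullet \to i^* V$ is a projective resolution in $\hat A\gmod$.

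Now $\Ext^n_A(V, iW)$ is the $n$th cohomology of $\Hom_A(P_\bullet, iW)$, and the adjunction isomorphism $\Hom_A(P, iW)\cong\Hom_{\hat A}(i^*P, W)$, which is natural in $P$ and compatible with the internal $\Z$-grading on these $\Hom$-spaces (both $i$ and $i^*$ commute with the degree shift $q$), identifies this complex with $\Hom_{\hat A}(i^* P_\bullet, W)$. By the previous paragraph the cohomology of the latter computes $\Ext^n_{\hat A}(i^* V, W)$, which is the claimed isomorphism. (Equivalently, one may run the degenerate Grothendieck spectral sequence $\Ext^p_{\hat A}(L_q i^* V, W)\Rightarrow\Ext^{p+q}_A(V, iW)$, which collapses because $L_q i^* V = 0$ for $q \geq 1$.)

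For the second isomorphism the quickest route is duality. Given $V \in \ob\hat A\gmod$ and $W \in \ob A\gmodnabla$, the functor $?^\circledast$ sends $W$ to a right $A$-module with a $\Delta$-flag (the Duality section gives $\nabla(b)^\circledast \cong \Delta^\op(b)$, and $?^\circledast$ is exact and commutes with direct sums of locally finite-dimensional modules), commutes with the inclusion $i$, and intertwines the right adjoint $i^!$ for $A$ with the left adjoint $i^*$ for $A^\op$; combining this with \cref{hands} to move $\Ext$ across $?^\circledast$ reduces the claim to the first isomorphism applied to $A^\op$. Here one uses \cref{previousc} to know that $i^! W$ lies in $\hat A\gmodnabla$, hence is locally finite-dimensional, so that $((i^! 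W)^\circledast)^\circledast\cong i^! W$. Alternatively, one can rerun the first two paragraphs with the dual of \cref{previousl}, namely $R^q i^! W = 0$ for $W$ with a $\nabla$-flag and $q \geq 1$.

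I do not anticipate a genuine obstacle: once \cref{previousl} is available the statement is essentially formal, and the only care needed is bookkeeping — that $A\gmod$ has enough projectives, that a left adjoint of an exact functor preserves projectives, that $\Tor^A_\bullet(\hat A, V) = \bigoplus_{i\in\I}\Tor^A_\bullet(1_i\hat A, V)$, that all the adjunction and duality isomorphisms respect the $\Z$-grading, and the left/right-module bookkeeping in the duality argument for the second isomorphism.
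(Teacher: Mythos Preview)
Your proof is correct and follows essentially the same approach as the paper: the adjunction $\Hom_A(-,iW)\cong\Hom_{\hat A}(i^*-,W)$ combined with the $\Tor$-vanishing from \cref{previousl} (either packaged as a degenerate Grothendieck spectral sequence, as the paper does, or unwound directly via projective resolutions, as you do), and then duality via $?^\circledast$ and \cref{hands} for the second statement. The only cosmetic difference is that the paper's duality argument starts from a right module $W$ with $W^\circledast\in A\gmodnabla$, which sidesteps the need to invoke \cref{previousc}; your route through \cref{previousc} is equally valid.
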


\begin{proof}
To prove the first statement, take $W \in \ob\hat A\gmod$.
The adjunction gives an isomorphism of functors
$\Hom_{\hat A}(-, W) \circ i^* \cong \Hom_{A}(-, iW)$. Also the functor $i^* = \hat A \otimes_A-$ 
takes projectives to projectives as it is left adjoint to an exact functor. By a Grothendieck spectral sequence argument, it follows that $\Ext^n_{\hat A}(i^* V, W)
\cong \Ext^n_A(V, i W)$ for all $n \geq 0$ and $V$ 
such that $\Tor^A_m(\hat A, V) = 0$
for all $m \geq 1$. It remains to apply \cref{previousl}.

The second statement follows from the first statement by 
duality. This is a bit more complicated than 
it sounds, so we go through the details.
We show equivalently that
 $\Ext^n_{A}\left(i V, W^\circledast\right)
\cong \Ext^n_{\hat A}\left(V, i^! (W^\circledast)\right)$
for $V \in \ob\hat A\gmod$ and $W \in \ob \domg A$
such that $W^\circledast$ has a $\nabla$-flag
(equivalently, $W$ has a $\Delta^\op$-flag).
We have that $i \circ ?^\circledast \cong ?^\circledast \circ i$ viewed as covariant 
functors from $(\domg \hat A)^\op$ to $A\gmod$. 
Taking left adjoints
gives that $?^\circledast \circ i^* \cong
i^! \circ ?^\circledast$
viewed as 
functors from $A \gmod$ to $(\domg \hat A)^\op$.
So $$
\Ext^n_{\hat A}\left(V, i^! (W^\circledast)\right)
\cong \Ext^n_{\hat A}\left(V, (i^* W)^\circledast\right)
\stackrel{\cref{hands}}{\cong} 
\Ext^n_{\hat A}\left(i^* W, V^\circledast\right).
$$
Then we apply the analog of the first statement for the opposite algebras
to see that
$$
\Ext^n_{\hat A}\left(i^* W, V^\circledast\right)
\cong \Ext^n_A\left(W, i (V^\circledast)\right)
\cong \Ext^n_A\left(W, (i V)^\circledast\right)
\stackrel{\cref{hands}}{\cong} \Ext^n_A\left(i V, W^\circledast\right),
$$
as required.
\end{proof}

Now we can prove the hallmark property
of highest weight categories and their generalizations:

\begin{theorem}\label{extvanishingt}
If $V \in \ob A\gmoddelta$ and
$W \in \ob A\gmodnablabar$,
or if
$V \in \ob A\gmoddeltabar$ and
$W \in \ob A\gmodnabla$,
we have that $\Ext^n_A(V,W) = 0$ for all $n \geq 1$.
\end{theorem}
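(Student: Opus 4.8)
The plan is to reduce, by duality and truncation to an upper set, to a homological computation over the unital algebra $A_\mu$ for a suitable $\mu$. By the duality $?^\circledast$ discussed in the Duality section (it is exact contravariant and, composed with passage to $A^{\op}$, interchanges the two cases: $\Delta$-flags with $\nabla^{\op}$-flags, $\bar\nabla$-flags with $\bar\Delta^{\op}$-flags, and $\Ext^n_A(V,W)$ with $\Ext^n_{A^{\op}}(W^\circledast,V^\circledast)$ via \eqref{hands}, using that $W$ is locally finite-dimensional so $W^{\circledast\circledast}\cong W$), it suffices to treat the first case: $V\in\ob A\gmoddelta$ and $W\in\ob A\gmodnablabar$. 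Applying the long exact sequence for $\Ext_A(V,-)$ to the finite $\bar\nabla$-flag of $W$, I would reduce further to the case that $W$ is a single $\bar\nabla$-layer of type $\mu$, say $W=j^\mu_*\bar V$ with $\bar V$ a locally finite-dimensional, bounded-above graded left $A_\mu$-module.

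Next I would truncate to the upper set $\hat\Lambda:=\{\nu\in\Lambda:\nu\geq\mu\}$, writing $\hat A:=A_{\geq\mu}$. Since $W$ has all weights $\geq\mu$, we have $W=iW'$ for the corresponding $\bar\nabla$-layer $W'$ over $\hat A$, and \cref{mercedes} gives $\Ext^n_A(V,W)\cong\Ext^n_{\hat A}(i^*V,W')$, where $i^*V$ again has a $\Delta$-flag by \cref{previousc}. As $\mu$ is the minimum of $\hat\Lambda$ we have $\hat A_{\geq\mu}=\hat A$, $\hat A_\mu=A_\mu$ and $\hat{\jmath}^\mu_*=j^\mu_*$, so after replacing $A$ by $\hat A$ I may assume outright that $\mu$ is the minimum of $\Lambda$; then $A_{\geq\mu}=A$ and $j^\mu=e_\mu(-)\colon A\gmod\to A_\mu\gmod$.

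Now the crux. The functor $j^\mu_*$ is exact (\cref{xmas}) and, being right adjoint to the exact functor $j^\mu$, preserves injectives; hence $j^\mu_*$ sends an injective resolution of $\bar V$ in $A_\mu\gmod$ to an injective resolution of $W=j^\mu_*\bar V$ in $A\gmod$, and applying $\Hom_A(V,-)$ and invoking the adjunction $(j^\mu,j^\mu_*)$ yields $\Ext^n_A(V,W)\cong\Ext^n_{A_\mu}(j^\mu V,\bar V)=\Ext^n_{A_\mu}(e_\mu V,\bar V)$ for all $n\geq 0$. Finally, $e_\mu V$ is a projective $A_\mu$-module: applying the exact functor $e_\mu$ to the $\Delta$-flag of $V$, each section $e_\mu(j^\nu_!\bar P)$ equals the projective module $\bar P$ when $\nu=\mu$ (since $j^\mu j^\mu_!\cong\id$) and vanishes when $\nu>\mu$ (then $\mu\ngeq\nu$, so $e_\mu(j^\nu_!\bar P)=e_\mu A e_\nu(j^\nu_!\bar P)=0$ by \cref{green}(2), as $j^\nu_!\bar P$ is generated by its $\nu$-weight space); since at most one section has type $\mu$, this shows $e_\mu V$ is isomorphic to a (possibly infinite) direct sum of projective $A_\mu$-modules, hence projective. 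Therefore $\Ext^n_{A_\mu}(e_\mu V,\bar V)=0$ for all $n\geq 1$, which is what we wanted.

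The step I expect to need the most care is threading around the infinite direct sums built into $\Delta$- and $\bar\nabla$-layers in this non-Noetherian setting. Reducing $W$ to a single $\bar\nabla$-layer $j^\mu_*\bar V$ — rather than decomposing it into modules $\bar\nabla(c)$ — is exactly what makes exactness of $j^\mu_*$ available, since an arbitrary direct sum of injectives need not be injective here; and the isomorphism $\Ext^n_A(V,W)\cong\Ext^n_{A_\mu}(e_\mu V,\bar V)$ has to be produced at the level of injective resolutions (using that $j^\mu_*$ preserves both exactness and injectivity) rather than by naively commuting $\Ext$ past direct sums. The only other point requiring attention is that truncation genuinely preserves the hypothesis ``$V$ has a $\Delta$-flag'', which is \cref{previousc} and ultimately rests on the $\Tor$-vanishing of \cref{previousl}.
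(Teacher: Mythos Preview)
Your proof is correct and follows essentially the same approach as the paper: reduce $W$ to a single $\bar\nabla$-layer of some type, use \cref{mercedes} to pass to $A_{\geq\mu}$, and then invoke the Grothendieck spectral sequence argument (your injective-resolution formulation is exactly \cref{ssarg}) to reduce to an $\Ext$ computation over $A_\mu$ where the first argument is projective. The only cosmetic difference is that the paper first decomposes the $\Delta$-layer $V$ into individual standard modules $\Delta(b)$ (using that $\Ext^n_A(\bigoplus,-)\cong\prod\Ext^n_A(-,-)$) and then observes $j^\lambda\Delta(b)$ is $P_\lambda(b)$ or zero, whereas you keep $V$ intact and argue that $e_\mu V$ is projective from the whole $\Delta$-flag at once; either route works. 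One small correction: your citation of \cref{green}(2) should be \cref{green}(1), or more simply you can just note that a $\Delta$-layer of type $\nu>\mu$ is an $A_{\geq\nu}$-module, hence has no $\mu$-weight space.
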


\begin{proof}
We prove this assuming $V \in \ob A\gmoddelta$ and
$W \in \ob A\gmodnablabar$; the result in the other case then follows by duality.
The proof reduces easily to the case that
$V$ is a single $\Delta$-layer
and $W= j^\lambda_! \bar W$ is a single $\bar\nabla$-layer
of type $\lambda$.
By \cref{jam}, 
$V$ is a (possibly infinite) direct sum
of degree-shifted standard modules, and the proof reduces further to checking that
$\Ext^n_A(\Delta(b), j^\lambda_! \bar W) = 0$ for all $b \in \B$ and $n \geq 1$.
By \cref{mercedes}, we have that
$$
\Ext^n_A(\Delta(b), j^\lambda_! \bar W) \cong \Ext^n_{A_{\geq \lambda}}(
i_{\geq \lambda}^* \Delta(b), j^\lambda_! \bar W).
$$
If $\dot b \ngeq \lambda$ then
$i_{\geq \lambda}^* \Delta(b) = 0$ and the conclusion follows at once. If $\dot b \geq \lambda$ then
we are in the same situation as \cref{ssarg},
and applying that isomorphism gives that
$\Ext^n_{A_{\geq \lambda}}(\Delta(b), j^\lambda_* \bar W)
\cong \Ext^n_{A_\lambda}(j^\lambda \Delta(b), \bar W)$.
This is zero for $n \geq 1$ as required
since $j^\lambda \Delta(b) \cong 
P_\lambda(b)$ is projective in $A_\lambda\gmod$
if $\dot b = \lambda$, and $j^\lambda \Delta(b)=0$ otherwise.
\end{proof}

\section{BGG reciprocity}

Using \cref{extvanishingt},
we can make sense of multiplicities
in $\Delta$- and  $\bar\Delta$-flags.
First, for any $V \in A\gmod$, we define
the {\em $\Delta$-} and  {\em $\bar\Delta$-supports}
of $V$:
\begin{align}\label{supp1}
\supp_\Delta(V) &:= \big\{\dot b\:\big|\:b \in \B\text{ such that }
\Hom_A(V, \bar\nabla(b))\neq 0\big\},\\\label{supp2}
\supp_{\bar\Delta}(V) &:= \big\{\dot b\:\big|\:
b \in \B\text{ such that }\Hom_A(V, \nabla(b))\neq 0\big\}.
\end{align}
Since $\bar\nabla(b) \hookrightarrow \nabla(b)$, we have that
$\supp_\Delta(V) \subseteq\supp_{\bar\Delta}(V)$.
When $A$ is not unital, i.e., infinitely many of the
$e_\lambda\:(\lambda\in\Lambda)$ are non-zero, 
these sets could
be infinite, but they are always finite
if $V$ is finitely generated:

\begin{lemma}\label{supports}
If $V$ is a finitely generated graded left $A$-module
then $\supp_\Delta(V)$ and $\supp_{\bar\Delta}(V)$
are finite.
\end{lemma}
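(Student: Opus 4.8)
The plan is to bound the larger set $\supp_{\bar\Delta}(V)$ — since $\bar\nabla(b)\hookrightarrow\nabla(b)$ gives $\supp_\Delta(V)\subseteq\supp_{\bar\Delta}(V)$, this suffices, and in any case the same argument applies directly to $\supp_\Delta(V)$ with $\nabla(b)$ replaced by $\bar\nabla(b)$. The idea is to reduce the non-vanishing of $\Hom_A(V,\nabla(b))$ to a condition on finitely many of the idempotents $1_i$, and then, for each fixed $i$, to bound the set of weights $\lambda$ with $\bar e_\lambda A_{\geq\lambda}\bar 1_i\neq 0$.

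First I would fix a finite set of homogeneous generators $v_1,\dots,v_m$ of the finitely generated module $V$, say with $v_l\in 1_{i_l}V$. Any non-zero homogeneous homomorphism $f\colon V\to W$ satisfies $f(v_l)=1_{i_l}f(v_l)\neq 0$ for some $l$, so $\Hom_A(V,W)\neq 0$ forces $1_{i_l}W\neq 0$ for some $l\in\{1,\dots,m\}$. Applying this with $W=\nabla(b)=i_{\geq\lambda}j^\lambda_* I_\lambda(b)$ where $\lambda:=\dot b$, and unwinding the definition of $j^\lambda_*$ in \cref{blueye} to identify $1_i\nabla(b)=\Hom_{A_\lambda}(\bar e_\lambda A_{\geq\lambda}\bar 1_i, I_\lambda(b))$, I see that $\Hom_A(V,\nabla(b))\neq 0$ implies $\bar e_{\dot b}A_{\geq\dot b}\bar 1_{i_l}\neq 0$ for some $l$. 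Hence $\supp_{\bar\Delta}(V)\subseteq\bigcup_{l=1}^m\Lambda_{i_l}$, where $\Lambda_i:=\{\lambda\in\Lambda:\bar e_\lambda A_{\geq\lambda}\bar 1_i\neq 0\}$, and it remains to prove that each $\Lambda_i$ is finite.

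For this last step I would use the basis of $\bar e_\lambda A_{\geq\lambda}\bar 1_i$ recorded in \cref{thimbles} (equivalently, \cref{anotherbasis}), namely $\bigoplus_{s\in\S_\lambda}\bigoplus_{y\in\Y(s,i)}A_\lambda\bar y$; thus $\lambda\in\Lambda_i$ forces $\Y(s,i)\neq\varnothing$ for some $s\in\S_\lambda$. If $i\in\S$, write $\dot i=\mu$; the triangularity axioms in \cref{raspberries} then give $\dot s\leq\mu$ (using $\Y(i,i)=\{1_i\}$ if $s=i$), so $\Lambda_i\subseteq\{\lambda\in\Lambda:\lambda\leq\mu\}$, which is finite because $\Lambda$ is lower finite. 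If $i\in\I-\S$, the final axiom of \cref{raspberries} ensures that only finitely many $s\in\S$ have $\Y(s,i)\neq\varnothing$, so only finitely many weights $\dot s$ occur, and $\Lambda_i$ is again finite. A finite union of finite sets is finite, which finishes the proof.

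I expect the case $i\in\I-\S$ in the last step to be the only genuinely delicate point: for such $i$ there is no triangularity relation constraining $\dot t$ in terms of $i$ in $\Y(t,i)$, so without the final axiom of \cref{raspberries} the set $\Lambda_i$ really can be infinite; this is the (essentially unique) place in the development where that axiom is needed. Everything else is a routine unwinding of the adjunction defining $j^\lambda_*$ and of the basis statements in \cref{xmas} and \cref{thimbles}.
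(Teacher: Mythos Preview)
Your argument is correct, but it takes a longer route than the paper's and invokes a hypothesis that the paper deliberately avoids here.

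The paper's proof first reduces to the case that $V$ is generated by finitely many \emph{weight vectors}, i.e., elements of $1_s V$ for $s\in\S$. This reduction is immediate: any $v\in 1_i V$ satisfies $v=1_i v$, and since $1_i$ is a single element of $1_i A 1_i$ it is a \emph{finite} linear combination of basis vectors $xhy$ with $y\in\Y(t,i)$ for some $t\in\S$; hence $v$ lies in the $A$-span of the finitely many weight vectors $yv\in 1_t V$. After this reduction only your case $i\in\S$ remains, and the argument finishes exactly as you wrote: if the generators have weights $\lambda_1,\dots,\lambda_n$ then $\Hom_A(V,\nabla(b))\neq 0$ forces some $\lambda_r$ to be a weight of $\nabla(b)$, whence $\dot b\leq\lambda_r$, and lower finiteness gives the result.

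The practical difference is that the paper's proof does \emph{not} use the final axiom of \cref{raspberries}. Your closing paragraph asserting that this lemma is ``the (essentially unique) place in the development where that axiom is needed'' is therefore incorrect: the footnote in \cref{raspberries} says the final axiom is applied only in \cref{wherefinalaxiomisneeded}, and the paper's shorter proof of the present lemma confirms that it is unnecessary here. Your detour through the case $i\in\I-\S$ works, but it can be eliminated entirely by the one-line reduction above.
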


\begin{proof}
Suppose that $V$ is
generated by finitely many weight vectors.
Let $\lambda_1,\dots,\lambda_n$ be their weights.
Then $\Hom_A(V, \nabla(b)) = 0$ unless
one of $\lambda_1,\dots,\lambda_n$ is a weight of $\nabla(b)$. But this implies that $b \in \B_{\leq \lambda_1} \cup\cdots\cup\B_{\leq\lambda_r}$, which is finite.
\end{proof}

For $V$ with a $\Delta$-flag, we define
\begin{equation}\label{seasonalmuffin}
(V:\Delta(b))_q := \overline{\dim_q \Hom_A(V, \bar\nabla(b))} \in \N\lround q \rround.
\end{equation}
This is non-zero if and only if $b \in \supp_\Delta(V)$.
If $0 = V_0 \subseteq \cdots \subseteq V_n = V$
is a $\Delta$-flag,
the section $V_r / V_{r-1}$ being a $\Delta$-layer of type
$\lambda_r$,
we have that
\begin{equation}
(V:\Delta(b))_q = \sum_{r=1}^n (V_r / V_{r-1}:\Delta(b))_q.
\end{equation}
This follows from \cref{extvanishingt}.
Moreover, \cref{upup} implies that
\begin{equation}
V_r / V_{r-1} \cong \bigoplus_{b \in \B_{\lambda_r}} \Delta(b)^{\oplus (V_r / V_{r-1}:\Delta(b))_q}.
\end{equation}
Thus, $(V:\Delta(b))_q$ counts the graded multiplicity
of $\Delta(b)$ as a summand of the layers of the
$\Delta$-flag as one would expect.
Instead, if $V$ has a $\bar\Delta$-flag, we set
\begin{equation}\label{tea}
(V:\bar\Delta(b))_q := \overline{\dim_q \Hom_A(V, \nabla(b))_q}\in \N\lround q \rround,
\end{equation}
which is non-zero if and only if $b \in \supp_{\bar\Delta}(V)$.
Again, we have that 
\begin{equation}
(V:\bar\Delta(b))_q = \sum_{r=1}^n (V_r / V_{r-1}:\bar\Delta(b))_q
\end{equation}
if $0 = V_0 \subseteq \cdots \subseteq V_n = V$
is a $\bar\Delta$-flag; now this follows by
\cref{extvanishingt}.
So $(V:\bar\Delta(b))_q$ computes
the sum of the graded multiplicities of $\bar\Delta(b)$ in each of the $\bar\Delta$-layers,
with the understanding that for a 
single $\bar\Delta$-layer $W \cong j^\lambda_! \bar W$ of type $\lambda$ and $b \in \B_\lambda$
we have that
\begin{equation}\label{wednesdaynight}
(W:\bar \Delta(b))_q = \overline{\dim_q \Hom_A(W, \nabla(b))}
= \overline{\dim_q \Hom_{A_\lambda}(\bar W, I_\lambda(b))} = [\bar W: L_\lambda(b)]_q.
\end{equation}
For example, every $\Delta(a)$ has a $\bar\Delta$-flag,
and we have that
\begin{equation}\label{thursdaynight}
(\Delta(a):\bar\Delta(b))_q = 
\left\{
\begin{array}{ll}
[P_\lambda(a):L_\lambda(b)]_q&\text{if $a,b \in \B_\lambda$
for some $\lambda \in \Lambda$}\\
0&\text{if $\dot a \neq \dot b$.}
\end{array}\right.
\end{equation}

\begin{lemma}\label{fridaynight}
If $V$ has a $\Delta$-flag then
$(V:\bar\Delta(b))_q = \sum_{a \in \B}
(V:\Delta(a))_q(\Delta(a):\bar\Delta(b))_q$.
\end{lemma}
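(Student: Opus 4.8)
The plan is to reduce to the case of a single $\Delta$-layer and then translate the identity into a statement about graded composition multiplicities over the ``Cartan'' algebra $A_\lambda$. Fix a $\Delta$-flag $0 = V_0 \subset \cdots \subset V_n = V$ of $V$, the section $V_r/V_{r-1}$ being a $\Delta$-layer of type $\lambda_r$ with the $\lambda_r$ distinct. Since each $\Delta$-layer is a $\bar\Delta$-layer, this is also a $\bar\Delta$-flag, so both sides of the asserted identity are additive along it: the left-hand side because $(V:\bar\Delta(b))_q = \sum_r (V_r/V_{r-1}:\bar\Delta(b))_q$, and the right-hand side because $(V:\Delta(a))_q = \sum_r (V_r/V_{r-1}:\Delta(a))_q$ (both recorded above as consequences of \cref{extvanishingt}). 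As $(V_r/V_{r-1}:\Delta(a))_q = 0$ unless $a \in \B_{\lambda_r}$, which is finite, the resulting double sum over $r$ and $a$ is finite and may be reordered; hence it suffices to prove the identity when $V$ is a single $\Delta$-layer $W$ of type some $\lambda \in \Lambda$.

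So write $W = j^\lambda_! \bar W$ with $\bar W$ a projective graded left $A_\lambda$-module that is locally finite-dimensional and bounded below. By \cref{jam} (via \cref{upup}) we have $W \cong \bigoplus_{a \in \B_\lambda} \Delta(a)^{\oplus f_a}$ with $f_a = (W:\Delta(a))_q \in \N\lround q^{-1}\rround$; applying $j^\lambda$, using $j^\lambda j^\lambda_! \cong \id$ and $\Delta(a) = j^\lambda_! P_\lambda(a)$, gives $\bar W \cong \bigoplus_{a \in \B_\lambda} P_\lambda(a)^{\oplus f_a}$. Now $W$ is also a $\bar\Delta$-layer of type $\lambda$, so \cref{wednesdaynight} gives $(W:\bar\Delta(b))_q = [\bar W : L_\lambda(b)]_q$ for $b \in \B_\lambda$, while \cref{thursdaynight} gives $(\Delta(a):\bar\Delta(b))_q = [P_\lambda(a):L_\lambda(b)]_q$ for $a \in \B_\lambda$ and $=0$ when $\dot a \neq \dot b$. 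If $\dot b \neq \lambda$ then every term on both sides of the desired identity vanishes, so we may assume $b \in \B_\lambda$. Thus the claim reduces to the purely $A_\lambda$-theoretic identity
\[
\Big[\bigoplus_{a \in \B_\lambda} P_\lambda(a)^{\oplus f_a} : L_\lambda(b)\Big]_q = \sum_{a \in \B_\lambda} f_a\,[P_\lambda(a):L_\lambda(b)]_q ,
\]
i.e.\ additivity of graded composition multiplicity over a (possibly infinite) direct sum inside $A_\lambda\gmodlfd$.

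For this last point I would invoke \cref{tech2}(3): since $A_\lambda$ is unital and everything in sight is locally finite-dimensional and bounded below, $[M:L_\lambda(b)]_q = \dim_q \Hom_{A_\lambda}(P_\lambda(b), M)$. Because $P_\lambda(b)$ is finitely generated, $\Hom_{A_\lambda}(P_\lambda(b), -)$ commutes with arbitrary direct sums and with the degree-shifted sums implicit in the notation $(-)^{\oplus f_a}$ (any homomorphism from a finitely generated module into a coproduct factors through a finite subcoproduct), so $\Hom_{A_\lambda}(P_\lambda(b), \bar W) \cong \bigoplus_{a} \Hom_{A_\lambda}(P_\lambda(b), P_\lambda(a))^{\oplus f_a}$; taking $\dim_q$, which distributes over a direct sum whose total is locally finite-dimensional, and using \cref{tech2}(3) once more with $M = P_\lambda(a)$, yields the displayed identity. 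Combining everything, $(W:\bar\Delta(b))_q = [\bar W : L_\lambda(b)]_q = \sum_a f_a [P_\lambda(a):L_\lambda(b)]_q = \sum_a (W:\Delta(a))_q (\Delta(a):\bar\Delta(b))_q$, completing the single-layer case and hence the proof.

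The only place the argument is not purely formal is the handling of the infinite direct sum (i.e.\ the displayed multiplicity identity and the use of compactness of $P_\lambda(b)$); everywhere else is bookkeeping with results already established, so I expect that to be the main—though mild—obstacle.
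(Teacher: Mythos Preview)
Your proof is correct and follows essentially the same route as the paper's: reduce to a single $\Delta$-layer, identify it as $\bigoplus_{a \in \B_\lambda} \Delta(a)^{\oplus (V:\Delta(a))_q}$, and then use \cref{wednesdaynight,thursdaynight} to translate into a graded multiplicity identity over $A_\lambda$. The paper simply asserts the resulting chain of equalities, whereas you spell out the justification for the infinite-direct-sum step via compactness of $P_\lambda(b)$ and \cref{tech2}(3); this extra care is correct and fills in what the paper leaves implicit.
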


\begin{proof}
It suffices to prove this when $V$ is a $\Delta$-layer of type $\lambda$,
so
 $V \cong \bigoplus_{a \in \B_\lambda} \Delta(a)^{\oplus (V:\Delta(a))_q}$.
 Then $$
 (V:\bar\Delta(b))_q
=[j^\lambda V: L_\lambda(b)]_q  = \sum_{a \in \B_\lambda} (V:\Delta(a))_q [P_\lambda(a):L_\lambda(b)]_q
  = \sum_{a \in \B_\lambda} (V:\Delta(a))_q (\Delta(a):\bar\Delta(b))_q.
  $$
  Here, we used \cref{wednesdaynight,thursdaynight}.
\end{proof}

\begin{theorem}[Homological criteria for 
$\Delta$- and $\bar\Delta$-flags]\label{citizens}
Assume that $V \in \ob A\gmod$ is locally finite-dimensional and bounded below.
\begin{itemize}
\item[(1)]
The following are equivalent:
\begin{enumerate}
\item[(a)] $V$ has a $\Delta$-flag;
\item[(b)] 
$|\supp_\Delta(V)| < \infty$ and
$\Ext^1_A(V, \bar\nabla(b)) = 0$ for all $b \in \B$;
\item[(c)] 
$|\supp_\Delta(V)| < \infty$ and
$\Ext^n_A(V, \bar\nabla(b)) = 0$ for all $b \in \B$ and $n \geq 1$;
\end{enumerate}
\item[(2)]
The following are equivalent:
\begin{enumerate}
\item[(a)] $V$ has a $\bar\Delta$-flag;
\item[(b)] 
$|\supp_{\bar\Delta}(V)| < \infty$ and
$\Ext^1_A(V, \nabla(b)) = 0$ for all $b \in \B$;
\item[(c)] 
$|\supp_{\bar\Delta}(V)| < \infty$ and
$\Ext^n_A(V, \nabla(b)) = 0$ for all $b \in \B$ and $n \geq 1$.
\end{enumerate}
\end{itemize}
\end{theorem}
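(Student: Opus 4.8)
The plan is to prove part~(1) via the cycle (a)$\Rightarrow$(c)$\Rightarrow$(b)$\Rightarrow$(a); part~(2) then follows by the dual argument, replacing $\Delta$- and $\bar\Delta$-layers by $\bar\Delta$- and $\nabla$-notions and $\bar\nabla(b)$ by $\nabla(b)$ throughout, and invoking the ``resp.'' halves of \cref{inductionbase}, \cref{extvanishingt}, \cref{jam} and \cref{upup}. For (a)$\Rightarrow$(c): \cref{extvanishingt} gives $\Ext^n_A(V,\bar\nabla(b))=0$ for all $n\geq 1$ and $b\in\B$; applying $\Hom_A(-,\bar\nabla(b))$ along a $\Delta$-flag with layer types $\lambda_1,\dots,\lambda_n$ and using this vanishing shows that $\Hom_A(V,\bar\nabla(b))$ is the direct sum of the $\Hom_A(V_r/V_{r-1},\bar\nabla(b))$, each of which vanishes unless $\dot b=\lambda_r$ by \cref{jam,upup}; hence $\supp_\Delta(V)\subseteq\{\lambda_1,\dots,\lambda_n\}$ is finite. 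The implication (c)$\Rightarrow$(b) is trivial.

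The substance is (b)$\Rightarrow$(a), which I would prove by induction on $|\supp_\Delta(V)|$. If this is $0$ then $\Hom_A(V,\bar\nabla(b))=0$ for all $b$, so $V=0$ by \cref{nakcor}. Otherwise pick $\lambda$ minimal in $\supp_\Delta(V)$. Writing $\mu:=\dot b$, the adjunctions give $\Hom_A(V,\bar\nabla(b))\cong\Hom_{A_\mu}(j^\mu i^*_{\geq\mu}V,L_\mu(b))$, which vanishes whenever $e_\mu V=0$; thus $\supp_\Delta(V)\subseteq\Lambda(V)$, and moreover any weight minimal in $\Lambda(V)$ lies in $\supp_\Delta(V)$ by the argument of \cref{nakcor} (apply \cref{nakayama} to $A_\nu$ acting on $e_\nu V$ and use \cref{green}). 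Hence $\lambda$ is minimal in $\Lambda(V)$, so $\Lambda(V)\subseteq\hat\Lambda:=\{\nu\in\Lambda\:|\:\nu\nl\lambda\}$, an upper set in which $\lambda$ is minimal; consequently $V$ is a module over the quotient algebra $\hat A$ of \cref{tuper}, for which \cref{inductionbase} is available with $\lambda$ minimal in the weight poset.

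Now set $V':=Ae_\lambda V$ and $W:=V/V'$. By \cref{green}(1), $V'\subseteq i^!_{\geq\lambda}V$, so $\Lambda(V')\subseteq\Lambda_{\geq\lambda}$, and in particular $V',W\in\hat A\gmod$, $e_\lambda V'=e_\lambda V$ and $e_\lambda W=0$. The key step is that the restriction map $\Hom_A(V,\bar\nabla(b))\to\Hom_A(V',\bar\nabla(b))$ is surjective for every $b$. Using $V'=Ae_\lambda V'$ together with $\Lambda(V')\subseteq\Lambda_{\geq\lambda}$, the adjunction identity shows that $i^*_{\geq\mu}V'=0$ when $\mu>\lambda$ (as $\lambda\ngeq\mu$) and $j^\mu i^*_{\geq\mu}V'=0$ when $\mu\ngeq\lambda$ (as $e_\mu V'=0$), so $\Hom_A(V',\bar\nabla(b))=0$ unless $\dot b=\lambda$; and when $\dot b=\lambda$, \cref{green} identifies both $\Hom_A(V,\bar\nabla(b))$ and $\Hom_A(V',\bar\nabla(b))$ with $\Hom_{A_\lambda}(e_\lambda V,L_\lambda(b))$ compatibly with restriction. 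Feeding this into the long exact sequence of $\Hom_A(-,\bar\nabla(b))$ applied to $0\to V'\to V\to W\to 0$, together with $\Ext^1_A(V,\bar\nabla(b))=0$, yields $\Ext^1_A(W,\bar\nabla(b))=0$ for all $b$; since also $\supp_\Delta(W)\subseteq\supp_\Delta(V)\setminus\{\lambda\}$ (using $\Hom_A(W,-)\hookrightarrow\Hom_A(V,-)$ and $e_\lambda W=0$), the inductive hypothesis gives $W$ a $\Delta$-flag, necessarily with all layer types $\neq\lambda$ because $e_\lambda W=0$.

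It remains to peel off $V'$. As $W\in\ob A\gmoddelta$, \cref{extvanishingt} gives $\Ext^2_A(W,\bar\nabla(b))=0$, so the long exact sequence now forces $\Ext^1_A(V',\bar\nabla(b))=0$ for all $b$, in particular for $b\in\hat\B$; since $\bar\nabla(b)$ (for $b\in\hat\B$) and $V'$ both lie in $\hat A\gmod$ and any $A$-module extension between two $\hat A$-modules is again an $\hat A$-module, this equals $\Ext^1_{\hat A}(V',\widehat{\bar\nabla}(b))$ (with $\widehat{\bar\nabla}(b)=\bar\nabla(b)$ by \cref{workout}). Thus $V'$ satisfies the hypotheses of \cref{inductionbase} over $\hat A$ (note $V'=\hat Ae_\lambda V'$), so it is a $\Delta$-layer of type $\lambda$; prepending $0\subset V'\subset V$ to a $\Delta$-flag of $W$ yields a $\Delta$-flag of $V$ with distinct layer types, completing the induction. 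The main obstacle is the bookkeeping in these last two paragraphs: the induction must be arranged so that one first obtains the $\Delta$-flag of $W$ (using only the elementary surjectivity of the restriction map) and only afterwards deduces, via \cref{extvanishingt} for $W$, the $\Ext^1$-vanishing needed to apply \cref{inductionbase} to $V'$; and one must accommodate the fact that $\lambda$ is in general not minimal in all of $\Lambda$, which is precisely the role of the truncation to $\hat\Lambda$.
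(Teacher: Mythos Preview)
Your proof is correct but proceeds in the opposite direction from the paper's in the key implication (b)$\Rightarrow$(a). The paper picks $\lambda$ \emph{maximal} in $\supp_\Delta(V)$ and peels off the quotient $W:=i^*_{\geq\lambda}V$ as the top layer: it first verifies directly (via \cref{inductionbase} applied over $A_{\geq\lambda}$) that this quotient is a $\Delta$-layer of type $\lambda$, and only afterwards deduces the $\Ext^1$-vanishing for the kernel $K$ and applies induction to $K$. You instead pick $\lambda$ \emph{minimal} in $\supp_\Delta(V)$ and peel off the submodule $V':=Ae_\lambda V$ as the bottom layer: you must first establish the surjectivity of the restriction map $\Hom_A(V,\bar\nabla(b))\to\Hom_A(V',\bar\nabla(b))$ to get the $\Ext^1$-vanishing for the quotient $W=V/V'$, apply induction to $W$, and only then return (via \cref{extvanishingt} for $W$) to prove that $V'$ is a $\Delta$-layer using \cref{inductionbase}. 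Both arguments require truncating to an upper set in which $\lambda$ is minimal; the paper does this for the quotient, you for the submodule. The paper's order is slightly cleaner because the $\Delta$-layer appears immediately as a quotient (matching the convention that the largest type sits at the top of a $\Delta$-flag), avoiding your two-pass structure and the auxiliary surjectivity argument; on the other hand, your idempotent argument for $\Ext^1_A=\Ext^1_{\hat A}$ on $\hat A$-modules is a nice observation that sidesteps invoking \cref{mercedes}.
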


\begin{proof}
(1)
Clearly (c)$\Rightarrow$(b).
Also
(a)$\Rightarrow$(c) 
by \cref{extvanishingt}.
It remains to prove that (b)$\Rightarrow$(a).
Suppose that (b) holds.
We show that $V$ has a $\Delta$-flag
by induction on the size of the
support $\supp_\Delta(V)$.
If $\supp_\Delta(V) = \varnothing$ then we have that $V = 0$ by \cref{nakcor}, and the conclusion is clear.
Now assume that $\supp_\Delta(V)$ is non-empty and pick
a maximal element $\lambda$.
Let $W := i_{\geq \lambda}^* V$.
We are going to apply \cref{inductionbase} (with
$A$ replaced by $A_{\geq\lambda}$, $\Lambda$
replaced by the upper set generated by $\lambda$
and $\B$ replaced by $\B_{\geq \lambda}$)
to show that $W$ is a $\Delta$-layer of type $\lambda$; it is important to note here that ``$\Delta$-layer of type $\lambda$" means the same thing for $A_{\geq\lambda}\gmod$ as 
it does for $A\gmod$ because 
$\hat\jmath^\lambda_! = j^\lambda_!$, notation as in 
\cref{tuper}.
Since $W$ is a quotient of $V$, the choice of $\lambda$ implies that
$\Hom_A(W, \bar\nabla(b)) = 0$
unless $\dot b = \lambda$.
Since $W / A e_\lambda W$ does not have $\lambda$ as a weight, 
we deduce that
$\Hom_A(W / A e_\lambda W , \bar\nabla(b)) = 0$ for all $b \in \B$. Applying \cref{nakcor} again, it follows that $W = A e_\lambda W = A_{\geq \lambda} e_\lambda W$. Thus $W$ satisfies property (2) from \cref{inductionbase}.
Also, $W$ is finitely generated, so it satisfies property (1).
To show that it satisfies property (3) too,
 let $K$ be the kernel of the quotient map $V \twoheadrightarrow W$ and take any $b \in \B$. Applying
$\Hom_A(-,\bar\nabla(b))$ to the short exact sequence $0 \rightarrow K \rightarrow V \rightarrow W \rightarrow 0$ gives the long exact sequence
$$
0 \longrightarrow \Hom_A(W, \bar\nabla(b))
\longrightarrow \Hom_A(V, \bar\nabla(b))
\longrightarrow \Hom_A(K, \bar\nabla(b))
\longrightarrow \Ext^1_A(W, \bar\nabla(b))
\longrightarrow 0,
$$
plus an isomorphism $\Ext^1_A(K, \bar\nabla(b))\cong \Ext^2_A(W, \bar\nabla(b))$.
Now suppose that $b \in \B_{\geq \lambda}$, so that
all weights of $\bar\nabla(b)$ are $\geq \lambda$ too.
By the definition of $W$, $K$ does not have a proper
quotient whose weights are all $\geq \lambda$,
so $\Hom_A(K,\bar\nabla(b)) = 0$.
We deduce that $\Ext^1_A(W, \bar\nabla(b))= \Ext^1_{A_{\geq\lambda}}(W, \bar\nabla(b)) = 0$ 
for all $b \in \B_{\geq \lambda}$.
Now we have checked all of the properties, so we can now apply \cref{inductionbase} to deduce that $W$ is indeed a $\Delta$-layer of type $\lambda$. 

From \cref{extvanishingt}, it follows
that $\Ext^2_A(W, \bar\nabla(b)) = 0$, hence, we get
also that $\Ext^1_A(K, \bar\nabla(b)) = 0$ for all $b \in \B$. Also $|\supp_\Delta(K)| < |\supp_\Delta(V)|$
since $\Hom_A(K, \bar\nabla(b))$ is a quotient of
$\Hom_A(V, \bar\nabla(b))$ for all $b \in \B$, and 
$\Hom_A(K, \bar\nabla(b)) = 0$ for 
$b \in \B_\lambda$ so $\lambda \notin \supp_\Delta(K)$.
This means that we can apply the induction hypothesis
to the module $K$ to deduce that it has a $\Delta$-flag.
Also none of the layers in such a flag 
are of type $\lambda$, again
because $\Hom_A(K, \bar\nabla(b)) = 0$ for $b \in \B_\lambda$. Now we have in our hands a $\Delta$-flag of $V$ coming from the $\Delta$-flag of $K$ plus the top section that is the $\Delta$-layer $W$ of type $\lambda$.
Thus, (a) is proved.

\vspace{1mm}
\noindent
(2)
This is a very similar argument. For the hardest implication
(b)$\Rightarrow$(a),
one proceeds by induction on the size of the set
$\supp_{\bar\Delta}(V)$.
Noting that $\supp_{\Delta}(V) \subseteq 
\supp_{\bar\Delta}(V)$, 
we are done trivially in case $\supp_{\bar\Delta}(V) = \varnothing$
as before.
Then we repeat the arguments in (a)
replacing $\supp_\Delta(V)$ and $\bar\nabla(b)$ with
$\supp_{\bar\Delta}(V)$ and $\nabla(V)$.
\end{proof}

\begin{corollary}[BGG reciprocity for projectives]\label{bggproj}
For $b \in \B$, the indecomposable
projective $P(b)$ has a $\Delta$-flag
with
$(P(b):\Delta(a))_q = \overline{[\bar\nabla(a):L(b)]_q}$
for all $a \in \B$.
If the graded triangular basis admits a duality
then $(P(b):\Delta(a))_q = [\bar\Delta(a):L(b)]_q$.
\end{corollary}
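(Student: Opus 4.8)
The plan is to get the $\Delta$-flag on $P(b)$ by exhibiting it as a direct summand of the projective module $Q(b)$ produced by \cref{ohno}(1), and then to read off the multiplicities from the graded-multiplicity formula for $\Hom$-spaces in \cref{tech2}(3).

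First I would check that $P(b)$ is a direct summand of $Q(b)$. By \cref{ohno}(1) the module $Q(b)$ is finitely generated projective and surjects onto $\Delta(b)$, hence onto its irreducible head $L(b)$. Lifting the projective cover $P(b)\twoheadrightarrow L(b)$ along this surjection gives a homomorphism $\phi\colon Q(b)\to P(b)$ with $\im\phi+\rad P(b)=P(b)$; since $\rad P(b)$ is superfluous in $P(b)$ by \cref{tech2}(2), the map $\phi$ is surjective, and it splits because $P(b)$ is projective. So $P(b)$ is a summand of $Q(b)$.

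Next, $Q(b)$ has a $\Delta$-flag by \cref{jam}; being finitely generated it has $|\supp_\Delta(Q(b))|<\infty$ by \cref{supports}, and $\Ext^1_A(Q(b),\bar\nabla(c))=0$ for all $c\in\B$ by \cref{extvanishingt}. Both properties are inherited by the summand $P(b)$, which is moreover locally finite-dimensional and bounded below, so \cref{citizens}(1) in the form (b)$\Rightarrow$(a) shows that $P(b)$ has a $\Delta$-flag. For the multiplicities, by the definition \cref{seasonalmuffin} we have $(P(b):\Delta(a))_q=\overline{\dim_q\Hom_A(P(b),\bar\nabla(a))}$; since $P(b)$ is the projective cover of $L(b)$ and $\bar\nabla(a)=j^\lambda_* L_\lambda(a)$ is locally finite-dimensional (and bounded above) by \cref{xmas}, \cref{tech2}(3) identifies $\dim_q\Hom_A(P(b),\bar\nabla(a))$ with $[\bar\nabla(a):L(b)]_q$, giving $(P(b):\Delta(a))_q=\overline{[\bar\nabla(a):L(b)]_q}$. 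When the basis admits a duality, the relation $[\bar\Delta(a):L(b)]_q=\overline{[\bar\nabla(a):L(b)]_q}$ recorded after \cref{selfduality} then turns this into $(P(b):\Delta(a))_q=[\bar\Delta(a):L(b)]_q$.

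The only step needing a little care is the first one: confirming that $P(b)$ is a \emph{summand} of $Q(b)$, not merely a quotient. This rests on the superfluousness of $\rad P(b)$ from \cref{tech2}(2) together with the projectivity of $P(b)$. Once $P(b)$ is a summand of $Q(b)$, closure of ``having a $\Delta$-flag'' under summands is supplied by the homological criterion \cref{citizens}, and the multiplicity count is an immediate application of \cref{tech2}(3); I do not expect any genuine obstacle beyond this bookkeeping.
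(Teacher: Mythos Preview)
Your proof is correct and follows essentially the same strategy as the paper: verify the hypotheses of \cref{citizens}(1) for $P(b)$, then read off the multiplicities from the definition \cref{seasonalmuffin} via \cref{tech2}(3). However, the detour through $Q(b)$ is unnecessary. Since $P(b)$ is projective, $\Ext^1_A(P(b),\bar\nabla(c))=0$ holds trivially for all $c\in\B$; and since $P(b)$ is finitely generated, \cref{supports} gives $|\supp_\Delta(P(b))|<\infty$ directly. The paper simply invokes these two facts and applies \cref{citizens} at once. Your argument that $P(b)$ is a summand of $Q(b)$ is correct, but it buys you nothing here beyond what projectivity and finite generation of $P(b)$ already provide.
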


\begin{proof}
The fact that $P(b)$ has a $\Delta$-flag
follows from \cref{supports} and
the homological criterion of \cref{citizens}.
For the multiplicities, we compute
from the definition \cref{seasonalmuffin}:
$$
(P(b):\Delta(a))_q = \overline{\dim_q \Hom_A(P(b), \bar\nabla(a))}
= \overline{[\bar\nabla(a):L(b)]_q}.
$$
\end{proof}

\begin{corollary}\label{sesdelta}
Suppose that $0 \rightarrow U \rightarrow V \rightarrow W 
\rightarrow 0$ is a short exact sequence of 
graded left $A$-modules. 
Assuming that $W$ has a $\bar\Delta$-flag, $U$ has a $\bar\Delta$-flag if and only if $V$ has a $\bar\Delta$-flag.
Similarly for $\Delta$-flags.
\end{corollary}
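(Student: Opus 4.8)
The plan is to reduce everything to the homological criterion \cref{citizens}(2) together with the $\Ext$-vanishing \cref{extvanishingt}; no new idea is needed. Recall that, for a module that is locally finite-dimensional and bounded below, having a $\bar\Delta$-flag is equivalent to $|\supp_{\bar\Delta}(-)| < \infty$ together with $\Ext^1_A(-,\nabla(b)) = 0$ for all $b \in \B$. Since $W$ has a $\bar\Delta$-flag it is locally finite-dimensional and bounded below, $|\supp_{\bar\Delta}(W)| < \infty$, and by \cref{extvanishingt} we have $\Ext^n_A(W,\nabla(b)) = 0$ for all $b \in \B$ and $n \geq 1$.

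First I would dispose of the finiteness conditions. Applying the exact functor $1_i(-)$ and then taking degree-$d$ components to $0 \to U \to V \to W \to 0$ shows that the property ``locally finite-dimensional and bounded below'' passes to submodules and is closed under extensions; thus $U$ inherits it from $V$, and $V$ inherits it from $U$ (using that $W$ has it). For supports, apply $\Hom_A(-,\nabla(b))$ and use $\Ext^1_A(W,\nabla(b)) = 0$ to get the short exact sequence
$$0 \to \Hom_A(W,\nabla(b)) \to \Hom_A(V,\nabla(b)) \to \Hom_A(U,\nabla(b)) \to 0,$$
which simultaneously yields $\supp_{\bar\Delta}(U) \subseteq \supp_{\bar\Delta}(V)$ and $\supp_{\bar\Delta}(V) \subseteq \supp_{\bar\Delta}(U) \cup \supp_{\bar\Delta}(W)$; hence in both directions the relevant support is finite.

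Then I would transfer the $\Ext^1$-vanishing. Continuing the long exact sequence of $\Hom_A(-,\nabla(b))$ and using $\Ext^1_A(W,\nabla(b)) = \Ext^2_A(W,\nabla(b)) = 0$ gives an isomorphism
$$\Ext^1_A(V,\nabla(b)) \;\cong\; \Ext^1_A(U,\nabla(b))$$
for every $b \in \B$, so if one of $U,V$ has vanishing $\Ext^1$ against all $\nabla(b)$ then so does the other. Combining the last two paragraphs with the implication (b)$\Rightarrow$(a) of \cref{citizens}(2), we conclude that $V$ has a $\bar\Delta$-flag as soon as $U$ does, and $U$ has one as soon as $V$ does. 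The $\Delta$-flag statement is proved verbatim, replacing $\nabla(b)$ by $\bar\nabla(b)$, $\supp_{\bar\Delta}$ by $\supp_\Delta$, and \cref{citizens}(2) by \cref{citizens}(1), the relevant $\Ext$-acyclicity again being supplied by \cref{extvanishingt} and the finiteness properties by \cref{xmas}.

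I do not expect a genuine obstacle here. The one subtlety worth flagging is that the naive cohomological condition $\Ext^1_A(-,\nabla(b)) = 0$ is not on its own enough to produce a flag at this point in the paper (the hypothesis-free version comes later); the finiteness assumptions in \cref{citizens} are indispensable, so the argument must really verify the support containments and the ``locally finite-dimensional and bounded below'' stability, which is exactly what the first two paragraphs above accomplish.
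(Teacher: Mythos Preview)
Your proof is correct and follows essentially the same approach as the paper: reduce to the homological criterion of \cref{citizens} by using \cref{extvanishingt} to obtain the $\Hom$ short exact sequence and the isomorphism $\Ext^1_A(V,\nabla(b))\cong\Ext^1_A(U,\nabla(b))$, then check the support and local-finiteness conditions. The paper even records the slightly sharper statement $\supp_{\bar\Delta}(V)=\supp_{\bar\Delta}(U)\cup\supp_{\bar\Delta}(W)$, but your containments already suffice.
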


\begin{proof}
We explain for $\bar\Delta$-flags, the case of $\Delta$-flags being similar.
Since $W$ is locally finite-dimensional 
bounded below as it has a 
$\bar\Delta$-flag, it is clear that
 $U$ is locally finite-dimensional and bounded below if and only if $V$ has these properties. Also, this is the case if either $U$ or $V$ has a $\bar\Delta$-flag.
Applying $\Hom_A(-,\nabla(b))$ to the short exact sequence 
using the vanishing of $\Ext^n_A(W, \nabla(b))$ for $n \geq 1$
gives short exact sequences
$$
0 \longrightarrow \Hom_A(W, \nabla(b))
\longrightarrow \Hom_A(V, \nabla(b))
\longrightarrow \Hom_A(U, \nabla(b))
\longrightarrow 0
$$
and isomorphisms
$\Ext^1_A(V,\nabla(b))
\cong
\Ext^1_A(U,\nabla(b))$
for all $b \in \B$.
The short exact sequences imply that
\begin{equation}\label{supportsum}
\supp_{\bar\Delta}(V) = \supp_{\bar\Delta}(U) \cup \supp_{\bar\Delta}(W).
\end{equation}
Hence, $\supp_{\bar\Delta}(U)$ is finite if and only if
$\supp_{\bar\Delta}(V)$ is finite. Now we can apply the
homological criterion for ${\bar\Delta}$-flags from \cref{citizens}
to deduce the result.
\end{proof}

\begin{corollary}
The categories $A\gmoddelta$ and $A\gmoddeltabar$
are closed under degree shift, finite direct sum and passing to graded direct summands.
\end{corollary}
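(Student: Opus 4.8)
The plan is to deduce all three closure properties directly from the homological criteria of \cref{citizens}, which reduces the whole statement to the additivity of $\Hom_A(-,M)$ and $\Ext^n_A(-,M)$ in the first argument, together with the fact that the class of graded left $A$-modules that are locally finite-dimensional and bounded below is itself closed under degree shift, finite direct sum, and passing to summands.

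For closure under degree shift I would argue as follows. If $V$ has a $\Delta$-flag then $q^nV$ is again locally finite-dimensional and bounded below; moreover $\Hom_A(q^nV,\bar\nabla(b))$ is a degree shift of $\Hom_A(V,\bar\nabla(b))$, so $\supp_\Delta(q^nV)=\supp_\Delta(V)$ is finite, and likewise $\Ext^1_A(q^nV,\bar\nabla(b))$ is a degree shift of $\Ext^1_A(V,\bar\nabla(b))=0$ (the latter by \cref{extvanishingt}). Hence $q^nV$ has a $\Delta$-flag by \cref{citizens}(1). The same computation with $\nabla(b)$ in place of $\bar\nabla(b)$, using \cref{citizens}(2) and the second case of \cref{extvanishingt}, handles $\bar\Delta$-flags. (Alternatively one can just degree-shift the given filtration, noting that $q^n$ is exact and commutes with $j^\lambda_!$, hence sends $\Delta$-layers of type $\lambda$ to $\Delta$-layers of type $\lambda$.)

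For finite direct sums and summands it is enough to treat $V=V_1\oplus V_2$. Suppose first that $V_1$ and $V_2$ both have $\Delta$-flags. Then $V$ is locally finite-dimensional and bounded below; since $\Hom_A(V,\bar\nabla(b))\cong\Hom_A(V_1,\bar\nabla(b))\oplus\Hom_A(V_2,\bar\nabla(b))$ we get $\supp_\Delta(V)=\supp_\Delta(V_1)\cup\supp_\Delta(V_2)$, which is finite; and $\Ext^1_A(V,\bar\nabla(b))\cong\Ext^1_A(V_1,\bar\nabla(b))\oplus\Ext^1_A(V_2,\bar\nabla(b))=0$ by \cref{extvanishingt}. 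So $V$ has a $\Delta$-flag by \cref{citizens}(1). Conversely, if $V=V_1\oplus V_2$ has a $\Delta$-flag, then each $V_i$ is a summand of a locally finite-dimensional bounded-below module, hence has those properties; $\supp_\Delta(V_i)\subseteq\supp_\Delta(V)$ is finite; and $\Ext^1_A(V_i,\bar\nabla(b))$ is a direct summand of $\Ext^1_A(V,\bar\nabla(b))=0$. Thus each $V_i$ has a $\Delta$-flag by \cref{citizens}(1). Replacing $\bar\nabla(b)$ by $\nabla(b)$ and \cref{citizens}(1) by \cref{citizens}(2) throughout (and using that modules with $\bar\Delta$-flags are locally finite-dimensional and bounded below) gives the corresponding statements for $A\gmoddeltabar$.

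I do not expect a genuine obstacle here: the only subtle closure property for a category of filtered modules is closure under direct summands, but in this setting that has effectively already been done in proving \cref{citizens}, so it comes for free. The one point to watch is keeping the finiteness-of-support hypotheses in play at every step, since these are exactly what \cref{citizens} requires and are visibly preserved by the additivity arguments above.
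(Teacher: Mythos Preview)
Your proposal is correct and is exactly the intended argument: the paper states this corollary without proof, immediately after \cref{citizens} and \cref{sesdelta}, precisely because the closure properties follow at once from the homological criteria via additivity of $\Hom$ and $\Ext^1$ in the first variable together with the obvious stability of the locally-finite-dimensional-and-bounded-below and finite-support conditions. There is nothing to add.
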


Since they are often useful, we take the time to formulate the dual results too.
The {\em $\nabla$-} and {\em $\bar\nabla$-supports}
of $V \in \ob A\gmod$ are
\begin{align}\label{supp3}
\supp_\nabla(V) &:= \big\{\dot b\:\big|\:
b\in\B\text{ such that }\Hom_A(\bar\Delta(b), V) \neq 0\big\},\\\label{supp4}
\supp_{\bar\nabla}(V) &:= \big\{\dot b\:\big|\:
b \in \B\text{ such that }
\Hom_A(\Delta(b),V) \neq 0\big\}.
\end{align}
We have that $\supp_\nabla(V) \subseteq \supp_{\bar\nabla}(V)$. These sets are necessarily finite if $A$ is unital, or if $V$ is finitely cogenerated (this statement is dual to \cref{supports}).
Multiplicities in $\nabla$- and $\bar\nabla$-flags are
defined by
\begin{align}\label{pea}
(V:\nabla(b))_q &:= \dim_q \Hom_A(\bar\Delta(b), V) 
\in \N\lround q^{-1}\rround,\\\label{nut}
(V:\bar \nabla(b))_q &:= \dim_q \Hom_A(\Delta(b), V)
\in\N\lround q^{-1}\rround,
\end{align}
with interpretations similar to the ones explained
for $\Delta$- and $\bar\Delta$-flags.
For example, every $\nabla(a)$ has a $\bar\nabla$-flag
with
\begin{equation}
(\nabla(a):\bar\nabla(b))_q = 
\left\{
\begin{array}{ll}
[I_\lambda(a):L_\lambda(b)]_q&\text{if $a,b \in \B_\lambda$
for some $\lambda \in \Lambda$}\\
0&\text{if $\dot a \neq \dot b$.}
\end{array}\right.
\end{equation}
The dual results to \cref{fridaynight}, \cref{citizens} and its corollaries are as follows:

\begin{lemma}\label{saturdaynight}
If $V$ has a $\nabla$-flag then
$(V:\bar\nabla(b))_q = \sum_{a \in \B}
(V:\nabla(a))_q(\nabla(a):\bar\nabla(b))_q$.
\end{lemma}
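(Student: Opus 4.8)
The plan is to run the argument of \cref{fridaynight} in the dual (costandard) setting; one could alternatively deduce the statement from \cref{fridaynight} applied to $A^{\op}$ via the duality $?^\circledast$, but the direct route is cleaner to state.

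First I would reduce to the case where $V$ is a single $\nabla$-layer of type $\lambda$. Given a $\nabla$-flag $V = V_0 \supset V_1 \supset \cdots \supset V_n = 0$, each $V_r$ again has a $\nabla$-flag, and both sides of the asserted identity are additive along the sections: the left side because $(-:\bar\nabla(b))_q = \dim_q\Hom_A(\Delta(b),-)$ converts short exact sequences of modules with $\nabla$-flags into short exact sequences, since $\Ext^{\geq 1}_A(\Delta(b),-)$ vanishes on $A\gmodnabla$ by \cref{extvanishingt} (recall $\nabla$-layers are $\bar\nabla$-layers, so $A\gmodnabla\subseteq A\gmodnablabar$); the right side because additivity reduces to that of $(-:\nabla(a))_q = \dim_q\Hom_A(\bar\Delta(a),-)$, which vanishes on $A\gmodnabla$ for the same reason, while the factors $(\nabla(a):\bar\nabla(b))_q$ do not depend on $V$. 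So it suffices to treat $V \cong j^\lambda_* \bar V$ with $\bar V$ an injective, locally finite-dimensional, bounded-above graded $A_\lambda$-module; by \cref{jam} we may write $V \cong \bigoplus_{a\in\B_\lambda}\nabla(a)^{\oplus(V:\nabla(a))_q}$, equivalently $\bar V \cong \bigoplus_{a\in\B_\lambda}I_\lambda(a)^{\oplus(V:\nabla(a))_q}$.

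Second, I would establish the dual of \cref{wednesdaynight}: for such $V$ and $b\in\B_\lambda$,
\[
(V:\bar\nabla(b))_q = \dim_q\Hom_A(\Delta(b), j^\lambda_*\bar V) = \dim_q\Hom_{A_\lambda}(j^\lambda \Delta(b), \bar V) = \dim_q\Hom_{A_\lambda}(P_\lambda(b), \bar V) = [\bar V:L_\lambda(b)]_q,
\]
using that $i_{\geq\lambda}$ is fully faithful, the adjunction between $j^\lambda$ and $j^\lambda_*$ together with $j^\lambda j^\lambda_* \cong \id$, the identity $j^\lambda\Delta(b)\cong P_\lambda(b)$, and Schur's Lemma as in \cref{tech2}(3). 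For $\dot b \neq \lambda$ both sides vanish: the right side because $(V:\nabla(a))_q = 0$ unless $a\in\B_\lambda$, and the left side because, via the $\bar\nabla$-flag of $\nabla(a)$ and \cref{extvanishingt}, $\dim_q\Hom_A(\Delta(b),\nabla(a))$ is computed termwise by \cref{up} and so vanishes for $a\in\B_\lambda$, $\dot b\neq\lambda$.

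Finally, substituting $\bar V \cong \bigoplus_{a\in\B_\lambda}I_\lambda(a)^{\oplus(V:\nabla(a))_q}$ into $[\bar V:L_\lambda(b)]_q$ yields
\[
(V:\bar\nabla(b))_q = \sum_{a\in\B_\lambda}(V:\nabla(a))_q\,[I_\lambda(a):L_\lambda(b)]_q = \sum_{a\in\B}(V:\nabla(a))_q\,(\nabla(a):\bar\nabla(b))_q,
\]
where the last step uses the displayed formula for $(\nabla(a):\bar\nabla(b))_q$ preceding the statement together with the vanishing of $(V:\nabla(a))_q$ off $\B_\lambda$. I do not anticipate a genuine obstacle here, since every ingredient is dual to something already proved; the one point deserving care is the legitimacy of the reduction to a single $\nabla$-layer, i.e.\ the additivity of $(-:\bar\nabla(b))_q$ and $(-:\nabla(a))_q$ on short exact sequences whose terms all have $\nabla$-flags, which is exactly \cref{extvanishingt} combined with left-exactness of the relevant $\Hom$ functors.
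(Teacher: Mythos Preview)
Your proposal is correct and takes essentially the same approach as the paper: the paper simply records \cref{saturdaynight} as the dual of \cref{fridaynight} without an explicit proof, and your argument is precisely the dualized version of the (very short) proof of \cref{fridaynight}. Your write-up is more detailed than necessary---for instance, the vanishing of the left side when $\dot b\neq\lambda$ follows immediately from the displayed formula for $(\nabla(a):\bar\nabla(b))_q$ just before the statement rather than via the filtration argument you sketch---but the core reduction to a single $\nabla$-layer and the use of the identity $(V:\bar\nabla(b))_q=[\bar V:L_\lambda(b)]_q$ (the dual of \cref{wednesdaynight}, i.e.\ \cref{tuesdaynight}) is exactly right.
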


\begin{theorem}[Homological criteria for 
$\nabla$- and $\bar\nabla$-flags]\label{citizenss}
Assume that $V \in \ob A\gmod$ is locally finite-dimensional and bounded above.
\begin{itemize}
\item[(1)]
The following are equivalent:
\begin{enumerate}
\item[(a)] $V$ has a $\nabla$-flag;
\item[(b)] 
$|\supp_\nabla(V)| < \infty$ and
$\Ext^1_A(\bar\Delta(b), V) = 0$ for all $b \in \B$;
\item[(c)]
$|\supp_\nabla(V)| < \infty$ and
$\Ext^n_A(\bar\Delta(b),V) = 0$ for all $b \in \B$ and $n \geq 1$.
\end{enumerate}
\item[(2)]
The following are equivalent:
\begin{enumerate}
\item[(a)] $V$ has a $\bar\nabla$-flag;
\item[(b)] 
$|\supp_{\bar\nabla}(V)| < \infty$ and
$\Ext^1_A(\Delta(b),V) = 0$ for all $b \in \B$;
\item[(c)] 
$|\supp_{\bar\nabla}(V)| < \infty$ and
$\Ext^n_A(\Delta(b),V) = 0$ for all $b \in \B$ and $n \geq 1$.
\end{enumerate}
\end{itemize}
\end{theorem}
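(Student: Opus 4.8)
Both parts follow from \cref{citizens} by passing to the opposite algebra via the exact contravariant functor $?^\circledast$, so the plan is to set up a translation dictionary and then quote the homological criteria already established for $A^\op$. Since $V$ is locally finite-dimensional and bounded above, its dual $V^\circledast$ is a locally finite-dimensional graded right $A$-module that is bounded below (by \cref{circledast1}), hence a legitimate input for \cref{citizens} applied with $A$ replaced by $A^\op$; recall that $A^\op$ inherits a graded triangular basis from that of $A$ by interchanging the sets $\X(s)$ and $\Y(s)$, and that the algebras $(A^\op)_\lambda$ are the $(A_\lambda)^\op$.

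Next I would assemble the dictionary. As recorded in the section on duality, $?^\circledast$ sends $\Delta^\op(b), \bar\Delta^\op(b), \bar\nabla^\op(b), \nabla^\op(b)$ to $\nabla(b), \bar\nabla(b), \bar\Delta(b), \Delta(b)$ respectively. Since these modules and $V$ are all locally finite-dimensional, we have $(V^\circledast)^\circledast \cong V$, and combining this with \cref{hands} gives graded isomorphisms
$$
\Ext^n_A(\bar\Delta(b), V) \cong \Ext^n_{A^\op}(V^\circledast, \bar\nabla^\op(b)),
\qquad
\Ext^n_A(\Delta(b), V) \cong \Ext^n_{A^\op}(V^\circledast, \nabla^\op(b))
$$
for all $b \in \B$ and $n \geq 0$. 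Taking $n = 0$ shows that $\supp_\nabla(V) = \supp_{\Delta^\op}(V^\circledast)$ and $\supp_{\bar\nabla}(V) = \supp_{\bar\Delta^\op}(V^\circledast)$, so the finiteness conditions and the $\Ext$-vanishing conditions in (b) and (c) of each part of the theorem correspond exactly to those in \cref{citizens} for $A^\op$ and $V^\circledast$.

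It then remains to match the flag conditions, i.e.\ to check that $V$ has a $\nabla$-flag (resp.\ a $\bar\nabla$-flag) if and only if $V^\circledast$ has a $\Delta^\op$-flag (resp.\ a $\bar\Delta^\op$-flag). For this I would apply $?^\circledast$ to the filtration in \cref{defdown}: using the identity $j^\lambda_! \circ ?^\circledast \cong ?^\circledast \circ j^\lambda_*$ from the section on duality, a $\nabla$-layer $j^\lambda_* \bar V$ of type $\lambda$ dualizes to $j^\lambda_!(\bar V^\circledast)$, and $\bar V^\circledast$ is a projective (resp.\ arbitrary), locally finite-dimensional, bounded below graded $A_\lambda^\op$-module whenever $\bar V$ is injective (resp.\ arbitrary), locally finite-dimensional and bounded above over $A_\lambda$. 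Hence a $\nabla$-layer (resp.\ $\bar\nabla$-layer) of type $\lambda$ dualizes to a $\Delta^\op$-layer (resp.\ $\bar\Delta^\op$-layer) of type $\lambda$, and conversely using $(V^\circledast)^\circledast \cong V$; the reversal of the order of the sections under a contravariant functor is harmless, since \cref{defup,defdown} only require the weights $\lambda_1, \dots, \lambda_n$ to be distinct. With this in hand, parts (1) and (2) of the theorem become parts (1) and (2) of \cref{citizens} applied to $A^\op$ and $V^\circledast$, read through the dictionary above.

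The one place needing care is the bookkeeping of that last paragraph, namely verifying $(j^\lambda_* \bar V)^\circledast \cong j^\lambda_!(\bar V^\circledast)$ as graded $A^\op$-modules (and, for the $\bar\nabla$ case, the analogous statement for arbitrary $\bar V$), together with the degree-compatibility of the $\Hom$- and $\Ext$-duality isomorphisms. I expect these to be soft points that parallel computations already carried out in the section on duality, so there should be no serious obstacle, only routine verification.
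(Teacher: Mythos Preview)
Your proposal is correct and is exactly the approach the paper takes: the paper states \cref{citizenss} without proof, presenting it explicitly as the dual formulation of \cref{citizens}, to be obtained by applying $?^\circledast$ and working over $A^\op$. Your dictionary is the one set up in the duality section, and the one point you flag as needing care---that $(j^\lambda_* \bar V)^\circledast \cong j^\lambda_!(\bar V^\circledast)$ with $\bar V^\circledast$ projective when $\bar V$ is injective---is handled there (the functor identity is stated outright, and the injective-to-projective correspondence follows from \cref{tech1} together with the fact that $?^\circledast$ commutes with direct sums on locally finite-dimensional modules).
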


\begin{corollary}[BGG reciprocity for injectives]\label{bgginj}
For $b \in \B$, the indecomposable
injective $I(b)$ has a $\nabla$-flag
with
$(I(b):\nabla(a))_q = \overline{[\bar\Delta(a):L(b)]_q}$
for all $a \in \B$.
If the graded triangular basis admits a duality
then $(I(b):\nabla(a))_q = [\bar\nabla(a):L(b)]_q$.
\end{corollary}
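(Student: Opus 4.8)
The plan is to mirror the proof of \cref{bggproj}, using the dual homological criterion \cref{citizenss} in place of \cref{citizens}. First I would note that $I(b)$, being an injective hull of the irreducible module $L(b)$, is finitely cogenerated, hence locally finite-dimensional and bounded above, so that \cref{citizenss} is applicable to it. Since $I(b)$ is injective in $A\gmod$, we have $\Ext^1_A(\bar\Delta(a), I(b)) = 0$ for every $a \in \B$; and $\supp_\nabla(I(b)) \subseteq \supp_{\bar\nabla}(I(b))$ is finite because $I(b)$ is finitely cogenerated (the statement dual to \cref{supports}). Thus \cref{citizenss}(1) shows that $I(b)$ has a $\nabla$-flag.

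Next I would compute the multiplicities directly from the definition \cref{pea}: $(I(b):\nabla(a))_q = \dim_q \Hom_A(\bar\Delta(a), I(b))$. Here $\bar\Delta(a)$ is locally finite-dimensional (by \cref{xmas}, since $L_\lambda(a)$ is), and $I(b)$ is the injective hull of $L(b)$, so \cref{tech2}(3) applies with $V = \bar\Delta(a)$ and $L = L(b)$: it gives $\Hom_A(P(b), \bar\Delta(a)) \cong \Hom_A(\bar\Delta(a), I(b))^\circledast$ together with the fact that the graded dimension of this space equals $[\bar\Delta(a):L(b)]_q$. Since $?^\circledast$ reverses the grading, $\dim_q \Hom_A(\bar\Delta(a),I(b))^\circledast = \overline{\dim_q \Hom_A(\bar\Delta(a),I(b))}$, and we conclude $(I(b):\nabla(a))_q = \overline{[\bar\Delta(a):L(b)]_q}$, which is the first asserted formula.

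For the refinement when the graded triangular basis admits a duality $\tau$, I would invoke \cref{selfduality}: from $\bar\Delta(a)^\taudual \cong \bar\nabla(a)$, $L(b)^\taudual \cong L(b)$, and the fact that $?^\taudual$ reverses degrees, one obtains $[\bar\nabla(a):L(b)]_q = \overline{[\bar\Delta(a):L(b)]_q}$; substituting into the previous formula yields $(I(b):\nabla(a))_q = [\bar\nabla(a):L(b)]_q$.

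I do not expect a real obstacle here, since all of the required machinery (\cref{citizenss}, \cref{tech2}(3), \cref{selfduality}) is already established; the only points that need care are the conjugation bookkeeping forced by the grading reversal in $?^\circledast$ and $?^\taudual$, and the verification that $I(b)$ meets the finiteness hypotheses of \cref{citizenss}. As an alternative to the direct argument, the entire corollary can be obtained by applying the exact contravariant functor $?^\circledast$ to \cref{bggproj} for the opposite algebra $A^\op$, using the identifications $P^\op(b)^\circledast \cong I(b)$ and $(\Delta^\op(a))^\circledast \cong \nabla(a)$ recorded in the section on duality.
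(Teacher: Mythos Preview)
Your proposal is correct and follows essentially the same approach as the paper: the paper presents \cref{bgginj} simply as the dual result to \cref{bggproj} (obtained either by dualizing via $?^\circledast$ from $A^\op$ or, equivalently, by mirroring the argument using \cref{citizenss} and the dual of \cref{supports}), and your write-up carries out exactly this dualization with the appropriate conjugation bookkeeping from \cref{tech2}(3).
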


\begin{corollary}
Suppose that $0 \rightarrow U \rightarrow V \rightarrow W 
\rightarrow 0$ is a short exact sequence of 
graded left $A$-modules. 
Assuming that $U$ has a $\bar\nabla$-flag, $V$ has a $\bar\nabla$-flag if and only if $W$ has a $\bar\nabla$-flag.
Similarly for $\nabla$-flags.
\end{corollary}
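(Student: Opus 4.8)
The plan is to run the argument of \cref{sesdelta} in its dual form. One route is purely formal: apply $?^\circledast$ to the given sequence to obtain $0\rightarrow W^\circledast\rightarrow V^\circledast\rightarrow U^\circledast\rightarrow 0$ of graded right $A$-modules, observe that $?^\circledast$ carries $\bar\nabla$-flags to $\bar\Delta^{\op}$-flags so that the quotient $U^\circledast$ has a $\bar\Delta^{\op}$-flag, invoke \cref{sesdelta} for $A^{\op}$ (in which the roles of sub and quotient are interchanged), and dualize back; the only point requiring care there is that $?^\circledast$ is an equivalence merely on locally finite-dimensional modules, which is guaranteed here by the first step below. I will instead spell out the more self-contained route, mimicking the proof of \cref{sesdelta} with $\Hom_A(-,\nabla(b))$ replaced by $\Hom_A(\Delta(b),-)$ and \cref{citizens} replaced by its dual \cref{citizenss}.

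First I would verify the boundedness hypothesis needed to apply \cref{citizenss}. Since $U$ has a $\bar\nabla$-flag it is locally finite-dimensional and bounded above by \cref{xmas}; hence if $V$ has a $\bar\nabla$-flag then the quotient $W$ is locally finite-dimensional and bounded above, and if $W$ has a $\bar\nabla$-flag then $V$, being an extension of $W$ by $U$, is locally finite-dimensional and bounded above. So in either direction of the claimed equivalence both $V$ and $W$ satisfy the standing hypothesis of \cref{citizenss}. Next, apply $\Hom_A(\Delta(b),-)$ to $0\rightarrow U\rightarrow V\rightarrow W\rightarrow 0$. As $\Delta(b)\in\ob A\gmoddelta$ and $U\in\ob A\gmodnablabar$, \cref{extvanishingt} gives $\Ext^n_A(\Delta(b),U)=0$ for all $n\geq 1$, so the long exact sequence collapses to short exact sequences
$$0\rightarrow\Hom_A(\Delta(b),U)\rightarrow\Hom_A(\Delta(b),V)\rightarrow\Hom_A(\Delta(b),W)\rightarrow 0$$
together with isomorphisms $\Ext^1_A(\Delta(b),V)\cong\Ext^1_A(\Delta(b),W)$, for every $b\in\B$. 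Reading off supports via the definition \cref{supp4}, the short exact sequences yield $\supp_{\bar\nabla}(V)=\supp_{\bar\nabla}(U)\cup\supp_{\bar\nabla}(W)$, so, since $\supp_{\bar\nabla}(U)$ is finite, $\supp_{\bar\nabla}(V)$ is finite if and only if $\supp_{\bar\nabla}(W)$ is. Combined with the $\Ext^1$-isomorphism, this shows that condition (b) of \cref{citizenss}(2) holds for $V$ if and only if it holds for $W$; hence $V$ has a $\bar\nabla$-flag if and only if $W$ does.

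The $\nabla$-flag case is the same argument verbatim with $\Delta(b)$ replaced by $\bar\Delta(b)$: one uses $\bar\Delta(b)\in\ob A\gmoddeltabar$ and $U\in\ob A\gmodnabla$ (since now $U$ has a $\nabla$-flag) together with \cref{extvanishingt} to break the long exact sequence obtained from $\Hom_A(\bar\Delta(b),-)$, then finishes with \cref{citizenss}(1). I do not anticipate a genuine obstacle: the substance is carried entirely by \cref{extvanishingt} and \cref{citizenss}, and the only step that is easy to overlook is checking the local finite-dimensionality and boundedness of $V$ and $W$ in each direction of the equivalence before invoking \cref{citizenss} — which is precisely why I would dispose of it at the outset.
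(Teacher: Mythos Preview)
Your proposal is correct and matches the paper's intended argument: the paper states this corollary among a list of ``dual results'' with no proof, implicitly obtaining it from \cref{sesdelta} by duality. You have spelled out both the formal duality route via $?^\circledast$ and the direct argument obtained by dualizing the proof of \cref{sesdelta} line by line (replacing $\Hom_A(-,\nabla(b))$ by $\Hom_A(\Delta(b),-)$ and \cref{citizens} by \cref{citizenss}); either is exactly what the paper has in mind.
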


\begin{corollary}\label{12ozofcoffee}
The categories $A\gmodnabla$ and $A\gmodnablabar$
are closed under degree shift, finite direct sum and passing to graded direct summands.
\end{corollary}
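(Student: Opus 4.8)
The plan is to derive this directly from the homological criteria of \cref{citizenss}, mirroring the proof of the corresponding closure statement for $A\gmoddelta$ and $A\gmoddeltabar$. I would handle $A\gmodnabla$ first and then observe that $A\gmodnablabar$ is treated word-for-word the same, after replacing part~(1) of \cref{citizenss}, the support $\supp_\nabla$, and the modules $\bar\Delta(b)$ by part~(2), $\supp_{\bar\nabla}$, and $\Delta(b)$ respectively.

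Closure under degree shift I would dispose of in one line straight from \cref{defdown}: shifting each layer of a $\nabla$-flag works because $q^n(j^\lambda_* \bar V) \cong j^\lambda_*(q^n \bar V)$ and the class of injective, locally finite-dimensional, bounded-above graded $A_\lambda$-modules is stable under degree shift. For finite direct sums and graded direct summands I would deliberately avoid manipulating flags by hand, since concatenating two $\nabla$-flags clashes with the requirement in \cref{defdown} that the weights $\lambda_1,\dots,\lambda_n$ be distinct and would force a fiddly merging of layers of equal type. Instead, given $V, W \in \ob A\gmodnabla$, I would note that $V \oplus W$ and every graded direct summand of $V$ are again locally finite-dimensional and bounded above; that $\supp_\nabla(V \oplus W) = \supp_\nabla(V) \cup \supp_\nabla(W)$ and $\supp_\nabla(V') \subseteq \supp_\nabla(V)$ for a summand $V'$, so the finiteness hypothesis of \cref{citizenss}(1) persists; and that, by additivity of $\Ext^n_A(\bar\Delta(b),-)$ together with the implication (a)$\Rightarrow$(c) of \cref{citizenss}(1) applied to $V$ and to $W$ separately, $\Ext^n_A(\bar\Delta(b), V \oplus W) = 0$ and $\Ext^n_A(\bar\Delta(b), V') = 0$ for all $b \in \B$ and $n \geq 1$. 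The implication (b)$\Rightarrow$(a) of \cref{citizenss}(1) then produces the required $\nabla$-flags.

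A second, even shorter route I would mention is pure duality: the statement is the image under the exact contravariant equivalence $?^\circledast$ of the already-proved closure properties of $A^\op\gmoddelta$ and $A^\op\gmoddeltabar$, since $?^\circledast$ carries $\Delta^\op$- and $\bar\Delta^\op$-flags to $\nabla$- and $\bar\nabla$-flags and intertwines degree shift, finite direct sums and passage to summands. I do not anticipate any genuine obstacle here; the only points needing a moment's care are the bookkeeping that "locally finite-dimensional and bounded above" is inherited by finite direct sums and by summands, and that the support sets behave as stated, both of which are immediate from the relevant split inclusions and projections together with the left-exactness and additivity of $\Hom$.
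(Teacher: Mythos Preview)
Your proposal is correct and matches the paper's approach: the paper states this corollary without proof, immediately after \cref{citizenss} and its companion corollaries, as part of a block of results explicitly introduced as ``the dual results'' to the $\Delta$-case. Both routes you describe---applying the homological criterion of \cref{citizenss} directly, or invoking duality from the already-established $\Delta$-version---are exactly the arguments the paper leaves implicit.
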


We record one more lemma which will be needed in the next section.

\begin{lemma}\label{more}
If $V$ has a $\bar\Delta$-flag (resp., 
a $\bar\nabla$-flag) then
$[V:L(b)]_q = \sum_{a \in \B} (V:\bar\Delta(a))_q [\bar\Delta(a):L(b)]_q$
(resp., $\sum_{a \in \B} (V:\bar\nabla(a))_q [\bar\nabla(a):L(b)]_q$).
\end{lemma}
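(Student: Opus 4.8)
The plan is to establish that both sides of the identity are additive along a $\bar\Delta$-flag and then to compute them for a single $\bar\Delta$-layer. For the right-hand side this additivity is essentially already recorded: by \cref{extvanishingt} we have $\Ext^1_A(V',\nabla(a))=0$ whenever $V'$ has a $\bar\Delta$-flag, so $(-:\bar\Delta(a))_q$, which is defined through $\dim_q\Hom_A(-,\nabla(a))$, is additive on short exact sequences of such modules, and summing over $a$ is harmless since $\B_\lambda$ is finite. For the left-hand side the key observation is that $V'\mapsto[V':L(b)]_q$ is additive on \emph{every} short exact sequence of locally finite-dimensional bounded-below modules: by \cref{tech2}(3) it equals $\dim_q\Hom_A(P(b),V')$, the functor $\Hom_A(P(b),-)$ is exact, and $\dim_q$ is additive on short exact sequences of locally finite-dimensional graded vector spaces. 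Since all modules occurring in a $\bar\Delta$-flag are locally finite-dimensional and bounded below, both sides therefore reduce to the case that $V=j^\lambda_!\bar V$ is a single $\bar\Delta$-layer of type $\lambda$.

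For such a $V$, \cref{wednesdaynight} gives $(V:\bar\Delta(a))_q=[\bar V:L_\lambda(a)]_q$ when $a\in\B_\lambda$, and $(V:\bar\Delta(a))_q=0$ when $\dot a\neq\lambda$: a nonzero homomorphism $V\to\nabla(a)$ has image containing $\soc\nabla(a)=L(a)$, so $\dot a$ is a weight of $V$ and hence $\dot a\geq\lambda$; and it is nonzero on the weight space $e_\lambda V$, which generates $V$, so $e_\lambda\nabla(a)\neq 0$ and hence $\dot a\leq\lambda$. Since $\bar\Delta(a)=j^\lambda_!L_\lambda(a)$ for $a\in\B_\lambda$, what remains to prove is
\begin{equation*}
[j^\lambda_!\bar V:L(b)]_q=\sum_{a\in\B_\lambda}[\bar V:L_\lambda(a)]_q\,[\bar\Delta(a):L(b)]_q .
\end{equation*}
When $\bar V$ is finite-dimensional this is immediate: choose a composition series of $\bar V$ as an $A_\lambda$-module, apply the exact functor $j^\lambda_!$ (\cref{xmas}) to obtain a finite filtration of $j^\lambda_!\bar V$ whose sections are degree shifts $q^{d_i}\bar\Delta(a_i)$, and use the additivity of $[-:L(b)]_q$ from the previous paragraph (the base case being an irreducible $\bar V=q^dL_\lambda(a)$, where $j^\lambda_!\bar V=q^d\bar\Delta(a)$).

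The one real subtlety is that $\bar V$---and likewise $j^\lambda_!\bar V$ and the modules $\bar\Delta(a)$ themselves---need not have finite length, so there is no finite filtration with $\bar\Delta$-sections to which Jordan--H\"older applies directly; the main obstacle is to bridge this gap. I would do so by a degree-truncation argument. Writing $A_{\lambda,\geq r}:=\bigoplus_{s\geq r}(A_\lambda)_s$, if $\bar V_d=0$ for $d<M$ then $A_{\lambda,\geq r}\bar V$ is concentrated in degrees $\geq M+r$, so $\bar V/A_{\lambda,\geq r}\bar V$ is finite-dimensional (locally finite-dimensional and concentrated in finitely many degrees), while $j^\lambda_!(A_{\lambda,\geq r}\bar V)$ is concentrated in degrees tending to $+\infty$ with $r$ (its basis vectors from \cref{thisisabasis} have degree $\deg x+\deg v$ with $\deg x$ bounded below and $\deg v\geq M+r$). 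Applying $j^\lambda_!$ to $0\to A_{\lambda,\geq r}\bar V\to\bar V\to\bar V/A_{\lambda,\geq r}\bar V\to 0$ and using additivity of $[-:L(b)]_q$ and of $[-:L_\lambda(a)]_q$, together with the fact that if a module $N$ is concentrated in degrees $\geq D$ then $[N:L]_q$ is supported in $q$-degrees bounded above by a quantity tending to $-\infty$ as $D\to+\infty$, one checks that for any fixed $n\in\Z$ the coefficient of $q^n$ on each side of the displayed identity is unchanged on replacing $\bar V$ by $\bar V/A_{\lambda,\geq r}\bar V$ for $r$ sufficiently large (finiteness of $\B_\lambda$ and bounded-below-ness of each $\bar\Delta(a)$ are used here for the right-hand side). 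Hence the general case follows from the finite-dimensional one. Finally, the statement for $\bar\nabla$-flags follows by applying $?^\circledast$, since it interchanges $\bar\Delta$- and $\bar\nabla$-flags and carries irreducibles to irreducibles for the opposite algebra.
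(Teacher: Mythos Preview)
Your overall strategy is sound and genuinely different from the paper's, but the degree-truncation step has a real technical flaw. The subspace $A_{\lambda,\geq r}\bar V$ is \emph{not} an $A_\lambda$-submodule in general, since $A_\lambda$ is only assumed bounded below, not non-negatively graded; thus you cannot apply $j^\lambda_!$ to the short exact sequence you wrote. More seriously, even setting aside the submodule issue, the quotient $\bar V/A_{\lambda,\geq r}\bar V$ need not be finite-dimensional: take $A_\lambda=\kk$ concentrated in degree $0$, so that $A_{\lambda,\geq 1}=0$ and the quotient is all of $\bar V$. The fix is easy: replace $A_{\lambda,\geq r}\bar V$ by the genuine submodule $W_r:=A_\lambda\cdot\bar V_{\geq r}$ generated by the high-degree part of $\bar V$. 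Then $W_r\supseteq\bar V_{\geq r}$, so $\bar V/W_r$ is concentrated in degrees $[M,r)$ and is finite-dimensional; and if $(A_\lambda)_d=0$ for $d<-N$ then $W_r\subseteq\bar V_{\geq r-N}$, so $j^\lambda_!W_r$ is still concentrated in degrees tending to $+\infty$. With this correction your limiting argument goes through as you describe.

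By contrast, the paper proves the $\bar\nabla$-case and computes $[V:L(b)]_q=\dim_q\Hom_A(P(b),V)$ by instead filtering $P(b)$: it invokes BGG reciprocity (\cref{bggproj}) to get a $\Delta$-flag of $P(b)$ whose sections are the (possibly infinite) direct sums $\bigoplus_{a\in\B_\mu}\Delta(a)^{\oplus\overline{[\bar\nabla(a):L(b)]_q}}$, then uses \cref{extvanishingt} to pass $\Hom_A(-,V)$ through this filtration, and finally handles the product-versus-sum issue for $\Hom$ from an infinite direct sum by a direct coefficient computation. Your approach is more elementary in that it avoids BGG reciprocity entirely, relying only on exactness of $j^\lambda_!$ and a composition series of a finite-dimensional truncation of $\bar V$; the price is the limiting argument. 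The paper's approach trades that limit for a dependency on \cref{bggproj} and a comparably delicate bookkeeping with infinite direct sums.
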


\begin{proof}
We just prove the result when $V$ has a $\bar\nabla$-flag, the other case being the dual statement.
We may assume that $V$ is a single $\bar\nabla$-layer,
so
$V \cong j^\lambda_* \bar V$
for a graded left $A_\lambda$-module $\bar V$ that is locally finite-dimensional and bounded above.
By \cref{bggproj}, $P(b)$ has a $\Delta$-flag
with sections given by the $\Delta$-layers 
$\bigoplus_{a \in \B_\mu} \Delta(a)^{\oplus \overline{[\bar\nabla(a):L(b)]_q}}$ of type $\mu$
for all $\mu \in \Lambda$ (this being zero unless $\mu \leq \dot b$).
Using \cref{extvanishingt}, we deduce that
\begin{align*}
[V:L(b)]_q &= \dim_q \Hom_A(P(b),V)
= \sum_{\substack{\mu \in \Lambda\\a \in \B_\mu} }
\dim_q \Hom_A\Big(\Delta(a)^{\oplus \overline{[\bar\nabla(a):L(b)]_q}}, j^\lambda_* \bar V
\Big)\\
&= \sum_{\substack{\mu \in \B_{\geq \lambda}\\a \in \B_\mu}}
\dim_q \Hom_{A_{\geq \lambda}}\Big(
\Delta(a)^{\oplus \overline{[\bar\nabla(a):L(b)]_q}},
j^\lambda_* \bar V\Big)
= 
\sum_{a \in \B_\lambda}
\dim_q \Hom_{A_{\lambda}}\Big(P_\lambda(a)^{\oplus\overline{[\bar\nabla(a):L(b)]_q}}, \bar V\Big).
\end{align*}
To complete the proof, we
show that the $q^d$-coefficient 
of $\dim_q \Hom_{A_{\lambda}}\Big(P_\lambda(a)^{\oplus\overline{[\bar\nabla(a):L(b)]_q}}, \bar V\Big)$
is equal to the $q^d$-coefficient of $(V:\bar\nabla(a))_q[\bar\nabla(a):L(b)]_q$
for each $a \in \B_\lambda$ and $d \in \Z$.
Like in \cref{wednesdaynight}, we have that
 \begin{equation}\label{tuesdaynight}
(V:\bar \nabla(a))_q = \dim_q \Hom_A(\Delta(a), V)
= 
\dim_q \Hom_{A_\lambda}\big(P_\lambda(a), \bar V\big)
= [\bar V:L_\lambda(a)]_q.
\end{equation}
Assuming that  $(V:\bar\nabla(a))_q = \sum_{m \in \Z} r_m q^m$ and
$[\bar\nabla(a):L(b)]_q
 = \sum_{n \in \Z} s_n q^n$,
we deduce that 
$$
\dim\Hom_{A_{\lambda}}\Big(P_\lambda(a)^{\oplus\overline{[\bar\nabla(a):L(b)]_q}}, \bar V\Big)_{d}
=
\dim \prod_{n \in \Z} \Hom_{A_\lambda}\big(P_\lambda(a), \bar V\big)^{\oplus s_n}_{d-n}
=
\sum_{n \in \Z} r_{d-n} s_{n},
$$
which is the $q^d$-coefficient of $(V:\bar\nabla(a))_q[\bar\nabla(a):L(b)]_q$
as we wanted.
\end{proof}

\section{Truncation to finite lower sets}\label{trunctolower}

Now let $\Gamma$ be a {\em finite} lower set in $\Lambda$
and set $\S_\Gamma := \{s \in \S\:|\:\dot s \in \Gamma\}$, $\B_\Gamma := \{b \in \B\:|\:\dot b \in \Gamma\}$.
Let $e_\Gamma := \sum_{\lambda \in \Gamma} e_\lambda$.
Then $A_\Gamma := e_\Gamma A e_\Gamma = \bigoplus_{s,t \in \S_\Gamma} 1_s A 1_t$ is a unital graded algebra which is locally finite-dimensional and bounded below.
We let 
\begin{equation}
j^\Gamma: A\gmod \rightarrow A_\Gamma\gmod
\end{equation}
be the quotient functor defined by truncating with the  idempotent $e_\Gamma$. As explained at the start of
\cref{saturday}, $j^\Gamma$ fits into an 
adjoint triple $(j^\Gamma_!, j^\Gamma, j^\Gamma_*)$.

The algebra $A_\Gamma$ has a graded triangular basis
with special idempotents
$1_s\:(s \in \S_\Gamma)$,
the finite weight poset $(\Gamma,\leq)$, and basis elements arising from
the sets $\X(s,t), \H(s,t)$ and $\Y(s,t)$
for all $s, t \in \S_\Gamma$.
For $\lambda\in\Gamma$, 
it is clear by considering the bases that
the quotient algebra
$(A_\Gamma)_{\geq\lambda}$ 
of $A_\Gamma$ may be identified with the
idempotent truncation $(A_{\geq \lambda})_\Gamma = \bar e_\Gamma A_{\geq \lambda} \bar e_\Gamma$ of $A_{\geq \lambda}$. 
Hence, $(A_\Gamma)_\lambda$ is 
identified with exactly the same algebra $A_\lambda = \bar e_\lambda A_{\geq \lambda} \bar e_\lambda$ as before.
The analog of the 
adjoint triple $(j^\lambda_!, j^\lambda, j^\lambda_*)$
for $A_\Gamma$ will be denoted $(j^{\Gamma,\lambda}_!,
j^{\Gamma,\lambda}, j^{\Gamma,\lambda}_*)$.
So 
\begin{equation}
j^{\Gamma,\lambda}:(A_{\Gamma})_{\geq \lambda}
\gmod \rightarrow A_\gamma\gmod
\end{equation}
is the idempotent
truncation functor defined by $\bar e_\lambda$, and
$j^{\Gamma,\lambda}_!$ and $j^{\Gamma,\lambda}_*$
are its left and right adjoints.

The standard, proper standard, costandard and proper
costandard modules for $A_\Gamma$ 
arising from the graded triangular basis are 
\begin{align}\label{standardsyetagain}
\Delta_\Gamma(b)& := 
\jmath^{\Gamma,\lambda}_! P_\lambda(b),&
\bar\Delta_\Gamma(b) &:= \jmath^{\Gamma,\lambda}_! L_\lambda(b),&
\bar\nabla_\Gamma(b)& := \jmath^{\Gamma,\lambda}_* L_\lambda(b),&
 \nabla_\Gamma(b) &:= \jmath^{\Gamma,\lambda}_* I_\lambda(b)
\end{align}
for $b \in \B_\Gamma$ and $\lambda := \dot b$.
Then, by \cref{irrclass}, the modules
$L_\Gamma(b) := \head \Delta_\Gamma(b) = \soc \nabla_\Gamma(b)$ for $b \in \B_\Gamma$ 
give a complete set of irreducible graded
left $A_\Gamma$-modules up to isomorphism and degree shift. We denote a projective cover
and an injective hull of $L_\Gamma(b)$
by $P_\Gamma(b)$ and $I_\Gamma(b)$, respectively.

\begin{lemma}\label{easy}
For $b \in \B_\Gamma$,  we have that
$j^\Gamma_! P_\Gamma(b) \cong P(b)$,
$j^\Gamma_! \Delta_\Gamma(b) \cong \Delta(b)$,
$j^\Gamma_! \bar\Delta_\Gamma(b) \cong \bar\Delta(b)$,
$j^\Gamma_* \bar\nabla_\Gamma(b) \cong \bar\nabla(b)$.
$j^\Gamma_* \nabla_\Gamma(b) \cong \nabla(b)$,
$j^\Gamma_* I_\Gamma(b) \cong I(b)$.
Also 
$j^\Gamma L(b) \cong L_\Gamma(b)$
for $b \in \B_\Gamma$, and $j^\Gamma \Delta(b) = j^\Gamma\bar\Delta(b) = j^\Gamma L(b) = j^\Gamma \bar\nabla(b) =j^\Gamma \nabla(b) = 0$ for $b \in \B-\B_\Gamma$.
\end{lemma}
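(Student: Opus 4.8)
The plan is to split into the cases $b\notin\B_\Gamma$ and $b\in\B_\Gamma$, the first being trivial and the second carrying all the content. For $b\notin\B_\Gamma$ set $\lambda:=\dot b\notin\Gamma$; each of $\Delta(b),\bar\Delta(b),L(b),\bar\nabla(b),\nabla(b)$ is a subquotient of a module induced from $A_\lambda\gmod$ along $i_{\geq\lambda}$, hence is an $A_{\geq\lambda}$-module, so all of its weights are $\geq\lambda$. Since $\Gamma$ is a lower set, no weight $\mu\geq\lambda$ can lie in $\Gamma$ (it would force $\lambda\in\Gamma$), so $e_\Gamma$ annihilates all five modules and $j^\Gamma$ sends each to $0$; this settles the last assertion.

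Now fix $b\in\B_\Gamma$ and put $\lambda:=\dot b\in\Gamma$. First I would prove $j^\Gamma L(b)\cong L_\Gamma(b)$: as $e_\lambda L(b)\cong L_\lambda(b)\neq 0$ and $\lambda\in\Gamma$, the module $j^\Gamma L(b)=e_\Gamma L(b)$ is non-zero, hence irreducible by the general properties of the quotient functor recalled at the start of \cref{saturday}. Both $j^\Gamma L(b)$ and $L_\Gamma(b)$ are irreducible graded $A_\Gamma$-modules having $\lambda$ as their unique minimal weight (uniqueness via \cref{green}(1), as in the proof of \cref{irrclass}) and $L_\lambda(b)$, with no degree shift, as their $\lambda$-weight space, so the uniqueness built into the classification of irreducibles applied to $A_\Gamma$ forces $j^\Gamma L(b)\cong L_\Gamma(b)$. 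The projective and injective statements are then formal: $j^\Gamma P(b)$ is a projective cover of $j^\Gamma L(b)=L_\Gamma(b)$, hence $\cong P_\Gamma(b)$, and $j^\Gamma_! P_\Gamma(b)=j^\Gamma_! j^\Gamma P(b)\cong P(b)$; dually $j^\Gamma I(b)\cong I_\Gamma(b)$ and $j^\Gamma_* I_\Gamma(b)\cong I(b)$.

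For the (proper) standard modules I would exploit the compatibility of the three operations in play: truncation by $e_\Gamma$, passage to $A_{\geq\lambda}$, and truncation by $\bar e_\lambda$. Write $\mathfrak j:=\bar e_\Gamma(-):A_{\geq\lambda}\gmod\to(A_\Gamma)_{\geq\lambda}\gmod$ for the quotient functor coming from the identification $(A_\Gamma)_{\geq\lambda}=\bar e_\Gamma A_{\geq\lambda}\bar e_\Gamma$ noted before the lemma, with left adjoint $\mathfrak j_!$, and let $i^\Gamma_{\geq\lambda}$ be the inclusion $(A_\Gamma)_{\geq\lambda}\gmod\hookrightarrow A_\Gamma\gmod$ with left adjoint $(i^\Gamma_{\geq\lambda})^*$. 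Checking on the explicit triangular bases, one gets the identities $j^\Gamma\circ i_{\geq\lambda}\cong i^\Gamma_{\geq\lambda}\circ\mathfrak j$, $\mathfrak j\circ j^\lambda_!\cong j^{\Gamma,\lambda}_!$ and $j^{\Gamma,\lambda}\circ\mathfrak j\cong j^\lambda$ (the last two reflecting $\bar e_\Gamma\bar e_\lambda=\bar e_\lambda$). Passing to left adjoints yields $i^*_{\geq\lambda}\circ j^\Gamma_!\cong\mathfrak j_!\circ(i^\Gamma_{\geq\lambda})^*$ and $\mathfrak j_!\circ j^{\Gamma,\lambda}_!\cong j^\lambda_!$. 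Since $\Delta(b)$, regarded in $A_{\geq\lambda}\gmod$, is the projective cover of $L(b)$ and $\mathfrak j L(b)=L_\Gamma(b)\neq 0$, the quotient-functor facts give $\mathfrak j\Delta(b)\cong\Delta_\Gamma(b)$ and $\mathfrak j_!\mathfrak j\Delta(b)\cong\Delta(b)$, while $\mathfrak j\bar\Delta(b)\cong\mathfrak j j^\lambda_! L_\lambda(b)\cong j^{\Gamma,\lambda}_! L_\lambda(b)=\bar\Delta_\Gamma(b)$. Substituting into the adjoint squares gives $i^*_{\geq\lambda}(j^\Gamma_!\Delta_\Gamma(b))\cong\mathfrak j_!\mathfrak j\Delta(b)\cong\Delta(b)$ and $i^*_{\geq\lambda}(j^\Gamma_!\bar\Delta_\Gamma(b))\cong\mathfrak j_! j^{\Gamma,\lambda}_! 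L_\lambda(b)\cong j^\lambda_! L_\lambda(b)=\bar\Delta(b)$ as $A_{\geq\lambda}$-modules.

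It then remains to remove the $i^*_{\geq\lambda}$, i.e. to see that $j^\Gamma_!\Delta_\Gamma(b)$ and $j^\Gamma_!\bar\Delta_\Gamma(b)$ already have all weights $\geq\lambda$. Each of $\Delta_\Gamma(b),\bar\Delta_\Gamma(b)$ is generated over $A_\Gamma$ by its lowest weight space $e_\lambda(-)$, so, since $j^\Gamma_!=Ae_\Gamma\otimes_{A_\Gamma}-$, both $j^\Gamma_!\Delta_\Gamma(b)$ and $j^\Gamma_!\bar\Delta_\Gamma(b)$ are generated over $A$ by their $\lambda$-weight spaces; applying $j^\Gamma$ recovers $\Delta_\Gamma(b)$, resp. $\bar\Delta_\Gamma(b)$, which has no weight $<\lambda$, and any weight $<\lambda$ would in any case lie in the lower set $\Gamma$, so $\lambda$ is minimal in their sets of weights. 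Then \cref{green}(1) forces all weights to be $\geq\lambda$, so $j^\Gamma_!\Delta_\Gamma(b)\cong\Delta(b)$ and $j^\Gamma_!\bar\Delta_\Gamma(b)\cong\bar\Delta(b)$. Finally $j^\Gamma_*\bar\nabla_\Gamma(b)\cong\bar\nabla(b)$, $j^\Gamma_*\nabla_\Gamma(b)\cong\nabla(b)$ and $j^\Gamma_* I_\Gamma(b)\cong I(b)$ follow by applying $?^\circledast$ to the statements just proved for $A^{\op}$ (which carries the transposed graded triangular basis, with $(A^{\op})_\Gamma=(A_\Gamma)^{\op}$), using that $?^\circledast$ intertwines $j^\Gamma_!$ with $j^\Gamma_*$ and exchanges standard with costandard modules. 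I expect the main obstacle to be the bookkeeping in the third paragraph: pinning down the several commuting squares of truncation and quotient functors \emph{on the nose} and transporting them correctly through adjunctions, which is the one place where one genuinely has to return to the explicit triangular bases.
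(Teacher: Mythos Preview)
Your proof is correct, but it takes a more circuitous path than the paper's. The paper's key observation is the one-line factorization: since $j^\lambda = j^{\Gamma,\lambda}\circ j^\Gamma$ as functors $A_{\geq\lambda}\gmod\to A_\lambda\gmod$ (both are truncation by $\bar e_\lambda$, and $\bar e_\lambda = \bar e_\lambda \bar e_\Gamma$), taking left adjoints gives $j^\lambda_! \cong j^\Gamma_! \circ j^{\Gamma,\lambda}_!$ immediately, whence $j^\Gamma_!\Delta_\Gamma(b) = j^\Gamma_! j^{\Gamma,\lambda}_! P_\lambda(b) \cong j^\lambda_! P_\lambda(b) = \Delta(b)$ and likewise for $\bar\Delta$; the costandard statements follow by the dual factorization $j^\lambda_*\cong j^\Gamma_*\circ j^{\Gamma,\lambda}_*$. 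The paper then deduces $j^\Gamma L(b)\cong L_\Gamma(b)$ from $j^{\Gamma,\lambda}(j^\Gamma L(b)) = j^\lambda L(b)\cong L_\lambda(b)$, and gets the $P$ and $I$ statements by adjunction, as you do.

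Your detour through the adjoint square $i^*_{\geq\lambda}\circ j^\Gamma_! \cong \mathfrak j_!\circ(i^\Gamma_{\geq\lambda})^*$ and the subsequent weight argument to remove $i^*_{\geq\lambda}$ is not wrong---indeed, it makes explicit an identification the paper leaves tacit, namely that the $j^\Gamma_!$ appearing in the factorization (which a priori lands in $A_{\geq\lambda}\gmod$) agrees with the $j^\Gamma_!:A_\Gamma\gmod\to A\gmod$ of the lemma statement when applied to $(A_\Gamma)_{\geq\lambda}$-modules. So your extra care is justified, but once you have the factorization $j^\lambda_! = \mathfrak j_!\circ j^{\Gamma,\lambda}_!$ you could reach $\bar\Delta(b)$ in one step as $\mathfrak j_!\bar\Delta_\Gamma(b) = \mathfrak j_! j^{\Gamma,\lambda}_! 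L_\lambda(b) = j^\lambda_! L_\lambda(b)$, and similarly for $\Delta(b)$, avoiding the appeal to projective-cover properties under quotient functors.
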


\begin{proof}
The functor $j^\lambda:A_{\geq \lambda}\gmod
\rightarrow A_\lambda\gmod$
is 
the composition of 
$j^\Gamma:A_{\geq \lambda}\gmod
\rightarrow \bar e_\Gamma A_{\geq \lambda} \bar e_\Gamma \gmod$
followed by 
$j^{\Gamma,\lambda}:\bar e_\Gamma A_{\geq \lambda} \bar e_\Gamma \gmod
\rightarrow A_\lambda\gmod$.
Hence, $j^\lambda_! \cong j^\Gamma_! \circ j^{\Gamma,\lambda}_!$, giving that
$j^\Gamma_! \Delta_\Gamma(b) \cong \Delta(b)$ and
$j^\Gamma_! \bar\Delta_\Gamma(b) \cong \bar\Delta(b)$.
Similarly, $j^\lambda_* \cong j^\Gamma_* \circ j^{\Gamma,\lambda}_*$, giving that
$j^\Gamma_* \bar\nabla_\Gamma(b) \cong \bar\nabla(b)$.
$j^\Gamma_* \nabla_\Gamma(b) \cong \nabla(b)$.

Next we show that $j L(b) \cong L_\Gamma(b)$
for $b \in \B_\lambda$ and $\lambda \in \Gamma$.
This follows
because $j^{\Gamma,\lambda} \left(j^\Gamma L(b)\right)  = j^\lambda L(b)
\cong L_\lambda(b)$ as $A_\lambda$-modules.
Then we deduce that $j^\Gamma_! P_\Gamma(b)
\cong P(b)$ and $j^\Gamma_* I_\Gamma(b) \cong I(b)$
for $b \in \B_\Gamma$ using adjunction properties.

Finally, it is clear that $j \Delta(b) = j \bar\Delta(b) = j L(b) = j \bar\nabla(b) =j \nabla(b) = 0$ for $b \in 
\B- \B_\Gamma$,
since all these have lowest weight $\dot b$,
hence, they have no weights that are in $\Gamma$.
\end{proof}

\begin{corollary}\label{easyc}
For $b \in \B_\Gamma$, we have that
$j^\Gamma P(b)\cong 
P_\Gamma(b) $,
$j^\Gamma \Delta(b)\cong \Delta_\Gamma(b)$,
$j^\Gamma \bar\Delta(b)\cong \bar\Delta_\Gamma(b)$,
$j^\Gamma \bar\nabla(b)\cong \bar\nabla_\Gamma(b)$,
$j^\Gamma \nabla(b)\cong \nabla_\Gamma(b)$ and
$j^\Gamma I(b)\cong I_\Gamma(b)$.
\end{corollary}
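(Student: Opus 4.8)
The plan is to read each isomorphism off directly from Lemma~\ref{easy} by applying the quotient functor $j^\Gamma$ and invoking the standard fact, recorded at the start of \cref{saturday}, that $j^\Gamma \circ j^\Gamma_! \cong j^\Gamma \circ j^\Gamma_* \cong \id_{A_\Gamma\gmod}$ (both $j^\Gamma_!$ and $j^\Gamma_*$ are sections of the quotient functor $j^\Gamma$). Concretely, Lemma~\ref{easy} gives $j^\Gamma_! P_\Gamma(b) \cong P(b)$; applying $j^\Gamma$ and simplifying with $j^\Gamma j^\Gamma_! \cong \id$ yields $j^\Gamma P(b) \cong j^\Gamma j^\Gamma_! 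P_\Gamma(b) \cong P_\Gamma(b)$. Running the identical argument with $\Delta_\Gamma(b)$ and $\bar\Delta_\Gamma(b)$ in place of $P_\Gamma(b)$, using $j^\Gamma_! \Delta_\Gamma(b) \cong \Delta(b)$ and $j^\Gamma_! \bar\Delta_\Gamma(b) \cong \bar\Delta(b)$, produces $j^\Gamma \Delta(b) \cong \Delta_\Gamma(b)$ and $j^\Gamma \bar\Delta(b) \cong \bar\Delta_\Gamma(b)$.

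The three remaining isomorphisms are obtained dually: from $j^\Gamma_* \bar\nabla_\Gamma(b) \cong \bar\nabla(b)$, $j^\Gamma_* \nabla_\Gamma(b) \cong \nabla(b)$ and $j^\Gamma_* I_\Gamma(b) \cong I(b)$ in Lemma~\ref{easy}, applying $j^\Gamma$ and using $j^\Gamma j^\Gamma_* \cong \id$ gives $j^\Gamma \bar\nabla(b) \cong \bar\nabla_\Gamma(b)$, $j^\Gamma \nabla(b) \cong \nabla_\Gamma(b)$ and $j^\Gamma I(b) \cong I_\Gamma(b)$. One small point worth a sentence is that the isomorphisms of Lemma~\ref{easy} are isomorphisms in the graded module categories, and $j^\Gamma$ is a functor of graded module categories (truncation by the homogeneous idempotent $e_\Gamma$), so it carries these graded isomorphisms to graded isomorphisms with no shift; this is immediate. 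Alternatively, for the statements about $P(b)$ and $I(b)$ one could argue without Lemma~\ref{easy}'s $j^\Gamma_!$/$j^\Gamma_*$ assertions, using instead that $j^\Gamma$ sends a projective cover (resp.\ injective hull) of $L(b)$ to a projective cover (resp.\ injective hull) of $j^\Gamma L(b) \cong L_\Gamma(b)$, as recalled at the start of \cref{saturday}.

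There is essentially no obstacle here: the corollary is a purely formal consequence of Lemma~\ref{easy} together with the adjunction identities $j^\Gamma j^\Gamma_! \cong j^\Gamma j^\Gamma_* \cong \id$. All the real content — in particular the compatibilities $j^\lambda_! \cong j^\Gamma_! \circ j^{\Gamma,\lambda}_!$ and $j^\lambda_* \cong j^\Gamma_* \circ j^{\Gamma,\lambda}_*$ used to identify the various standardization and costandardization functors — has already been established in the proof of Lemma~\ref{easy}, so the proof of the corollary is just two or three lines of bookkeeping.
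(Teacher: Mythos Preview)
Your proof is correct and takes essentially the same approach as the paper's own proof, which simply says ``This follows from the lemma since $j^\Gamma \circ j^\Gamma_! \cong \id_{A_\Gamma\gmod} \cong j^\Gamma \circ j^\Gamma_*$.'' You have just spelled out the one-line argument in more detail.
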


\begin{proof}
This follows from the lemma since $j^\Gamma \circ j^\Gamma_! \cong \id_{A_\Gamma\gmod} \cong j^\Gamma \circ j^\Gamma_*$.
\end{proof}

\begin{lemma}\label{anothergss}
For $V \in \ob A\gmod$ and $W \in \ob A_\Gamma \gmodnablabar$, we have that
$\Ext^n_{A_\Gamma}\left(j^\Gamma V, W\right) \cong \Ext^n_A\left(V, j^\Gamma_* W\right)$
for all $n \geq 0$.
\end{lemma}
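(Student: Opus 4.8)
The plan is to mimic the degenerate Grothendieck spectral sequence argument already used for \cref{mercedes}, now with the adjoint pair $(j^\Gamma, j^\Gamma_*)$ in place of $(i^*, i)$. The adjunction gives a natural isomorphism of functors $\Hom_A(V,-)\circ j^\Gamma_*\cong \Hom_{A_\Gamma}(j^\Gamma V,-)$ on $A_\Gamma\gmod$. Since $j^\Gamma$ is exact, its right adjoint $j^\Gamma_*$ sends injectives to injectives, hence to $\Hom_A(V,-)$-acyclic objects, so the Grothendieck spectral sequence of the composite reads
$$
\Ext^p_A\big(V, R^q j^\Gamma_* W\big)\ \Longrightarrow\ \Ext^{p+q}_{A_\Gamma}\big(j^\Gamma V, W\big).
$$
Thus it suffices to show that $R^q j^\Gamma_* W = 0$ for all $q\geq 1$: the spectral sequence then collapses onto its $q=0$ row, and since $j^\Gamma_*$ is left exact this yields $\Ext^n_A(V, j^\Gamma_* W)\cong\Ext^n_{A_\Gamma}(j^\Gamma V, W)$, as desired. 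This reduction is completely formal and works for $V$ arbitrary.

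To compute the higher derived functors, recall from \cref{blueye} that $j^\Gamma_* = \bigoplus_{i\in\I}\Hom_{A_\Gamma}(e_\Gamma A 1_i, -)$. Resolving $W$ injectively in $A_\Gamma\gmod$, applying $j^\Gamma_*$ and taking cohomology (which commutes with the direct sum over $\I$) identifies $R^q j^\Gamma_* W$ with $\bigoplus_{i\in\I}\Ext^q_{A_\Gamma}(e_\Gamma A 1_i, W)$. So I must check that $\Ext^q_{A_\Gamma}(e_\Gamma A 1_i, W)=0$ for $q\geq 1$ and all $i$. Here I would invoke \cref{extvanishingt} \emph{for the algebra $A_\Gamma$} (which carries a graded triangular basis, so the whole theory developed so far applies to it): by hypothesis $W\in A_\Gamma\gmodnablabar$, so it is enough to know that $e_\Gamma A 1_i = j^\Gamma(A 1_i)$ lies in $A_\Gamma\gmoddelta$.

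To see this, note that $A 1_i$ is a finitely generated projective graded left $A$-module, so $|\supp_\Delta(A 1_i)|<\infty$ by \cref{supports} and $\Ext^1_A(A 1_i, \bar\nabla(b))=0$ for all $b$, whence $A 1_i$ has a $\Delta$-flag by \cref{citizens}. Applying the exact functor $j^\Gamma$ to such a flag, and using \cref{easy,easyc} together with \cref{jam} (which give $j^\Gamma\Delta(b)\cong\Delta_\Gamma(b)$ for $b\in\B_\Gamma$ and $j^\Gamma\Delta(b)=0$ otherwise, so that, since $j^\Gamma$ commutes with direct sums, it sends a $\Delta$-layer of type $\lambda$ to a $\Delta_\Gamma$-layer if $\lambda\in\Gamma$ and to $0$ if $\lambda\notin\Gamma$), produces a filtration of $e_\Gamma A 1_i$ whose nonzero sections exhibit it as a member of $A_\Gamma\gmoddelta$. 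That finishes the argument. The step I expect to need the most care is the identification $R^q j^\Gamma_* W\cong\bigoplus_i\Ext^q_{A_\Gamma}(e_\Gamma A 1_i, W)$ together with the reduction to $e_\Gamma A 1_i\in A_\Gamma\gmoddelta$; the spectral sequence formalism itself is routine given \cref{mercedes,inductionbase}.
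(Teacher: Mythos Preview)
Your argument is correct. Both you and the paper run a degenerate Grothendieck spectral sequence and reduce to the same key fact---that $j^\Gamma$ of a projective $A$-module lies in $A_\Gamma\gmoddelta$, whence \cref{extvanishingt} for $A_\Gamma$ gives the required acyclicity---so the substance is identical. The organizational difference is which variable you derive in: the paper derives in the first variable, using $\Hom_{A_\Gamma}(-,W)\circ j^\Gamma\cong\Hom_A(-,j^\Gamma_* W)$ together with exactness of $j^\Gamma$, and then checks that $j^\Gamma$ sends projectives to $\Hom_{A_\Gamma}(-,W)$-acyclic objects by exhibiting a $\Delta$-flag on $j^\Gamma Q(b)$ via \cref{ohno}; you derive in the second variable, computing $R^q j^\Gamma_* W\cong\bigoplus_i\Ext^q_{A_\Gamma}(e_\Gamma A 1_i,W)$ and then showing each $e_\Gamma A 1_i$ has a $\Delta$-flag via \cref{citizens}. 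Your route has the minor advantage that the reduction ``for $V$ arbitrary'' is visibly uniform, while the paper's route avoids the (easy) identification of $R^q j^\Gamma_*$ as that direct sum; neither buys anything essential over the other.
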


\begin{proof}
This is another Grothendieck spectral sequence argument.
We have that $\Hom_{A_\Gamma}(-,W) \circ j^\Gamma
\cong \Hom_A(-,j^\Gamma_* W)$.
Also $j^\Gamma$ is exact.
To deduce that
$\Ext^n_{A_\Gamma}(j^\Gamma -, W) \cong \Hom_A(-, j^\Gamma_* W)$,
it remains to show that $j^\Gamma$ sends projectives
in $A\gmod$
to modules that are acyclic
for $\Hom_{A_\Gamma}(-,W)$.
Since any projective in $A \gmod$ is a summand of a direct
sum of degree-shifts of the projective modules $Q(b)$ from 
\cref{ohno}(1), and $j^\Gamma$ commutes with direct sum and with $Q$, the proof of this reduces to checking that
$\Ext^n_{A_\Gamma}(j^\Gamma Q(b), W) = 0$ for 
all $b \in \B$ and $n \geq 1$.
Since $j^\Gamma$ is exact and $j^\Gamma \Delta(b)$ is either zero or
a standard module for $A_\Gamma$ by \cref{easy,easyc},
we deduce that $j^\Gamma Q(b)$ has a $\Delta$-flag.
Hence, 
$\Ext^n_{A_\Gamma}(j^\Gamma Q(b), W) = 0$ for $n \geq 1$
thanks to \cref{extvanishingt}.
\end{proof}

The dual
result to \cref{anothergss} will be formulated and 
proved in \cref{anothergss2} below. It does not follow immediately at this point
since we have not included any
assumption of locally finite-dimensionality on $W$ in
the statement.

\begin{lemma}\label{lunch}
If $V \in \ob A_\Gamma\gmoddelta$
then $j^\Gamma_! V \in \ob A \gmoddelta$
with 
$\supp_{\Delta}\left(j_!^\Gamma V\right) = \supp_{\Delta}(V)$,
indeed, 
we have
$\left(j_!^\Gamma V: \Delta(b)\right) = \left(V:\Delta_\Gamma(b)\right)$ for 
$b \in \B_\Gamma$.
The same statement with $\Delta$ replaced by $\bar\Delta$ everywhere
also holds.
Similarly, if $V \in \ob A_\Gamma\gmodnabla$
then $j^\Gamma_* V \in \ob A \gmodnabla$
with 
$\supp_{\nabla}\left(j_*^\Gamma V\right) = \supp_{\nabla}(V)$,
indeed, 
we have
$\left(j_*^\Gamma V: \nabla(b)\right) = \left(V:\nabla_\Gamma(b)\right)$ for 
$b \in \B_\Gamma$.
The same statement with $\nabla$ replaced by $\bar\nabla$ everywhere also holds.
\end{lemma}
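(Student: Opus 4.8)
Everything reduces to the assertions about $j^\Gamma_!$, $\Delta_\Gamma$ and $\bar\Delta_\Gamma$: the statements for $j^\Gamma_*$, $\nabla_\Gamma$ and $\bar\nabla_\Gamma$ then follow by applying $?^\circledast$ and invoking the first part for the opposite algebra $A^\op$ (which also carries a graded triangular basis), using that $?^\circledast$ interchanges $\Delta^\op_\Gamma(b)\leftrightarrow\nabla_\Gamma(b)$ and $\bar\Delta^\op_\Gamma(b)\leftrightarrow\bar\nabla_\Gamma(b)$ and conjugates the functor ``$j^\Gamma_!$ for $A^\op$'' into ``$j^\Gamma_*$ for $A$''. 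The crux of the first part is the following exactness claim: \emph{if $0\to U\to V\to W\to 0$ is exact with $W$ admitting a $\bar\Delta_\Gamma$-flag, then $0\to j^\Gamma_!U\to j^\Gamma_!V\to j^\Gamma_!W\to 0$ is exact too.} Granting this, if $V\in\ob A_\Gamma\gmoddelta$ (resp.\ $\ob A_\Gamma\gmoddeltabar$) has filtration $0 = V_0\subset\cdots\subset V_n = V$ with $V_r/V_{r-1}$ a $\Delta_\Gamma$-layer (resp.\ $\bar\Delta_\Gamma$-layer) of type $\lambda_r\in\Gamma$, then applying $j^\Gamma_!$ yields a filtration $0 = j^\Gamma_!V_0\subseteq\cdots\subseteq j^\Gamma_!V_n = j^\Gamma_!V$ with $j^\Gamma_!V_r/j^\Gamma_!V_{r-1}\cong j^\Gamma_!(V_r/V_{r-1})$; since $j^\Gamma_!$ commutes with direct sums and sends $\Delta_\Gamma(b)\mapsto\Delta(b)$, $\bar\Delta_\Gamma(b)\mapsto\bar\Delta(b)$ by \cref{easy}, each section is a $\Delta$-layer (resp.\ $\bar\Delta$-layer) of type $\lambda_r$, so $j^\Gamma_!V\in\ob A\gmoddelta$ (resp.\ $\ob A\gmoddeltabar$); in particular it is locally finite-dimensional and bounded below.

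Since $j^\Gamma_! = Ae_\Gamma\otimes_{A_\Gamma}-$ is right exact, the exactness claim follows once we know that $\Tor^{A_\Gamma}_m(Ae_\Gamma, W) = 0$ for all $m\geq 1$ whenever $W$ has a $\bar\Delta_\Gamma$-flag. The plan for this, and the step where essentially all the work lies, is to prove that $Ae_\Gamma = \bigoplus_{i\in\I}1_iAe_\Gamma$ has a $\Delta^\op_\Gamma$-flag as a graded right $A_\Gamma$-module, i.e.\ that each $1_iAe_\Gamma$ does. This is an idempotent-truncated right-handed version of the filtration argument proving \cref{ohno}(1). From the graded triangular basis, $1_iAe_\Gamma$ has basis $\{xhy : x\in\X(i,s),\,h\in\H(s,u),\,y\in\Y(u,t),\ s,u,t\in\S_\Gamma\}$, the constraint $s,u,t\in\S_\Gamma$ being forced since $\Gamma$ is a lower set. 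For $\nu\in\Gamma$ let $F_{\leq\nu}$ be the span of the basis vectors with $\dot s\leq\nu$; using $xe_{\dot s} = x$ and applying \cref{parasites} with $\lambda = \dot s$ to re-expand $xhyx'h'y'$ for a basis element $x'h'y'$ of $A_\Gamma$, every resulting term $x''h''y''$ has $x''\in\X(i,s'')$ with $\dot{s''}\leq\dot s$, so each $F_{\leq\nu}$ is a right $A_\Gamma$-submodule. Ordering $\Gamma$ linearly, one identifies (exactly as in the proof of \cref{ohno}) the section $F_{\leq\nu}/F_{<\nu}$ with $\bigoplus_{s\in\S_\nu}\bigoplus_{x\in\X(i,s)}q^{-\deg x}\,\bar 1_sA_{\geq\nu}\bar e_\Gamma$ as a right $A_\Gamma$-module, where $\bar 1_sA_{\geq\nu}\bar e_\Gamma\cong \bar 1_sA_\nu\otimes_{A_\nu}\bar e_\nu A_{\geq\nu}\bar e_\Gamma$ is the standardization of the projective right $A_\nu$-module $\bar 1_sA_\nu$ and hence a $\Delta^\op_\Gamma$-layer of type $\nu$. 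As $\Gamma$ is finite this is a finite filtration, giving the $\Delta^\op_\Gamma$-flag. With this in hand the $\Tor$-vanishing is formal: $(Ae_\Gamma)^\circledast = \prod_{i}(1_iAe_\Gamma)^\circledast$ is a product of left $A_\Gamma$-modules each admitting a $\nabla_\Gamma$-flag (as $?^\circledast$ turns a $\Delta^\op_\Gamma$-flag into a $\nabla_\Gamma$-flag), so for $W$ with a $\bar\Delta_\Gamma$-flag and $m\geq 1$, tensor--hom duality and \cref{extvanishingt} give
\[
\Tor^{A_\Gamma}_m(Ae_\Gamma, W)^\circledast\;\cong\;\Ext^m_{A_\Gamma}\!\big(W,(Ae_\Gamma)^\circledast\big)\;\cong\;\prod_{i}\Ext^m_{A_\Gamma}\!\big(W,(1_iAe_\Gamma)^\circledast\big)\;=\;0 .
\]
Choosing (via \cref{tech1}) a resolution of $W$ by locally finite-dimensional, bounded-below projectives shows $\Tor^{A_\Gamma}_m(Ae_\Gamma,W)$ is itself locally finite-dimensional, whence it vanishes.

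It remains to match multiplicities and supports, which we read off from the adjunction $j^\Gamma_!\dashv j^\Gamma$ and \cref{easyc}. For $b\in\B_\Gamma$ we have $\Hom_A(j^\Gamma_!V,\bar\nabla(b))\cong\Hom_{A_\Gamma}(V, j^\Gamma\bar\nabla(b)) = \Hom_{A_\Gamma}(V,\bar\nabla_\Gamma(b))$; for $b\notin\B_\Gamma$ the module $\bar\nabla(b)$ has lowest weight $\dot b\notin\Gamma$ and hence no weight in $\Gamma$, so $j^\Gamma\bar\nabla(b) = 0$ and the $\Hom$ vanishes. Taking conjugate graded dimensions gives $(j^\Gamma_!V:\Delta(b))_q = (V:\Delta_\Gamma(b))_q$ for $b\in\B_\Gamma$ and $\supp_\Delta(j^\Gamma_!V) = \supp_\Delta(V)$. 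Replacing $\bar\nabla$ by $\nabla$ throughout yields the corresponding statement with $\bar\Delta$ in place of $\Delta$, and the $\nabla$- and $\bar\nabla$-versions follow by the duality reduction noted at the start.
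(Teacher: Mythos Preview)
Your argument is correct in outline but takes a genuinely different route from the paper. The paper proves the $\nabla$-statement first and deduces the $\Delta$-statement by duality: for $V\in A_\Gamma\gmodnabla$ it checks that $j^\Gamma_* V$ is locally finite-dimensional and bounded above, then uses the adjunction $\Hom_A(\bar\Delta(b),j^\Gamma_*V)\cong\Hom_{A_\Gamma}(j^\Gamma\bar\Delta(b),V)$ together with \cref{anothergss} (a Grothendieck spectral sequence argument relying on \cref{ohno} and \cref{extvanishingt}) to verify the hypotheses of the homological criterion \cref{citizenss}. Your approach instead attacks the $\Delta$-side directly by building an explicit $\Delta^\op_\Gamma$-flag of $1_iAe_\Gamma$ (an idempotent-truncated, right-handed analogue of \cref{ohno}) and deducing $\Tor$-vanishing; you then transport a given $\Delta_\Gamma$- or $\bar\Delta_\Gamma$-flag along $j^\Gamma_!$ section by section. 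This is more constructive and essentially proves \cref{dinner} along the way, whereas the paper postpones \cref{dinner} until after the present lemma and proves it by a quite different dimension-counting trick using \cref{more}. The paper's route is cleaner at this point in the development because \cref{anothergss} is already available; yours avoids the homological criterion entirely.

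One technical point to tighten: the tensor--Hom duality step is written loosely. The identity $(Ae_\Gamma)^\circledast=\prod_i(1_iAe_\Gamma)^\circledast$ is not literally correct (the graded dual of an infinite direct sum is a direct sum of products in each fixed degree, not a product of direct sums), and the isomorphism $\Tor_m^{A_\Gamma}(M,W)^\circledast\cong\Ext^m_{A_\Gamma}(W,M^\circledast)$ requires justification since $M^\circledast$ differs from the full linear dual $\Hom_\kk(M,\kk)$ when $M$ is supported in infinitely many degrees. The fix is to work with each $1_iAe_\Gamma$ separately and argue one degree at a time: choosing a projective resolution $P_\bullet\to W$ as in \cref{tech1}(1) with each $P_k$ locally finite-dimensional and bounded below, and using that $A_\Gamma$ is unital and $1_iAe_\Gamma$ is locally finite-dimensional and bounded below, one sees that in each fixed degree only finitely many summands of $P_k$ contribute to $(1_iAe_\Gamma\otimes_{A_\Gamma}P_k)_d$, so this graded piece is finite-dimensional and its dual identifies with $\Hom_{A_\Gamma}(P_k,(1_iAe_\Gamma)^\circledast)_{-d}$. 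Taking (co)homology gives the degreewise duality you need, and since $(1_iAe_\Gamma)^\circledast$ has a $\nabla_\Gamma$-flag, \cref{extvanishingt} finishes the job. With this clarification the argument goes through.
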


\begin{proof}
The statements for $\Delta$ and $\bar \Delta$
follow from the ones for $\nabla$ and $\bar\nabla$
by the usual duality argument.
Now we proceed to prove the statement for $\nabla$, with
a similar argument proving the one for $\bar\nabla$.
For $i \in \I$, we have that $1_i (j^\Gamma_* V) = \Hom_{A_\Gamma}(e_\Gamma A 1_i, V) \subseteq \Hom_\kk(e_\Gamma A 1_i, V)$.
Since $e_\Gamma A 1_i$ is locally finite-dimensional and bounded below and $V$ is locally finite-dimensional and bounded above, we deduce that $j^\Gamma_* V$
is locally finite-dimensional and bounded above.

We have that $\Hom_A\left(\bar\Delta(b), j^\Gamma_* V\right)
\cong \Hom_{A_\Gamma}\left(j^\Gamma \bar\Delta(b), V\right)$, and
$j^\Gamma \bar\Delta(b) = \bar\Delta_\Gamma(b)$ if $b \in \B_\Gamma$
or $0$ otherwise thanks to \cref{easy,easyc}. 
Once we have proved that $j^\Gamma_* V$ has a $\nabla$-flag,
this will imply the statement about the multiplicities $\left(j^\Gamma_* V: \nabla(b)\right)$.
It shows already that
$\left|\supp_{\nabla}(j^\Gamma_* V)\right| 
= |\supp_{\nabla}(V)| < \infty$.
Also $\Ext^1_A\left(\bar\Delta(b),j^\Gamma_* V\right)
\cong \Ext^1_{A_\Gamma}\left(j^\Gamma\bar\Delta(b), V\right)$
by \cref{anothergss}, which is zero for all $b \in \B$
as $V$ has a $\bar\Delta$-flag.
It remains to apply \cref{citizenss}.
\end{proof}

\begin{lemma}\label{dinner}
For $V \in \ob A_\Gamma\gmoddeltabar$ and $i \in \I$,
we have that
$\Tor_m^{A_\Gamma}(1_i Ae_\Gamma, V) = 0$
for all $m \geq 1$.
\end{lemma}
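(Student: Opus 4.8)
The plan is to identify $1_iAe_\Gamma$ as the truncation of a projective right $A$-module, deduce that it has a $\Delta$-flag as a right $A_\Gamma$-module, and then read off the asserted Tor-vanishing from the Ext-vanishing \cref{extvanishingt} via the duality $?^\circledast$.

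\emph{Step 1: $1_iAe_\Gamma$ has a $\Delta$-flag as a right $A_\Gamma$-module} — equivalently, a $\Delta^\op$-flag, i.e.\ a $\Delta$-flag over $A_\Gamma^\op = (A^\op)_\Gamma$ for the triangular basis of $A^\op$ obtained by interchanging $\X$ and $\Y$. The point is that $1_iAe_\Gamma$ is obtained from the right $A$-module $1_iA$ by truncation at $e_\Gamma$: writing $j^\Gamma$ for the truncation functor of \cref{trunctolower} applied to $A^\op$ in place of $A$, we have $1_iAe_\Gamma = j^\Gamma(1_iA)$. Now $1_iA = A^\op 1_i$ is cyclic, hence finitely generated, and projective as a right $A$-module (being a direct summand of $A$). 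By \cref{supports} applied to $A^\op$, $\supp_{\Delta^\op}(1_iA)$ is finite, and $\Ext^1_{A^\op}(1_iA,\bar\nabla^\op(b)) = 0$ for all $b\in\B$ by projectivity; so by the homological criterion \cref{citizens}(1) applied to $A^\op$, the module $1_iA$ has a $\Delta^\op$-flag over $A$. Finally $j^\Gamma$ carries this flag to a $\Delta^\op$-flag over $A_\Gamma$: it is exact, and by the $A^\op$-versions of \cref{easy,easyc} it sends $\Delta^\op(b)$ to $\Delta^\op_\Gamma(b)$ for $b\in\B_\Gamma$ and to $0$ otherwise, whence (using \cref{jam} and that $j^\Gamma$ commutes with direct sums) it sends a $\Delta^\op$-layer of type $\lambda$ to a $\Delta^\op_\Gamma$-layer of the same type when $\lambda\in\Gamma$, and to $0$ when $\lambda\notin\Gamma$. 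Applying $j^\Gamma$ to the (finite) filtration of $1_iA$ and deleting the zero sections produces the desired $\Delta^\op$-flag of $1_iAe_\Gamma$, its surviving types being distinct.

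\emph{Step 2: duality.} Since $A$ is locally finite-dimensional and bounded below, $1_iAe_\Gamma$ is locally finite-dimensional and bounded below, so its graded dual $(1_iAe_\Gamma)^\circledast$ is a locally finite-dimensional, bounded above left $A_\Gamma$-module. Applying $?^\circledast$ to the $\Delta^\op$-flag produced in Step 1, and using that $?^\circledast$ carries $\Delta^\op_\Gamma(b)$ to $\nabla_\Gamma(b)$ (hence $\Delta^\op_\Gamma$-layers to $\nabla$-layers and filtrations to filtrations), we conclude that $(1_iAe_\Gamma)^\circledast$ has a $\nabla$-flag over $A_\Gamma$. Computing with a graded projective resolution $P_\bullet\to V$ over $A_\Gamma$ and using the graded hom-tensor duality together with exactness of $?^\circledast$, one obtains a natural isomorphism
$$
\Tor^{A_\Gamma}_m(1_iAe_\Gamma, V)^\circledast \;\cong\; \Ext^m_{A_\Gamma}\big(V,(1_iAe_\Gamma)^\circledast\big)
$$
for all $m\geq 0$. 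Since $V\in\ob A_\Gamma\gmoddeltabar$ and $(1_iAe_\Gamma)^\circledast\in\ob A_\Gamma\gmodnabla$, the right-hand side vanishes for $m\geq 1$ by \cref{extvanishingt}. As a graded vector space is zero precisely when its graded dual is, this gives $\Tor^{A_\Gamma}_m(1_iAe_\Gamma, V) = 0$ for all $m\geq 1$, as required.

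The genuinely new input is the identification $1_iAe_\Gamma = j^\Gamma(1_iA)$ together with projectivity of $1_iA$; everything else is bookkeeping. The one point needing a little care is the duality isomorphism in Step 2 — in particular matching degree shifts so that $?^\circledast$ turns the complex $1_iAe_\Gamma\otimes_{A_\Gamma}P_\bullet$ into $\Hom_{A_\Gamma}(P_\bullet,(1_iAe_\Gamma)^\circledast)$ — which is routine given the conventions fixed earlier in the paper.
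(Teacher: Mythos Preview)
Your proof is correct and takes a genuinely different route from the paper's argument.

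The paper proceeds by a dimension-counting and d\'evissage argument entirely on the left-module side: it takes the projective cover $0 \to K \to P \to V \to 0$ in $A_\Gamma\gmod$, observes that $P$ and $K$ inherit $\bar\Delta$-flags (via \cref{citizens,sesdelta}), then uses \cref{lunch,more} to show that the graded dimensions of $1_iAe_\Gamma \otimes_{A_\Gamma}(-)$ applied to $K$, $P$, $V$ are additive.  Plugging this into the long exact Tor-sequence forces $\Tor_1$ to vanish, and then one dimension-shifts.  By contrast, you work on the right-module side: you identify $1_iAe_\Gamma = j^\Gamma(1_iA)$ as the truncation of a finitely generated projective right $A$-module, use the $A^\op$-version of \cref{citizens} to give it a $\Delta^\op$-flag, dualize to put a $\nabla_\Gamma$-flag on $(1_iAe_\Gamma)^\circledast$, and then invoke the Tor--Ext duality together with \cref{extvanishingt}.

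Your approach is shorter and more conceptual, and yields as a byproduct the pleasant fact that $1_iAe_\Gamma$ itself has a $\Delta^\op$-flag over $A_\Gamma$.  The paper's approach, while longer, stays on one side of the duality and avoids the graded Tor--Ext isomorphism (which, as you note, needs a moment's care with the $?^\circledast$ conventions).  One small point worth making explicit in your Step~2: the passage from ``$?^\circledast$ carries $\Delta^\op_\Gamma(b)$ to $\nabla_\Gamma(b)$'' to ``hence $\Delta^\op_\Gamma$-layers to $\nabla_\Gamma$-layers'' is not quite formal, since layers may be infinite direct sums; it is cleanest to argue instead via the identity $?^\circledast \circ (j^{\Gamma,\lambda}_!)^\op \cong j^{\Gamma,\lambda}_* \circ ?^\circledast$ together with the observation that $?^\circledast$ takes projective, locally finite-dimensional, bounded-below right $A_\lambda$-modules to injective, locally finite-dimensional, bounded-above left $A_\lambda$-modules (which follows from \cref{hands}).
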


\begin{proof}
Consider the short exact sequence
$0 \rightarrow K \rightarrow P \rightarrow V$
where $P := P_V$ is the projective cover of $V$ 
in $A_\Gamma\gmod$ from \cref{tech1}(1).
We note that $P$ has a $\bar\Delta$-flag.
This follows from \cref{citizens} 
(the condition
$|\sup_{\bar\Delta}(P)| < \infty$ holds automatically
since $\Gamma$ is finite).
Also $V$ has a $\bar\Delta$-flag by assumption.
Hence, $K$ has a $\bar\Delta$-flag by \cref{sesdelta}.
Now \cref{lunch} implies that $j^\Gamma_! V, j^\Gamma_! P$ and $j^\Gamma_! L$
all have $\bar\Delta$-flags,
and moreover
$\left(j^\Gamma_! P: \bar\Delta(b)\right) = 
\left(j^\Gamma_! K: \bar\Delta(b)\right)+
\left(j^\Gamma_! V: \bar\Delta(b)\right)$
for all $b \in \B$.
Applying \cref{more}, we deduce that
$$
\left[j^\Gamma_! P: L(b)\right]_q = \left[j^\Gamma_! K:L(b)\right]_q+\left[j^\Gamma_! V: L(b)\right]_q
$$
for each $b \in \B$. 
It follows that
$\dim_q 1_i A e_\Gamma \otimes_{A_\Gamma} P=
\dim_q 1_i A e_\Gamma \otimes_{A_\Gamma} K+
\dim_q 1_i A e_\Gamma \otimes_{A_\Gamma} V$.
Applying $1_i A e_\Gamma \otimes_{A_\Gamma}-$ to the short exact sequence gives
the long exact sequence
$$
0 \longrightarrow \Tor_1^{A_\Gamma}(1_i A e, V)
\longrightarrow 1_i A e_\Gamma \otimes_{A_\Gamma} K \longrightarrow 1_i A e_\Gamma\otimes_{A_\Gamma} P
\longrightarrow 1_i A e_\Gamma\otimes_{A_\Gamma} V \longrightarrow 0
$$
and isomorphisms $\Tor_m^{A_\Gamma}(1_i Ae, K) 
\cong \Tor_{m+1}^{A_\Gamma}(1_i Ae, V)$ for all $m \geq 1$.
From this long exact sequence and the equality of dimensions already established, we deduce that
$\Tor_1^{A_\Gamma}(1_i Ae, V) = 0$.
This applies equally well to $K$, so
we get that $\Tor_1^{A_\Gamma}(1_i Ae, K) = 0$,
hence, $\Tor_2^{A_\Gamma}(1_i Ae, V) = 0$.
Further degree shifting like this completes the proof.
\end{proof}

\begin{corollary}
The functor $j^\Gamma_! = Ae_\Gamma\otimes_{\check A_\Gamma} -$
takes short exact sequences of modules with
$\Delta$-flags 
(resp., $\bar\Delta$-flags)
to short exact sequences
of modules with $\Delta$-flags (resp., $\bar\Delta$-flags).
Similarly,
The functor $j^\Gamma_* = \bigoplus_{i \in \I} \Hom_{A_\Gamma} (e_\Gamma A 1_i, )$
takes short exact sequences of modules with
$\nabla$-flags 
(resp., $\bar\nabla$-flags)
to short exact sequences
of modules with $\nabla$-flags (resp., $\bar\nabla$-flags).
\end{corollary}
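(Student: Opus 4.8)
The plan is to bootstrap everything from \cref{dinner,lunch}, exactly in the spirit of the proof of \cref{previousc}. I would first establish the two assertions about $j^\Gamma_!$ directly, and then deduce the two assertions about $j^\Gamma_*$ by the usual duality argument.

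So suppose $0 \rightarrow U \rightarrow V \rightarrow W \rightarrow 0$ is a short exact sequence of graded left $A_\Gamma$-modules, each having a $\bar\Delta$-flag. Decomposing the right $A_\Gamma$-module $A e_\Gamma$ as $\bigoplus_{i \in \I} 1_i A e_\Gamma$, \cref{dinner} gives $\Tor^{A_\Gamma}_1(A e_\Gamma, W) \cong \bigoplus_{i \in \I}\Tor^{A_\Gamma}_1(1_i A e_\Gamma, W) = 0$. Combined with the right exactness of $j^\Gamma_! = A e_\Gamma \otimes_{A_\Gamma} -$, this shows that $0 \rightarrow j^\Gamma_! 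U \rightarrow j^\Gamma_! V \rightarrow j^\Gamma_! W \rightarrow 0$ is again short exact, and by \cref{lunch} each of its three terms lies in $\ob A\gmoddeltabar$, so it is a short exact sequence of modules with $\bar\Delta$-flags. For the $\Delta$-flag version the argument is word for word the same: a module with a $\Delta$-flag has in particular a $\bar\Delta$-flag (as $\Delta$-layers are $\bar\Delta$-layers), so $\Tor^{A_\Gamma}_1(A e_\Gamma, W) = 0$ as before and the image sequence stays short exact, while \cref{lunch} now places each term in $\ob A\gmoddelta$.

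For the statements about $j^\Gamma_*$ I would argue by duality. Given a short exact sequence $0 \rightarrow U \rightarrow V \rightarrow W \rightarrow 0$ of left $A_\Gamma$-modules with $\nabla$-flags (resp.\ $\bar\nabla$-flags), applying the exact contravariant functor $?^\circledast$ produces a short exact sequence $0 \rightarrow W^\circledast \rightarrow V^\circledast \rightarrow U^\circledast \rightarrow 0$ of right $A_\Gamma$-modules, each having a $\Delta^\op$-flag (resp.\ a $\bar\Delta^\op$-flag); here $A^\op$ carries the graded triangular basis obtained by swapping the sets $\X(s)$ and $\Y(s)$, the identification $(A^\op)_\Gamma = (A_\Gamma)^\op$ holds, $\Gamma$ is still a finite lower set, and $?^\circledast$ carries each $\nabla(b)$ to $\Delta^\op(b)$ (resp.\ each $\bar\nabla(b)$ to $\bar\Delta^\op(b)$) by the section on duality together with \cref{jam}. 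Applying the statement about $j^\Gamma_!$ just proved, now for $A^\op$, gives a short exact sequence of $A^\op$-modules with $\Delta^\op$-flags (resp.\ $\bar\Delta^\op$-flags). Finally, using the natural isomorphism $j^\Gamma_! \circ\, ?^\circledast \cong\, ?^\circledast \circ\, j^\Gamma_*$ of functors between the relevant opposite categories — which holds by properties of adjunctions, just as for the isomorphisms $j^\lambda_! \circ\, ?^\circledast \cong\, ?^\circledast \circ\, j^\lambda_*$ recorded in the section on duality — and applying $?^\circledast$ once more, we recover the short exact sequence $0 \rightarrow j^\Gamma_* U \rightarrow j^\Gamma_* V \rightarrow j^\Gamma_* W \rightarrow 0$, whose terms have $\nabla$-flags (resp.\ $\bar\nabla$-flags) by \cref{lunch}. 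Throughout, one uses that all modules in sight are locally finite-dimensional — so that $?^\circledast$ is a genuine contravariant equivalence on them — which holds for modules with $\nabla_\Gamma$- or $\bar\nabla_\Gamma$-flags because $A_\Gamma$ is unital, and for their $j^\Gamma_*$-images again by \cref{lunch}.

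The whole argument is bookkeeping on top of \cref{dinner,lunch}; the only step that needs real care is this duality reduction for $j^\Gamma_*$, where one must correctly match $j^\Gamma_*$ with $j^\Gamma_!$ for $A^\op$ via $?^\circledast$ and check that the $\circledast$-duals of the $\nabla$-flag modules really are modules with $\Delta^\op$-flags, i.e.\ of the type to which the $j^\Gamma_!$-statement applies.
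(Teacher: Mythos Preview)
Your proposal is correct and follows essentially the same route as the paper's proof: use \cref{dinner} for exactness of $j^\Gamma_!$ on modules with $\bar\Delta$-flags (hence also on the subcategory of modules with $\Delta$-flags), use \cref{lunch} to see that the terms of the image sequence again have the appropriate flags, and deduce the statements for $j^\Gamma_*$ by duality. Your write-up simply spells out more of the details, particularly the passage via $?^\circledast$ and $A^\op$ in the duality step, which the paper leaves as a one-line remark.
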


\begin{proof}
The results for $\nabla$-flags and $\bar\nabla$-flags
follow for the ones for $\Delta$-flags and $\bar\Delta$-flags by duality.
The proofs of $\Delta$-flags and $\bar\Delta$-flags are
similar. In the case of $\Delta$-flags,
the functor $j_!$ takes modules with $\Delta$-flags
to modules with $\Delta$-flags by \cref{lunch}.
It is exact on $A_\Gamma\gmoddeltabar$ by \cref{dinner},
hence, it is exact on $A_\Gamma\gmoddelta$ too since this is a subcategory.
\end{proof}

The next theorem will be useful in the next section.
For $V \in A\gmod$, we let
\begin{align}\label{tools}
V_\Gamma &:= A e_\Gamma V,&
V^\Gamma &:= \{v \in V\:|\:e_\Gamma A v = 0 \}.
\end{align}
The counit of adjunction for the adjoint pair $(j^\Gamma_!, j^\Gamma)$
defines a homomorphism
$\eps^\Gamma_V: j^\Gamma_! j^\Gamma V \rightarrow V$.
This is just the natural multiplication
map $A e_\Gamma \otimes_{A_\Gamma} e_\Gamma V \rightarrow V$,
so its image is the submodule $V_\Gamma$ just defined.
Also the unit of adjunction for
the adjoint pair $(j^\Gamma, j^\Gamma_*)$ defines a homomorphism
$\eta^\Gamma_V: V \rightarrow j^\Gamma_* j^\Gamma V$.
This takes $v \in V$ to
the element of $j^\Gamma_* j^\Gamma V = \bigoplus_{i \in \I} \Hom_{A_\Gamma} (e_\Gamma A 1_i, V)$
that maps $e_\Gamma a 1_i \in e_\Gamma A 1_i$ 
to $e_\Gamma a 1_i v$.
From this, we see that $\ker \eta^\Gamma_V = V^\Gamma$.

\begin{theorem}\label{aha}
Suppose that $V \in A\gmod$.
\begin{enumerate}
\item 
If $V_\Gamma$ has a $\bar\Delta$-flag then 
the counit of adjunction
defines an isomorphism
$\eps^\Gamma_V:j^\Gamma_! j^\Gamma V \stackrel{\sim}{\rightarrow} V_\Gamma$.
\item
If $V / V^\Gamma$ has a $\bar\nabla$-flag then 
the unit of adjunction defines an isomorphism
$\eta^\Gamma_V: V / V^\Gamma \stackrel{\sim}{\rightarrow}
j^\Gamma_* j^\Gamma V$.
\end{enumerate}
\end{theorem}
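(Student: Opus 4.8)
The plan is to handle the two parts in parallel, reducing each to the vanishing of a kernel or cokernel and then killing it with \cref{nakcor}. Recall from the discussion before the theorem that $\eps^\Gamma_V$ is the multiplication map with image exactly $V_\Gamma$, so (1) is equivalent to $\ker\eps^\Gamma_V=0$, and that $\ker\eta^\Gamma_V=V^\Gamma$, so (2) is equivalent to surjectivity of the induced monomorphism $\bar\eta\colon V/V^\Gamma\hookrightarrow j^\Gamma_*j^\Gamma V$. First I would pin down $j^\Gamma V=e_\Gamma V$ as an $A_\Gamma$-module. For (1): $j^\Gamma V_\Gamma=e_\Gamma V=j^\Gamma V$ since $A_\Gamma$ acts unitally on $e_\Gamma V$; moreover $V_\Gamma$ has a $\bar\Delta$-flag by hypothesis, so it and hence $e_\Gamma V$ is locally finite-dimensional and bounded below, and applying the exact functor $j^\Gamma$ to a $\bar\Delta$-flag of $V_\Gamma$, using \cref{easyc} and the fact that $\Gamma$ is a \emph{lower} set (so $j^\Gamma$ annihilates every $\bar\Delta$-layer of a type $\mu\notin\Gamma$, all of whose weights lie outside $\Gamma$, and carries those of type in $\Gamma$ to $A_\Gamma$-$\bar\Delta$-layers), shows $e_\Gamma V\in\ob A_\Gamma\gmoddeltabar$. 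Then \cref{lunch} gives $j^\Gamma_!j^\Gamma V=j^\Gamma_!(e_\Gamma V)\in\ob A\gmoddeltabar$. For (2) the parallel facts are $e_\Gamma V^\Gamma=0$ (because $e_\Gamma=e_\Gamma\cdot e_\Gamma\in e_\Gamma A$, so $e_\Gamma w\in e_\Gamma A w=0$ for $w\in V^\Gamma$), whence $\bar V:=j^\Gamma(V/V^\Gamma)=e_\Gamma V$, which carries a $\bar\nabla$-flag over $A_\Gamma$, so $j^\Gamma_*j^\Gamma V=j^\Gamma_*\bar V\in\ob A\gmodnablabar$ by \cref{lunch}.

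For (1) I would set $K:=\ker\eps^\Gamma_V$, a submodule of $j^\Gamma_!j^\Gamma V$ and so bounded below, fix $a\in\B$, and apply $\Hom_A(-,\nabla(a))$ to $0\to K\to j^\Gamma_!j^\Gamma V\to V_\Gamma\to 0$. Since $V_\Gamma$ and $j^\Gamma_!j^\Gamma V$ have $\bar\Delta$-flags and $\nabla(a)$ has a $\nabla$-flag, \cref{extvanishingt} makes the higher $\Ext_A$-groups vanish, leaving
$$
0\longrightarrow\Hom_A(V_\Gamma,\nabla(a))\longrightarrow\Hom_A(j^\Gamma_!j^\Gamma V,\nabla(a))\longrightarrow\Hom_A(K,\nabla(a))\longrightarrow 0 .
$$
The first map is injective (precomposition with the surjection $\eps^\Gamma_V$), so I only need to match graded dimensions of the outer terms. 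If $a\in\B_\Gamma$, the $(j^\Gamma_!,j^\Gamma)$-adjunction gives $\Hom_A(j^\Gamma_!j^\Gamma V,\nabla(a))\cong\Hom_{A_\Gamma}(e_\Gamma V,j^\Gamma\nabla(a))$, and the $(j^\Gamma,j^\Gamma_*)$-adjunction with \cref{easy} ($\nabla(a)\cong j^\Gamma_*\nabla_\Gamma(a)$) gives $\Hom_A(V_\Gamma,\nabla(a))\cong\Hom_{A_\Gamma}(e_\Gamma V_\Gamma,j^\Gamma\nabla(a))=\Hom_{A_\Gamma}(e_\Gamma V,j^\Gamma\nabla(a))$; if $a\notin\B_\Gamma$ then $j^\Gamma\nabla(a)=0$ by \cref{easy}, so $\Hom_A(j^\Gamma_!j^\Gamma V,\nabla(a))=0$ and the $V_\Gamma$-term, a subspace of it, vanishes too. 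Hence $\Hom_A(K,\nabla(a))=0$ for all $a$, so $\Hom_A(K,\bar\nabla(a))=0$ as $\bar\nabla(a)\hookrightarrow\nabla(a)$, and \cref{nakcor} gives $K=0$.

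Part (2) I would run dually: put $C:=\coker\bar\eta$, so $0\to V/V^\Gamma\to j^\Gamma_*\bar V\to C\to 0$ with $C$ bounded above, fix $a\in\B$, and apply $\Hom_A(\Delta(a),-)$. Here $\Delta(a)\in\ob A\gmoddelta$ and $V/V^\Gamma,\ j^\Gamma_*\bar V\in\ob A\gmodnablabar$, so \cref{extvanishingt} again kills the higher $\Ext$-groups, leaving
$$
0\longrightarrow\Hom_A(\Delta(a),V/V^\Gamma)\longrightarrow\Hom_A(\Delta(a),j^\Gamma_*\bar V)\longrightarrow\Hom_A(\Delta(a),C)\longrightarrow 0 .
$$
The first map is injective (postcomposition with the mono $\bar\eta$), and the outer terms have the same graded dimension: for $a\in\B_\Gamma$ both are $\cong\Hom_{A_\Gamma}(j^\Gamma\Delta(a),\bar V)$ via the two adjunctions and \cref{easy,easyc} (using $\Delta(a)=j^\Gamma_!\Delta_\Gamma(a)$), and for $a\notin\B_\Gamma$ we have $j^\Gamma\Delta(a)=0$, so $\Hom_A(\Delta(a),j^\Gamma_*\bar V)=0$ and the $V/V^\Gamma$-term vanishes with it. Thus $\Hom_A(\Delta(a),C)=0$, hence $\Hom_A(\bar\Delta(a),C)=0$ as $\bar\Delta(a)$ is a quotient of $\Delta(a)$, and the evident dual of \cref{nakcor} (apply \cref{nakcor} over $A^\op$ via $?^\circledast$, using $\bar\nabla^\op(a)^\circledast\cong\bar\Delta(a)$ and $C^{\circledast\circledast}\cong C$) gives $C=0$.

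The main obstacle, to my mind, is the claim in the first paragraph that $j^\Gamma_!j^\Gamma V$ and $j^\Gamma_*\bar V$ carry good filtrations: $V$ is completely arbitrary here, in particular possibly neither finitely generated nor locally finite-dimensional, so this is not a formal consequence of the universal property of the quotient functor, and it is exactly here that the finiteness of the lower set $\Gamma$ and the transfer statement \cref{lunch} are indispensable. Everything after that is bookkeeping with the standard adjunction isomorphisms for $(j^\Gamma_!,j^\Gamma,j^\Gamma_*)$ and the $\Ext^1$-vanishing theorem, and all the $\Hom$- and $\Ext$-computations ultimately take place over the unital, locally finite-dimensional, bounded-below algebra $A_\Gamma$, where they are unproblematic.
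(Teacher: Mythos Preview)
Your proof is correct and takes a somewhat different route from the paper's. Both arguments begin the same way: establish that $j^\Gamma V = e_\Gamma V$ has a $\bar\Delta$-flag over $A_\Gamma$ (by truncating the $\bar\Delta$-flag of $V_\Gamma$), invoke \cref{lunch} to get that $j^\Gamma_! j^\Gamma V$ has a $\bar\Delta$-flag, and set $K := \ker \eps^\Gamma_V$. From here the paper observes that $j^\Gamma K = 0$, then uses \cref{sesdelta} on the short exact sequence $0\to K\to j^\Gamma_! j^\Gamma V\to V_\Gamma\to 0$ to deduce that $K$ itself has a $\bar\Delta$-flag with $\supp_{\bar\Delta}(K)\subseteq\Gamma$; since $j^\Gamma$ is nonzero on every $\bar\Delta(b)$ with $b\in\B_\Gamma$, this forces $K=0$. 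Your argument instead applies $\Hom_A(-,\nabla(a))$, uses \cref{extvanishingt} to kill the boundary term, and then matches the two outer Hom spaces via the $(j^\Gamma_!,j^\Gamma)$ and $(j^\Gamma,j^\Gamma_*)$ adjunctions together with \cref{easy,easyc}, concluding $\Hom_A(K,\bar\nabla(a))=0$ for all $a$ and hence $K=0$ by \cref{nakcor}. Part (2) is dual in both treatments.

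The trade-off: the paper's route is more structural and reuses machinery already in place (\cref{sesdelta}, which in turn rests on the homological criterion \cref{citizens}); your route bypasses \cref{sesdelta} entirely in favor of a direct adjunction computation, at the cost of a slightly more delicate bookkeeping step (your graded-dimension matching is fine since both Hom spaces are identified with the same $\Hom_{A_\Gamma}(e_\Gamma V,\nabla_\Gamma(a))$, but one could also just note that the adjunction isomorphisms intertwine the map $\eps^*$ with the identity on that common target). Your invocation of the dual of \cref{nakcor} in part (2) is correct; it follows by applying \cref{nakcor} for $A^\op$ to $C^\circledast$, which is legitimate since $C$, as a quotient of $j^\Gamma_*\bar V$, is locally finite-dimensional.
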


\begin{proof}
(1)
Suppose that $V_\Gamma = A e_\Gamma V$ has a 
$\bar\Delta$-flag.
Let $K := \ker \eps^\Gamma_V$ 
so that there is a short exact
sequence $0 \rightarrow K \rightarrow j^\Gamma_! j^\Gamma V \rightarrow
V_\Gamma \rightarrow 0$.
We need to show that $K = 0$.
Note that the second map in this short exact sequence
becomes an isomorphism when we apply $j^\Gamma$, so 
we have that $j^\Gamma K = 0$.
Since $j^\Gamma$ is exact, it is clear 
from \cref{easy} that $j^\Gamma V_\Gamma$ 
has a $\bar\Delta$-flag.
Since $j^\Gamma V_\Gamma = e_\Gamma Ae_\Gamma V = j^\Gamma V$, we deduce that $j^\Gamma V$ has a $\bar\Delta$-flag.
Now \cref{lunch} gives that $j^\Gamma_! j^\Gamma V$ has a $\bar\Delta$-flag
with $\supp_{\bar\Delta}\left(j^\Gamma_! j^\Gamma V\right) \subseteq \Gamma$. 
By \cref{sesdelta,supportsum},
we deduce that
$K$ has a $\bar\Delta$-flag
with $\supp_{\bar\Delta}(K) \subseteq \Gamma$ too.
Since $j^\Gamma K = 0$ 
and $j^\Gamma$ is non-zero on any $\bar\Delta(b)$,
we must have that $K = 0$.

\vspace{1mm}
\noindent
(2) This is quite similar.
Start from the short exact sequence
$0 \rightarrow V / V^\Gamma \rightarrow j^\Gamma_*
j^\Gamma V \rightarrow Q \rightarrow 0$.
We must show that $Q = 0$. The first map becomes an isomorphism when we apply $j^\gamma$, so $j^\gamma Q = 0$.
It remains to show that $Q$ has a $\bar\nabla$-flag
with $\supp_{\bar\nabla}(Q) \subseteq \Gamma$.
This follows from \cref{12ozofcoffee} and the obvious analog of \cref{supportsum} 
because $V / V^\Gamma$ has a $\bar\nabla$-flag by assumption
and $j^\Gamma_* j^\Gamma V \cong j^\Gamma_* j^\Gamma (V / V^\Gamma)$
has a $\bar\nabla$-flag with the appropriate support by \cref{lunch}.
\end{proof}

The final lemma is the dual version of \cref{anothergss}
promised earlier.

\begin{lemma}\label{anothergss2}
For $V \in \ob A_\Gamma\gmoddeltabar$ and $W \in \ob A \gmod$, we have that
$\Ext^n_{A_\Gamma}\left(V, j^\Gamma W\right) \cong \Ext^n_A\left(j^\Gamma_! V, W\right)$
for all $n \geq 0$.
\end{lemma}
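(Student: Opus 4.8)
The plan is to run the same kind of (degenerate) Grothendieck spectral sequence argument used in \cref{anothergss} and \cref{mercedes}, but now on the projective side. The relevant adjunction is $(j^\Gamma_!, j^\Gamma)$, which supplies a natural isomorphism of functors $\Hom_A(j^\Gamma_! -,\,W) \cong \Hom_{A_\Gamma}(-,\, j^\Gamma W)$ on $A_\Gamma\gmod$. Since $j^\Gamma$ is exact, its left adjoint $j^\Gamma_! = A e_\Gamma \otimes_{A_\Gamma} -$ sends projective $A_\Gamma$-modules to projective $A$-modules. The goal is then to promote the displayed $\Hom$-isomorphism to an isomorphism of $\Ext$-groups.

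First I would pick a projective resolution $\cdots \to P_1 \to P_0 \to V \to 0$ in $A_\Gamma\gmod$ and apply the right exact functor $j^\Gamma_!$. The resulting complex $\cdots \to j^\Gamma_! P_1 \to j^\Gamma_! P_0 \to 0$ consists of projective $A$-modules, and its homology in homological degree $m$ computes the left derived functor $(\mathbb{L}_m j^\Gamma_!) V$. The decisive point is that this vanishes for $m \geq 1$: for each $i \in \I$ the $1_i$-component of $(\mathbb{L}_m j^\Gamma_!) V$ is $\Tor^{A_\Gamma}_m(1_i A e_\Gamma, V)$, which is zero by \cref{dinner} precisely because $V \in \ob A_\Gamma\gmoddeltabar$. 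Hence $j^\Gamma_! P_\bullet \to j^\Gamma_! V$ is a genuine projective resolution of $j^\Gamma_! V$ in $A\gmod$.

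Finally I would compute
$$
\Ext^n_A(j^\Gamma_! V, W) = H^n\big(\Hom_A(j^\Gamma_! P_\bullet, W)\big) \cong H^n\big(\Hom_{A_\Gamma}(P_\bullet, j^\Gamma W)\big) = \Ext^n_{A_\Gamma}(V, j^\Gamma W),
$$
where the middle isomorphism is induced by the adjunction isomorphisms $\Hom_A(j^\Gamma_! P_k, W) \cong \Hom_{A_\Gamma}(P_k, j^\Gamma W)$, which are natural and so commute with the differentials. Note that no finiteness hypothesis on $W$ is needed here, which is why this statement had to be deferred past \cref{anothergss}: the argument uses only a projective resolution of $V$, not anything about $W$.

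As for the main obstacle: there is essentially none that has not already been dealt with. The only substantive input beyond formal nonsense is the $\Tor$-vanishing $\Tor^{A_\Gamma}_m(1_i A e_\Gamma, V) = 0$ for $m\geq 1$, and that is exactly \cref{dinner}. The one thing to keep in mind is that the hypothesis $V \in \ob A_\Gamma\gmoddeltabar$ is indispensable, since it is what makes \cref{dinner}, and hence the exactness of $j^\Gamma_!$ on the chosen resolution, available.
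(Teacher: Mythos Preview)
Your proof is correct and follows essentially the same approach as the paper: both use the adjunction $\Hom_A(j^\Gamma_! -, W)\cong \Hom_{A_\Gamma}(-, j^\Gamma W)$, the fact that $j^\Gamma_!$ preserves projectives, and then invoke \cref{dinner} to ensure the left derived functors $(\mathbb{L}_m j^\Gamma_!) V$ vanish for $m\geq 1$. You have simply written out explicitly the ``usual argument'' that the paper alludes to.
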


\begin{proof}
We have that $\Hom_{A}(-,W) \circ j^\Gamma_!
\cong \Hom_{A_\Gamma}(-, j^\Gamma W)$.
Also $j^\Gamma_!$ takes projectives to projectives 
since it is left adjoint to an exact functor.
Therefore, by the usual argument,
we have that
$\Ext^n_A(j^\Gamma_! V, W) \cong \Ext^n_{A_\Gamma}(V, j^\Gamma W)$
for all $n \geq 0$ and $V \in A_\Gamma\gmod$ 
such that $\Tor_m^{A_\Gamma}(Ae_\Gamma, V) = 0$ for $m \geq 1$. 
This holds for $V \in A_\Gamma\gmoddeltabar$ by
\cref{dinner}.
\end{proof}

\section{Semi-infinite flags}

When the algebra
$A$ (still possessing 
a graded triangular basis) is not unital, 
it also makes sense to consider
certain semi-infinite 
$\Delta$-flags, 
$\bar\Delta$-flags,
$\nabla$-flags and $\bar\nabla$-flags.
These were introduced in \cite[Def.~3.35]{BS} in the ungraded setting, and then they were there used 
to introduce tilting modules.
In this section, we make some first steps 
in this direction in the graded setting
by setting up the basic facts about semi-infinite flags.
Throughout the section, we will make use of the notation
from the previous section
for a finite lower set $\Gamma \subseteq \Lambda$, especially \cref{tools}.

\begin{definition}\label{upity}
We say that a graded left $A$-module $V$ has an
{\em ascending $\Delta$-flag} 
(resp., an {\em ascending $\bar\Delta$-flag})
if the $A$-submodule $V_\Gamma$ has a $\Delta$-flag (resp., a $\bar\Delta$-flag)
for all finite 
lower sets $\Gamma \subseteq \Lambda$.
\end{definition}

\begin{definition}\label{downity}
We say that a graded left $A$-module $V$ has a
{\em descending $\nabla$-flag} 
(resp., a {\em descending $\bar\nabla$-flag})
if the quotient module $V / V^\Gamma$
has a $\nabla$-flag (resp., a $\bar\nabla$-flag)
for all finite lower sets ${\Gamma} \subseteq \Lambda$.
\end{definition}

Our first lemma shows that in order to
check the conditions in \cref{upity,downity},
it suffices just to consider
finite lower sets $\Gamma \subseteq \Lambda$
that are sufficiently large.
In particular, if $A$ is unital
(i.e., $\{\lambda \in \Lambda\:|\:e_\lambda \neq 0\}$
is finite), we deduce that $V$ has an ascending
$\Delta$-flag if and only if $V$ has a $\Delta$-flag in the earlier sense, and similarly for $\bar\Delta$-flags,
$\nabla$-flags and $\bar\nabla$-flags.
So these new notions are only interesting in the non-unital case.

\begin{lemma}\label{pizza}
Let $\Gamma \subseteq \Pi$ be two finite lower sets in
$\Lambda$
and $V \in \ob A\gmod$.
\begin{enumerate}
\item
If
$V_\Pi$ has a $\Delta$-flag (resp., a $\bar\Delta$-flag)
then so do $V_\Gamma$ and $V_\Pi / V_\Gamma$.
\item
If $V / V^\Pi$ has a $\nabla$-flag
(resp., a $\bar\nabla$-flag) then so do 
$V / V^\Gamma$ and $V^\Gamma / V^\Pi$.
\end{enumerate}
\end{lemma}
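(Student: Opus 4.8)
The plan is to reduce both parts to the following self-contained \emph{core claim}: if $W\in\ob A\gmod$ has a $\bar\Delta$-flag (resp.\ a $\Delta$-flag) and $\Gamma$ is any finite lower set, then $W_\Gamma$ and $W/W_\Gamma$ have $\bar\Delta$-flags (resp.\ $\Delta$-flags). Granting this, part (1) is immediate: since $\Gamma\subseteq\Pi$ gives $e_\Gamma e_\Pi=e_\Pi e_\Gamma=e_\Gamma$ and $e_\Gamma\in A$ (as $\Gamma$ is finite), a short computation shows $V_\Gamma\subseteq V_\Pi$ and $(V_\Pi)_\Gamma=Ae_\Gamma Ae_\Pi V=Ae_\Gamma V=V_\Gamma$ (the inclusion $\supseteq$ using $e_\Gamma\cdot e_\Gamma\cdot e_\Pi=e_\Gamma$), so applying the core claim to $W:=V_\Pi$ produces exactly $V_\Gamma$ and $V_\Pi/V_\Gamma$. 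For part (2) I would dualise. One first checks $V^\Pi\subseteq V^\Gamma$ and the identity $(V/V^\Pi)^\Gamma=V^\Gamma/V^\Pi$: if $e_\Gamma A v\subseteq V^\Pi$ then plugging $b=e_\Gamma$ into $e_\Pi b\,e_\Gamma a v=0$ and using $e_\Pi e_\Gamma=e_\Gamma$ forces $e_\Gamma A v=0$. A module with a $\nabla$-flag is locally finite-dimensional and bounded above, so $X:=V/V^\Pi$, its quotient $V/V^\Gamma$ and its submodule $V^\Gamma/V^\Pi$ are all locally finite-dimensional; thus $?^\circledast$ acts as an exact contravariant equivalence on them, carries $\nabla$-flags to $\Delta^{\op}$-flags, and intertwines $(-)_\Gamma$ with $(-)/(-)^\Gamma$ (because it intertwines $j^\Gamma_!$ with $j^\Gamma_*$ and fixes $j^\Gamma$). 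Applying the core claim for $A^{\op}$ to $X^\circledast$ and then $?^\circledast$ once more returns $\nabla$-flags (resp.\ $\bar\nabla$-flags) on $V/V^\Gamma$ and $V^\Gamma/V^\Pi$.

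It then remains to prove the core claim, and here are the steps. First, for \emph{any} $W$ and any $\mu\in\Gamma$ one has $e_\mu W_\Gamma\supseteq e_\mu e_\Gamma W=e_\mu W$ (using $e_\mu e_\Gamma=e_\mu$) while trivially $e_\mu W_\Gamma\subseteq e_\mu W$, so $W/W_\Gamma$ has no weights in $\Gamma$. Secondly, because $\Gamma$ is a lower set, every $\lambda\in\Gamma$ satisfies $\lambda\ngeq\mu$ for all $\mu\in\Lambda-\Gamma$; hence, by \cref{archers} (resp.\ \cref{archersc}) together with the reordering mechanism of the remark following \cref{archers} — split the two-step subquotient of any adjacent pair of layers whose upper layer has type in $\Gamma$ and whose lower layer has type outside $\Gamma$, using the vanishing $\Ext^1_A(\text{upper},\text{lower})=0$, and swap — I may reorder a given flag $0=W_0\subset\cdots\subset W_n=W$ so that all layers of types in $\Gamma$ occur at the bottom, say as a subflag $0=W_0\subset\cdots\subset W_m$, with the layers above $W_m$ of types in $\Lambda-\Gamma$.

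The third step identifies $W_m$ with $W_\Gamma$. On one hand $W/W_m$ has a $\bar\Delta$-flag all of whose layers, being $A_{\ge\lambda}$-modules with $\lambda\notin\Gamma$, have every weight outside $\Gamma$ (since $\Gamma$ is lower), so $e_\Gamma W\subseteq W_m$ and hence $W_\Gamma=Ae_\Gamma W\subseteq W_m$. On the other hand each $\bar\Delta$-layer $j^\lambda_!\bar V$ is generated as an $A$-module by its lowest ($\lambda$-)weight space, so running up the subflag $W_m$ shows $W_m$ is generated by weight vectors of weights $\lambda_1,\dots,\lambda_m\in\Gamma$, each of which lies in $e_\Gamma W$; thus $W_m\subseteq W_\Gamma$. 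Therefore $W_\Gamma=W_m$ carries the bottom $m$ layers of the reordered flag as a $\bar\Delta$-flag, and $W/W_\Gamma=W/W_m$ carries the top $n-m$ layers. The $\Delta$-flag case is verbatim the same, using \cref{archersc} in place of \cref{archers} and noting that $\Delta$-layers are $\bar\Delta$-layers (hence still generated by their lowest weight space).

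The step I expect to require the most care is this third one — matching the combinatorially defined bottom piece $W_m$ of the reordered flag with the intrinsically defined truncation $Ae_\Gamma W$ — together with the bookkeeping in the duality argument for part (2), namely checking $(V/V^\Pi)^\Gamma=V^\Gamma/V^\Pi$ and that $?^\circledast$ sends $(-)_\Gamma$ to $(-)/(-)^\Gamma$; everything else reduces to \cref{archers}, \cref{archersc} and the elementary idempotent identities.
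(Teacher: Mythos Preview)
Your proposal is correct and follows essentially the same approach as the paper's proof: reorder the layers of $V_\Pi$ using \cref{archers}/\cref{archersc} so that those of types in $\Gamma$ sit at the bottom, then identify this bottom piece with $V_\Gamma$ via the two inclusions you spell out. The paper is terser (it compresses your third step into the line ``It follows that $K=V_\Gamma$'' and handles part (2) and the $\bar\Delta$-case by saying ``the other cases are similar''), whereas you make explicit the identity $(V_\Pi)_\Gamma=V_\Gamma$, the matching $W_m=W_\Gamma$, and the duality bookkeeping for part (2); but the underlying argument is the same.
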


\begin{proof}
We just go through the details for $\Delta$-flags, the other cases are similar.
Since $e_\Gamma = e_\Pi e_\Gamma = e_\Gamma e_\Pi$,
we have that 
$V_\Gamma \subseteq  V_\Pi$.
We are given that $V_\Pi$ has a $\Delta$-flag.
Clearly its sections are $\Delta$-layers of types
from $\Pi$. 
Using \cref{archersc}, we can arrange the layers
to obtain a short exact sequence
$0 \rightarrow K \rightarrow V_\Pi \rightarrow Q \rightarrow 0$ so that $K$ has a $\Delta$-flag
with $\Delta$-layers of types from $\Gamma$
and $Q$ has a $\Delta$-flag with layers from $\Pi - \Gamma$. But $e_\Gamma$ is zero on $\Delta$-layers
of types from $\Pi - \Gamma$, and any $\Delta$-layer $W$
of type from $\Gamma$ is generated by $e_\Gamma W$.
It follows that $K = V_\Gamma$,
$Q = V_\Pi / V_\Gamma$, so both have $\Delta$-flags.
\end{proof}

For $V$ with an ascending $\Delta$-flag or
an ascending $\bar\Delta$-flag,
we define the multiplicities
$(V:\Delta(b))_q$ and 
$(V:\bar\Delta(b))_q$
by the same formulae \cref{seasonalmuffin,tea}
as before.
They both belong to $\N\lround q^{-1} \rround$ thanks to the next lemma.
Similarly, we define
$(V:\nabla(b))_q$ and $(V:\bar\nabla(b))_q$ for $V$ with
a descending $\nabla$-flag or a descending
$\bar\nabla$-flag by
\cref{pea,nut}; these necessarily belong to $\N\lround q\rround$.

\begin{lemma}\label{wherefinalaxiomisneeded}
Let $V$ be a graded left $A$-module.
\begin{enumerate}
\item
If $V$ has an ascending $\Delta$-flag (resp.,
an ascending $\bar\Delta$-flag) then $V$ is locally finite-dimensional and bounded below.
\item
If $V$ has a descending $\nabla$-flag (resp.,
a descending $\bar\nabla$-flag) then $V$ is locally finite-dimensional and bounded above.
\end{enumerate}
\end{lemma}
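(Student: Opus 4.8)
\emph{Overview of the plan.} Fix $i\in\I$. The plan is to exhibit, for this $i$, a single finite lower set $\Gamma_i\subseteq\Lambda$ (depending only on $i$, and not on $V$) for which $1_iV=1_iV_{\Gamma_i}$. Granting this, in case (1) the module $V_{\Gamma_i}$ has a $\Delta$-flag (resp.\ a $\bar\Delta$-flag), hence lies in $A\gmoddelta$ (resp.\ $A\gmoddeltabar$), so it is locally finite-dimensional and bounded below by \cref{xmas}; therefore $1_iV=1_iV_{\Gamma_i}$ is finite-dimensional in every degree and bounded below, and letting $i$ vary over $\I$ proves (1). Part (2) will be obtained by the mirror-image construction, using $V/V^{\Gamma_i'}$ in place of $V_{\Gamma_i}$ and $A\gmodnabla$, $A\gmodnablabar$ in place of $A\gmoddelta$, $A\gmoddeltabar$.

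\emph{Construction of $\Gamma_i$ and the equality $1_iV=1_iV_{\Gamma_i}$.} Let $\S(i):=\{s\in\S\mid\X(i,s)\neq\varnothing\}$. First I claim $\S(i)$ is finite. If $i\in\S$, the third axiom of \cref{raspberries} gives $\S(i)\subseteq\S_{\leq\dot\imath}$, and $\S_{\leq\dot\imath}=\bigcup_{\mu\leq\dot\imath}\S_\mu$ is finite because $\Lambda$ is lower finite and each fibre $\S_\mu$ is finite. If $i\in\I-\S$, then $\S(i)$ is finite by the final axiom of \cref{raspberries}; this is the only place that axiom is used. Now let $\Gamma_i$ be the lower set of $\Lambda$ generated by the finite set $\{\dot s\mid s\in\S(i)\}$, so $\Gamma_i$ is a finite lower set and $\S(i)\subseteq\S_{\Gamma_i}$. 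Given $v\in1_iV$ we have $v=1_iv\in1_iA1_i\cdot V$, and every element of $1_iA1_i$ is a linear combination of graded triangular basis vectors $xhy$ with $x\in\X(i,s)$, $h\in\H(s,t)$, $y\in\Y(t,i)$ and $s\in\S(i)$; since $xhyv=x(hyv)$ with $hy\in1_sA1_i$, this yields $1_iV\subseteq\sum_{s\in\S(i)}A\,1_sV$. Finally, for $s\in\S(i)\subseteq\S_{\Gamma_i}$ one has $1_s=1_se_{\Gamma_i}$, so $1_sv=1_s(e_{\Gamma_i}v)$ for all $v$, whence $A\,1_sV\subseteq Ae_{\Gamma_i}V=V_{\Gamma_i}$. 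Therefore $1_iV\subseteq V_{\Gamma_i}$, i.e.\ $1_iV=1_iV_{\Gamma_i}$. This finishes (1) for ascending $\Delta$-flags and, verbatim, for ascending $\bar\Delta$-flags.

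\emph{Part (2) and the main obstacle.} For (2) I would run the symmetric argument with $\S'(i):=\{t\in\S\mid\Y(t,i)\neq\varnothing\}$ (finite for the same two reasons) and $\Gamma_i'$ the finite lower set generated by $\{\dot t\mid t\in\S'(i)\}$. The key point is that $1_iV\cap V^{\Gamma_i'}=0$: if $v\in1_iV$ satisfies $e_{\Gamma_i'}Av=0$ then $1_tAv=0$ for every $t\in\S'(i)$ (as $1_t=1_te_{\Gamma_i'}$), so $yv=0$ for all $y\in\Y(t,i)$; but every basis vector of $A1_i$ has the form $xhy$ with $y\in\Y(t,i)$ for some $t\in\S'(i)$, so $A1_i\cdot v=0$, and $v=1_iv\in A1_i\cdot v$ forces $v=0$. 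Hence $1_iV\hookrightarrow V/V^{\Gamma_i'}$, with image $1_i(V/V^{\Gamma_i'})$, and $V/V^{\Gamma_i'}$ lies in $A\gmodnabla$ or $A\gmodnablabar$, so it --- and therefore $1_iV$ --- is locally finite-dimensional and bounded above. (Alternatively, (2) can be deduced from (1) for $A^{\op}$ via $?^\circledast$, once one checks that $?^\circledast$ carries descending $\nabla$-flags to ascending $\Delta^{\op}$-flags and that it reflects local finite-dimensionality and boundedness; the direct argument is shorter.) The one real obstacle, and the sole reason the final axiom of \cref{raspberries} is invoked, is the finiteness of $\S(i)$ and $\S'(i)$ for $i\in\I-\S$; without it one knows only that $1_iV=\bigcup_\Gamma1_iV_\Gamma$ is an increasing union of locally finite-dimensional bounded-below spaces, a union which need not retain either property.
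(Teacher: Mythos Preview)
Your proof is correct and follows essentially the same approach as the paper: fix $i\in\I$, use the axioms of \cref{raspberries} to produce a finite lower set $\Gamma$ with $1_iV=1_iV_\Gamma$, then invoke \cref{xmas}. Your justification of $1_iV\subseteq V_{\Gamma_i}$ via expanding $1_i$ in the triangular basis of $1_iA1_i$ is slightly more direct than the paper's (which argues via the explicit basis \cref{thisisabasis} for $\bar\Delta$-layers), and you are more careful in separating the cases $i\in\S$ and $i\in\I-\S$ when establishing finiteness of $\S(i)$.
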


\begin{proof}
(1)
It suffices to prove that $V$ is locally finite-dimensional and bounded below assuming if it has an ascending $\bar\Delta$-flag.
Fix a choice of $i \in \I$.
If $1_i (j^\lambda_! \bar V) \neq 0$
for some $\lambda \in \Lambda$ and a graded 
left 
$A_\lambda$-module $\bar V$ that is locally finite-dimensional
and bounded below,
then by \cref{thisisabasis} $\bar x \otimes v \neq 0$
for some $x \in \X(i,s)$, $v \in 1_s \bar V$ 
and $s \in \S_\lambda$.
By axiom (A4),
there are only finitely many possibilities for $\lambda$.
Let $\Gamma$ be the finite lower set in $\Lambda$ generated
by all of them. It follows that
$1_i V = 1_i V_\Gamma$. Since $V_\Gamma$ has a $\bar\Delta$-flag, it is locally finite-dimensional and bounded below by \cref{xmas}. Hence, so is $V$.

\vspace{1mm}
\noindent
(2) This follows by the dual argument.
\end{proof}

Now we are ready for the main results of the section.
These are almost the same as \cref{citizens,citizenss},
it is just that the conditions on finite support 
have been removed.

\begin{theorem}[Homological criteria for 
ascending $\Delta$- and $\bar\Delta$-flags]\label{villains}
Assume that $V \in \ob A\gmod$ is locally finite-dimensional and bounded below.
\begin{itemize}
\item[(1)]
The following are equivalent:
\begin{enumerate}
\item[(a)] $V$ has an ascending $\Delta$-flag;
\item[(b)] $j^\Gamma V$ has a $\Delta$-flag for all finite lower sets $\Gamma\subseteq\Lambda$;
\item[(c)] 
$\Ext^1_A(V, \bar\nabla(b)) = 0$ for all $b \in \B$;
\item[(d)] 
$\Ext^n_A(V, \bar\nabla(b)) = 0$ for all $b \in \B$ and $n \geq 1$.
\end{enumerate}
When this holds, for any finite lower set $\Gamma \subseteq \Lambda$, both $V_\Gamma$ and 
$V / V_\Gamma$ have ascending $\Delta$-flags
with
\begin{align}
(V_\Gamma:\Delta(b))_q &= \left\{
\begin{array}{ll}
(V:\Delta(b))_q&\text{if $b \in \B_\Gamma$}\\
0&\text{otherwise;}
\end{array}
\right.
&
(V / V_\Gamma:\Delta(b))_q &= \left\{
\begin{array}{ll}
0&\text{if $b \in \B_\Gamma$}\\
(V:\Delta(b))_q&\text{otherwise.}
\end{array}
\right.\label{cats}
\end{align}
\item[(2)]
The following are equivalent:
\begin{enumerate}
\item[(a)] $V$ has an ascending $\bar\Delta$-flag;
\item[(b)] $j^\Gamma V$ has a $\bar\Delta$-flag
for all finite lower sets $\Gamma \subseteq \Lambda$;
\item[(c)] 
$\Ext^1_A(V, \nabla(b)) = 0$ for all $b \in \B$;
\item[(d)] 
$\Ext^n_A(V, \nabla(b)) = 0$ for all $b \in \B$ and $n \geq 1$.
\end{enumerate}
When this holds, for any finite lower set $\Gamma \subseteq \Lambda$, both $V_\Gamma$ and 
$V / V_\Gamma$ have ascending $\bar\Delta$-flags
with
\begin{align}\label{catsbar}
(V_\Gamma:\bar\Delta(b))_q &= \left\{
\begin{array}{ll}
(V:\bar\Delta(b))_q&\text{if $b \in \B_\Gamma$}\\
0&\text{otherwise;}
\end{array}
\right.
&
(V / V_\Gamma:\bar\Delta(b))_q &= \left\{
\begin{array}{ll}
0&\text{if $b \in \B_\Gamma$}\\
(V:\bar\Delta(b))_q&\text{otherwise.}
\end{array}
\right.
\end{align}
\end{itemize}
\end{theorem}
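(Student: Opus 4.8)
The plan is to run the usual cycle of implications, reducing everything to the unital algebras $A_\Gamma$ via the quotient functors $j^\Gamma$, and to obtain the hard implication $(\mathrm{b})\Rightarrow(\mathrm{a})$ by combining the finite case \cref{citizens} with \cref{aha}.

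First I would dispatch $(\mathrm{d})\Rightarrow(\mathrm{c})$ (trivial) and $(\mathrm{a})\Rightarrow(\mathrm{b})$: since $e_\Gamma V\subseteq V_\Gamma$ one has $j^\Gamma V=j^\Gamma V_\Gamma$, and applying the exact functor $j^\Gamma$---which by \cref{easy,easyc} sends $\Delta(b)$ to $\Delta_\Gamma(b)$ for $b\in\B_\Gamma$ and to $0$ otherwise---to a $\Delta$-flag of $V_\Gamma$ yields a $\Delta$-flag of $j^\Gamma V$ over $A_\Gamma$. For $(\mathrm{b})\Rightarrow(\mathrm{d})$ I would fix $b$, choose a finite lower set $\Gamma\ni\dot b$, use \cref{easy} to write $\bar\nabla(b)=j^\Gamma_*\bar\nabla_\Gamma(b)$, and invoke \cref{anothergss} to get $\Ext^n_A(V,\bar\nabla(b))\cong\Ext^n_{A_\Gamma}(j^\Gamma V,\bar\nabla_\Gamma(b))$, which vanishes for $n\ge1$ because $j^\Gamma V$ has a $\Delta$-flag for $A_\Gamma$, by \cref{extvanishingt}. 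Conversely $(\mathrm{c})\Rightarrow(\mathrm{b})$ follows from the same $\Ext$-comparison together with \cref{citizens} applied to $A_\Gamma$, the finiteness-of-support hypothesis there being automatic since $\B_\Gamma$ is finite. This establishes $(\mathrm{b})\Leftrightarrow(\mathrm{c})\Leftrightarrow(\mathrm{d})$, and the verbatim argument with $\nabla$ in place of $\bar\nabla$ proves the corresponding equivalences for part (2).

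The crux is $(\mathrm{b})\Rightarrow(\mathrm{a})$. Since $\Delta$-flags are $\bar\Delta$-flags, condition $(\mathrm{b})$ of part (1) implies condition $(\mathrm{b})$ of part (2), so it suffices to prove part (2)'s hard direction first and then upgrade: granting that every $V_\Gamma$ has a $\bar\Delta$-flag, \cref{aha} identifies $\eps^\Gamma_V\colon j^\Gamma_!j^\Gamma V\xrightarrow{\sim}V_\Gamma$, and since $j^\Gamma V$ has a $\Delta$-flag by $(\mathrm{b})$, \cref{lunch} shows $j^\Gamma_!j^\Gamma V$ has a $\Delta$-flag, whence so does $V_\Gamma$. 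For part (2)'s hard direction I would induct on $|\Gamma|$: writing $\Gamma=\Pi\sqcup\{\lambda\}$ with $\lambda$ maximal in $\Gamma$ (so $\Pi$ is a lower set and $\lambda$ is minimal in $\Lambda\setminus\Pi$, indeed minimal in the relevant truncation), one checks that $M:=V_\Gamma/V_\Pi$ is locally finite-dimensional and bounded below, has no weights in $\Pi$, and is generated by its $\lambda$-weight space, hence is a module over $A_{\ge\lambda}$ with $M=A_{\ge\lambda}e_\lambda M$; then \cref{inductionbase} (applied to $A_{\ge\lambda}$, in which $\lambda$ is minimal) identifies $M$ as a $\bar\Delta$-layer of type $\lambda$ once $\Ext^1(M,\nabla(b))=0$ is known, and \cref{sesdelta} combined with the inductive hypothesis that $V_\Pi$ has a $\bar\Delta$-flag then gives a $\bar\Delta$-flag of $V_\Gamma$.

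The main obstacle, and the reason the inductive hypothesis must be strengthened, is controlling these groups $\Ext^1(M,\nabla(b))$: through the exact sequences $0\to V_\Pi\to V_\Gamma\to M\to0$ and $0\to M\to V/V_\Pi\to V/V_\Gamma\to0$ they reduce to $\Ext$-groups of $V/V_\Pi$ and $V/V_\Gamma$, so I would carry along in the induction the extra assertion that $V/V_\Gamma$ again satisfies condition $(\mathrm{d})$ (resp.\ $(\mathrm{d}')$); the inductive step then uses \cref{mercedes} together with \cref{workout} to see that $\Ext^{\ge1}(V/V_\Gamma,\nabla(b))$ vanishes for $b\in\B_\Gamma$ (because $i^!\nabla(b)=0$ there) and combines this with the vanishing for $b\notin\B_\Gamma$ coming from the support bound $\supp_{\bar\Delta}(V_\Gamma)\subseteq\Gamma$, thereby closing the loop. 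Once $V_\Gamma$ and $V/V_\Gamma$ are known to carry ascending flags, the multiplicity formulas \cref{cats,catsbar} drop out by applying $\Hom_A(-,\bar\nabla(b))$ to $0\to V_\Gamma\to V\to V/V_\Gamma\to0$: the $\Ext^1$-vanishing makes the $\Hom$-sequence short exact, the support bounds $\supp_\Delta(V_\Gamma)\subseteq\Gamma$ and $\supp_\Delta(V/V_\Gamma)\subseteq\Lambda\setminus\Gamma$ (immediate from the weights of $V_\Gamma$ and $V/V_\Gamma$) single out which term survives, and $\Hom_A(V,\bar\nabla(b))\cong\Hom_{A_\Gamma}(j^\Gamma V,\bar\nabla_\Gamma(b))$ identifies the surviving multiplicity with $(V:\Delta(b))_q$.
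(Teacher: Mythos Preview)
Your cycle of implications is fine through $(\mathrm{b})\Leftrightarrow(\mathrm{c})\Leftrightarrow(\mathrm{d})$, and the idea of proving part~(2) first, then upgrading to part~(1) via \cref{aha} and \cref{lunch}, is sound. The gap is in the inductive step for $(\mathrm{b})\Rightarrow(\mathrm{a})$ of part~(2). To apply \cref{inductionbase} to $M=V_\Gamma/V_\Pi$ over $A_{\ge\lambda}$ you need $\Ext^1_A(M,\nabla(b))=0$ for all $b\in\B_{\ge\lambda}$. Using the sequence $0\to M\to V/V_\Pi\to V/V_\Gamma\to 0$ and $\Ext^1(V/V_\Pi,\nabla(b))=0$ from the strengthened induction hypothesis, this reduces to $\Ext^2_A(V/V_\Gamma,\nabla(b))=0$. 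For $b\in\B_\lambda\subseteq\B_\Gamma$ this follows from \cref{mercedes}+\cref{workout}, but for $b\in\B_{>\lambda}$ (so $b\notin\B_\Gamma$) your support bound $\supp_{\bar\Delta}(V_\Gamma)\subseteq\Gamma$ only gives $\Hom_A(V_\Gamma,\nabla(b))=0$, hence $\Ext^1_A(V/V_\Gamma,\nabla(b))=0$ from the long exact sequence for $0\to V_\Gamma\to V\to V/V_\Gamma\to 0$. It does \emph{not} give $\Ext^2_A(V/V_\Gamma,\nabla(b))=0$: that would need $\Ext^1_A(V_\Gamma,\nabla(b))=0$, precisely what you are trying to prove. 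So the loop does not close as stated.

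Your approach can be rescued: for any $b$, choose a finite lower set $\Gamma'\supseteq\Gamma$ with $\dot b\in\Gamma'$; by~(b) and \cref{pizza}(1) the $A_{\Gamma'}$-module $j^{\Gamma'}(V/V_\Gamma)=j^{\Gamma'}V/(j^{\Gamma'}V)_\Gamma$ has a $\bar\Delta$-flag, so \cref{anothergss} and \cref{extvanishingt} give $\Ext^n_A(V/V_\Gamma,\nabla(b))\cong\Ext^n_{A_{\Gamma'}}\!\big(j^{\Gamma'}(V/V_\Gamma),\nabla_{\Gamma'}(b)\big)=0$ for $n\ge 1$, establishing the strengthened hypothesis directly. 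The paper avoids this induction altogether: it shows $\eps^\Gamma_V\colon j^\Gamma_! j^\Gamma V\to V$ is injective by comparing it, via a commutative square, with $\eps^\Gamma_{j^\Pi_! j^\Pi V}$ for a finite lower set $\Pi\supseteq\Gamma$ containing a weight where injectivity might fail; the latter counit is injective by \cref{aha} because $j^\Pi_! j^\Pi V$ already has a $\Delta$-flag by \cref{lunch}, and $\eps^\Pi_V$ is an isomorphism on that weight space. This gives $V_\Gamma\cong j^\Gamma_! j^\Gamma V$ in one stroke for both parts.
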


\begin{proof}
(1)
It is clear that (d)$\Rightarrow$(c). 

To prove that (a)$\Rightarrow$(d), the canonical map
$\varinjlim_{\Gamma} V_{\Gamma} \rightarrow V$ 
is an isomorphism, where the direct limit is over all finite lower sets $\Gamma \subset \Lambda$ with maps given by the natural inclusions.
This follows because $V$ is generated by all of its weight spaces
$e_\lambda V\:(\lambda \in \Lambda)$,
and the poset is lower finite
so every weight space is a subset of $V_\Gamma$ for some
finite lower set $\Gamma$.
So
$$
\Ext^n_A(V, \bar\nabla(b))
\cong
\Ext^n_A\left(\varinjlim_\Gamma V_\Gamma, \bar\nabla(b)\right)
\cong
\varprojlim_\Gamma \Ext^n_A\left(V_\Gamma, \bar\nabla(b)\right).
$$
This is 0 for $n \geq 1$ 
thanks to \cref{extvanishingt} as
each $V_\Gamma$
has a $\Delta$-flag by the definition of
ascending $\Delta$-flag.

In this paragraph, we prove that (c)$\Rightarrow$(b).
Take a finite lower set $\Gamma$.
Note that $j^\Gamma V$ is locally finite-dimensional and bounded below since $V$ has these properties. Also for $b \in \B_\Gamma$, we have that $\Ext^1_{A_\Gamma}(j^\Gamma V, \bar\Delta_\Gamma(b)) \cong \Ext^1_A(V, j^\Gamma_* \bar\Delta_\Gamma(b))$ by \cref{anothergss}. Since $j^\Gamma_* \bar\Delta_\Gamma(b)
\cong \bar\Delta(b)$ by \cref{easy},
the assumed property (3) gives
that $\Ext^1_{A_\Gamma}(j^\Gamma V, \bar\Delta_\Gamma(b)) = 0$. Now we can apply \cref{citizens} (using that $\Gamma$ is finite so the support condition is automatic)
to establish (2).

For (b)$\Rightarrow$(a), assume that (b) holds.
\cref{lunch} implies that $j^\Gamma_! j^\Gamma V$ has
a $\Delta$-flag. We claim that the
counit of adjunction
$\eps^\Gamma_V: j^\Gamma_! j^\Gamma V \rightarrow V$
is injective. Given this, the image of
$\eps^\Gamma_V$ is $V_\Gamma$, so we deduce that
$V_\Gamma$ has a $\Delta$-flag, as needed to prove (a).
Suppose for a contradiction that $\eps^\Gamma_V$ is not
injective. Then we can find $\lambda \in \Lambda$
such that the restriction of $\eps^\Gamma_V$ to the $\lambda$-weight space is not injective.
Let $\Pi$ be the finite lower set generated by $\Gamma$ and $\lambda$. Consider the following diagram:
$$
\begin{tikzcd}
j^\Pi_! j^\Pi V\arrow[r,"\eps^\Pi_V"]&V\\
\arrow[u,hookrightarrow,"\eps^\Gamma_{\!\scriptscriptstyle j^\Pi_! j^\Pi V}"]j^\Gamma_! j^\Gamma
\left(j^\Pi_! j^\Pi V\right)\arrow[r,"\sim"]&
j^\Gamma_! j^\Gamma V\arrow[u,"\eps^\Gamma_V"]
\end{tikzcd}
$$
The bottom map
is 
$$
A e_\Gamma \otimes_{A_\Gamma} e_\Gamma A e_\Pi
\otimes_{A_\Pi} e_\Pi V \rightarrow
A e_\Gamma \otimes_{A_\Gamma} e_\Gamma V,\qquad
a e_\Gamma \otimes e_\Gamma b e_\Pi \otimes
e_\Pi v \mapsto a e_\Gamma \otimes e_\Gamma b e_\Pi v,
$$
which is clearly an isomorphism.
The left hand map is
$$
A e_\Gamma \otimes_{A_\Gamma} e_\Gamma A e_\Pi
\otimes_{A_\Pi} e_\Pi V \rightarrow
A e_\Pi \otimes_{A_\Pi} e_\Pi V,\qquad
a e_\Gamma \otimes e_\Gamma b e_\Pi \otimes e_\Pi v
\mapsto a e_\Gamma b e_\Pi \otimes e_\Pi v.
$$
This map is injective. To see this, let
$W := j^\Pi_! j^\Pi V$. We know it has a $\bar\Delta$-flag,
so by \cref{pizza}(1) we deduce that $W_\Gamma$ has a
$\bar\Delta$-flag too. Hence, by \cref{aha}(1),
$\eps^\Gamma_W:j^\Gamma_! j^\Gamma W
\rightarrow W_\Gamma$ 
is an isomorphism.
The top and right hand maps in the diagram are the natural multiplication maps, and it is easily checked that the diagram commutes. Finally, the top map becomes an isomorphism when we apply $j^\Pi$, hence, it
is a bijection on $\lambda$-weight spaces.
It follows that $\eps^\Pi_V \circ \eps^\Gamma_W$ is injective on the $\lambda$-weight space. Hence,
$\eps^\Gamma_V$ is injective on the $\lambda$-weight space, contradicting the earlier assumption.

Finally, we assume (a) and deduce \cref{cats}.
Let $\Gamma \subseteq \Lambda$ be a finite lower set.
We have that $V_\Gamma$ has a $\Delta$-flag by the definition. Also $V / V_\Gamma$ has an ascending $\Delta$-flag, as follows 
directly from the definition using \cref{pizza}(1).
Now to establish \cref{cats}, 
one just has to apply
$\Hom_A(-,\bar\nabla(b))$ to the short exact
sequence $0 \rightarrow V_\Gamma \rightarrow V \rightarrow V / V_\Gamma \rightarrow 0$.
Since
$\Ext^1_A(V / V_\Gamma, \bar\nabla(b)) = 0$,
one obtains a short exact sequence
showing that
$$
(V:\Delta(b))_q = (V_\Gamma:\Delta(b))_q + (V / V_\Gamma:\Delta(b))_q.
$$
Defining supports as in \cref{supp1,supp2},
we also have that
$\supp_\Delta(V_\Gamma) \subseteq \Gamma$, e.g., this follows from \cref{lunch} because $V_\Gamma \cong j^\Gamma_! j^\Gamma V$
by \cref{aha}(1) and $j^\Gamma V \in A_\Gamma\gmoddelta$.
Also $\supp_\Delta(V / V_\Gamma) \subseteq \Lambda - \Gamma$ since it has no weights belonging to $\Gamma$.
Now \cref{cats} is clear.

\vspace{1mm}
\noindent
(2) Similar.
\end{proof}

\begin{corollary}
Suppose that $0 \rightarrow U \rightarrow V \rightarrow W 
\rightarrow 0$ is a short exact sequence of 
graded left $A$-modules. 
Assuming that $W$ has an ascending $\Delta$-flag, $U$ has an ascending $\Delta$-flag if and only if $V$ has an ascending $\Delta$-flag.
Similarly for $\bar\Delta$-flags.
\end{corollary}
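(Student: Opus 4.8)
The plan is to deduce this from the homological criterion of \cref{villains}, in exactly the way that \cref{sesdelta} was deduced from \cref{citizens}. First I would record the elementary fact that the class of graded left $A$-modules that are locally finite-dimensional and bounded below is closed under submodules, quotients, and extensions: the first two are immediate, and for an extension $0\to U\to V\to W\to 0$ one has $\dim 1_i V_d \le \dim 1_i U_d + \dim 1_i W_d$, while a lower bound for the grading of $1_i V$ is the minimum of the lower bounds for $1_i U$ and $1_i W$. Since $W$ has an ascending $\Delta$-flag, \cref{wherefinalaxiomisneeded}(1) shows $W$ is locally finite-dimensional and bounded below. Hence in the direction where $V$ has an ascending $\Delta$-flag, \cref{wherefinalaxiomisneeded}(1) gives that $V$, and therefore its submodule $U$, is locally finite-dimensional and bounded below; in the direction where $U$ has an ascending $\Delta$-flag, $U$ and $W$ are both locally finite-dimensional and bounded below, so $V$ is too.

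Next, apply $\Hom_A(-,\bar\nabla(b))$ to the short exact sequence. By the equivalence in \cref{villains}(1), the hypothesis that $W$ has an ascending $\Delta$-flag gives $\Ext^n_A(W,\bar\nabla(b)) = 0$ for all $b \in \B$ and $n \ge 1$, so the long exact sequence yields isomorphisms $\Ext^1_A(V,\bar\nabla(b)) \cong \Ext^1_A(U,\bar\nabla(b))$ for every $b \in \B$. Consequently $\Ext^1_A(V,\bar\nabla(b)) = 0$ for all $b$ if and only if $\Ext^1_A(U,\bar\nabla(b)) = 0$ for all $b$.

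Finally, I would invoke the equivalence of (a) and (c) in \cref{villains}(1): since in either direction of the claimed statement both $U$ and $V$ are already known to be locally finite-dimensional and bounded below, $V$ has an ascending $\Delta$-flag precisely when $\Ext^1_A(V,\bar\nabla(b)) = 0$ for all $b$, and likewise for $U$; combining this with the previous paragraph proves the $\Delta$-flag assertion. The $\bar\Delta$-flag assertion is verbatim the same argument, using \cref{villains}(2) and replacing $\bar\nabla(b)$ by $\nabla(b)$ throughout. There is no genuinely hard step here; the only thing requiring care is the bookkeeping in the first paragraph, since \cref{villains} is only applicable once the local finite-dimensionality and bounded-below hypotheses have been secured in each direction.
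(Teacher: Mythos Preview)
Your proof is correct and follows exactly the approach the paper intends: this corollary is stated immediately after \cref{villains} without proof because it is deduced from that theorem in precisely the same way that \cref{sesdelta} is deduced from \cref{citizens}. The one simplification compared to the proof of \cref{sesdelta} is that no finite-support condition needs to be tracked here, since the homological criterion in \cref{villains} has no such hypothesis---and you have correctly omitted that step.
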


Finally, we state the dual results which, as usual, follow
from by dualizing the above.

\begin{theorem}[Homological criteria for 
descending $\nabla$- and $\bar\nabla$-flags]\label{villainss}
Assume that $V \in \ob A\gmod$ is locally finite-dimensional and bounded above.
\begin{itemize}
\item[(1)]
The following are equivalent:
\begin{enumerate}
\item[(a)] $V$ has an descending $\nabla$-flag;
\item[(b)] $j^\Gamma V$ has a $\nabla$-flag for all finite
lower sets $\Gamma \subseteq \Lambda$;
\item[(c)] 
$\Ext^1_A(\bar\Delta(b),V) = 0$ for all $b \in \B$;
\item[(d)] 
$\Ext^n_A(\bar\Delta(b),V) = 0$ for all $b \in \B$ and $n \geq 1$.
\end{enumerate}
When this holds, for any finite lower set $\Gamma \subseteq \Lambda$, both $V / V^\Gamma$ and 
$V^\Gamma$ have ascending $\nabla$-flags
with
\begin{align}
(V / V^\Gamma:\nabla(b))_q &= \left\{
\begin{array}{ll}
(V:\nabla(b))_q&\text{if $b \in \B_\Gamma$}\\
0&\text{otherwise;}
\end{array}
\right.
&
(V^\Gamma:\nabla(b))_q &= \left\{
\begin{array}{ll}
0&\text{if $b \in \B_\Gamma$}\\
(V:\nabla(b))_q&\text{otherwise.}
\end{array}
\right.
\end{align}
\item[(2)]
The following are equivalent:
\begin{enumerate}
\item[(a)] $V$ has an descending $\bar\nabla$-flag;
\item[(b)] 
$j^\Gamma V$ has a $\bar\nabla$-flag for all finite
lower sets $\Gamma \subseteq \Lambda$;
\item[(c)] 
$\Ext^1_A(\Delta(b),V) = 0$ for all $b \in \B$;
\item[(d)] 
$\Ext^n_A(\Delta(b),V) = 0$ for all $b \in \B$ and $n \geq 1$.
\end{enumerate}
When this holds, for any finite lower set $\Gamma \subseteq \Lambda$, both $V / V^\Gamma$ and 
$V^\Gamma$ have ascending $\bar\nabla$-flags
with
\begin{align}
(V / V^\Gamma:\bar\nabla(b))_q &= \left\{
\begin{array}{ll}
(V:\bar\nabla(b))_q&\text{if $b \in \B_\Gamma$}\\
0&\text{otherwise;}
\end{array}
\right.
&
(V^\Gamma:\bar\nabla(b))_q &= \left\{
\begin{array}{ll}
0&\text{if $b \in \B_\Gamma$}\\
(V:\bar\nabla(b))_q&\text{otherwise.}
\end{array}
\right.
\end{align}
\end{itemize}
\end{theorem}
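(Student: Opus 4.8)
The plan is to deduce the theorem from \cref{villains} by applying the exact contravariant duality $?^\circledast$, working throughout with the graded triangular basis of $A^{\op}$ obtained from the one for $A$ by interchanging the sets $\X(s)$ and $\Y(s)$. Recall that $?^\circledast$ restricts to a contravariant equivalence between the category of locally finite-dimensional $A$-modules and that of locally finite-dimensional $A^{\op}$-modules, interchanging bounded-below with bounded-above modules, and that it sends the modules $\Delta^{\op}(b),\bar\Delta^{\op}(b),\bar\nabla^{\op}(b),\nabla^{\op}(b)$ attached to the triangular basis of $A^{\op}$ to $\nabla(b),\bar\nabla(b),\bar\Delta(b),\Delta(b)$, respectively. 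Combining this with \cref{hands}, the $\Ext$-conditions (c) and (d) of the theorem transcribe to the corresponding conditions in \cref{villains}: for example $\Ext^n_A(\bar\Delta(b),V)\cong\Ext^n_A(V^\circledast,\bar\Delta(b)^\circledast)\cong\Ext^n_{A^{\op}}(V^\circledast,\bar\nabla^{\op}(b))$, and the hypothesis that $V$ is locally finite-dimensional and bounded above says exactly that $V^\circledast$ is locally finite-dimensional and bounded below.

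The first thing I would make precise is the translation of flags. Since $?^\circledast$ is exact, interchanges $j^\lambda_!$ with $j^\lambda_*$, and reverses filtrations, a left $A$-module $W$ has a $\nabla$-flag (resp.\ a $\bar\nabla$-flag) in the sense of \cref{defdown} if and only if $W^\circledast$ has a $\Delta^{\op}$-flag (resp.\ a $\bar\Delta^{\op}$-flag) for $A^{\op}$ in the sense of \cref{defup}, with matching multiplicities up to conjugating $q$. Next I would verify that $?^\circledast$ is compatible with the truncation operations of \cref{tools}: using $?^\circledast\circ j^\Gamma\cong j^\Gamma\circ ?^\circledast$ and $?^\circledast\circ j^\Gamma_*\cong j^\Gamma_!\circ ?^\circledast$, together with the explicit descriptions of the unit $\eta^\Gamma$ and counit $\eps^\Gamma$, one gets natural isomorphisms $(V/V^\Gamma)^\circledast\cong (V^\circledast)_\Gamma$ and $(V_\Gamma)^\circledast\cong V^\circledast/(V^\circledast)^\Gamma$, where the right-hand sides are formed inside $A^{\op}\gmod$; this rests on the standard fact that the adjunction $(j^\Gamma,j^\Gamma_*)$ for $A$ dualizes to the adjunction $(j^\Gamma_!,j^\Gamma)$ for $A^{\op}$, unit corresponding to counit. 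Granting this, ``$V$ has a descending $\nabla$-flag'' (i.e.\ $V/V^\Gamma$ has a $\nabla$-flag for every finite lower set $\Gamma$) becomes ``$(V^\circledast)_\Gamma$ has a $\Delta^{\op}$-flag for every finite lower set $\Gamma$'', which is ``$V^\circledast$ has an ascending $\Delta^{\op}$-flag'' in the sense of \cref{upity}; and similarly $\bar\nabla$ matches $\bar\Delta^{\op}$. Condition (b) likewise transcribes, since $j^\Gamma$ commutes with $?^\circledast$ and a $\Delta$-flag over $A_\Gamma$ dualizes to a $\Delta^{\op}$-flag over $(A_\Gamma)^{\op}=(A^{\op})_\Gamma$.

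With these dictionaries in hand, part (1) is exactly \cref{villains}(1) applied to $V^\circledast$ over $A^{\op}$, read back through $?^\circledast$: the equivalences (a)$\Leftrightarrow$(b)$\Leftrightarrow$(c)$\Leftrightarrow$(d) transport verbatim, and the multiplicity formulas \cref{cats} for $(V^\circledast)_\Gamma$ and $V^\circledast/(V^\circledast)_\Gamma$ turn into the displayed formulas for $V/V^\Gamma$ and $V^\Gamma$, the conjugation bars cancelling because \cref{pea,nut} are defined with $\dim_q$ where \cref{seasonalmuffin,tea} use $\overline{\dim_q}$. Part (2) is obtained in the same way, with $\nabla$ replaced by $\bar\nabla$ and $\Delta^{\op}$ by $\bar\Delta^{\op}$ throughout. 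The only step that is not purely formal is the compatibility of $?^\circledast$ with the truncations $V\mapsto V_\Gamma$, $V\mapsto V/V^\Gamma$ and with the functors $j^\Gamma_!,j^\Gamma,j^\Gamma_*$; I expect this to be routine bookkeeping — essentially the assertion that the duality of the earlier sections respects the semi-infinite formalism — and no deeper obstacle should arise.
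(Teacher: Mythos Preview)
Your approach is correct and matches the paper's: the paper simply states that \cref{villainss} ``follow[s] from by dualizing the above,'' i.e., it is obtained from \cref{villains} applied to $A^{\op}$ via $?^\circledast$, exactly as you outline. You have supplied considerably more detail than the paper does (the translation of flags, the compatibility of $?^\circledast$ with $j^\Gamma, j^\Gamma_!, j^\Gamma_*$ and with the truncations $V_\Gamma, V^\Gamma$), and your identification of the one nonformal point---that the unit $\eta^\Gamma$ dualizes to the counit $\eps^\Gamma$, so that $(V/V^\Gamma)^\circledast \cong (V^\circledast)_\Gamma$---is accurate and is indeed routine.
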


\begin{corollary}
Suppose that $0 \rightarrow U \rightarrow V \rightarrow W 
\rightarrow 0$ is a short exact sequence of 
graded left $A$-modules. 
Assuming that $U$ has an ascending $\nabla$-flag, $V$ has an ascending $\nabla$-flag if and only if $W$ has an ascending $\nabla$-flag.
Similarly for $\bar\nabla$-flags.
\end{corollary}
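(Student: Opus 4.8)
The plan is to deduce this directly from the homological criterion of \cref{villainss}, in exact parallel with the way the corollary following \cref{bgginj} is deduced from \cref{citizenss}; here the relevant notion is the \emph{descending} $\nabla$-flag of \cref{downity} (the word ``ascending'' in the statement matching the typo in the conclusion of \cref{villainss}, whose title already makes the intended meaning clear). There is also an alternative route: apply $?^\circledast$ to the corollary following \cref{villains} for the opposite algebra, using that $?^\circledast$ interchanges modules with a descending $\nabla$-flag and modules with an ascending $\Delta^{\op}$-flag (this is immediate from the homological criteria, since $\Ext^1_A(\bar\Delta(b),M)\cong\Ext^1_{A^\op}(M^\circledast,\bar\nabla^\op(b))$ by \cref{hands}), and that dualizing $0\to U\to V\to W\to 0$ turns $U$ into the quotient term $U^\circledast$. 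I would mention this, but write out the direct argument, which is cleaner.

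First I would check that all three modules satisfy the standing hypothesis of \cref{villainss}, namely being locally finite-dimensional and bounded above, so that the criterion applies throughout. The module $U$ has this property by \cref{wherefinalaxiomisneeded}(2). If $V$ has a descending $\nabla$-flag then $V$, hence its quotient $W$, is locally finite-dimensional and bounded above; conversely, if $W$ has a descending $\nabla$-flag then $U$ and $W$ both are, and the exact sequences $0\to 1_i U_d\to 1_i V_d\to 1_i W_d\to 0$ force the same for $V$. So in either direction the three terms are admissible inputs for \cref{villainss}(1).

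The core is then the long exact $\Ext$-sequence obtained by applying $\Hom_A(\bar\Delta(b),-)$ to $0\to U\to V\to W\to 0$. If $W$ has a descending $\nabla$-flag, the fragment $\Ext^1_A(\bar\Delta(b),U)\to\Ext^1_A(\bar\Delta(b),V)\to\Ext^1_A(\bar\Delta(b),W)$ has both outer terms zero by part (c) of \cref{villainss}(1) applied to $U$ and to $W$, so $\Ext^1_A(\bar\Delta(b),V)=0$ for all $b\in\B$ and part (c)$\Rightarrow$(a) of \cref{villainss}(1) gives $V$ a descending $\nabla$-flag. If instead $V$ has a descending $\nabla$-flag, the fragment $\Ext^1_A(\bar\Delta(b),V)\to\Ext^1_A(\bar\Delta(b),W)\to\Ext^2_A(\bar\Delta(b),U)$ has both outer terms zero, now by part (d) of \cref{villainss}(1) applied to $V$ and to $U$, whence $\Ext^1_A(\bar\Delta(b),W)=0$ for all $b\in\B$ and (c)$\Rightarrow$(a) again supplies the flag for $W$. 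The $\bar\nabla$ statement is identical after replacing $\bar\Delta(b)$ by $\Delta(b)$ throughout and \cref{villainss}(1) by \cref{villainss}(2).

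I expect no real obstacle: \cref{villainss} already isolates all the homological content, its $\Ext$-vanishing is uniform in $b$, and the long exact sequence step is purely formal. The only point requiring a little care, handled in the first step above, is confirming the ``locally finite-dimensional and bounded above'' hypothesis for the middle term $V$ in the case where only $U$ and $W$ are assumed to carry flags.
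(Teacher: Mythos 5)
Your argument is correct and matches what the paper intends: the paper derives this corollary by dualizing the (unproved) corollary after \cref{villains}, which in turn rests on exactly the long-exact-sequence-plus-homological-criterion argument you carry out directly for \cref{villainss}, and your preliminary check that the middle term $V$ is locally finite-dimensional and bounded above is the right point to verify before invoking the criterion. You also correctly read ``ascending $\nabla$-flag'' in the statement as the descending $\nabla$-flag of \cref{downity}.
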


\section{Homological dimensions}

In this section, we give some applications to homological dimensions.
Often these require some Noetherian assumptions (something we have sought to avoid up until now).
Continue with $A$ having a graded triangular basis.
We say that $A$ is {\em locally left} (resp., {\em right}) {\em graded Noetherian} if each finitely generated projective graded left (resp., right) $A$-module
has the descending chain condition (DCC) on graded submodules.
Since $A$ is locally finite-dimensional, 
this is obviously equivalent by duality to 
each finitely cogenerated injective graded right (resp., left) $A$-module having ACC.
If $A$ is both locally left and locally 
right graded Noetherian, then its
(possibly infinite) left and right graded global dimensions
coincide, and we refer to them both just as 
 the graded global dimension of $A$. 
Without this assumption, one must talk about
the left and right graded global dimensions of $A$ separately.
This is the same as for ordinary (graded) algebras,
e.g., see \cite[Ch.~4]{Weibel}.


\begin{lemma}\label{step}
For $\lambda \in \Lambda$, let $\ell(\lambda)$
be the maximal length of a descending chain
$\lambda = \lambda_0 > \lambda_1 >\cdots>\lambda_\ell$
in the poset $\Lambda$.
For any $b \in \B_\lambda$,
the graded projective (resp., injective) dimension
of a $\Delta$-layer (resp., a $\nabla$-layer)
of type $\lambda$ is $\leq \ell(\lambda)$.
\end{lemma}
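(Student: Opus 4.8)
The plan is to prove the statement about projective dimension of $\Delta$-layers and then deduce the statement about injective dimension of $\nabla$-layers by duality. Since the projective dimension of a (set-indexed) direct sum is the supremum of the projective dimensions of the summands, and a $\Delta$-layer of type $\lambda$ is, by \cref{jam}, a direct sum of degree-shifted copies of the modules $\Delta(b)\:(b\in\B_\lambda)$, it suffices to show $\mathrm{pd}_A\Delta(b)\le\ell(\dot b)$ for every $b\in\B$.

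I would prove this by induction on $\ell:=\ell(\dot b)$. If $\ell=0$ then $\dot b$ is minimal in $\Lambda$, so $\Delta(b)=P(b)$ is projective by \cref{gym}. For the inductive step, take the finitely generated projective module $Q(b)$ with its finite filtration $Q(b)=Q_0(b)\supset Q_1(b)\supseteq\cdots\supseteq Q_n(b)=0$ furnished by \cref{ohno}(1). The submodule $Q_1(b)$ carries a finite $\Delta$-flag all of whose sections are $\Delta$-layers of types $\mu$ with $\mu<\dot b$; prepending $\dot b$ to a maximal descending chain through $\mu$ shows $\ell(\mu)\le\ell-1$, so combining the inductive hypothesis with the supremum-over-summands fact and the standard bound on the projective dimension of a module admitting a finite filtration gives $\mathrm{pd}_A Q_1(b)\le\ell-1$. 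Dimension shifting along $0\to Q_1(b)\to Q(b)\to Q_0(b)/Q_1(b)\to 0$, in which $Q(b)$ is projective, then yields $\mathrm{pd}_A\big(Q_0(b)/Q_1(b)\big)\le\ell$. As $Q_0(b)/Q_1(b)$ is a finite direct sum of degree-shifted standard modules having $\Delta(b)$ as a direct summand, we get $\mathrm{pd}_A\Delta(b)\le\ell$, completing the induction and hence the $\Delta$-layer case.

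For the $\nabla$-layer statement I would pass to $A^{\op}$, which again carries a graded triangular basis with the same poset $\Lambda$, so that $\ell$ is unchanged. A $\nabla$-layer $W=j^\lambda_*\bar V$ of type $\lambda$ is locally finite-dimensional and bounded above by \cref{xmas}, hence $W^\circledast\in\domg A$ is locally finite-dimensional and bounded below; writing the injective $A_\lambda$-module $\bar V$ as a direct sum of copies of the $I_\lambda(b)$, and using $j^\lambda_*\circ{?}^\circledast\cong{?}^\circledast\circ j^\lambda_!$ together with the fact that $?^\circledast$ commutes with those direct sums whose total is locally finite-dimensional, one identifies $W^\circledast$ with a $\Delta^{\op}$-layer of type $\lambda$ for $A^{\op}$. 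By the case already proved, $\mathrm{pd}_{A^{\op}}W^\circledast\le\ell(\lambda)$, so $\Ext^n_{A^{\op}}(W^\circledast,-)=0$ for all $n>\ell(\lambda)$. The Ext-duality \cref{hands}, in the form $\Ext^n_A(V,W)\cong\Ext^n_{A^{\op}}(W^\circledast,V^\circledast)$ --- valid for every graded left $A$-module $V$, since $W\cong(W^\circledast)^\circledast$ --- then gives $\Ext^n_A(V,W)=0$ for all $V$ and all $n>\ell(\lambda)$, i.e.\ the injective dimension of $W$ is at most $\ell(\lambda)$.

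The delicate point is this duality transfer: $?^\circledast$ does not commute with arbitrary direct sums, so one must first check that the direct sums occurring inside a $\nabla$-layer are of the harmless, locally-finite-dimensional kind before moving to $A^{\op}$; and it is cleanest to conclude via the isomorphism \cref{hands} rather than by literally dualizing a finite injective resolution of $W$, although the latter also works since $?^\circledast$ is exact and sends projective right modules to injective left modules. Everything else --- dimension shifting, the behaviour of projective dimension under direct sums and under finite filtrations --- is routine homological algebra, valid in the Grothendieck category $A\gmod$ because coproducts there are exact and coproducts of projectives are projective.
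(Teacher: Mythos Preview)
Your proof is correct and follows essentially the same inductive strategy as the paper's: reduce to individual standard modules, induct on $\ell(\lambda)$ with the base case coming from \cref{gym}, and dimension-shift along a short exact sequence with a projective middle term whose kernel has a $\Delta$-flag with layers of strictly smaller $\ell$-value.

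The one noteworthy difference is the choice of projective in the inductive step. You use the module $Q(b)$ from \cref{ohno}(1), so that $\Delta(b)$ appears as a summand of the top section $Q_0(b)/Q_1(b)$; the paper instead uses the indecomposable projective $P(b)$ together with \cref{bggproj} (and implicitly \cref{eat}) to get a short exact sequence $0\to K\to P(b)\to\Delta(b)\to 0$ with $K$ having a $\Delta$-flag of types $\mu<\dot b$. Your choice is slightly more elementary, since \cref{ohno} is proved earlier and does not rely on the homological machinery (\cref{extvanishingt}, \cref{citizens}) underlying BGG reciprocity; the paper's choice is marginally cleaner because $\Delta(b)$ is obtained directly as a quotient rather than as a summand of a quotient. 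Either works.

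For the $\nabla$-layer statement, the paper simply says ``the argument for $\nabla$-layers is similar''. You spell out the duality transfer explicitly via \cref{hands}, which is a good idea: it sidesteps the potential worry that the injective dimension of an infinite direct sum need not equal the supremum of the injective dimensions of the summands (direct sums of injectives are not injective in general Grothendieck categories). Your route through $A^{\op}$ avoids this entirely by reducing to the projective-dimension statement already established.
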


\begin{proof}
We just explain for $\Delta$-layers; the argument for $\nabla$-layers is similar.
By \cite[Ex.~4.1.3(1)]{Weibel},
it suffices to show that the graded projective
dimension of $\Delta(b)$ is $\leq \ell(\lambda)$
for $b \in \B_\lambda$.
We prove this by induction on $\ell(\lambda)$.
If $\ell(\lambda) = 0$
then $\Delta(b)$ is projective by \cref{gym},
giving the induction base.
Now suppose that $\ell(\lambda) > 0$.
By \cref{bggproj}, 
there is a short exact sequence 
$0 \rightarrow K \rightarrow P(b) \rightarrow \Delta(b)\rightarrow 0$
such that $K$ has a $\Delta$-flag
with sections that are $\Delta$-layers of types
$\mu$ with $\ell(\mu) < \ell(\lambda)$.
By \cite[Ex.~4.1.2(1)]{Weibel} and the induction hypothesis, it follows that the graded projective
dimension of $K$ is $<\ell(\lambda)$.
Another application of \cite[Ex.~4.1.2(1)]{Weibel}
shows that the
graded projective dimension
of $\Delta(b)$ is at most one more than that of $K$.
Hence, the graded projective dimension of $\Delta(b)$
is $\leq \ell(\lambda)$.
\end{proof}

\begin{lemma}\label{logs}
Suppose that we are given $\lambda \in \Lambda$ such that
$A_\lambda$ has finite
left (resp., right) graded global dimension $d(\lambda)$. Then any $\bar\Delta$-layer (resp., $\bar\nabla$-layer)
of type $\lambda$
has finite graded projective (resp., injective) dimension
that is $\leq \ell(\lambda)+d(\lambda)$.
\end{lemma}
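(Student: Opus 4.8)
The plan is to deduce the $\bar\Delta$-layer statement from the $\Delta$-layer statement already established in \cref{step}, by taking a finite projective resolution of the $A_\lambda$-module used to build the layer and transporting it through the exact functor $j^\lambda_!$. So fix a $\bar\Delta$-layer $V = j^\lambda_! \bar V$ of type $\lambda$, where $\bar V \in \ob A_\lambda\gmod$ is locally finite-dimensional and bounded below.

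First I would build a projective resolution of $\bar V$ that stays inside the category of locally finite-dimensional, bounded below $A_\lambda$-modules. Since $A_\lambda$ is unital, locally finite-dimensional and bounded below, \cref{tech1}(1) gives a projective cover $\bar P_0 \twoheadrightarrow \bar V$ with $\bar P_0$ locally finite-dimensional and bounded below, and its kernel inherits both properties as a submodule of $\bar P_0$. Iterating this, and using that $A_\lambda$ has left graded global dimension $d(\lambda)$ so that the $d(\lambda)$-th syzygy of $\bar V$ is already projective, I obtain an exact sequence
$$0 \rightarrow \bar P_{d(\lambda)} \rightarrow \cdots \rightarrow \bar P_1 \rightarrow \bar P_0 \rightarrow \bar V \rightarrow 0$$
in $A_\lambda\gmod$ in which each $\bar P_i$ is projective, locally finite-dimensional and bounded below (when $d(\lambda) = 0$ this just says $\bar V$ is projective). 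Applying $j^\lambda_!$, which is exact and preserves ``locally finite-dimensional and bounded below'' by \cref{xmas}, and then composing with the exact inclusion $i_{\geq\lambda}$ into $A\gmod$, produces an exact sequence
$$0 \rightarrow j^\lambda_! \bar P_{d(\lambda)} \rightarrow \cdots \rightarrow j^\lambda_! \bar P_0 \rightarrow V \rightarrow 0$$
of graded left $A$-modules whose terms $j^\lambda_! \bar P_i$ are, by \cref{defup}, $\Delta$-layers of type $\lambda$; hence each has graded projective dimension $\leq \ell(\lambda)$ by \cref{step}.

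To finish I would invoke the standard dimension-shifting argument: breaking the displayed long exact sequence into short exact sequences $0 \rightarrow C_{j+1} \rightarrow j^\lambda_! \bar P_j \rightarrow C_j \rightarrow 0$ with $C_0 = V$ and applying \cite[Ex.~4.1.2(1)]{Weibel} repeatedly shows that a module admitting a length-$n$ resolution by modules of graded projective dimension $\leq p$ has graded projective dimension $\leq p + n$; here $p = \ell(\lambda)$ and $n = d(\lambda)$, giving the desired bound $\ell(\lambda) + d(\lambda)$ on the graded projective dimension of $V$. The statement for $\bar\nabla$-layers follows by the dual argument (equivalently, by applying $?^\circledast$ to the already-proven projective-dimension statement for $A^\op$, whose Cartan algebras are $(A_\lambda)^\op$). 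The one point that needs care --- rather than being a genuine obstacle --- is the very first step: one must know the projective resolution of $\bar V$ can be chosen among locally finite-dimensional, bounded below modules, so that its image under $j^\lambda_!$ really consists of $\Delta$-layers in the sense of \cref{defup}, and this is exactly what \cref{tech1}(1) provides.
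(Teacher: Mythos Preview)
Your proof is correct and follows essentially the same approach as the paper: both construct a projective resolution of $\bar V$ in $A_\lambda\gmod$ of length at most $d(\lambda)$ using \cref{tech1}(1) to stay within locally finite-dimensional bounded-below modules, apply the exact functor $j^\lambda_!$ to obtain a resolution of $V$ by $\Delta$-layers, and then invoke \cref{step} together with dimension shifting. The only cosmetic difference is that the paper phrases the resolution as having length $n \leq d(\lambda)$ rather than padding to exactly $d(\lambda)$.
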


\begin{proof}
We go through the argument for a
$\bar\Delta$-layer $V = j^\lambda_! \bar V$ 
of type $\lambda$.
Since $\bar V$ is locally finite-dimensional and bounded below, \cref{tech1}(1) implies that 
it has a projective cover $P_0$ in $A_\lambda\gmod$
which is again locally finite-dimensional and bounded below.
It follows that the kernel of 
$P_0 \twoheadrightarrow \bar V$
is locally finite-dimensional and bounded below.
Repeating the argument, we end up with a minimal
graded projective resolution of $\bar V$
of the form
$0 \rightarrow P_n \rightarrow \cdots \rightarrow P_0
\rightarrow \bar V \rightarrow 0$
for $n \leq d(\lambda)$
and each $P_r$ being a projective graded 
module that is locally finite-dimensional and bounded below. 
Then we apply $j^\lambda_!$ to obtain 
an exact sequence
$0 \rightarrow j^\lambda_! P_n \rightarrow \cdots \rightarrow j^\lambda_! P_0 \rightarrow V \rightarrow 0$
with $n \leq d(\lambda)$ and each $j^\lambda_! P_r$
being $\Delta$-layer of type $\lambda$.
We deduce that $V$ is of finite graded projective dimension $\leq \ell(\lambda)+d(\lambda)$ 
using \cref{step}.
\end{proof}

\begin{lemma}\label{lugs}
If $A$ is locally left (resp., right) graded Noetherian
then all $A_\lambda\:(\lambda\in\Lambda)$
are left (resp., right) graded Noetherian.
\end{lemma}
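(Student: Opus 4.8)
The plan is to prove the statement by transitivity in two steps: first that the quotient $A_{\geq\lambda}$ of $A$ is locally left graded Noetherian, and then that the unital idempotent truncation $A_\lambda = \bar e_\lambda A_{\geq\lambda}\bar e_\lambda$ of $A_{\geq\lambda}$ is left graded Noetherian. Throughout I work with left modules; the right-module statement follows by the identical argument, or by applying the left case to $A^{\op}$, which again carries a graded triangular basis by the symmetry noted at the start of the Duality section. I shall repeatedly use that the DCC on graded submodules passes to graded submodules, graded quotients, finite graded direct sums (these being closed under extensions) and graded direct summands; hence to check that a locally unital graded algebra $B$ is locally left graded Noetherian it suffices to verify the DCC on graded submodules for each projective generator $B\bar 1_j\ (j \in \I)$, and if $B$ is unital it suffices to do this for $B$ itself.

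\emph{Step 1.} We have $A_{\geq\lambda} = A/I$, where $I$ is the graded two-sided ideal generated by the $e_\mu\ (\mu\ngeq\lambda)$. For $j \in \I$ the graded left $A_{\geq\lambda}$-module $A_{\geq\lambda}\bar 1_j$ equals $A1_j/I1_j$, a graded quotient of the finitely generated projective graded left $A$-module $A1_j$, and its graded submodules are exactly the graded submodules of $A1_j$ containing $I1_j$. Since $A$ is locally left graded Noetherian, $A1_j$ has the DCC on graded submodules, hence so does $A_{\geq\lambda}\bar 1_j$. By the reduction above, $A_{\geq\lambda}$ is locally left graded Noetherian. (It is still only locally unital in general, but it is locally finite-dimensional and bounded below, as recorded in Section~\ref{sfirst}.)

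\emph{Step 2.} Put $R := A_{\geq\lambda}$, $e := \bar e_\lambda = \sum_{s\in\S_\lambda}\bar 1_s$, and $A_\lambda = eRe$, a unital graded algebra with identity $e$. Since $\S_\lambda$ is finite, $Re = R\bar e_\lambda = \bigoplus_{s\in\S_\lambda} R\bar 1_s$ is a finite direct sum of projective generators, hence a finitely generated projective graded left $R$-module, and therefore has the DCC on graded submodules by Step~1. Now $Re$ is a left ideal of $R$, and for any graded left $A_\lambda$-submodule $N$ of $A_\lambda = eRe$ the subspace $RN$ is a graded left $R$-submodule of $Re$. The crucial point is that $e(RN) = N$: every $n \in eRe$ satisfies $en = n = ne$, so for $r \in R$ we get $ern = er(en) = (ere)n \in A_\lambda N \subseteq N$, which gives $eRN \subseteq N$, while $N = eN \subseteq eRN$ is clear. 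Consequently $N \mapsto RN$ is an order-preserving injection from the lattice of graded $A_\lambda$-submodules of $A_\lambda$ into the lattice of graded $R$-submodules of $Re$, with order-preserving left inverse $M \mapsto eM$. A strictly descending chain of graded submodules of $A_\lambda$ would therefore map to a non-stabilizing descending chain of graded submodules of $Re$, which is impossible. Hence $A_\lambda$ has the DCC on graded submodules of itself, so by the reduction above it is left graded Noetherian.

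I expect the only mildly delicate point to be the identity $e(RN) = N$, i.e.\ checking that $N \mapsto RN$ really is an order-embedding of submodule lattices; this is the graded, locally unital avatar of the classical fact that an idempotent truncation $eRe$ of a left Noetherian ring $R$ is again left Noetherian. Everything else is routine bookkeeping with the locally unital grading, and there is no genuine interaction with the triangular basis beyond what is already used to make sense of $A_{\geq\lambda}$ and $A_\lambda$.
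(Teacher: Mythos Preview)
Your proof is correct. It differs genuinely from the paper's argument, so a brief comparison is warranted.

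The paper works directly with chains of submodules of $P_\lambda(b)$: it applies the exact functor $j^\lambda_!$ (exactness coming from \cref{xmas}, which uses the triangular basis) to turn a descending chain in $P_\lambda(b)$ into one in the finitely generated module $\Delta(b) = j^\lambda_! P_\lambda(b)$, invokes the Noetherian hypothesis on $A$ to see that this chain stabilizes, and then applies $j^\lambda$ together with $j^\lambda \circ j^\lambda_! \cong \id$ to pull stabilization back. You instead factor through two general ring-theoretic facts: DCC descends to graded quotients (giving $A_{\geq\lambda}$ locally Noetherian), and DCC passes to an idempotent truncation $eRe$ via the order-embedding $N \mapsto RN$ with left inverse $M \mapsto eM$. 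Your argument is more elementary and, as you note, makes no real use of the triangular structure beyond the definitions of $A_{\geq\lambda}$ and $A_\lambda$; the paper's argument is shorter and leverages machinery already in place, but relies on the exactness of $j^\lambda_!$, which is a nontrivial consequence of the triangular basis. Both are perfectly valid; yours would survive unchanged in settings where $j^\lambda_!$ fails to be exact.
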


\begin{proof}
Assume that $A$ is locally left graded Noetherian.
Take $b \in \B_\lambda$ and a descending chain
$P_\lambda(b) = P_0 \supseteq P_1 \supseteq \cdots$
of graded submodules.
Apply the exact functor $j^\lambda_!$
to get a descending chain of graded submodules
of the standard module $\Delta(b)$. Since $A$ is locally left graded Noetherian and this module is finitely generated, it follows that the chain stabilizes. Then apply $j$ using
$j \circ j^\lambda_! \cong \id_{A_\lambda\gmod}$ 
to deduce that the original chain stabilizes too.
This proves that $A_\lambda$ is left graded Noetherian.
A similar argument starting with an ascending chain of graded submodules of $I_\lambda(b)$ and using
$j \circ j^\lambda_* \cong \id_{A_\lambda\gmod}$
proves that $A_\lambda$ is right graded Noetherian
when $A$ has this property.
\end{proof}

\begin{lemma}\label{laurent}
Assume that $A$ is locally left (resp., right) graded Noetherian.
Then $(P(b):\Delta(a))_q$ and $[\bar\nabla(a):L(b)]_q$ 
(resp., 
$(I(b):\nabla(a))_q$ and $[\bar\Delta(a):L(b)]_q$)
are Laurent polynomials in $\N[q,q^{-1}]$
for all $a,b \in \B$. 
\end{lemma}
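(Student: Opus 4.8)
The plan is to reduce everything, via BGG reciprocity and duality, to the single assertion that $(P(b):\Delta(a))_q$ is a Laurent polynomial when $A$ is locally left graded Noetherian. Indeed, \cref{bggproj} gives $(P(b):\Delta(a))_q=\overline{[\bar\nabla(a):L(b)]_q}$ and \cref{bgginj} gives $(I(b):\nabla(a))_q=\overline{[\bar\Delta(a):L(b)]_q}$, and conjugation is an involution of $\N[q,q^{-1}]$, so it suffices to handle $(P(b):\Delta(a))_q$ in the left Noetherian case and $[\bar\Delta(a):L(b)]_q$ in the right Noetherian case. The right case will follow from the left by passing to $A^{\op}$: by definition $A$ is locally right graded Noetherian if and only if $A^{\op}$ is locally left graded Noetherian, and the duality $?^\circledast$ carries $\bar\nabla^\op(a)$ to $\bar\Delta(a)$ and $L^\op(b)$ to $L(b)$ while transforming graded multiplicities by conjugation, so $[\bar\Delta(a):L(b)]_q=\overline{[\bar\nabla^\op(a):L^\op(b)]_q}\in\N[q,q^{-1}]$ once the left case is known for $A^{\op}$.

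So the substance is: assuming $A$ is locally left graded Noetherian, show $(P(b):\Delta(a))_q\in\N[q,q^{-1}]$ for all $a,b\in\B$. First I would observe that $P(b)$, being a finitely generated projective graded left $A$-module by \cref{tech2}(2), satisfies the DCC on graded submodules by hypothesis, and that it carries a $\Delta$-flag $0=V_0\subset V_1\subset\cdots\subset V_n=P(b)$ by \cref{bggproj}, the sections being $\Delta$-layers of distinct types $\lambda_1,\dots,\lambda_n$. Since $(P(b):\Delta(a))_q=\sum_{r=1}^n (V_r/V_{r-1}:\Delta(a))_q$ by the discussion following \cref{seasonalmuffin}, and this sum is concentrated in the unique section of type $\dot a$, it is enough to prove that each section has all of its $\Delta$-multiplicities in $\N[q,q^{-1}]$. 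Each section $V:=V_r/V_{r-1}$ is a subquotient of $P(b)$ and hence inherits the DCC on graded submodules; writing $\lambda:=\lambda_r$, all weights of $V$ are $\geq\lambda$, so $V$ may be regarded as an object of $A_{\geq\lambda}\gmod$, and there $V\cong j^\lambda_!\bar V$ where $\bar V:=j^\lambda V\in\ob A_\lambda\gmod$ is projective, locally finite-dimensional and bounded below.

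The crux is then to see that $\bar V$ is in fact a \emph{finitely generated} projective $A_\lambda$-module. Since $\bar V$ is its own projective cover, \cref{tech1}(1) gives $\bar V\cong\bigoplus_{a\in\B_\lambda} P_\lambda(a)^{\oplus g_a}$ with $g_a:=\overline{\dim_q\Hom_{A_\lambda}(\bar V,L_\lambda(a))}\in\N\lround q^{-1}\rround$, and the claim amounts to showing each $g_a$ is a Laurent polynomial. I would deduce this by transporting the DCC from $V$ to $\bar V$: given a descending chain $N_1\supseteq N_2\supseteq\cdots$ of graded $A_\lambda$-submodules of $\bar V=\bar e_\lambda V$, the submodules $A_{\geq\lambda}N_i$ of $V$ form a descending chain of graded submodules, which stabilizes because the $A_{\geq\lambda}$- and $A$-submodule lattices of $V$ coincide (the $A$-action on $V$ factors through $A_{\geq\lambda}$) and $V$ has the DCC; applying $\bar e_\lambda(-)$ recovers the $N_i$, since $\bar e_\lambda(A_{\geq\lambda}N_i)=\bar e_\lambda A_{\geq\lambda}\bar e_\lambda N_i=A_\lambda N_i=N_i$. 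So $\bar V$ has the DCC on graded submodules. But a direct sum of infinitely many nonzero graded modules admits an infinite strictly descending chain of graded submodules — delete the summands one at a time — so the DCC forces the decomposition above to be finite; as each $P_\lambda(a)\neq 0$, every $g_a$ lies in $\N[q,q^{-1}]$ and $g_a=0$ for all but finitely many $a$. Finally, applying the exact functor $j^\lambda_!$, which sends $P_\lambda(a)$ to $\Delta(a)$ (as in \cref{jam}), yields $V\cong\bigoplus_{a\in\B_\lambda}\Delta(a)^{\oplus g_a}$, so $(V:\Delta(a))_q=g_a\in\N[q,q^{-1}]$, completing the reduction.

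I expect the only real obstacle to be the bookkeeping around the idempotent truncation: checking precisely that the DCC descends from the $A$-module $V$ to the $A_\lambda$-module $\bar V$ (this is exactly where the Noetherian hypothesis enters), together with keeping the left/right and opposite-algebra bookkeeping straight in the initial reduction. Everything else — the $\Delta$-flag of $P(b)$, additivity of $\Delta$-multiplicities, and the structure of projective layers — is already available from \cref{bggproj,extvanishingt,tech1,tech2}.
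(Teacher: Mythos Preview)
Your proposal is correct and follows essentially the same idea as the paper: use the $\Delta$-flag of $P(b)$ from \cref{bggproj} and argue that Noetherianity forbids any section from being an infinite direct sum of standard modules. The paper's argument is a bit more direct than yours, though: rather than transferring the DCC down to the $A_\lambda$-module $\bar V$ via idempotent truncation, it simply observes that if some section $P_{r-1}/P_r$ were an infinite direct sum of degree-shifted standard modules then $P_{r-1}$ (a graded submodule of the Noetherian module $P(b)$) could not be finitely generated, giving the contradiction immediately---your detour through $\bar V$ is valid but unnecessary.
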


\begin{proof}
We just explain for the case of left Noetherian.
If $(P(b):\Delta(a))_q = \overline{[\bar\nabla(a):L(b)]_q}$ is {\em not}
in $\N[q,q^{-1}]$ for some $a,b \in \B$
then \cref{bggproj}
implies that there is a $\Delta$-flag
$P(b) = P_0 \supseteq P_1 \supseteq \cdots \supseteq P_n = 0$ such that for some $r$ the section
$P_{r-1} / P_r$ is an infinite direct sum of degree-shifted
standard modules.
This implies that $P_{r-1}$ is not finitely generated, hence, $P(b)$ is not graded Noetherian,
contradicting the assumption that $A$ is locally left graded Noetherian.
\end{proof}

\begin{corollary}\label{laurentc}
If $A$ is {unital} and 
locally left (resp., right) graded Noetherian,
then all of the proper standard modules $\bar\Delta(b)$
(resp., the proper costandard modules $\bar\nabla(b)$)
are of finite length.
\end{corollary}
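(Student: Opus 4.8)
The plan is to establish the statement for the proper standard modules $\bar\Delta(b)$ assuming $A$ is unital and locally left graded Noetherian, and then obtain the statement for the proper costandard modules $\bar\nabla(b)$ by applying this to the opposite algebra. This reduction is legitimate: $A^{\op}$ carries a graded triangular basis (swap the sets $\X(s)$ and $\Y(s)$) and is unital and locally left graded Noetherian exactly when $A$ is unital and locally right graded Noetherian; the functor $?^\circledast$ restricts to an exact contravariant equivalence on locally finite-dimensional graded modules (see \cref{circledast1}) which carries $\bar\nabla(b)$ to $\bar\Delta^{\op}(b)$, as recorded in the discussion of duality above; and $\bar\nabla(b)$ is locally finite-dimensional by \cref{xmas}, so finite length of $\bar\Delta^{\op}(b)$ forces finite length of $\bar\nabla(b)$.

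For the $\bar\Delta$-case, the first point is that $\bar\Delta(b)$ is finitely generated. The irreducible module $L(b)$ is finitely generated, so by \cref{tech2}(2) its projective cover $P(b)$ is finitely generated; and $\bar\Delta(b)$ is a quotient of $P(b)$, since there is a surjection $P(b)\twoheadrightarrow\Delta(b)$ (recorded just after \cref{irrclass}) and $\Delta(b)=j^\lambda_!P_\lambda(b)$ surjects onto $\bar\Delta(b)=j^\lambda_!L_\lambda(b)$ by exactness of $j^\lambda_!$ (\cref{xmas}) applied to the projective cover $P_\lambda(b)\twoheadrightarrow L_\lambda(b)$.

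The second point is that $\bar\Delta(b)$ satisfies the descending chain condition on graded submodules: $P(b)$ is a finitely generated projective graded left $A$-module, hence has this property by the definition of locally left graded Noetherian, and the descending chain condition passes to quotient modules. The third ingredient is \cref{tough}: since $A$ is unital, locally finite-dimensional and bounded below and $\bar\Delta(b)$ is finitely generated, $\bar\Delta(b)$ admits an exhaustive descending graded filtration $\bar\Delta(b)=V_0\supseteq V_1\supseteq\cdots$ with $\bigcap_{r\geq 0}V_r=0$ whose sections $V_{r-1}/V_r$ are irreducible or zero. By the descending chain condition this filtration stabilises, say $V_N=V_{N+1}=\cdots$; since the chain is descending its stable value equals $\bigcap_{r\geq 0}V_r=0$, so $\bar\Delta(b)=V_0\supseteq\cdots\supseteq V_N=0$ is a \emph{finite} filtration with irreducible-or-zero sections, and hence $\bar\Delta(b)$ has finite length.

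The only step calling for real thought is this last synthesis, namely realising that the descending chain condition -- this paper's notion of graded Noetherianity -- forces the (a priori infinite) exhaustive descending filtration supplied by \cref{tough} to collapse to a finite one; the remaining ingredients (finite generation of $\bar\Delta(b)$, inheritance of the descending chain condition by quotients, and the opposite-algebra reduction for $\bar\nabla(b)$) are routine. The mechanism is the same one underlying the proof of \cref{laurent}, where an infinite multiplicity is likewise traced back to a failure of the descending chain condition.
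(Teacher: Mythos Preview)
Your argument is correct. Taking the paper's definition of ``locally left graded Noetherian'' (DCC on finitely generated graded projectives) at face value, you pass DCC from $P(b)$ to its quotient $\bar\Delta(b)$ and observe that the exhaustive descending filtration of \cref{tough} must then stabilise, hence terminate at $0$; the opposite-algebra reduction for $\bar\nabla(b)$ is also fine.

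The paper gives no separate proof, recording the result as a corollary of \cref{laurent}. Read literally, \cref{laurent} in the left Noetherian case asserts that each $[\bar\nabla(a):L(b)]_q$ is a Laurent polynomial; combined with unitality (so that $\B$ is finite) this yields finite length of $\bar\nabla(a)$, not of $\bar\Delta(a)$. Thus either the parenthetical left/right matching in the corollary is a misprint relative to \cref{laurent}, or the intended deduction is not through the conclusion of \cref{laurent} but by a direct chain-condition argument in its spirit --- which is exactly what you wrote. Either way, your proof is sound and establishes the corollary precisely as printed; the paper's route via multiplicities is shorter once \cref{laurent} is available but relies on getting the left/right bookkeeping to line up.
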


\begin{theorem}\label{birdie}
Assume $A$ is unital,
both left and right graded Noetherian,
and that each $A_\lambda\:(\lambda \in \Lambda)$ has finite graded global dimension.
Then $A$ has finite
graded global dimension.
\end{theorem}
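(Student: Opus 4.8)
The plan is to show that every simple graded $A$-module has finite graded projective \emph{and} injective dimension, and then feed this into a syzygy argument. The key simplification at the outset is that, since $A$ is unital, only finitely many of the idempotents $1_i$ are nonzero, hence only finitely many $\lambda\in\Lambda$ have $A_\lambda\neq 0$ (equivalently $\B_\lambda\neq\varnothing$), and $\B=\coprod_\lambda\B_\lambda$ is a finite set. For each such $\lambda$ write $d(\lambda)$ for the graded global dimension of $A_\lambda$, finite by hypothesis; by \cref{lugs} each $A_\lambda$ is graded Noetherian on both sides, so $d(\lambda)$ is unambiguous and $(A_\lambda)^\op$ has the same graded global dimension. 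Put $D_1:=\max_\lambda\bigl(\ell(\lambda)+d(\lambda)\bigr)<\infty$, in the notation of \cref{step,logs}.

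First I would prove that every $L(b)$ has finite graded projective dimension, by downward induction on $\dot b$ in the finite poset of weights $\lambda$ with $\B_\lambda\neq\varnothing$. Fixing $b$ and setting $\lambda:=\dot b$, consider the short exact sequence $0\to\rad\bar\Delta(b)\to\bar\Delta(b)\to L(b)\to 0$, using $L(b)=\head\bar\Delta(b)$ from \cref{irrclass}. By \cref{laurentc} the module $\bar\Delta(b)$ has finite length, so $\rad\bar\Delta(b)$ does too, and by \cref{eat}(1) each of its composition factors is a degree shift of some $L(c)$ with $\dot c>\lambda$; these have finite graded projective dimension by the inductive hypothesis, hence so does $\rad\bar\Delta(b)$ (induct on its length using \cite[Ex.~4.1.2(1)]{Weibel}). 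Since $\bar\Delta(b)=j^\lambda_!L_\lambda(b)$ is a $\bar\Delta$-layer of type $\lambda$, \cref{logs} bounds its graded projective dimension by $\ell(\lambda)+d(\lambda)\leq D_1$. The displayed sequence together with \cite[Ex.~4.1.2(1)]{Weibel} then forces $L(b)$ to have finite graded projective dimension. This step is the heart of the proof, and the invocation of \cref{logs} is the only place where the hypothesis that each $A_\lambda$ has finite graded global dimension is consumed; the induction is legitimate precisely because unitality makes the relevant part of $\Lambda$ finite.

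Next, to get injective dimensions, I would apply the previous paragraph to $A^\op$: it again carries a graded triangular basis (swap the $\X(s)$ and $\Y(s)$), is unital, is graded Noetherian on both sides, and has $(A^\op)_\lambda=(A_\lambda)^\op$ of finite graded global dimension, so every simple graded right $A$-module has finite graded projective dimension over $A^\op$. Applying the exact contravariant equivalence $?^\circledast$, which carries a projective resolution of $L^\op(b)=L(b)^\circledast$ over $A^\op$ to an injective resolution of $L(b)$ over $A$ of the same length, we conclude that each $L(b)$ has finite graded injective dimension as well. Let $D<\infty$ be a common upper bound for the graded projective and injective dimensions of all the $L(b)$, $b\in\B$ (finitely many).

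Finally I would bound the graded global dimension by $D$. Let $M$ be any finitely generated graded left $A$-module; it is locally finite-dimensional and bounded below, and as in the proof of \cref{tech1} it admits a projective resolution $\cdots\to P_1\to P_0\to M\to 0$ with every $P_i$ locally finite-dimensional, bounded below and projective (take $P_i:=A\otimes_\kk\Omega^iM$ with $\Omega^0M:=M$, $\Omega^{i+1}M:=\ker(P_i\twoheadrightarrow\Omega^iM)$), so that the $D$-th syzygy $\Omega^DM$ is again locally finite-dimensional and bounded below. Dimension shifting gives $\Ext^1_A(\Omega^DM,L(b))\cong\Ext^{D+1}_A(M,L(b))$, which vanishes for all $b$ since $D+1$ exceeds the graded injective dimension of $L(b)$; hence $\Omega^DM$ is projective by \cref{noname}, and the graded projective dimension of $M$ is at most $D$. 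Since the left graded global dimension of $A$ is the supremum over graded left ideals $I$ of the graded projective dimensions of the cyclic modules $A/I$ (the graded form of \cite[Ch.~4]{Weibel}), it is $\leq D$; the same argument applied to $A^\op$ bounds the right graded global dimension by $D$; so $A$ has finite graded global dimension. The only parts requiring care beyond routine homological algebra are tracking the duality in the injective-dimension step and phrasing the syzygy argument for finitely generated $M$ so that it is covered by \cref{noname}.
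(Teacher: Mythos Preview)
Your proof is correct and follows essentially the same strategy as the paper: establish that every irreducible has finite homological dimension by downward induction on the weight poset, using \cref{logs} to control the proper (co)standard module and \cref{laurentc} to ensure the remaining piece has finite length with composition factors of strictly higher weight. The paper runs this induction directly on the injective side via $0\to L(b)\to\bar\nabla(b)\to Q\to 0$, whereas you run the dual argument on $\bar\Delta(b)$ and then pass to $A^{\op}$; the content is the same. Your final reduction via the syzygy argument and \cref{noname} is a clean self-contained alternative to the paper's appeal to \cite[Th.~4.1.2(3)]{Weibel} and an external lemma to reduce to irreducible $W$. One minor redundancy: since only the injective dimensions of the $L(b)$ are used in your last paragraph, you could have argued directly with $\bar\nabla(b)$ and skipped the detour through $A^{\op}$.
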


\begin{proof}
It suffices to 
show that $A$ has finite left graded global dimension.
By \cite[Th.~4.1.2(3)]{Weibel},
we need to show that there is $N \in \N$ such that
$\Ext^n_A(V, W) = 0$
for $n > N$, all finitely generated graded left $A$-modules $V$ and arbitrary graded left $A$-modules $W$.
In fact, we may also assume that $W$ is finitely generated.
To prove this,
we use that $A$ is graded left Noetherian to construct a graded projective resolution
$\cdots \rightarrow
P_{n+1}\stackrel{\partial_n}{\rightarrow} P_n 
\stackrel{\partial_{n-1}}{\rightarrow} P_{n-1}\rightarrow \cdots \rightarrow P_0 \rightarrow V \rightarrow 0$ all of whose terms 
are finitely generated.
Any element of $\Ext^n_A(V,W)$
is 
represented by a homomorphism $f:P_n \rightarrow W$
such that $f \circ \partial_n = 0$.
The image of $f$ is a finitely generated submodule $W'$ 
of $W$.
If we know $\Ext^n_A(V,W') = 0$,
then $f = g \circ \partial_{n-1}$ for some
$g:P_{n-1} \rightarrow W'$, and we deduce that the image of $f$ in 
$\Ext^n_A(V,W)$ is zero, hence, $\Ext^n_A(V,W) = 0$.
So now we have reduced the problem
to showing that there exists $N \in \N$ such that
$\Ext^n_A(V,W) = 0$ for $n > N$
and all finitely generated
graded left $A$-modules $V$ and $W$. By \cite[Lem.~1.1]{abcdefg},
the proof reduces 
further to checking this statement just for all
{\em irreducible} $W$.

Thus, the proof has been reduced 
to showing that all of the
irreducible modules $L(b)\:(b \in \B)$ have finite graded injective dimension.
Replacing $\Lambda$ by $\{\dot b\:|\:b \in \B\}$,
we may assume that the poset $\Lambda$ is finite, and
proceed by downward induction on this poset.
Take any $b \in \B$
and consider the short exact sequence 
$0 \rightarrow L(b) \rightarrow \bar\nabla(b)
\rightarrow Q \rightarrow 0$.
By \cref{laurentc}, 
$Q$ is of finite length. Moreover, all of its composition factors
are degree shifts of $L(c)$ for $c \in \B$ 
with $\dot c > \dot b$.
By induction, 
they are all of finite injective dimension,
hence, $Q$ is of finite injective dimension.
Also $A_\lambda$ is graded right Noetherian by \cref{lugs}, so $\bar\nabla(b)$ is of finite injective dimension
by \cref{logs}. It follows that $L(b)$ has finite 
graded injective dimension.
\end{proof}

\section{Refinement}

What happens if the algebras $A_\lambda$ have additional structure? 
The results in this section address this question in the situation that each $A_\lambda$ is itself a based affine quasi-hereditary algebra in the sense of \cref{nice}.
This means that
we are given partial orders
$\leq_\lambda$ on $\B_\lambda$
and ``local"
graded triangular bases for each $\lambda \in \Lambda$
making the unital graded algebras 
$A_\lambda$ into based affine quasi-hereditary algebras with respect to the posets $(\B_\lambda,\leq_\lambda)$.
Then we can define a refined partial order
$\leq$ on $\B$ by
\begin{equation}\label{neworder}
b \leq c
\qquad \Leftrightarrow 
\qquad 
\dot b < \dot c\text{ or }
(\lambda := \dot b = \dot c
\text{ and }b\leq_\lambda c).
\end{equation}
One might hope to be able to assemble the various local triangular bases into a new global triangular basis making $A$ into a based affine quasi-hereditary algebra with weight poset $(\B, \leq)$.
Unfortunately this seems to be 
difficult to do directly. However, it is still possible to prove the representation theoretical consequences of the existence of such a basis. 

So assume from now on that we are given
a graded triangular basis for $A$ as usual, and additional 
partial orders $\leq_\lambda$ on each of the finite sets $\B_\lambda\:(\lambda \in \Lambda)$.
Define $\leq$ on $\B$ as in \cref{neworder}.
For each $\lambda \in \Lambda$, we assume that 
$A_\lambda$ has some extra structure making it into a based affine quasi-hereditary algebra with respect to the poset $(\B_\lambda,\leq_\lambda)$. We will never refer explicitly to these bases, rather, we will work with them implicitly in terms of the consequences of the existence of these bases for the categories $A_\lambda\gmod$.
We denote the various families of graded modules
for $A_\lambda$ arising from the extra structure
by $P_\lambda(b), \Delta_\lambda(b),
\bar\Delta_\lambda(b), L_\lambda(b),
\bar\nabla_\lambda(b), \nabla_\lambda(b)$
and $I_\lambda(b)$, all for $b \in \B_\lambda$.
There are corresponding 
notions of $\Delta$-layers, 
$\bar\Delta$-layers, $\Delta$-flags, $\bar\Delta$-flags, etc. for $A_\lambda$-modules, which we will call
$\Delta_\lambda$-layers, $\bar\Delta_\lambda$-layers, $\Delta_\lambda$-flags,
$\bar\Delta_\lambda$-flags, etc. for extra clarity.
As well the usual $A$-modules
$\Delta(\lambda), \bar\Delta(\lambda),
\bar\nabla(\lambda)$ and $\nabla(\lambda)$
defined as in \cref{standards}, we also have
\begin{align}
\pureDelta(b) &:= j^\lambda_! \Delta_\lambda(b),&
\bar\pureDelta(b) &:= j^\lambda_! \bar\Delta_\lambda(b),&
\bar\purenabla(b) &:= j^\lambda_* \bar\nabla_\lambda(b),&
\purenabla(b) &:= j^\lambda_* \nabla_\lambda(b)
\end{align}
for $b \in \B_\lambda$.
We call these the {\em pure standard}, {\em pure proper standard}, {\em pure costandard} and {\em pure proper costandard modules}, respectively.

\begin{lemma}\label{upupandaway}
For $b, c \in \B$, $f \in \N\lround q \rround$ and $g \in \N\lround q^{-1}\rround$,
we have that
$$
\dim_q \Hom_A\big(\pureDelta(b)^{\oplus f}, \bar\purenabla(c)^{\oplus g}\big)=
\dim_q \Hom_A\big(\bar\pureDelta(b)^{\oplus f}, \purenabla(c)^{\oplus g}\big)= \delta_{b,c}\,\overline{f}\,g \in \N \lround q^{-1} \rround.
$$
\end{lemma}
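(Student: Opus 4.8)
The plan is to reduce this to the ``pure'' analogue of \cref{up}, namely the assertion that
\[
\dim_q \Hom_A(\pureDelta(b), \bar\purenabla(c)) = \dim_q \Hom_A(\bar\pureDelta(b), \purenabla(c)) = \delta_{b,c}
\]
for all $b,c \in \B$, and then to run the formal infinite-direct-sum manipulation from the proof of \cref{upup} essentially verbatim.

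For the pure analogue of \cref{up}, write $\lambda := \dot b$ and $\mu := \dot c$. Since $\pureDelta(b) = i_{\geq\lambda} j^\lambda_! \Delta_\lambda(b)$, composing the adjunctions $i^*_{\geq\lambda} \dashv i_{\geq\lambda} \dashv i^!_{\geq\lambda}$ and $j^\lambda_! \dashv j^\lambda$ yields a natural isomorphism $\Hom_A(\pureDelta(b), W) \cong \Hom_{A_\lambda}(\Delta_\lambda(b), j^\lambda i^!_{\geq\lambda} W)$, and likewise $\Hom_A(\bar\pureDelta(b), W) \cong \Hom_{A_\lambda}(\bar\Delta_\lambda(b), j^\lambda i^!_{\geq\lambda} W)$. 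I would then compute $j^\lambda i^!_{\geq\lambda} \bar\purenabla(c)$ by weight bookkeeping. Since $\bar\purenabla(c) \hookrightarrow \nabla(c)$ has simple socle $L(c)$ by \cref{irrclass}, every nonzero submodule of $\bar\purenabla(c)$ has $\mu$ among its weights, so $i^!_{\geq\lambda}\bar\purenabla(c) = 0$ unless $\mu \geq \lambda$; and if $\mu \geq \lambda$, then $\bar\purenabla(c)$ is already an $A_{\geq\mu}$-module, hence an $A_{\geq\lambda}$-module, so $i^!_{\geq\lambda}\bar\purenabla(c) = \bar\purenabla(c)$ and $j^\lambda i^!_{\geq\lambda}\bar\purenabla(c) = e_\lambda\bar\purenabla(c)$, which vanishes unless $\lambda$ is a weight of $\bar\purenabla(c)$, forcing $\lambda = \mu$. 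When $\lambda = \mu$ this space equals $j^\mu j^\mu_* \bar\nabla_\mu(c) = \bar\nabla_\mu(c)$. Hence $\Hom_A(\pureDelta(b), \bar\purenabla(c))$ is zero unless $\dot b = \dot c = \lambda$, in which case it is $\Hom_{A_\lambda}(\Delta_\lambda(b), \bar\nabla_\lambda(c))$, whose graded dimension is $\delta_{b,c}$ by \cref{up} applied to the based affine quasi-hereditary algebra $A_\lambda$. The identical computation with $\Delta_\lambda$ replaced by $\bar\Delta_\lambda$ and $\bar\nabla_\mu$ by $\nabla_\mu$ handles $\Hom_A(\bar\pureDelta(b), \purenabla(c))$.

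Granting this, the lemma follows exactly as in the proof of \cref{upup}: the modules $\pureDelta(b)$ and $\bar\pureDelta(b)$ are finitely generated, being quotients of the finitely generated projective $P(b)$, so writing $f = \sum_m r_m q^{-m}$ and $g = \sum_n s_n q^n$ one reduces degreewise to the single-module Hom space and obtains $\dim_q \Hom_A(\pureDelta(b)^{\oplus f}, \bar\purenabla(c)^{\oplus g}) = \delta_{b,c}\,\overline f\, g$, and similarly for the barred version.

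The genuinely new input is the single-module computation; the reduction in the last paragraph is purely formal. The only point needing care is the case analysis of $\dot b$ versus $\dot c$ in the adjunction step -- one must keep straight which of $i^!_{\geq\lambda}$ and the $e_\lambda$-truncation annihilates $\bar\purenabla(c)$ -- but this presents no real obstacle, since all the ingredients (simplicity of the socles from \cref{irrclass}, the elementary behaviour of $i^!$ and idempotent truncation on weight spaces in the spirit of \cref{green}, and \cref{up} for each $A_\lambda$) are already available.
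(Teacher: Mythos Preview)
Your proposal is correct and follows essentially the same route as the paper: reduce via the $(j^\lambda_!, j^\lambda)$ and $(j^\lambda, j^\lambda_*)$ adjunctions to a Hom computation in $A_\lambda\gmod$, then invoke \cref{up}/\cref{upup} for the based affine quasi-hereditary algebra $A_\lambda$. The only cosmetic difference is ordering: the paper applies the adjunction directly to the full direct sums $\pureDelta(b)^{\oplus f}$ and $\bar\purenabla(c)^{\oplus g}$ (noting both live in $A_{\geq\lambda}\gmod$ once $\dot b=\dot c=\lambda$) and then cites \cref{upup} for $A_\lambda$ in one stroke, whereas you first establish the single-module pure analogue of \cref{up} and then rerun the direct-sum manipulation of \cref{upup}; this is a harmless reordering, not a different argument.
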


\begin{proof}
We just explain for the first space.
Since $\pureDelta(b)$ has lowest weight $\dot b$ 
and $\bar\purenabla(c)$ has lowest weight $\dot c$,
the space is zero unless $\lambda := \dot b = \dot c$.
Assuming this, we have that
\begin{align*}
\Hom_A(\pureDelta(b)^{\oplus f}, \bar\purenabla(c)^{\oplus g})
&=
\Hom_{A_{\geq \lambda}}(\pureDelta(b)^{\oplus f}, \bar\purenabla(c)^{\oplus g})
=\Hom_{A_{\geq \lambda}}(j^\lambda_! \Delta_\lambda(b)^{\oplus f}, j^\lambda_*\bar\nabla_\lambda(c)^{\oplus g})\\
&\cong
\Hom_{A_\lambda}(\Delta_\lambda(b)^{\oplus f},
j^\lambda j^\lambda_* \bar\nabla_\lambda(b)^{\oplus g})
=
\Hom_{A_\lambda}(\Delta_\lambda(b)^{\oplus f},
\bar\nabla_\lambda(b)^{\oplus g}),
\end{align*}
which is of graded dimension $\delta_{b,c} \overline{f} g$ by \cref{upup}.
\end{proof}

\begin{definition}\label{defupnew}
By a {\em $\pureDelta$-layer} (resp., a {\em $\bar\pureDelta$-layer}) of type $b \in \B_\lambda$,
we mean a graded $A$-module that is isomorphic to
$j^\lambda_! \bar V$
for  a
graded left $A_\lambda$-module $\bar V$
which is a $\Delta_\lambda$-layer (resp., a  $\bar\Delta_\lambda$-layer) of type $b$.
We say that $V \in \ob A\gmod$ has a 
{\em $\pureDelta$-flag} (resp., a {\em $\bar\pureDelta$-flag}) if for some $n \geq 0$
there is a graded
filtration $$
0 = V_0 \subset V_1 \subset\cdots \subset V_n = V
$$
and distinct $b_1,\dots,b_n \in \B$
such that $V_{r} / V_{r-1}$ is a $\pureDelta$-layer (resp., a $\bar\pureDelta$-layer) of type
$b_r$ for each $r= 1,\dots,n$.
We say that $V$ has an
{\em ascending $\pureDelta$-flag} 
(resp., an {\em ascending $\bar\pureDelta$-flag})
if the $A$-submodule $V_\Gamma$ defined in \cref{tools} 
has a $\pureDelta$-flag (resp., a $\bar\pureDelta$-flag)
for all finite 
lower sets $\Gamma \subseteq \Lambda$.
\end{definition}

\begin{definition}\label{defdownnew}
By a {\em $\purenabla$-layer} (resp., a {\em $\bar\purenabla$-layer}) of type $b \in \B_\lambda$,
we mean a graded $A$-module that is isomorphic to
$j^\lambda_* \bar V$
for  a
graded left $A_\lambda$-module $\bar V$
which is a $\nabla_\lambda$-layer (resp., a  $\bar\nabla_\lambda$-layer) of type $b$.
We say that $V \in \ob A\gmod$ has a 
{\em $\purenabla$-flag} (resp., a {\em $\bar\purenabla$-flag}) if for some $n \geq 0$
there is a graded
filtration $$
V = V_0 \supset V_1 \supset\cdots \supset V_n = 0
$$
and distinct $b_1,\dots,b_n \in \B$
such that $V_{r-1} / V_{r}$ is a $\purenabla$-layer (resp., a $\bar\purenabla$-layer) of type
$b_r$ for each $r= 1,\dots,n$.
We say that $V$ has an
{\em ascending $\purenabla$-flag} 
(resp., an {\em ascending $\bar\purenabla$-flag})
if the quotient module $V / V^\Gamma$ 
defined in \cref{tools} 
has a $\purenabla$-flag (resp., a $\bar\purenabla$-flag)
for all finite 
lower sets $\Gamma \subseteq \Lambda$.
\end{definition}

\begin{remark}\label{jammier}
A $\pureDelta$-layer of type $b$ means just the same thing as a direct sum
$\pureDelta(b)^{\oplus f}$
for $f \in \N\lround q^{-1}\rround$.
Similarly, a $\purenabla$-layer of type $b$ is a direct sum
$\purenabla(b)^{\oplus g}$
for $g \in \N\lround q\rround$.
\end{remark}

The full subcategory of $A\gmod$
consisting of modules with $\pureDelta$-flags
(resp., $\bar\pureDelta$-flags, $\purenabla$-flags, $\bar\purenabla$-flags) will be denoted
$A\gmodpuredelta$ (resp. $A\gmodpuredeltabar$,
$A\gmodpurenabla$, $A\gmodpurenablabar$).
Since \cref{defupnew,defdownnew} are dual to each other, 
from now on, 
we will explain results just in the case of $\pureDelta$- and $\bar\pureDelta$-flags,
leaving the dual statements for $\purenabla$- and $\bar\purenabla$-flags to the reader.

Noting that $\pureDelta$-layers are $\bar\pureDelta$-layers,
  $A\gmodpuredelta$ is a subcategory of
$A\gmodpuredeltabar$.
It is also the case 
that $A\gmoddelta$ is a subcategory of
$A\gmodpuredelta$ and $A\gmodpuredeltabar$ is a subcategory
of $A\gmoddeltabar$. These statements are not quite obvious; they
are justified by the corollary appearing after the next lemma.

\begin{lemma}\label{archersnew}
In either of the following situations, we
have that $\Ext^1_A(V,W) = 0$:
\begin{enumerate}
\item $V$ is a $\bar\pureDelta$-layer
of type $b$ and $W$ is a $\bar\pureDelta$-layer of type $c$
for $b \ngeq c$;
\item
$V$ is a $\pureDelta$-layer
of type $b$ and $W$ is a $\pureDelta$-layer of type $c$
for $b \ng c$.
\end{enumerate}
\end{lemma}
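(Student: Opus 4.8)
The plan is to imitate the strategy behind \cref{archers,archersc,archersr}, reducing $\Ext^1_A(V,W)$ in each case to an $\Ext^1$-group over a single one of the algebras $A_\lambda$. Write $V\cong j^\lambda_!\bar V$ and $W\cong j^\mu_!\bar W$ with $\lambda:=\dot b$, $\mu:=\dot c$, where by \cref{defupnew} the module $\bar V$ (resp.\ $\bar W$) is a $\bar\Delta_\lambda$-layer of type $b$ and a $\bar\Delta_\mu$-layer of type $c$ in case (1), or a $\Delta_\lambda$-layer of type $b$ and a $\Delta_\mu$-layer of type $c$ in case (2). Unwinding \cref{neworder}, in both cases the hypothesis on $b,c$ says exactly: either $\lambda\ngeq\mu$, or $\lambda=\mu$ together with $b\ngeq_\lambda c$ in case (1), respectively $b\ng_\lambda c$ in case (2). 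Note also that the hypothesis always forces $\lambda\not>\mu$, so $\lambda\neq\mu$ already entails $\lambda\ngeq\mu$.

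I would first dispose of the case $\lambda\ngeq\mu$. Since $\bar\Delta_\lambda$- and $\Delta_\lambda$-layers are in particular locally finite-dimensional and bounded below $A_\lambda$-modules, here $V$ is a $\bar\Delta$-layer of type $\lambda$ and $W$ a $\bar\Delta$-layer of type $\mu$ in the sense of \cref{defup}, so \cref{archers} gives $\Ext^1_A(V,W)=0$ at once. This settles case (1) for $\lambda\ngeq\mu$, and then case (2) for $\lambda\ngeq\mu$ as well: $\Delta_\lambda$-layers are $\bar\Delta_\lambda$-layers, and $\lambda\ngeq\mu$ forces $b\ngeq c$, so the claim for pure layers follows from the one just proved for pure proper layers.

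The remaining case is $\lambda=\mu$, and here two reduction isomorphisms are needed. First, under $i_{\geq\lambda}$ both $V$ and $W$ lie in the Serre subcategory of $A\gmod$ consisting of modules all of whose weights are $\geq\lambda$; this subcategory is closed under extensions in $A\gmod$, so $\Ext^1$ computed there agrees with $\Ext^1_A$, giving $\Ext^1_A(V,W)\cong\Ext^1_{A_{\geq\lambda}}(j^\lambda_!\bar V,\,j^\lambda_!\bar W)$. Second, $j^\lambda_!\colon A_\lambda\gmod\to A_{\geq\lambda}\gmod$ is exact by \cref{xmas} and preserves projectives (being left adjoint to the exact functor $j^\lambda$), so applying it to a projective resolution of $\bar V$ over $A_\lambda$ produces one of $j^\lambda_!\bar V$ over $A_{\geq\lambda}$; feeding this into the adjunction isomorphism $\Hom_{A_{\geq\lambda}}(j^\lambda_!-,\,j^\lambda_!\bar W)\cong\Hom_{A_\lambda}(-,\,j^\lambda j^\lambda_!\bar W)\cong\Hom_{A_\lambda}(-,\bar W)$ and passing to cohomology yields $\Ext^1_{A_{\geq\lambda}}(j^\lambda_!\bar V,\,j^\lambda_!\bar W)\cong\Ext^1_{A_\lambda}(\bar V,\bar W)$. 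Finally, since $A_\lambda$ is based affine quasi-hereditary with poset $(\B_\lambda,\leq_\lambda)$, I would apply \cref{archers} for $A_\lambda$ in case (1) (using $b\ngeq_\lambda c$) and \cref{archersr} for $A_\lambda$ in case (2) (using $b\ng_\lambda c$) to conclude $\Ext^1_{A_\lambda}(\bar V,\bar W)=0$, whence $\Ext^1_A(V,W)=0$.

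The only real obstacle will be the $\lambda=\mu$ step, namely justifying the two reduction isomorphisms — the identification of $\Ext^1_A$ with $\Ext^1$ in the extension-closed subcategory $A_{\geq\lambda}\gmod$, and the collapse of $\Ext$ over $A_{\geq\lambda}$ against a module in the image of $j^\lambda_!$ down to $\Ext$ over $A_\lambda$. Both are routine given exactness of $j^\lambda$ and \cref{xmas}, and importantly nothing goes wrong with the possibly infinite direct sums appearing in the layers, since the definitions of $\bar\Delta_\lambda$- and $\Delta_\lambda$-layer already incorporate them and the results quoted for $A_\lambda$ are stated at precisely that level of generality.
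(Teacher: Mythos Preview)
Your proof is correct and follows essentially the same approach as the paper: split according to whether $\lambda\ngeq\mu$ (handled by \cref{archers}) or $\lambda=\mu$ (reduce to $\Ext^1_{A_\lambda}$ and apply \cref{archers} or \cref{archersr} there). The only difference is cosmetic: for the reduction $\Ext^1_{A_{\geq\lambda}}(j^\lambda_!\bar V,\,j^\lambda_!\bar W)\cong\Ext^1_{A_\lambda}(\bar V,\bar W)$ you argue directly from exactness of $j^\lambda_!$ and adjunction, whereas the paper simply cites \cref{anothergss2} (applied to $A_{\geq\lambda}$ with the finite lower set $\{\lambda\}$).
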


\begin{proof}
(1) Suppose that $V = j^\lambda_! \bar V$
for a $\bar\Delta_\lambda$-layer $\bar V$ of type $b \in \B_\lambda$ and
$W = j^\mu_! \bar V$
for a $\bar\Delta_\mu$-layer $\bar V$ of type $c \in \B_\mu$.
The hypothesis that $b\ngeq c$ means either that $\lambda \ngeq \mu$, or $\lambda=\mu$ and $b \,\ngeq_\lambda\, c$.
Since $\bar\pureDelta$-layers are $\bar\Delta$-layers,
\cref{archers} gives $\Ext^1_A(V,W) = 0$ if $\lambda\neq \mu$.
Now suppose that $\lambda = \mu$.
We have that $\Ext^1_A(V,W) \cong \Ext^1_{A_{\geq \lambda}}(V,W)$ which, is isomorphic to $\Ext^1_{A_\lambda}(\bar V, \bar W)$
by \cref{anothergss2}.
As $b\,\ngeq_\lambda\,c$, this is zero thanks to \cref{archers}
in $A_\lambda\gmod$.

\vspace{1mm}
\noindent
(2)
This is similar to (1) using \cref{archersr} in place of
\cref{archers}.
\end{proof}

\begin{corollary}
If $V$ has a $\Delta$-flag (resp., a $\bar\pureDelta$-flag) then it has a $\pureDelta$-flag (resp., a $\bar\Delta$-flag).
\end{corollary}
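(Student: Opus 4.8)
\medskip

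\noindent The plan is to treat the two assertions separately, in each case reducing to a single layer and then appealing to the homological criteria of \cref{citizens}. For the first assertion I would first reduce to showing that every $\Delta$-layer has a $\pureDelta$-flag: given a $\Delta$-flag $0=V_0\subset V_1\subset\cdots\subset V_n=V$ with $V_r/V_{r-1}$ a $\Delta$-layer of type $\lambda_r$ and the $\lambda_r$ distinct, one lifts a $\pureDelta$-flag of each $V_r/V_{r-1}$ to a filtration of $V_r$ over $V_{r-1}$ by taking preimages and then concatenates; the resulting sections are $\pureDelta$-layers whose types lie in the sets $\B_{\lambda_r}$, and these are pairwise disjoint because $\B=\coprod_{\lambda}\B_\lambda$ and the $\lambda_r$ are distinct, so the concatenation is a bona fide $\pureDelta$-flag. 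Now if $W=j^\lambda_!\bar W$ is a $\Delta$-layer of type $\lambda$, then $\bar W$ is a projective graded left $A_\lambda$-module that is locally finite-dimensional and bounded below; since $A_\lambda$ is based affine quasi-hereditary (\cref{nice}), $\supp_\Delta(\bar W)$ is a subset of the finite weight set $\B_\lambda$, and $\Ext^1_{A_\lambda}(\bar W,\bar\nabla_\lambda(b))=0$ for all $b$ because $\bar W$ is projective, so \cref{citizens}(1) applied to $A_\lambda$ shows $\bar W$ has a $\Delta_\lambda$-flag. Applying the exact functor $j^\lambda_!$ to this flag carries each section to a $\pureDelta$-layer, by the definition of $\pureDelta$-layer in \cref{defupnew}; hence $W$ has a $\pureDelta$-flag, and the reduction above finishes this half.

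For the second assertion the key point is that a $\bar\pureDelta$-layer of type $b\in\B_\lambda$ is automatically a $\bar\Delta$-layer of type $\lambda$: it equals $j^\lambda_!\bar V$ for some $\bar\Delta_\lambda$-layer $\bar V$, and by \cref{xmas} applied to $A_\lambda$ such a $\bar V$ is an (arbitrary) graded $A_\lambda$-module that is locally finite-dimensional and bounded below. So, starting from a $\bar\pureDelta$-flag of $V$ with sections of types $b_1,\dots,b_n$ and writing $\lambda_r:=\dot b_r$, each section is a $\bar\Delta$-layer of type $\lambda_r$; in particular the sections lie in $A\gmoddeltabar$ and are locally finite-dimensional and bounded below, whence so is $V$. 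The $r$-th section $j^{\lambda_r}_!\bar V_r$ has lowest weight $\lambda_r$, and a nonzero map from it to $\nabla(c)$ forces $\dot c=\lambda_r$ (compare lowest weights and use that $\nabla(c)$ has simple socle $L(c)$), so $\supp_{\bar\Delta}(V)\subseteq\{\lambda_1,\dots,\lambda_n\}$ is finite; moreover \cref{extvanishingt} gives $\Ext^m_A(S,\nabla(c))=0$ for every section $S$, every $m\geq1$ and every $c$, whence $\Ext^1_A(V,\nabla(c))=0$ for all $c$ by induction on $n$ via the long exact sequence. Then \cref{citizens}(2) furnishes a $\bar\Delta$-flag of $V$.

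I expect the only genuine subtlety --- and the reason the paper calls these facts ``not quite obvious'' --- to be the mismatch between the refined and the coarse layers: a $\Delta$-layer is not itself a $\pureDelta$-layer, so one must refine it using the local $\Delta_\lambda$-flags of projective $A_\lambda$-modules, and conversely two distinct $\bar\pureDelta$-layers can share the coarse type $\dot b$, so a $\bar\pureDelta$-flag need not literally be a $\bar\Delta$-flag and one is forced to invoke the homological characterisation rather than reshuffling a filtration by hand. Everything else --- exactness of $j^\lambda_!$, disjointness of the $\B_\lambda$, and the $\Ext$-vanishing of \cref{extvanishingt} --- is already in place, so the argument should be routine modulo that observation. (The same reduction, run on $A^{\op}$ via $?^\circledast$, also re-proves the dual statement for $\nabla$- and $\bar\purenabla$-flags.)
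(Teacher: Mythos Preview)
Your proof is correct, but the route differs from the paper's in both halves.

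For the first half, the paper works with individual $\Delta(b)$'s: it refines each $\Delta(b)$ into $\pureDelta$-layers via the $\Delta_\lambda$-flag of $P_\lambda(b)$, then faces the problem that combining these refinements across a whole $\Delta$-layer $\bigoplus_b \Delta(b)^{\oplus f_b}$ can produce repeated types, so it invokes \cref{archersnew}(2) to reorder and merge neighboring $\pureDelta$-layers of the same type. Your approach is slicker: you apply \cref{citizens}(1) directly to the entire projective $\bar W$, obtaining a $\Delta_\lambda$-flag with distinct types in one stroke, and then disjointness of the $\B_{\lambda_r}$ guarantees distinctness globally. This sidesteps the merging argument entirely.

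For the second half, the paper again works by hand: after observing that the sections are $\bar\Delta$-layers, it reorders them using \cref{archers} and then shows concretely that an extension of two $\bar\Delta$-layers of the same type $\lambda$ is again a $\bar\Delta$-layer of type $\lambda$, via the counit of adjunction $j^\lambda_! j^\lambda V \to V$ and the five lemma. You instead verify the hypotheses of \cref{citizens}(2) and let that theorem do the work. Both are fine; the paper's argument is more constructive and exhibits the merging mechanism explicitly, while yours is shorter but relies on the machinery already built. Your justification of finite $\bar\Delta$-support is correct (a nonzero map $V \to \nabla(c)$ must be nonzero on some section by a simple induction, and then the lowest-weight argument pins down $\dot c$), though you might make the inductive step explicit.
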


\begin{proof}
First suppose that $V$ has a $\Delta$-flag.
Take $b \in \B_\lambda$. 
Applying $j^\lambda_!$ to a $\Delta_\lambda$-flag
for $P_\lambda(b)$ arising from \cref{bggproj},
we deduce that $\Delta(b)$ has a filtration of finite length
with top section $\pureDelta(b)$ and other sections that are
$\pureDelta$-layers of types $c \in \B_\lambda$ with $c <_\lambda b$. Using \cref{jam}, it
follows easily that any $\Delta$-layer of type $\lambda$
has a filtration of finite length with sections that are
$\pureDelta$-layers of types $c \in \B_\lambda$.
Hence, $V$ itself 
has a filtration of finite length with sections that
are $\pureDelta$-layers of types $c \in \B$. However, this is not yet a $\pureDelta$-flag of $V$ due to the requirement that $b_1,\dots,b_n$ are distinct in \cref{defdownnew}.
To fix the problem, we first use \cref{archersnew}(2) to order the $\pureDelta$-layers in some order refining the order $\leq$ on $\B$ (biggest at the top). It could still be that there are several neighboring layers of the same type, but these
can be combined into a single $\pureDelta$-layer by taking their direct sum. This uses the fact that
 $\Ext^1_A(V, W) = 0$ if $V$ and $W$ are $\pureDelta$-layers of the same type, which is \cref{archersnew}(2) again.

Now assume that $V$ has a $\bar\pureDelta$-flag.
Since $\bar\pureDelta$-layers are $\bar\Delta$-layers,
this means that $V$ has a finite filtration with sections that
are $\bar\Delta$-layers of types $\lambda \in \B$.
However this is not a $\bar\Delta$-flag due to the requirement that $\lambda_1,\dots,\lambda_n$ are distinct in \cref{defup}.
To fix the problem, we first use \cref{archers} to reorder the sections if necessary. Then we have to merge neighboring $\bar\Delta$-layers of the same type into a single $\bar\Delta$-layer. This follows because if
$0 \rightarrow U \rightarrow V \rightarrow W \rightarrow 0$
is an extension of two $\bar\Delta$-layers of type $\lambda$
then $V \cong j^\lambda_! j^\lambda V$, so it is itself a $\bar\Delta$-layer of type $\lambda$. Indeed, the counit of adjunction gives a homomorphism
$j^\lambda_! j^\lambda V \rightarrow V$. This homomorphism is an isomorphism because $j^\lambda_! \circ j^\lambda$ is exact
and the counit of adjunction is an isomorphism 
on $U= j^\lambda_! \bar U$ and $W = j^\lambda_! \bar W$.
\end{proof}

\begin{corollary}\label{breathe}
For $b \in \B$, the indecomposable projective module
$P(b)$ has a $\pureDelta$-flag with top section
$\pureDelta(b)$ and other sections that are $\pureDelta$-layers
of types $a < b$.
\end{corollary}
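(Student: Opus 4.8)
The plan is to take the $\Delta$-flag of $P(b)$ supplied by \cref{bggproj} and refine it into a $\pureDelta$-flag, keeping track of the top section throughout. First I would record the shape of that $\Delta$-flag. By \cref{bggproj}, $(P(b):\Delta(a))_q = \overline{[\bar\nabla(a):L(b)]_q}$; since every weight of $\bar\nabla(a)$ is $\geq \dot a$ while $\dot b$ is a weight of $L(b)$, this vanishes unless $\dot a \leq \dot b$, and \cref{eat} gives both $(P(b):\Delta(b))_q = 1$ and $(P(b):\Delta(a))_q = 0$ whenever $a \neq b$ with $\dot a = \dot b$. Since $P(b)$ is finitely generated, the flag is finite (\cref{supports}), so after reordering its layers by \cref{archersc} it follows, as already anticipated in \cref{archersr}, that $P(b)$ has a finite $\Delta$-flag with top section $\Delta(b)$ and every other section a $\Delta$-layer of type $\mu < \dot b$.

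Next I would refine each $\Delta$-layer. Fix $\mu \in \Lambda$ and $a \in \B_\mu$. Since $A_\mu$ is a based affine quasi-hereditary algebra with the finite poset $(\B_\mu,\leq_\mu)$, BGG reciprocity for $A_\mu$ (i.e.\ \cref{bggproj} applied to $A_\mu$) together with \cref{eat} shows that $P_\mu(a)$ admits a finite $\Delta_\mu$-flag with top section $\Delta_\mu(a)$ of multiplicity one and all other sections $\Delta_\mu$-layers of types $c <_\mu a$. Applying the exact functor $j^\mu_!$ and \cref{jam}, the module $\Delta(a) = j^\mu_! P_\mu(a)$ acquires a finite filtration with top section $\pureDelta(a)$ and other sections $\pureDelta$-layers of types $c \in \B_\mu$ with $c <_\mu a$; hence an arbitrary $\Delta$-layer of type $\mu$ has a finite filtration by $\pureDelta$-layers of types in $\B_\mu$. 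Splicing these into the flag of the previous paragraph, $P(b)$ acquires a finite filtration all of whose sections are $\pureDelta$-layers, the very top one being $\pureDelta(b)$ and occurring exactly once, and every other section being a $\pureDelta$-layer of a type $a$ with $\dot a = \dot b$ and $a <_{\dot b} b$, or with $\dot a < \dot b$; in either case $a < b$ for the order of \cref{neworder}.

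Finally I would turn this filtration into a genuine $\pureDelta$-flag in the sense of \cref{defupnew}, repeating the reorder-and-merge argument from the proof of the corollary preceding \cref{breathe}. Fix a total order refining $\leq$ on $\B$. By \cref{archersnew}(2), an adjacent pair of sections may be interchanged whenever the type of the upper section is not strictly larger than that of the lower one, so a bubble sort (which only ever performs such transpositions) rearranges the filtration so that types are non-increasing from top to bottom in the chosen total order; because $b$ is strictly larger than every other type that appears, $\pureDelta(b)$ stays a single section at the very top. Then neighbouring sections of equal type are amalgamated into one $\pureDelta$-layer by taking their direct sum, which is legitimate since $\Ext^1_A$ between two $\pureDelta$-layers of the same type vanishes by another application of \cref{archersnew}(2). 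This produces a $\pureDelta$-flag of $P(b)$ with top section $\pureDelta(b)$ and all other sections $\pureDelta$-layers of types $a < b$, as required.

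The routine part is the reorder-and-merge bookkeeping, which is essentially word-for-word the argument already used for the previous corollary; the only point needing genuine care is verifying that $\pureDelta(b)$ occurs with multiplicity one and survives as the top section. This is exactly where the two uses of BGG reciprocity---for $A$ to force $(P(b):\Delta(b))_q = 1$, and for $A_{\dot b}$ to force $(P_{\dot b}(b):\Delta_{\dot b}(b))_q = 1$---together with the maximality of $b$ among the types appearing in the refined filtration, come into play.
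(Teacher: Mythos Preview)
Your proof is correct and follows essentially the same approach as the paper's: take the $\Delta$-flag of $P(b)$ from \cref{bggproj}, refine each section using the $\Delta_\mu$-flag of $P_\mu(a)$ coming from the quasi-hereditary structure on $A_\mu$, then reorder and merge via \cref{archersnew}(2) as in the preceding corollary. Your write-up is more explicit than the paper's (which simply asserts the shape of the $\Delta$-flag of $P(b)$ and of the $\pureDelta$-flag of $\Delta(b)$ without rederiving them from \cref{eat}), but the strategy is identical.
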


\begin{proof}
By \cref{bggproj}, we know that $P(b)$
has a $\Delta$-flag with top section $\Delta(b)$
and other sections that are $\Delta$-layers of types
$\mu < \dot b$.
Also $\Delta(b)$ has a $\pureDelta$-flag
with top section $\pureDelta(b)$ and other sections
that are $\pureDelta$-layers of types
$a <_\lambda b$. This filtration can be converted into 
the desired $\pureDelta$-flag by using \cref{archersnew}(2) as in the proof of the previous corollary.
\end{proof}

\cref{breathe} is the key property needed
to upgrade other results about $\Delta$- and $\bar\Delta$-flags to $\pureDelta$- and $\bar\pureDelta$-flags.
To start with, the results about truncation to upper sets from \cref{tuper} carry over to the refined setting.
In particular,
letting $\hat\Lambda$ be an upper set
in $\Lambda$ and $\hat A$ be as in \cref{tuper},
we have the following:
\begin{itemize}
\item
For $V \in \ob A\gmodpuredelta$ and $i \in \I$, we have that
$\Tor^A_m(1_i \widehat A,V) = 0$ for all $m \geq 1$.
\item
The functor $i^* = A \otimes_{\hat A} -$
takes short exact sequences of modules with
$\pureDelta$-flags to short exact sequences of modules with $\pureDelta$-flags.
\item
For $V \in \ob A \gmodpuredelta$
and $W \in \ob \hat A\gmod$,
we have that $\Ext^n_A(V, iW)
\cong \Ext^n_{\hat A}(i^* V, W)$ for all $n \geq 0$.
\end{itemize}
These follow by mimicking the proofs
of \cref{previousl}, \cref{previousc} and
\cref{mercedes}, respectively, using the $\pureDelta$-flag of $P(b)$ from \cref{breathe} in place of the arguments with the $\Delta$-flag of $Q(b)$ given before.

\begin{theorem}\label{pureexts}
In either of the following situations, we
have that $\Ext^n_A(V,W) = 0$ for all $n \geq 1$:
\begin{enumerate}
\item 
$V \in \ob A\gmodpuredelta$ and $W \in \ob A\gmodpurenablabar$; 
\item $V \in \ob A\gmodpuredeltabar$ and $W \in \ob A\gmodpurenabla$.
\end{enumerate}
\end{theorem}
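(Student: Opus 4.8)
The plan is to prove \cref{pureexts} by reducing it to \cref{extvanishingt} applied to the local algebras $A_\mu$, following the pattern of the proof of \cref{extvanishingt} itself but inserting one extra truncation step (from $A$ down to $A_{\geq\mu}$) before the one that passes to $A_\mu$. I will carry out case (1), with $V\in\ob A\gmodpuredelta$ and $W\in\ob A\gmodpurenablabar$; case (2) then follows by the usual duality argument, applying $?^\circledast$ and invoking the analogue of (1) for $A^\op$, whose local algebras $(A^\op)_\mu = (A_\mu)^\op$ are again based affine quasi-hereditary and for which $?^\circledast$ interchanges $\pureDelta(b),\bar\pureDelta(b),\bar\purenabla(b),\purenabla(b)$ with $\purenabla^\op(b),\bar\purenabla^\op(b),\bar\pureDelta^\op(b),\pureDelta^\op(b)$ (via $j^\lambda_!\circ?^\circledast\cong?^\circledast\circ j^\lambda_*$ and \cref{hands}).

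\textbf{Reduction in $W$.} First I would apply $\Hom_A(V,-)$ to the short exact sequences coming from a $\bar\purenabla$-flag of $W$ and induct on its length, so that it suffices to treat the case in which $W$ is a single $\bar\purenabla$-layer of type $c\in\B_\mu$. Thus $W = j^\mu_*\bar W$ for a $\bar\nabla_\mu$-layer $\bar W$ over $A_\mu$ (equivalently $\bar W\cong\bar\nabla_\mu(c)^{\oplus g}$ for some $g\in\N\lround q\rround$), and in particular $W$ lies in the subcategory $A_{\geq\mu}\gmod\subseteq A\gmod$, realised through $i_{\geq\mu}$.

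\textbf{Two truncations.} Since $V$ has a $\pureDelta$-flag and $W\in\ob A_{\geq\mu}\gmod$, the refined form of \cref{mercedes} recorded in the bulleted list preceding \cref{pureexts} gives, with $\hat A = A_{\geq\mu}$,
$$
\Ext^n_A(V,W)\cong\Ext^n_{A_{\geq\mu}}\bigl(i^*_{\geq\mu}V,W\bigr)\qquad(n\geq 0).
$$
Here $i^*_{\geq\mu}V$ has a $\pureDelta$-flag over $A_{\geq\mu}$ by the refined form of \cref{previousc} (also in that list), its sections being the images of the $\pureDelta$-layers of $V$: a $\pureDelta$-layer $\pureDelta(b)^{\oplus f}$ of type $b$ is preserved unchanged if $\dot b\geq\mu$ (all its weights are then already $\geq\dot b\geq\mu$) and is annihilated if $\dot b\ngeq\mu$ (it is generated by its $\dot b$-weight space, which dies in any weight-$\geq\mu$ quotient), and moreover "$\pureDelta$-layer of type $b$" has the same meaning over $A$ and over $A_{\geq\mu}$ when $\dot b\geq\mu$ because $(A_{\geq\mu})_{\dot b} = A_{\dot b}$ and $j^{\mu,\dot b}_! = j^{\dot b}_!$. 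Next I would apply the isomorphism \cref{ssarg}, with $A_\lambda$ replaced by $A_\mu$ (legitimate since $j^\mu_*$ is exact by \cref{xmas} and is right adjoint to the exact functor $j^\mu$), to obtain
$$
\Ext^n_{A_{\geq\mu}}\bigl(i^*_{\geq\mu}V,\,j^\mu_*\bar W\bigr)\cong\Ext^n_{A_\mu}\bigl(j^\mu i^*_{\geq\mu}V,\,\bar W\bigr).
$$
Finally I claim $j^\mu i^*_{\geq\mu}V\in\ob A_\mu\gmoddelta$: $j^\mu$ is exact, and it sends a $\pureDelta$-layer $\pureDelta(b)^{\oplus f}$ with $\dot b\geq\mu$ to $e_\mu(\pureDelta(b)^{\oplus f})$, which vanishes when $\dot b>\mu$ (no $\mu$-weight space) and equals $j^\mu j^\mu_!\Delta_\mu(b)^{\oplus f}\cong\Delta_\mu(b)^{\oplus f}$, a $\Delta_\mu$-layer, when $\dot b=\mu$; so $j^\mu i^*_{\geq\mu}V$ has a finite filtration with $\Delta_\mu$-layer sections. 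Since $\bar W$ has a $\bar\nabla_\mu$-flag, \cref{extvanishingt} for the based affine quasi-hereditary algebra $A_\mu$ gives $\Ext^n_{A_\mu}(j^\mu i^*_{\geq\mu}V,\bar W)=0$ for all $n\geq 1$, and chasing back through the two displayed isomorphisms yields $\Ext^n_A(V,W)=0$ for $n\geq 1$.

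\textbf{Main obstacle.} The delicate point, and the step I expect to require the most care, is the bookkeeping for how the two truncations $i^*_{\geq\mu}$ and $j^\mu$ act on a $\pureDelta$-flag: namely that $i^*_{\geq\mu}$ really does send the $\pureDelta$-flag of $V$ to a $\pureDelta$-flag over $A_{\geq\mu}$ (so that the refined \cref{mercedes} and \cref{previousc} are genuinely applicable, and $\pureDelta$-layers of type $b$ with $\dot b\geq\mu$ are identified over $A$ and $A_{\geq\mu}$), while $j^\mu$ then annihilates exactly the layers of types $b$ with $\dot b>\mu$ and turns those with $\dot b=\mu$ into $\Delta_\mu$-layers, leaving something in $A_\mu\gmoddelta$. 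Once this is checked, everything else is formal dévissage together with the already-established isomorphisms \cref{mercedes}, \cref{previousc}, \cref{ssarg} and the local instance of \cref{extvanishingt}.
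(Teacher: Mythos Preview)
Your proof is correct and follows essentially the same route as the paper: reduce $W$ to a single $\bar\purenabla$-layer, then pass from $A$ to $A_{\geq\mu}$ via the refined \cref{mercedes}, from $A_{\geq\mu}$ to $A_\mu$ via \cref{ssarg}, and finish with \cref{extvanishingt} for $A_\mu$; case (2) is deduced by duality exactly as you describe. The only cosmetic difference is that the paper first reduces $V$ to a single indecomposable $\pureDelta(b)$ (using \cref{jammier}) before applying these isomorphisms, so that the analysis of $i^*_{\geq\mu}$ and $j^\mu$ becomes a simple trichotomy on $\dot b$ versus $\mu$, whereas you carry the whole $\pureDelta$-flag of $V$ through and invoke the refined \cref{previousc} to see that $i^*_{\geq\mu}V$ still has a $\pureDelta$-flag---this is equivalent and your version of the bookkeeping is accurate.
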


\begin{proof}
(1) The strategy is similar to the proof of \cref{extvanishingt}.
Using \cref{jammier}, we reduce to checking that
$\Ext^n_A(\pureDelta(b), W) = 0$ for $b \in \B$, 
$n \geq 1$ and $W := j^\lambda_! \bar W$ for 
a $\bar\Delta_\lambda$-layer $\bar W$ of 
type $b \in \B_\lambda$. By the third
point noted just before the statement of the theorem, i.e., 
the analog of \cref{mercedes} for $\pureDelta$-flags, we have that
$$
\Ext^n_A\left(\pureDelta(b), W\right)
\cong
\Ext^n_{A_{\geq \lambda}}\left(i^*_{\geq \lambda} \pureDelta(b), j^\lambda_! \bar W\right).
$$
This is clearly zero if $\dot b \not\geq \lambda$.
When $\dot b \geq \lambda$, we apply \cref{ssarg} to get that
$$
\Ext^n_{A_{\geq \lambda}}\left(i^*_{\geq \lambda} \pureDelta(b), j^\lambda_! \bar W\right)
\cong
\Ext^n_{A_\lambda}\left(j^\lambda \pureDelta(b),
\bar W\right).
$$
This is clearly zero if $\dot b \neq \lambda$. Finally,
when $\dot b = \lambda$, it is zero thanks to \cref{extvanishingt}
applied in $A_\lambda\gmod$.

\vspace{1mm}
\noindent
(2) This follows from (1) for $A^\op$ plus \cref{hands}.
\end{proof}

\cref{upupandaway,pureexts} justify the following definitions for $V$ with a $\pureDelta$-flag or a $\bar\pureDelta$-flag, respectively:
\begin{align}
(V:\pureDelta(b))_q &:= \overline{\dim_q \Hom_A(V, \bar\purenabla(b))},&
(V:\bar\pureDelta(b))_q &:= \overline{\dim_q \Hom_A(V, \purenabla(b))_q}.
\end{align}
These are analogous to \cref{seasonalmuffin,tea}.
Now we can strengthen \cref{breathe}:

\begin{corollary}[Pure BGG reciprocity]\label{breathless}
For $a,b \in \B$, we have that $(P(b):\pureDelta(a))_q =
\overline{[\bar\purenabla(a):L(b)]_q}$.
If the graded triangular basis for $A$
admits a duality $\tau$ such that for each $\lambda \in \Lambda$ the induced duality
on $A_\lambda\gmod$ satisfies
$\nabla_\lambda(b)^\taudual \cong \Delta_\lambda(b)$ for all $b \in \B_\lambda$,
this graded multiplicity is also equal to
$[\bar\pureDelta(a):L(b)]_q$.
\end{corollary}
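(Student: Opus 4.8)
The plan is to read off the first equality directly from the definitions and the basic $\Hom$-computation, and then to deduce the second from it by means of the duality $?^\taudual$ on $A\gmod$.

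For the first equality: $P(b)$ is finitely generated, being a projective cover of $L(b)$, and $\bar\purenabla(a)$ is a $\bar\purenabla$-layer, hence a $\bar\nabla$-layer, hence locally finite-dimensional and bounded above. So \cref{tech2}(3) gives $\dim_q\Hom_A(P(b),\bar\purenabla(a)) = [\bar\purenabla(a):L(b)]_q$. Since $P(b)$ has a $\pureDelta$-flag by \cref{breathe}, the multiplicity $(P(b):\pureDelta(a))_q$ is defined, and unwinding its definition gives $(P(b):\pureDelta(a))_q = \overline{\dim_q\Hom_A(P(b),\bar\purenabla(a))} = \overline{[\bar\purenabla(a):L(b)]_q}$.

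Now assume $\tau$ is as in the statement. I would first note that it forces $L_\lambda(b)^\taudual\cong L_\lambda(b)$ for every $\lambda\in\Lambda$ and $b\in\B_\lambda$: since $?^\taudual$ is an exact contravariant equivalence of $A_\lambda\gmodlfd$, we have $(\soc\nabla_\lambda(b))^\taudual\cong\head(\nabla_\lambda(b)^\taudual)\cong\head\Delta_\lambda(b)$, and here $\soc\nabla_\lambda(b) = L_\lambda(b) = \head\Delta_\lambda(b)$. Hence the graded triangular basis of $A$ admits a duality in the sense of the section on Duality, so $L(b)^\taudual\cong L(b)$ and the relations \cref{selfduality} hold for $A$. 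Moreover, applying \cref{selfduality} to the based affine quasi-hereditary algebra $A_\lambda$ itself (whose Cartan pieces are copies of $\kk$, so the hypothesis on irreducibles is automatic) we get $\bar\Delta_\lambda(b)^\taudual\cong\bar\nabla_\lambda(b)$ in $A_\lambda\gmod$. Writing $\lambda:=\dot a$, the identity $(j^\lambda_!N)^\taudual\cong j^\lambda_*(N^\taudual)$ — which follows from $j^\lambda_!\circ{?^\circledast}\cong{?^\circledast}\circ j^\lambda_*$ together with the fact that $\tau$ descends to $A_{\geq\lambda}$ and $?^\taudual$ commutes with $i_{\geq\lambda}$, exactly as the analogous facts were proved in the Duality section — gives $\bar\pureDelta(a)^\taudual = (j^\lambda_!\bar\Delta_\lambda(a))^\taudual\cong j^\lambda_*(\bar\Delta_\lambda(a)^\taudual)\cong j^\lambda_*\bar\nabla_\lambda(a) = \bar\purenabla(a)$.

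Finally, $?^\taudual$ restricts to a contravariant self-equivalence of $A\gmodlfd$ which negates the grading, so $[M^\taudual:L^\taudual]_q = \overline{[M:L]_q}$ for $M\in\ob A\gmodlfd$ and $L$ irreducible; this is the same reasoning that produced the identity $[\bar\Delta(a):L(b)]_q = \overline{[\bar\nabla(a):L(b)]_q}$ in the Duality section. Taking $M=\bar\pureDelta(a)$ (which is locally finite-dimensional and bounded below) and $L=L(b)$, and using $\bar\pureDelta(a)^\taudual\cong\bar\purenabla(a)$ together with $L(b)^\taudual\cong L(b)$, we get $[\bar\purenabla(a):L(b)]_q = \overline{[\bar\pureDelta(a):L(b)]_q}$, i.e.\ $\overline{[\bar\purenabla(a):L(b)]_q} = [\bar\pureDelta(a):L(b)]_q$. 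Combining this with the first equality proves the claim. The only step that is more than bookkeeping is establishing $\bar\pureDelta(a)^\taudual\cong\bar\purenabla(a)$, equivalently that $?^\taudual$ interchanges $j^\lambda_!$ and $j^\lambda_*$ on the refined families; but this is a verbatim transcription of the argument already carried out in the Duality section for the coarse modules $\bar\Delta(b),\bar\nabla(b)$, so I expect no genuine obstacle.
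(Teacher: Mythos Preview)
Your proof is correct and follows essentially the same route as the paper: the first equality comes straight from the definition of $(P(b):\pureDelta(a))_q$ together with \cref{breathe} and $\dim_q\Hom_A(P(b),-)=[-:L(b)]_q$, and the second from establishing $\bar\pureDelta(a)^\taudual\cong\bar\purenabla(a)$ via the interchange of $j^\lambda_!$ and $j^\lambda_*$ under $?^\taudual$. You are actually more careful than the paper in spelling out why $L_\lambda(b)^\taudual\cong L_\lambda(b)$ and $\bar\Delta_\lambda(b)^\taudual\cong\bar\nabla_\lambda(b)$ follow from the stated hypothesis $\nabla_\lambda(b)^\taudual\cong\Delta_\lambda(b)$.
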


\begin{proof}
We know already from \cref{breathe}
that $P(b)$ has a $\pureDelta$-flag.
We have that
$$
(P(b):\pureDelta(a))_q = 
\overline{
\dim_q \Hom_A(P(b), \bar\purenabla(a))}
=
\overline{[\bar\purenabla(a):L(b)]_q}.
$$
In the presence of the 
duality, for $a \in \B_\lambda$, 
we have that $$
\bar\purenabla(a)^\taudual
= (j^\lambda_* \nabla_\lambda(a))^\taudual
\cong j^\lambda_! (\nabla_\lambda(a)^\taudual)
\cong j^\lambda_! \Delta_\lambda(a)
= \bar\pureDelta(a).
$$
So $\overline{[\bar\purenabla(a):L(b)]_q} = [\bar\pureDelta(a):L(b)]_q$.
\end{proof}

There are also analogs of \cref{citizens,villains} for $\pureDelta$- and $\bar\pureDelta$-flags. The statements are 
almost exactly the same as before, replacing the various standard, proper standard, costandard and proper costandard modules by their pure counterparts. 
The definitions of {\em $\pureDelta$-} and  {\em $\bar\pureDelta$-supports}
of $V\in A\gmod$ needed for the modified statement of \cref{citizens}
are:
\begin{align}\label{supp1pure}
\supp_{\pureDelta}(V) &:= \big\{\dot b\:\big|\:b \in \B\text{ such that }
\Hom_A(V, \bar\purenabla(b))\neq 0\big\},\\\label{supp2pure}
\supp_{\bar\pureDelta}(V) &:= \big\{\dot b\:\big|\:
b \in \B\text{ such that }\Hom_A(V, \purenabla(b))\neq 0\big\}.
\end{align}
We have that
$\supp_\Delta(V) 
\subseteq \supp_{\pureDelta}(V) 
\subseteq \supp_{\bar\pureDelta}(V) 
\subseteq\supp_{\bar\Delta}(V)$.
All of these sets are finite when $V$ is finitely generated
by \cref{supports}.
We leave full 
proofs of the analogs of \cref{citizens,villains} to the reader,
just recording one more lemma here which is the appropriate modification of the key 
\cref{inductionbase} in the new setting---with this in hand, the other modifications to the earlier arguments are straightforward.

\begin{lemma}\label{pureinductionbase}
Suppose that 
$\lambda \in \Lambda$ is minimal
and $V \in A\gmod$ has the following properties:
\begin{enumerate}
\item
$V$ is locally finite-dimensional and bounded below;
\item
$V = A e_\lambda V$;
\item
$\Ext^1_A(V, \bar\purenabla(b)) = 0$ 
(resp.,
$\Ext^1_A(V, \purenabla(b)) = 0$) for all
$b \in \B$.
\end{enumerate}
Then $V$ has a $\pureDelta$-flag (resp., a $\bar\pureDelta$-flag). More precisely, we have that $V \cong j^\lambda_! \bar V$ 
for $\bar V \in A_\lambda\gmod$
with a $\Delta_\lambda$-flag (resp., a $\bar\Delta_\lambda$-flag).
\end{lemma}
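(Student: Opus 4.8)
The plan is to follow the proof of \cref{inductionbase} as far as it goes to obtain $V \cong j^\lambda_! j^\lambda V$, and then to push hypothesis~(3) through the Grothendieck spectral sequence into the quasi-hereditary structure of $A_\lambda$, so that \cref{citizens} applied inside $A_\lambda\gmod$ produces the required flag on $\bar V := j^\lambda V$.

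For the first step, one runs the argument of \cref{inductionbase} essentially verbatim. Property~(2) together with \cref{green}(1) shows that all weights of $V$ are $\geq\lambda$, so $V$ is an $A_{\geq\lambda}$-module, and the counit $\eps^\lambda_V\colon j^\lambda_! j^\lambda V\to V$ is surjective because its cokernel has zero $\lambda$-weight space while every quotient of $V$ is generated in weight $\lambda$. Put $K:=\ker\eps^\lambda_V$. The case analysis on $\dot b$ versus $\lambda$ from the proof of \cref{inductionbase} goes through unchanged with $Y:=\bar\purenabla(b)$ (resp.\ $Y:=\purenabla(b)$) in place of $\bar\nabla(b)$ (resp.\ $\nabla(b)$): indeed $\bar\purenabla(b)=j^\lambda_*\bar\nabla_\lambda(b)$ and $\purenabla(b)=j^\lambda_*\nabla_\lambda(b)$ are of the form $j^\lambda_*(-)$, which is all that is used when $\dot b=\lambda$, and $j^\lambda Y=0$ when $\dot b>\lambda$. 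Hypothesis~(3) then gives $\Hom_A(K,Y)=0$ for all $b\in\B$, and since $\bar\nabla(b)$ embeds into both $\bar\purenabla(b)$ and $\purenabla(b)$ we get $\Hom_A(K,\bar\nabla(b))=0$ for all $b$, hence $K=0$ by \cref{nakcor}. So $V\cong j^\lambda_!\bar V$ with $\bar V:=j^\lambda V=e_\lambda V$, which is locally finite-dimensional and bounded below because $V$ is.

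Next I would transfer the vanishing hypothesis into $A_\lambda\gmod$. An extension of $V$ by $\bar\purenabla(b)$ (resp.\ $\purenabla(b)$) has all weights $\geq\lambda$, hence is again an $A_{\geq\lambda}$-module, so $\Ext^1_A(V,\bar\purenabla(b))=\Ext^1_{A_{\geq\lambda}}(V,j^\lambda_*\bar\nabla_\lambda(b))$ and similarly with $\nabla_\lambda$. Applying the degenerate Grothendieck spectral sequence isomorphism \cref{ssarg} (valid since $j^\lambda_*$ is exact and sends injectives to injectives), this equals $\Ext^1_{A_\lambda}(\bar V,\bar\nabla_\lambda(b))$ (resp.\ $\Ext^1_{A_\lambda}(\bar V,\nabla_\lambda(b))$). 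Thus hypothesis~(3) says precisely that $\Ext^1_{A_\lambda}(\bar V,\bar\nabla_\lambda(b))=0$ (resp.\ $\Ext^1_{A_\lambda}(\bar V,\nabla_\lambda(b))=0$) for all $b\in\B_\lambda$. Since $A_\lambda$ is a based affine quasi-hereditary algebra it carries a graded triangular basis, so \cref{citizens} applies in $A_\lambda\gmod$; the support condition there is automatic because $\B_\lambda$ is finite. Hence $\bar V$ has a $\Delta_\lambda$-flag (resp.\ a $\bar\Delta_\lambda$-flag). Finally, applying the exact functor $j^\lambda_!$ and noting that it carries a $\Delta_\lambda$-layer of type $b$, i.e.\ a module $\Delta_\lambda(b)^{\oplus f}$, to $\pureDelta(b)^{\oplus f}$ (and likewise in the barred case) while preserving distinctness of the types occurring, one obtains a $\pureDelta$-flag (resp.\ $\bar\pureDelta$-flag) of $V\cong j^\lambda_!\bar V$, as required.

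The one genuinely delicate point is the bookkeeping in the third paragraph: one must be sure that the $\Ext^1$ computed over $A$, over $A_{\geq\lambda}$ and over $A_\lambda$ all coincide for the modules in play, so that the ambient hypothesis~(3) really does become a hypothesis to which \cref{citizens} for $A_\lambda$ can be applied. Everything else is a transcription of the proof of \cref{inductionbase} together with the homological criterion already established there.
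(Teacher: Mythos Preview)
Your proof is correct and follows essentially the same route as the paper: first reproduce the argument of \cref{inductionbase} to obtain $V\cong j^\lambda_! j^\lambda V$, then use \cref{ssarg} to translate hypothesis~(3) into $\Ext^1_{A_\lambda}(\bar V,\bar\nabla_\lambda(b))=0$ (resp.\ with $\nabla_\lambda$), and finally apply \cref{citizens} inside $A_\lambda\gmod$ where the support condition is automatic since $\B_\lambda$ is finite. Your write-up supplies details the paper leaves implicit, notably the justification that $\Ext^1_A(V,j^\lambda_* W)\cong\Ext^1_{A_{\geq\lambda}}(V,j^\lambda_* W)$ via the observation that any extension of $A_{\geq\lambda}$-modules in $A\gmod$ again has all weights $\geq\lambda$, and the reduction to \cref{nakcor} via the embedding $\bar\nabla(b)\hookrightarrow\bar\purenabla(b)$.
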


\begin{proof}
The same 
arguments as given in the proof of \cref{inductionbase}
show that 
$V \cong j^\lambda_! j^\lambda V$.
It then remains to show that
$\bar V := j^\lambda V$ has a $\Delta_\lambda$-flag (resp., 
a $\bar\Delta_\lambda$-flag). To see this, we can apply the homological criterion from \cref{citizens} 
in $A_\lambda\gmod$.
By \cref{ssarg}, we have that
$$
\Ext^1_{A_\lambda}\left(\bar V, W\right)
\cong \Ext^1_{A_{\geq \lambda}}\left(V, j^\lambda_* W\right)
\cong \Ext^1_{A}\left(V, j^\lambda_* W\right)
$$
for any $W \in A_{\lambda} \gmod$.
We apply this with $W = \bar\nabla_\lambda(c)$
(resp., $\nabla_\lambda(c)$)
for $c \in \B_\lambda$ using (3) to complete the proof.
\end{proof}

\begin{remark}
The principles outlined in this section are sufficiently robust that they can be adapted to various similar situations. For example, there is an analogous theory if we instead 
have that the categories of finitely generated graded
left $A_\lambda$-modules are {\em affine highest weight categories} 
in the sense of \cite[Def.~5.2]{AHW}
with weight posets
that are the opposites of the poset $(\B_\lambda, \leq_\lambda)$.
\end{remark}


\bibliographystyle{alpha}
\bibliography{triangular}
\end{document}